%
%
%
%
%
\documentclass[12pt]{amsart}
\usepackage{amssymb}
\newtheorem{theorem}{Theorem}[section]
\newtheorem{lemma}[theorem]{Lemma}
\newtheorem{proposition}[theorem]{Proposition}
\newtheorem{corollary}[theorem]{Corollary}

\theoremstyle{definition}

\newtheorem{example}[theorem]{Example}

\theoremstyle{remark}
\newtheorem{remark}[theorem]{Remark}

\numberwithin{equation}{section}

\newcommand{\QED}{\qed}
\newcommand{\bfind}[1]{\index{#1}{\bf #1}}

\newcommand{\n}{\par\noindent}
\newcommand{\nn}{\par\vskip2pt\noindent}
\newcommand{\sn}{\par\smallskip\noindent}
\newcommand{\mn}{\par\medskip\noindent}

\newcommand{\pars}{\par\smallskip}
\newcommand{\parm}{\par\medskip}
\newcommand{\parb}{\par\bigskip}
\newcommand{\isom}{\simeq}
\newcommand{\fvklit}[1]{[#1]}

\newcommand{\ovl}[1]{\overline{#1}}

\newcommand{\sep}{^{\rm sep}}
\newcommand{\chara}{\mbox{\rm char}\,}

\newcommand{\dist}{\mbox{\rm dist}\,}
\newcommand{\Gal}{\mbox{\rm Gal}\,}
\newcommand{\im}{\,\mbox{\rm im}\,}

\newcommand{\subsetuneq}{\mathrel{\raisebox{.8ex}{\footnotesize%
$\displaystyle\mathop{\subset}_{\not=}$}}}
%
%
\newcommand{\cal}{\mathcal}

\newcommand{\N}{\mathbb N}
\newcommand{\Q}{\mathbb Q}

\newcommand{\Z}{\mathbb Z}

\newcommand{\F}{\mathbb F}

%
%
%
%
%
\begin{document}
\title[Artin-Schreier defect extensions]{A classification of
Artin-Schreier defect extensions and characterizations of defectless
fields}
\author{Franz-Viktor Kuhlmann}
\address{Mathematical Sciences Group,
University of Saskatchewan,
106 Wiggins Road,
Saskatoon, Saskatchewan, Canada S7N 5E6}
\email{fvk@math.usask.ca}
\thanks{I thank Peter Roquette
for his invaluable help and support, and Olivier Piltant and Bernard
Teissier for inspiring discussions. Parts of this paper were written
while I was a guest of the Equipe G\'eom\'etrie et Dynamique, Institut
de Math\'ematiques de Jussieu, Paris, and of the Laboratoire de
Math\'ematiques at the Universit\'e de Versailles
Saint-Quentin-en-Yvelines. I gratefully acknowledge their hospitality
and support. I also thank the referee for his careful reading and an
abundance of useful remarks and corrections. I was partially
supported by a Canadian NSERC grant and by sabbatical grants from the
University of Saskatchewan.}
\date{16.\ 10.\ 2009}
\subjclass[2000]{Primary 12J20; Secondary 12J10, 12F05}
\begin{abstract}\noindent
{\footnotesize\rm
We classify Artin-Schreier extensions of valued fields with non-trivial
defect according to whether they are connected with purely inseparable
extensions with non-trivial defect, or not. We use this classification
to show that in positive characteristic, a valued field is algebraically
complete if and only if it has no proper immediate algebraic extension
and every finite purely inseparable extension is defectless. This result
is an important tool for the construction of algebraically complete
fields. We also consider extremal fields (= fields
for which the values of the elements in the images of arbitrary
polynomials always assume a maximum).
We characterize inseparably defectless, algebraically maximal and
separable-algebraically maximal fields in terms of extremality,
restricted to certain classes of polynomials. We give a second
characterization of algebraically complete fields, in terms of their
completion. Finally, a variety of examples for Artin-Schreier extensions
of valued fields with non-trivial defect is presented.}
\end{abstract}
\maketitle
%
%
%
%
\section{Introduction}
In this paper, we consider fields $K$ equipped with (Krull) valuations
$v$. The value group of $(K,v)$ will be denoted by $vK$, and its residue
field by $Kv$. The value of an element $a$ is denoted by $va$, and its
residue by $av$. We will frequently drop the valuation $v$ and talk of
$K$ as a valued field if the context is clear. By $(L|K,v)$ we mean an
extension of valued fields where $v$ is a valuation on $L$ and its
subfield $K$ is endowed with the restriction of $v$. The extension
$(L|K,v)$ is called \bfind{immediate} if $(vL:vK)=1$ and $[Lv:Kv]=1$.

Assume that $(L|K,v)$ is a finite extension and the extension of $v$
from $K$ to $L$ is unique. Then the Lemma of Ostrowski tells us that
\begin{equation}                            \label{LoO}
[L:K]\;=\; (vL:vK)\cdot [Lv:Kv]\cdot p^\nu \;\;\;\mbox{ with }\nu\geq 0
\end{equation}
where $p$ is the \bfind{characteristic exponent} of $Kv$, that is,
$p=\chara Kv$ if this is positive, and $p=1$ otherwise. The factor
${\rm d}(L|K):=p^\nu$ is called the \bfind{defect} of the extension
$(L|K,v)$. If $\nu>0$, then we talk of a \bfind{non-trivial} defect and
call $(L|K,v)$ a \bfind{defect extension}. Otherwise, we call $(L|K,v)$
a \bfind{defectless extension}. If $[L:K]=p$ then $(L|K,v)$ is a defect
extension if and only if it is immediate. A possibly infinite algebraic
extension is called defectless if all of its finite subextensions are
defectless; in view of Lemma~\ref{md} in Section~\ref{sectddie}, this
agrees with the definition for finite extensions.

\pars
Every finite extension $L$ of a valued field $(K,v)$
satisfies the \bfind{fundamental inequality} (cf.\ [En], [Z--S]):

\begin{equation}                             \label{fundineq}
n\>\geq\>\sum_{i=1}^{\rm g} {\rm e}_i {\rm f}_i
\end{equation}
where $n=[L:K]$ is the degree
of the extension, $v_1,\ldots,v_{\rm g}$ are the distinct extensions
of $v$ from $K$ to $L$, ${\rm e}_i=(v_iL:vK)$ are the respective
\bfind{ramification indices} and ${\rm f}_i=[Lv_i:Kv]$ are the
respective \bfind{inertia degrees}. If ${\rm g}=1$ for every finite
extension $L|K$ then $(K,v)$ is called \bfind{henselian}. This holds if
and only if $(K,v)$ satisfies \bfind{Hensel's Lemma}, that is, if $f$ is
a polynomial with coefficients in the valuation ring ${\cal O}$ of
$(K,v)$ and there is $b\in {\cal O}$ such that $vf(b)>0$ and $vf'(b)=0$,
then there is $a\in {\cal O}$ such that $f(a)=0$ and $v(b-a)>0$.

Every valued field $(K,v)$ has a minimal separable-algebraic extension
which is henselian; it is unique up to isomorphism over $K$. We call it
the \bfind{henselization of} $(K,v)$ and denote it by $(K,v)^h$. It is
an immediate extension of $(K,v)$.

We call a (not necessarily henselian) valued field $(K,v)$ a
\bfind{defectless field}, \bfind{separably defectless field} or
\bfind{inseparably defectless field} if equality holds in the
fundamental inequality (\ref{fundineq}) for every finite, finite
separable or finite purely inseparable extension $L$ of $K$. One can
trace this back to the case of unique extensions of the valuation,
respectively; for the proof of the following theorem, see [Ku9] (a
partial proof was already given in [En]):
\begin{theorem}                             \label{dhd}
A valued field $(K,v)$ is a defectless field if and only if its
henselization $(K,v)^h$ is (that is, if and only if every finite
extension of $(K,v)^h$ is a defectless extension). The same holds for
``separably defectless field'' and ``inseparably defectless field'' in
the place of ``defectless field''.
\end{theorem}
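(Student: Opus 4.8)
The plan is to pass through the henselization using two ingredients: that $(K,v)^h$ is an immediate extension of $(K,v)$, and a formula expressing $[L:K]$ in terms of the defects of the henselizations of the various extensions of $v$ to $L$. Fix an algebraic closure $\tilde K$ carrying an extension $\bar v$ of $v$, and write $K^h=(K,v)^h$ inside it. For a finite extension $L|K$ with distinct extensions $v_1,\dots,v_{\rm g}$ of $v$, realized by embeddings $\sigma_i\colon L\to\tilde K$, the compositum $L^{h_i}:=\sigma_i(L).K^h$ is the henselization of $(L,v_i)$ and is finite over the henselian field $K^h$. For separable $L|K$ I would first record the isomorphism $L\otimes_K K^h\cong\prod_{i=1}^{\rm g}L^{h_i}$, whose dimension count gives $[L:K]=\sum_i[L^{h_i}:K^h]$. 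Since $K^h|K$ and $L^{h_i}|L$ are immediate, we have $vK^h=vK$, $K^hv=Kv$, $v_iL^{h_i}=v_iL$ and $L^{h_i}v=Lv_i$, so Ostrowski's lemma (\ref{LoO}) applied to $L^{h_i}|K^h$ gives $[L^{h_i}:K^h]={\rm e}_i{\rm f}_i\,{\rm d}(L^{h_i}|K^h)$ with the same ${\rm e}_i=(v_iL:vK)$ and ${\rm f}_i=[Lv_i:Kv]$ that appear in (\ref{fundineq}). Thus $[L:K]=\sum_i{\rm e}_i{\rm f}_i\,{\rm d}(L^{h_i}|K^h)$, and comparison with (\ref{fundineq}) shows that equality holds for $L|K$ if and only if ${\rm d}(L^{h_i}|K^h)=1$ for every $i$.

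Next I would set up the correspondence between finite extensions of $K^h$ and the fields $L^{h_i}$. Every finite $M|K^h$ equals $L.K^h$ for $L=K(\alpha_1,\dots,\alpha_r)$, where the $\alpha_j$ generate $M$ over $K^h$ and are algebraic over $K$ because $K^h|K$ is algebraic; restricting $\bar v$ to $L$ gives one of the $v_i$, and then $M=L^{h_i}$. Combined with the previous step, ``$(K,v)$ is defectless'' is equivalent to ``${\rm d}(L^{h_i}|K^h)=1$ for all finite $L|K$ and all $i$'', which is exactly ``${\rm d}(M|K^h)=1$ for every finite $M|K^h$'', i.e.\ ``$K^h$ is defectless'' (for $K^h$ henselian ${\rm g}=1$, so defectlessness means $[M:K^h]={\rm e}{\rm f}$). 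For separably defectless fields I would note that this correspondence respects separability: if $L|K$ is separable then so is $L^{h_i}|K^h$, and conversely a finite separable $M|K^h$ can be written $K(\alpha).K^h$ with $\alpha$ separable over $K^h$, hence over $K$ since $K^h|K$ is separable. The same argument restricted to separable extensions then yields the statement for separably defectless fields.

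The purely inseparable case is where the tensor-product computation fails, since $L\otimes_K K^h$ need not be reduced. Here each purely inseparable $L|K$ has a unique extension of $v$ (${\rm g}=1$), so $L^{h}=L.K^h$; as $L|K$ is purely inseparable while $K^h|K$ is separable, $L$ and $K^h$ are linearly disjoint over $K$, giving $[L.K^h:K^h]=[L:K]$, and running Ostrowski for $L.K^h|K^h$ and for $L|K$ with the immediacy matching of ${\rm e},{\rm f}$ yields ${\rm d}(L.K^h|K^h)={\rm d}(L|K)$. By multiplicativity of the defect in towers, an immediate consequence of (\ref{LoO}), inseparable defectlessness of $(K,v)$ is equivalent to defectlessness of every $K^{1/p^m}|K$, and likewise over $K^h$; transporting these requires the identity $(K^h)^{1/p^m}=K^{1/p^m}.K^h$, which is the one point I would treat with care and the main obstacle: it rests on the fact that a $p$-basis of $K$ remains a $p$-basis of the separable-algebraic extension $K^h$, i.e.\ $[K^h:(K^h)^p]=[K:K^p]$, so that $[(K^h)^{1/p^m}:K^h]=[K^{1/p^m}:K]$ and the two fields coincide. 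Finally, for the full ``defectless'' equivalence one needs the relation $[L:K]=\sum_i[L^{h_i}:K^h]$ for \emph{arbitrary} finite $L|K$; I would obtain this by inserting a maximal separable subextension $L_0$, applying the separable relation to $L_0|K$ and the purely inseparable step to $L|L_0$ via transitivity of henselization, and combining them — noting in particular that one cannot merely conclude defectlessness from separable plus inseparable defectlessness. With these pieces the three equivalences follow uniformly.
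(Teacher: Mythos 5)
The paper itself contains no proof of Theorem~\ref{dhd}: it refers the reader to [Ku9] and remarks that a partial proof was already given in [En], so there is no in-paper argument to compare yours against; I can only judge your proposal on its own merits, and it is correct, following the classical route of those references. The chain you set up is sound: the splitting $L\otimes_K K^h\cong\prod_i L^{h_i}$ for separable $L|K$, the identity $L^h=L.K^h$ for algebraic extensions with compatible valuations, the matching of ${\rm e}_i$ and ${\rm f}_i$ through the immediate extensions $K^h|K$ and $L^{h_i}|L$, and hence $[L:K]=\sum_i {\rm e}_i{\rm f}_i\,{\rm d}(L^{h_i}|K^h)$, which settles the separable case; linear disjointness of purely inseparable from separable extensions gives $[L.K^h:K^h]=[L:K]$ and ${\rm d}(L.K^h|K^h)={\rm d}(L|K)$ for purely inseparable $L|K$; and your insertion of the maximal separable subextension $L_0$ to get $[L:K]=\sum_i[L^{h_i}:K^h]$ for \emph{arbitrary} finite $L|K$ is exactly the right move. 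In particular, your caution is well placed: assembling ``defectless'' from ``separably defectless plus inseparably defectless'' is, for henselian fields of positive characteristic, essentially Theorem~\ref{si=d} of this paper, which lies downstream of Theorem~\ref{dhd} (its proof uses Corollary~\ref{ed}), so that shortcut would be circular here; proving the degree formula directly avoids this entirely. One small repair in the inseparable case: the degree count $[(K^h)^{1/p^m}:K^h]=[K^{1/p^m}:K]$ only forces the field identity $(K^h)^{1/p^m}=K^{1/p^m}.K^h$ when the $p$-degree is finite. In general you should use the $p$-basis fact you quote directly: if $B$ is a $p$-basis of $K$, hence of the separable-algebraic extension $K^h$, then $(K^h)^{1/p^m}=K^h(b^{1/p^m}\mid b\in B)=K^{1/p^m}.K^h$; equivalently, one can use that $E=K.E^p$ for every separable algebraic extension $E|K$ in characteristic $p$, since $E|K.E^p$ is both separable and purely inseparable. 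With that adjustment your argument is complete.
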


A valued field is called \bfind{algebraically complete} if it is
henselian and defectless.

\par\smallskip
For various reasons (e.g., local uniformization in positive
characteristic [Kn-Ku1], [Kn-Ku2], model theory of valued fields [Ku6])
it is necessary to study the structure of defect extensions. A
ramification theoretic method that was used frequently by S.~Abhyankar
and that is also employed in [Ku4] is to consider the part of an
extension $(L|K,v)$ that ``lies above'' its ramification field. We can
reformulate this in the following way. We let $K^r$ denote the
\bfind{absolute ramification field} of $K$, i.e., the ramification field
of the extension $K\sep|K$ with respect to a fixed extension of $v$ to
the separable-algebraic closure $K\sep$ of $K$. Then we consider the
extension $L.K^r|K^r$. This extension has the same defect as $L|K$ (cf.\
Proposition~\ref{dlta} below). On the other hand, the Galois group of
$K\sep|K^r$ is a pro-$p$-group (cf.\ [En] or [N]). Consequently,
$L.K^r|K^r$ is a tower of normal extensions $L_1|L_2$ of degree $p$
(cf.\ Lemma~\ref{tow} in Section~\ref{sectddie}). If our fields have
characteristic $p$ and if $L_1|L_2$ is separable, then it is an
\bfind{Artin-Schreier extension}, that is, it is generated by a root
$\vartheta$ of a polynomial of the form $X^p-X-a$ with $a\in L_2$ (see,
e.g., [L]); in this case, $\vartheta$ is called an \bfind{Artin-Schreier
generator} of $L_1|L_2$. Such extensions are always normal and hence
Galois since the other roots of $X^p-X-a$ are $\vartheta+1,\ldots,
\vartheta+p-1$. This follows from the fact that $0,1,\ldots,p-1$ are all
roots of the Artin-Schreier polynomial $\wp(X) =X^p-X$ and that this
polynomial is additive. A polynomial $f\in K[X]$ is called
\bfind{additive} if $f(b+c)=f(b)+f(c)$ for all $b,c$ in every
extension field of $K$ (cf.\ [L], [Ku3]).

Also in the mixed characteristic case where $\chara L_1=0$ and $\chara
L_1v=p$, we will call $L_1|L_2$ an Artin-Schreier extension with
Artin-Schreier generator $\vartheta$ if $[L_1:L_2]=p$,
$L_1=L_2(\vartheta)$ and $\vartheta^p-\vartheta\in L_2$.

Because of the representative role of Artin-Schreier extensions that we
just pointed out, it is interesting to know more about their structure,
in particular when they have non-trivial defect. In this paper, we will
classify Artin-Schreier defect extensions according to the question
whether they are in some sense similar to immediate purely inseparable
extensions. Then we study the relation between the two different types
of extensions in our classification. In Section~\ref{sectexamp} we will
give several examples of Artin-Schreier defect extensions of both types.

The defect is a bad phenomenon as it destroys the tight connection
between valued fields and their invariants, value group and residue
field. Therefore, it is desirable to work with defectless fields. As the
notion of ``defectless field'' plays an important role in several
applications,
%
%
it is helpful to have equivalent characterizations. For instance,
when it comes to constructing defectless fields, one would like to have
criteria that could (more or less) easily be checked. Our results on
Artin-Schreier defect extensions will enable us to break down the
property ``defectless field'' into weaker maximality properties of
valued fields.

A valued field $(K,v)$ is called \bfind{algebraically maximal} if it has
no proper immediate algebraic extensions, and
\bfind{separable-algebraically maximal} if it has no proper immediate
separable-algebraic extensions. Note that a separable-algebraically
maximal valued field is henselian, because the henselization is an
immediate separable-algebraic extension. In Section~\ref{sectth1}, we
will prove the following useful characterization of the property
``defectless field'':
\begin{theorem}                             \label{si=d}
A valued field of positive characteristic is henselian and defectless
if and only if it is separable-algebraically maximal and inseparably
defectless.
\end{theorem}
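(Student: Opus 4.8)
The plan is to prove the two implications separately, with essentially all of the work in the backward direction. For the forward direction, assume $(K,v)$ is henselian and defectless. Inseparable defectlessness is then immediate, being merely the restriction of the defining condition to finite purely inseparable extensions. For separable-algebraic maximality, suppose for contradiction that there were a proper immediate separable-algebraic extension of $(K,v)$; picking any element $\vartheta$ of it outside $K$ produces a finite, proper, separable subextension $(K(\vartheta)|K,v)$ which is again immediate, since its value group and residue field are trapped between those of $K$ and of the ambient immediate extension. As $K$ is henselian the extension of $v$ is unique, so the Lemma of Ostrowski~(\ref{LoO}) forces $[K(\vartheta):K]={\rm d}(K(\vartheta)|K)>1$, contradicting defectlessness.

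For the converse, assume $(K,v)$ is separable-algebraically maximal and inseparably defectless. Henselianity is free, since a separable-algebraically maximal field is henselian. It remains to prove defectlessness; using henselianity (so that $v$ extends uniquely and the defect is well defined through~(\ref{LoO})), I must show that no finite extension $(L|K,v)$ has non-trivial defect. Writing $L_0$ for the separable closure of $K$ in $L$, the defect factors as ${\rm d}(L|K)={\rm d}(L_0|K)\cdot{\rm d}(L|L_0)$, with $L_0|K$ separable and $L|L_0$ purely inseparable, so it suffices to kill each factor.

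The idea for each factor is to trace a hypothetical non-trivial defect down to a single normal degree-$p$ extension and derive a contradiction there: an Artin-Schreier (hence immediate) such extension contradicts separable-algebraic maximality, while a purely inseparable immediate one contradicts inseparable defectlessness. For the separable factor I would pass to the absolute ramification field, where by Proposition~\ref{dlta} the defect is unchanged and by Lemma~\ref{tow} (together with the fact that $\Gal(K\sep|K^r)$ is pro-$p$) the extension becomes a tower of normal degree-$p$, i.e.\ Artin-Schreier, steps; multiplicativity of the defect then isolates at least one Artin-Schreier defect step. The purely inseparable factor $L|L_0$ similarly breaks into a tower of degree-$p$ purely inseparable steps, one of which must be immediate if ${\rm d}(L|L_0)>1$.

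The main obstacle---and the precise reason the classification of Artin-Schreier defect extensions is needed---is that these degree-$p$ defect steps a priori sit over \emph{intermediate} fields rather than over $K$ itself, and neither maximality nor defectlessness descends along arbitrary extensions. Pushing the obstruction back down to $K$ is where I would use the classification: inseparable defectlessness of $K$ forces every Artin-Schreier defect extension occurring here to be of the ``independent'' type (the ``dependent'' type being exactly the one coupled to a purely inseparable defect extension, which inseparable defectlessness rules out), and independent Artin-Schreier defects are precisely those governed by separable-algebraic maximality. Making this descent rigorous---most cleanly by an induction on $[L:K]$ in which a degree-$p$ \emph{defectless} step is shown to inherit both hypotheses, so that a minimal defect counterexample is impossible---is the delicate technical heart of the proof, and it is here that the transfer of inseparable defectlessness across the separable layer $L_0|K$ must be handled with care.
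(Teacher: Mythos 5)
Your forward direction is fine, and your high-level strategy for the converse is the paper's own: reduce to Artin-Schreier towers by ramification theory (Proposition~\ref{dlta}, Lemma~\ref{tow}), use the classification to kill dependent Artin-Schreier defect steps by inseparable defectlessness and independent ones by separable-algebraic maximality, and induct over the intermediate fields. The genuine gap is in the step you yourself flag as the heart: your induction needs a degree-$p$ defectless extension of $K$ to \emph{inherit both hypotheses}, and for separable-algebraic maximality no such inheritance is available. Maximality properties do not pass to finite defectless extensions in general: by the corollary following Lemma~\ref{tow}, if every finite defectless extension of an algebraically maximal field were again algebraically maximal, every such field would already be defectless, contradicting Delon's example of an algebraically maximal field that is not defectless. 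Under your two hypotheses the inheritance is true \emph{a posteriori}, but only because it follows from the theorem being proved; there is no direct argument, so taking it as the induction invariant is circular.

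What the paper actually carries through the induction is strictly weaker than your invariant, and establishing it is where all the work sits. Along a finite defectless extension of $K$ one transfers: (a) inseparable defectlessness (Lemma~\ref{l5}, itself nontrivial and proved via Lemma~\ref{l4} and the classification), which by Corollary~\ref{noinsnodep} rules out dependent Artin-Schreier defect extensions of the intermediate field; and (b) the property of admitting no independent Artin-Schreier defect extension, which is Proposition~\ref{l6} --- whose proof requires the persistence machinery entirely absent from your sketch: Lemma~\ref{l2} (independent Artin-Schreier defect extensions of distance $0^-$ persist in algebraically maximal immediate extensions), the coarsening reduction of Lemma~\ref{l3} for distances below $0^-$, and Lemma~\ref{l7} (stability of the maximal-immediate-extension condition under finite defectless extensions). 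Note also that these transfer results hold only for \emph{finite} extensions --- the paper shows via Examples~\ref{examp1} and~\ref{examp4} that the finiteness cannot be dropped --- so your plan to work over the absolute ramification field $K^r$ itself must be replaced by the finite tame extension $N$ furnished by the second part of Lemma~\ref{tow}, as the paper does. Your remaining deviation, splitting off the separable closure at the bottom instead of passing to the normal hull so that the purely inseparable part sits directly over $K$, is harmless given Lemma~\ref{l5}.
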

\n
This characterization has been applied in [Ku1] to construct a
valued field extension of the field $\F_p((t))$ of formal Laurent series
over the field with $p$ elements which is henselian defectless with
value group a $\Z$-group and residue field $\F_p$ but does not satisfy a
certain elementary sentence (involving additive polynomials) that holds
in $\F_p((t))$. This example shows that the axiom system ``henselian
defectless valued field of characteristic $p$ with value group a
$\Z$-group and residue field $\F_p$'' is not complete. Whenever one
wants to construct a henselian defectless field, the problem is to get
it to be defectless. It is easy to make it henselian (just go to the
henselization) or even algebraically maximal (just go to a maximal
immediate algebraic extension). But the latter does not imply that the
field is defectless, as an example given by F.~Delon [D1] shows (see
also [Ku5]). However, in the case of finite $p$-degree, Delon also gave
a handy characterization of inseparably defectless valued fields, see
Theorem~\ref{charinsdl}. (Recall that $d$ is called the
\bfind{$p$-degree}, or \bfind{Ershov invariant}, or \bfind{degree of
imperfection}, of $K$ if $[K:K^p]=p^d$.) Together with the above
theorem, this provides a handy characterization of henselian defectless
fields of characteristic $p$ in the case of finite $p$-degree.

\pars
Take a valued field $(K,v)$ with valuation ring ${\cal O}$. If $f$ is a
polynomial in $n$ variables with coefficients in $K$, then we will say
that $(K,v)$ is \bfind{$K$-extremal with respect to $f$} if
the set
\begin{equation}                            \label{defextr}
v\im_K (f)\;:=\;\{vf(a_1,\ldots,a_n)\mid a_1,\ldots,a_n\in
K\}\>\subseteq vK\cup\{\infty\}
\end{equation}
has a maximum, and we we will say that $(K,v)$ is \bfind{$\cal O$-extremal
with respect to $f$} if the set
\begin{equation}                            \label{defextrO}
v\im_{\cal O} (f)\;:=\;\{vf(a_1,\ldots,a_n)\mid a_1,\ldots,a_n\in
{\cal O}\}\>\subseteq vK\cup\{\infty\}
\end{equation}
has a maximum. The former means that
\[\exists Y_1,\ldots,Y_n \forall X_1,\ldots,X_n:\;
vf(X_1,\ldots,X_n)\leq vf(Y_1,\ldots,Y_n)\]
holds in $(K,v)$. For the latter, one has to build into the sentence the
condition that the $X_i$ and $Y_j$ only run over elements of ${\cal O}$.
It follows that being $K$-extremal or ${\cal O}$-extremal with respect
to $f$ is an elementary property in the language of valued fields with
parameters from $K$. Note that in the first case the maximum is $\infty$
if and only if $f$ admits a zero in $K^n$; in the second case, this
zero has to lie in ${\cal O}^n$. A valued field $(K,v)$ is called
\bfind{extremal} if for all $n\in\N$, it is ${\cal O}$-extremal with
respect to every polynomial $f$ in $n$ variables with coefficients in
$K$. This property can be expressed by a countable scheme of elementary
sentences (quantifying over the coefficients of all possible polynomials
of degree at most $n$ in at most $n$ variables). Hence, it is elementary
in the language of valued fields.

If we would have chosen $K$-extremality for the definition of ``extremal
valued field'' (as Yu.\ Ershov in [Er2]), then we would have obtained
precisely the class of algebraically closed valued fields. Using ${\cal
O}$-extremality instead yields a much more interesting class of valued
fields. See [A--Ku--Pop] for details.

The properties ``algebraically maximal'', ``separable-algebraically
\linebreak
maximal'' and ``inseparably defectless'' are each equivalent to
$K$- or ${\cal O}$-extrema\-lity restricted to certain (elementarily
definable) classes of polynomials. A polynomial is called a
\bfind{$p$-polynomial} if it is of the form ${\cal A}(X)+c$ where
${\cal A}(X)$ is an additive polynomial and $c$ is a constant.
We say that a basis $b_1,\ldots,b_n$ of a valued field extension
$(L|K,v)$ is a \bfind{valuation basis} if for all choices of
$c_1,\ldots,c_n\in K$,
\[v\sum_{i=1}^{n}c_ib_i\;=\;\min_i vc_ib_i\>.\]

In Section~\ref{sectidf}, we prove:
\begin{theorem}                             \label{extrid}
A valued field $K$ of positive characteristic is inseparably defectless
if and only if it is $K$-extremal with respect to every $p$-polynomial
of the form
\begin{equation}                            \label{b-bixp}
b-\sum_{i=1}^{n} b_iX_i^p
\end{equation}
with $n\in\N,\>b,b_1,\ldots,b_n\in K$ such that $b_1,\ldots,b_n$ form a
basis of a finite extension of $K^p$ (inside of $K$). If the value group
$vK$ of $K$ is divisible or a $\Z$-group, then $K$ is inseparably
defectless if and only if it is ${\cal O}$-extremal with respect to
every $p$-polynomial (\ref{b-bixp}) with $n\in\N,\>b,b_1,\ldots,b_n\in
{\cal O}$ such that $b_1,\ldots,b_n$ form a valuation basis of a finite
defectless extension of $K^p$ and $vb_1,\ldots,vb_n$ are smaller than
every positive element of $vK$.
\end{theorem}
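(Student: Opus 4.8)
The plan is to translate the extremality condition into a statement about best approximations inside purely inseparable extensions, via the \emph{Frobenius dictionary}. Since $p$-th powering is additive in characteristic $p$, for $c_1,\dots,c_n\in K$ we have $\sum_i b_ic_i^p=\bigl(\sum_i b_i^{1/p}c_i\bigr)^p$, so that
\[
v\Bigl(b-\sum_{i=1}^n b_ic_i^p\Bigr)=p\cdot v\Bigl(b^{1/p}-\sum_{i=1}^n c_ib_i^{1/p}\Bigr).
\]
Because $b_1,\dots,b_n$ is a basis of a finite extension of $K^p$ inside $K$, applying $x\mapsto x^{1/p}$ shows that $b_1^{1/p},\dots,b_n^{1/p}$ is a $K$-basis of the finite purely inseparable extension $L:=K(b_1^{1/p},\dots,b_n^{1/p})\subseteq K^{1/p}$, and that $L$ runs through \emph{all} finite subextensions of $K^{1/p}|K$ as the data vary, while $b^{1/p}$ runs through all of $K^{1/p}$. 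Thus $K$-extremality with respect to $b-\sum_i b_iX_i^p$ is precisely the assertion that $b^{1/p}$ admits a best approximation in $L$, and I would reduce the whole equivalence to: every finite subextension of $K^{1/p}|K$ is defectless if and only if every $\beta\in K^{1/p}$ has a best approximation in every finite subextension of $K^{1/p}|K$.

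The engine is the standard degree-$p$ fact from pseudo-convergence theory: a purely inseparable extension $F(\beta)|F$ of degree $p$ is defectless exactly when $\beta$ has a best approximation in $F$, and is immediate otherwise. For the forward direction I take $\beta=b^{1/p}$ and $L$; if $\beta\in L$ the supremum is $\infty$ and attained, and otherwise $L(\beta)|K$ is finite purely inseparable, hence defectless by inseparable defectlessness, so by multiplicativity of the defect along $K\subseteq L\subseteq L(\beta)$ the top step $L(\beta)|L$ is defectless, and the degree-$p$ fact over $L$ gives the best approximation. For the converse I induct on $[L:K]$: writing $L=L_0(\beta)$ with $[L:L_0]=p$ and $\beta\in K^{1/p}$, the hypothesis supplies a best approximation of $\beta$ in $L_0$, so $L|L_0$ is defectless while $L_0|K$ is defectless by induction, and multiplicativity yields $L|K$ defectless. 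To upgrade this ``level one'' equivalence to inseparable defectlessness, I would note that every finite purely inseparable $E|K$ lies in a box $K(a_1^{1/p^m},\dots,a_r^{1/p^m})$ and combine Frobenius transport — defectlessness of the finite subextensions of $K^{1/p}|K$ passes to those of $K^{1/p^{j+1}}|K^{1/p^j}$ through the valuation isomorphism $y\mapsto y^{p^j}$ — with multiplicativity along $K\subseteq K^{1/p}\subseteq\cdots$; since the top of a finite defectless tower forces every extension of the base inside it to be defectless, $E|K$ is defectless.

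For the $\mathcal{O}$-extremal refinement the same dictionary turns the statement into the attainment of $\sup v\bigl(b^{1/p}-\sum_i c_ib_i^{1/p}\bigr)$ over $c_i\in\mathcal{O}$, and the extra hypotheses are exactly what is needed to make the constrained optimum coincide with the unconstrained one. Requiring $b_1,\dots,b_n$ to be a valuation basis of a \emph{defectless} finite extension of $K^p$ makes $b_1^{1/p},\dots,b_n^{1/p}$ a valuation basis of the defectless extension $L|K$, so that $v\sum_i c_ib_i^{1/p}=\min_i v(c_ib_i^{1/p})$; and the requirement that each $vb_i$ be below every positive element of $vK$ (which, since $b_i\in\mathcal{O}$, pins $vb_i=0$ when $vK$ is divisible or a $\Z$-group) ensures that any coefficient with $vc_i<0$ can only worsen the approximation, so an optimal tuple may be chosen in $\mathcal{O}$. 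Conversely, under the same dichotomy on $vK$ I would strip the $\mathcal{O}$-restriction to recover the $K$-extremal statement and then conclude inseparable defectlessness as above.

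The hard part will be the $\mathcal{O}$-extremal equivalence: one must show \emph{precisely} that confining the variables to $\mathcal{O}$ costs nothing, a claim that is false without the structural hypotheses and is where the valuation-basis condition, the smallness of the values $vb_i$, and the dichotomy ``$vK$ divisible or a $\Z$-group'' are genuinely used, to locate the optimal coefficients and to exclude pathological value-group behaviour. A secondary delicate point is the passage from the level-one statement to full inseparable defectlessness, where the interplay of Frobenius transport and multiplicativity of the defect must be handled with care when the $p$-degree $[K:K^p]$ is infinite.
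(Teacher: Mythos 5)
Your first equivalence (the $K$-extremality criterion) is correct and follows essentially the same route as the paper: the paper works downstairs in $K|K^p$, reducing via Lemma~\ref{charid1/p} and a tower induction to the statement that $v(b-L)$ has a maximum for every finite subextension $L|K^p$ of $K|K^p$ and every degree-$p$ step $L(b)|L$ inside $K$, with exactly your engine --- the degree-$p$ dichotomy from Theorem~\ref{KT1} and part 1) of Lemma~\ref{ueGp1} --- plus multiplicativity of the defect (Lemma~\ref{md}) and Frobenius transport; your version upstairs in $K^{1/p}|K$ is the mirror image of this under $x\mapsto x^{1/p}$.

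The converse of the $\mathcal{O}$-extremality criterion, however, has a genuine gap, exactly at the place you declared to be the hard part. You read the value condition as pinning $vb_1=\dots=vb_n=0$. Since the values of a valuation basis of $(M|K^p,v)$ must represent all cosets of $vM/vK^p$, every polynomial (\ref{b-bixp}) in your class then comes from a defectless $M|K^p$ with $vM=vK^p$. Now run your induction $L=L_0(\beta)$ when $vK$ is a $\Z$-group with least positive element $\beta_0$: the defectless intermediate field can perfectly well have value group strictly larger than that of the base --- downstairs, take $L=K^p(t)$ with $vt=\beta_0$, so that $vL=vK\supsetneq vK^p$. Any valuation basis of $L|K^p$ consisting of elements of $\mathcal{O}$ must have values representing $0,\beta_0,\dots,(p-1)\beta_0$ modulo $vK^p$, hence at least one value $\geq\beta_0$; so no polynomial of your class is attached to $L$, the hypothesis yields no best approximation in $L$, and the induction step cannot be performed. ``Stripping the $\mathcal{O}$-restriction'' cannot create polynomials the class never contained. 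Nor is this cosmetic: in the extreme case where no proper defectless subextension of $K|K^p$ has value group $vK^p$, your class shrinks to the polynomials $b-X^p$, and extremality with respect to those expresses only that $K$ has no immediate purely inseparable extension of degree $p$, i.e.\ that $K$ is inseparably maximal --- and Section~\ref{sectidf} recalls that inseparably maximal fields need not be inseparably defectless.

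What the paper's proof actually uses, and what you need here, is the weaker normalization $0\leq vb_i<\alpha$ with $\alpha$ the least positive element of $vK^p=pvK$ (so the values $0,\beta_0,\dots,(p-1)\beta_0$ are allowed): the integrality step requires only the implication $vb_i+pvc_i\geq 0\Rightarrow pvc_i\geq 0$, for which ``$vb_i$ smaller than every positive element of $vK^p$'' suffices, and bases so normalized exist for every defectless $L|K^p$ under the dichotomy on $vK$. (Thus the operative condition is relative to $vK^p$, not $vK$; taken literally, as you took it, the printed condition is too strong to carry the converse.) Your sketch also passes silently over two steps the paper performs: scaling $b$ into $\mathcal{O}$ by elements of $(K^p)^\times$, and restricting to defectless $L|K^p$ --- legitimate precisely because of the tower induction --- which is what guarantees that valuation bases exist at all.
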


In Section~\ref{sectamf}, we prove:
\begin{theorem}                             \label{extram}
A valued field $K$ is algebraically maximal if and only if it is
${\cal O}$-extremal with respect to every polynomial in one variable
with coefficients in $K$.
\end{theorem}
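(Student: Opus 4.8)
The plan is to prove Theorem~\ref{extram}: a valued field $K$ is algebraically maximal if and only if it is $\mathcal{O}$-extremal with respect to every polynomial in one variable with coefficients in $K$.

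Let me think about the two directions.

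**Setup and the key idea.** Algebraic maximality means $K$ has no proper immediate algebraic extensions. The connection to extremality must go through the theory of pseudo-Cauchy sequences (pseudo-convergent sequences) à la Kaplansky. The classical theory says: a valued field has an immediate algebraic extension iff there is a pseudo-Cauchy sequence of algebraic type (one whose limit is a root of some polynomial) that has no pseudo-limit in $K$. So algebraic maximality is equivalent to: every pseudo-Cauchy sequence of algebraic type has a pseudo-limit in $K$.

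On the other side, $\mathcal{O}$-extremality with respect to a one-variable polynomial $f$ means the set $v\im_{\mathcal{O}}(f) = \{vf(a) : a \in \mathcal{O}\}$ has a maximum. So being $\mathcal{O}$-extremal w.r.t. all one-variable polynomials means for every $f \in K[X]$, the value $vf(a)$ achieves a maximum as $a$ ranges over $\mathcal{O}$.

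**Direction: extremal $\Rightarrow$ algebraically maximal.** Suppose $K$ is not algebraically maximal. Then there's a proper immediate algebraic extension, hence a pseudo-Cauchy sequence $(a_\nu)$ of algebraic type with no pseudo-limit in $K$. Let $f$ be the minimal polynomial (or an appropriate polynomial) realizing this. The theory of pseudo-limits tells us that $vf(a_\nu)$ strictly increases without a limit being attained — i.e., for every $a \in K$, we can find a better approximant making $vf$ larger. This should translate directly into the failure of $v\im(f)$ to attain a maximum. The main subtlety is passing from $K$-extremality to $\mathcal{O}$-extremality: one must normalize the approximants to lie in $\mathcal{O}$ (scale/translate by an element of $K$), which changes $f$ to another polynomial but preserves the essential structure.

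**Direction: algebraically maximal $\Rightarrow$ extremal.** This is the harder direction and I expect it to be the main obstacle. Given any $f \in K[X]$, I want to show $v\im_{\mathcal{O}}(f)$ attains a maximum. Suppose not: then there is a sequence $(a_\nu)$ in $\mathcal{O}$ with $vf(a_\nu)$ strictly increasing and unbounded-in-the-sense-of-no-max. I would want to extract from this a pseudo-Cauchy sequence of algebraic type. The delicate point is that $vf(a_\nu)$ increasing does not immediately make $(a_\nu)$ itself pseudo-Cauchy; one needs to analyze which root (in an algebraic closure) the $a_\nu$ are approaching, using the factorization of $f$ over $\overline{K}$ and Newton-polygon / Hensel-type arguments to show the approximants cluster around a root that generates an immediate extension. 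This uses that we are in $\mathcal{O}$ so the relevant root is integral, and one invokes henselianity — but note algebraic maximality already implies henselianity, so this is available.

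**Anticipated obstacle.** The real work is the algebraically-maximal $\Rightarrow$ extremal direction, specifically converting a value-increasing sequence for $f$ into a genuine pseudo-Cauchy sequence of algebraic type, and controlling the case where $f$ has several roots (so that the approximants might split among different roots). I expect the proof to reduce to a single root by a compactness/pigeonhole argument on the finitely many roots, then apply Kaplansky's theory to produce an immediate extension, contradicting maximality. The restriction to $\mathcal{O}$ (rather than all of $K$) is what makes the class nontrivial and requires care: one must ensure the relevant approximants and the limiting root stay integral, likely by a preliminary reduction replacing $f$ and rescaling so that the optimal approximation happens inside the valuation ring.
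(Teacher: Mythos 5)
Your sketch of the direction ``algebraically maximal $\Rightarrow$ $\mathcal{O}$-extremal'' is essentially the paper's argument: from a sequence in $\mathcal{O}$ with strictly increasing values $vf(a_\nu)$ one extracts, by a pigeonhole argument over the finitely many roots of $f$ in $\tilde K$ (the paper's Lemma~\ref{vfincrss} uses balls around the roots whose radii run over the finite set of pairwise root distances), a pseudo Cauchy sequence of algebraic type without limit in $K$, and then Kaplansky's Theorem~3 produces a proper immediate algebraic extension. Note, though, that this step needs neither henselianity nor Newton polygons, contrary to what you suggest; your invocation of henselianity there is harmless but superfluous.

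The genuine gap is in your ``extremal $\Rightarrow$ algebraically maximal'' direction. You take $a$ in a proper immediate algebraic extension, a pseudo Cauchy sequence $(a_\nu)$ in $K$ with limit $a$ and no limit in $K$, let $f$ be the minimal polynomial of $a$, and assert that the increasing values $vf(a_\nu)$ show that $v\im_K(f)$ has no maximum. That inference is valid only when the extension of $v$ from $K$ to $K(a)$ is \emph{unique} (the paper's Lemma~\ref{apCs}): uniqueness forces all conjugate roots to be equidistant from every $c\in K$, giving $vf(c)=[K(a):K]\cdot v(a-c)$, so that a value of $f$ exceeding all $vf(a_\nu)$ would make $c$ a limit of the sequence, a contradiction. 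Without uniqueness, a large value $vf(c)=\sum_i v(c-a_i)$ can come from proximity of $c$ to a \emph{different} conjugate root, and no contradiction results. But in this direction henselianity is not available: it is part of what must be proved (algebraically maximal fields are henselian, so $\mathcal{O}$-extremality must imply henselianity for the theorem to be true), and one cannot evade the problem by choosing $a$ in the henselization, since for every $a\in K^h\setminus K$ the extension of $v$ to $K(a)$ is automatically non-unique. The paper closes exactly this hole with a separate, substantial result, Proposition~\ref{extrhens} ($K$- or $\mathcal{O}$-extremality with respect to all separable polynomials in one variable implies henselian), proved by passing to a finitely generated subfield, decomposing the valuation into a composite of rank-one valuations, and running the Newton algorithm to manufacture a separable polynomial whose value set is cofinal in a convex subgroup and hence has no maximum. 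Nothing in your proposal can play this role; indeed you locate the ``main subtlety'' of this direction in the $K$-versus-$\mathcal{O}$ normalization, which the paper dispatches with a one-line scaling trick (Corollary~\ref{corKO}). This missing henselianity argument is precisely one of the gaps the paper attributes to Delon's original proof.
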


\begin{theorem}                             \label{extramh}
A henselian valued field $K$ of positive characteristic is algebraically
maximal if and only if it is ${\cal O}$-extremal with respect to every
$p$-polynomial in one variable with coefficients in $K$.
\end{theorem}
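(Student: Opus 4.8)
The plan is to split the equivalence and draw the two implications from different sources. The forward implication is essentially free: Theorem~\ref{extram} already shows that an algebraically maximal field is ${\cal O}$-extremal with respect to \emph{every} polynomial in one variable, and the one-variable $p$-polynomials form a subclass of these. (Neither henselianity nor $\chara K=p$ enters here.) So I concentrate on the converse, which I prove in contrapositive form: assuming $(K,v)$ is henselian of characteristic $p$ but \emph{not} algebraically maximal, I produce a one-variable $p$-polynomial for which ${\cal O}$-extremality fails.

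First I would split according to separable-algebraic maximality, using the structure theory of immediate extensions in characteristic $p$ recalled in the introduction. If $K$ is not separable-algebraically maximal, it admits a proper immediate Artin-Schreier extension $K(\vartheta)$ with $\wp(\vartheta)=\vartheta^p-\vartheta=a\in K$. If $K$ is separable-algebraically maximal but not algebraically maximal, then the separable closure of $K$ inside a minimal proper immediate algebraic extension must coincide with $K$, so that extension is purely inseparable of degree $p$, say $K(\vartheta)$ with $\vartheta^p=a\in K\setminus K^p$. In both cases the natural witness is a one-variable $p$-polynomial, namely $a-\wp(X)$ (additive part $-\wp(X)$, constant $a$) respectively $a-X^p$. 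Moreover, since $\vartheta$ lies in a proper immediate extension, the set $\{v(\vartheta-y)\mid y\in K\}$ has no maximum; in the inseparable case this transfers at once to $v\im_K(a-X^p)$ because $v(a-y^p)=v(\vartheta-y)^p=p\,v(\vartheta-y)$, and in the Artin-Schreier case to $v\im_K(a-\wp(X))$ via $a-\wp(y)=\wp(\vartheta-y)$. Thus $K$-extremality already fails for these polynomials.

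The main obstacle is the passage from $K$-extremality to ${\cal O}$-extremality, since near-optimal approximants of $\vartheta$ may have negative value and hence fall outside ${\cal O}$. I would resolve this by rescaling: for $d\in K^\times$ the polynomials $a-\wp(dX)$ and $a-d^pX^p$ are again one-variable $p$-polynomials, because the composite ${\cal A}(dX)$ of an additive polynomial with a scaling is additive. Writing $\gamma=v\vartheta\in vK$, the key observation is that the near-optimal approximants are forced to have value exactly $\gamma$: if $v(\vartheta-y)>\gamma=v\vartheta$ then $vy=\gamma$, and the values of $\{v(\vartheta-y)\}$ exceeding $\gamma$ are cofinal (otherwise $y=0$ would give the maximum $\gamma$, contradicting no-maximality). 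Hence, choosing $d$ with $vd\le\gamma$, every $y$ realizing a sufficiently large value of the image set lies in $d{\cal O}$, so that $v\im_{\cal O}(a-d^pX^p)$, respectively $v\im_{\cal O}(a-\wp(dX))$, is cofinal in the corresponding $K$-image set and therefore still has no maximum. The value $\infty$ cannot occur, since it would force $a\in K^p$ (inseparable case) or $\vartheta\in K$ (Artin-Schreier case), each impossible. In the Artin-Schreier case there is one harmless subtlety: an approximant with $v(\vartheta-y)=0$ whose residue lies in $\F_p$ must first be corrected by subtracting the corresponding $j\in\F_p$, which leaves $v(a-\wp(y))$ unchanged because $\wp(j)=0$; afterwards $v(\vartheta-y)>0$ and the argument proceeds as above.

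This exhibits a one-variable $p$-polynomial for which ${\cal O}$-extremality fails whenever $K$ is not algebraically maximal, completing the contrapositive. I expect the rescaling step, and specifically the bookkeeping that the scaled image set remains cofinal without acquiring a maximum, to be the only genuinely delicate point. Alternatively, one can reach the same conclusion more structurally by showing that ${\cal O}$-extremality with respect to these $p$-polynomials forces both separable-algebraic maximality and inseparable defectlessness, and then invoking Theorem~\ref{si=d} to obtain that the henselian field is defectless, hence algebraically maximal.
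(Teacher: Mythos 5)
Your forward direction, your purely inseparable case, and your rescaling device (which is essentially the paper's Corollary~\ref{corKO}) are all sound, and your overall reduction — quote Theorem~\ref{extram}, then exhibit a one-variable $p$-polynomial witnessing failure of extremality when $K$ is henselian but not algebraically maximal — is the same as the paper's. The genuine gap is your first case: the claim that a henselian field of characteristic $p$ which is not separable-algebraically maximal admits a proper \emph{immediate Artin-Schreier extension of $K$ itself}. The ``structure theory recalled in the introduction'' does not give this: Lemma~\ref{tow} decomposes $L.K^r|K^r$ (or $L.N|N$ for a finite tame extension $N$ of $K$) into Artin-Schreier steps, so the Artin-Schreier generators and their constants live over $K^r$, respectively over intermediate fields containing $N$ — not over $K$. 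A minimal proper immediate separable extension of $K$ (it exists, and is purely wild since $K$ is henselian) is not forced by anything you cite to be normal, nor even to have degree $p$: it could be non-normal of degree $p$, or of degree $p^n$ with $n>1$ and no proper subextensions, and in neither case do you get an Artin-Schreier polynomial over $K$. This is exactly the point where the paper invokes Theorem~13 of [Ku3]: a \emph{minimal} purely wild extension of a henselian field is generated by a root of some one-variable $p$-polynomial — in general not an Artin-Schreier polynomial and not of degree $p$ — and then applies Lemma~\ref{apCs} (as in Proposition~\ref{propersh}) and Corollary~\ref{corKO} to that polynomial. So your separable case is not a simplification of the paper's argument; it silently assumes the hardest step, and as written has no proof.

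Your fallback route fails outright. It would require ``${\cal O}$-extremal with respect to every one-variable $p$-polynomial $\Rightarrow$ inseparably defectless''. But by Theorem~\ref{extram} every algebraically maximal field is ${\cal O}$-extremal with respect to \emph{all} one-variable polynomials, while Delon's example cited in the introduction is an algebraically maximal field of characteristic $p$ that is not defectless; being separable-algebraically maximal and henselian, it cannot be inseparably defectless by Theorem~\ref{si=d}. So that implication is false, and Theorem~\ref{si=d} cannot be reached from one-variable data. Inseparable defectlessness is genuinely a many-variable condition: compare Theorem~\ref{extrid}, whose witnesses $b-\sum_{i=1}^{n} b_iX_i^p$ need $n$ variables because one must control degree-$p$ extensions $L(b)|L$ over arbitrary \emph{finite} extensions $L$ of $K^p$, not only over $K^p$ itself.
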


In Section~\ref{sectsamf}, we prove:
\begin{theorem}                             \label{extrsam}
A valued field $K$ is separable-algebraically maximal if and only if it
is ${\cal O}$-extremal with respect to every separable polynomial in one
variable with coefficients in $K$.
\end{theorem}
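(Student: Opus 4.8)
The plan is to prove both implications through the theory of pseudo-Cauchy sequences, in close analogy with Theorem~\ref{extram}, the only new ingredient being that separability must be tracked at every step. The governing principle is a dictionary: proper immediate separable-algebraic extensions of $(K,v)$ correspond to pseudo-Cauchy sequences in $K$ of algebraic type whose associated minimal polynomial is separable and which have no limit in $K$. Throughout I may assume $(K,v)$ henselian, since separable-algebraic maximality forces henselity (the henselization is an immediate separable-algebraic extension), and since $\mathcal{O}$-extremality with respect to separable polynomials transfers to and from the henselization by the same argument underlying Theorem~\ref{dhd}.

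For the direction ``$\mathcal{O}$-extremal $\Rightarrow$ separable-algebraically maximal'' I argue by contraposition. Suppose $(K,v)$ has a proper immediate separable-algebraic extension; choosing an element $z$ in it not lying in $K$ and scaling $z$ by a suitable element of $K$ (which preserves both separability and immediacy) I may assume $vz=0$. The minimal polynomial $f$ of $z$ over $K$ is then separable. Because the extension is immediate, $z$ is the limit of a pseudo-Cauchy sequence $(a_\nu)$ in $K$ with $v(z-a_\nu)\to\infty$, and for large $\nu$ one has $va_\nu=0$, so $a_\nu\in\mathcal{O}$. Writing $f=\prod_i(X-z_i)$ with the $z_i$ pairwise distinct (here separability is essential), one computes $vf(a_\nu)=v(a_\nu-z)+\sum_{i\ge 2}v(a_\nu-z_i)\to\infty$, while $f$ has no zero in $K$; hence $v\im_{\cal O}(f)$ contains arbitrarily large values but no maximum, so $(K,v)$ fails to be $\mathcal{O}$-extremal with respect to the separable polynomial $f$.

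For the converse, again by contraposition, suppose $(K,v)$ is not $\mathcal{O}$-extremal with respect to some separable polynomial, and choose such a polynomial $f$ of least degree. Non-extremality supplies a sequence $(a_\nu)$ in $\mathcal{O}$ with $vf(a_\nu)$ strictly increasing and cofinal in a supremum that is not attained. The technical core is to refine $(a_\nu)$ to a genuine pseudo-Cauchy sequence; by the same pseudo-Cauchy sequence analysis that proves Theorem~\ref{extram}, a Taylor expansion of $f$ at the $a_\nu$ together with the minimality of $\deg f$ shows that $v(a_{\nu+1}-a_\nu)$ is eventually strictly increasing, so that $(a_\nu)$ is pseudo-Cauchy of algebraic type with $f$ as its minimal polynomial. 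For such a sequence the value of the minimal polynomial tends to $\infty$; thus the supremum equals $\infty$, and the limit $z$ of $(a_\nu)$ in an immediate algebraic extension is a zero of $f$. Since $f$ is separable, $z$ is separable over $K$, and $K(z)|K$ is a proper immediate separable-algebraic extension, contradicting separable-algebraic maximality.

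I expect the main obstacle to be precisely this refinement step in the converse: extracting from the bare failure of $\mathcal{O}$-extremality a pseudo-Cauchy sequence of algebraic type whose minimal polynomial is the given separable $f$, thereby ruling out the a priori possibility that $vf(a_\nu)$ increases only to a finite, unattained bound. The minimality of $\deg f$ is what forces the associated minimal polynomial to be $f$ itself and hence forces the supremum to be $\infty$; making this rigorous, with the attendant bookkeeping about the henselization and about values lying in $\mathcal{O}$, is where the real work lies, whereas the transfer of separability to the limit is automatic once $z$ is exhibited as a zero of the separable polynomial $f$.
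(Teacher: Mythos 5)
Both directions of your proposal contain genuine gaps, and in each case the missing step is exactly where the paper has to do real work. In the direction ``${\cal O}$-extremal $\Rightarrow$ separable-algebraically maximal'', your opening reduction ``I may assume $(K,v)$ henselian'' is not available: if $K$ is not henselian, then $K$ is automatically not separable-algebraically maximal (the henselization is a proper immediate separable-algebraic extension), so in the contrapositive you must prove that a \emph{non-henselian} field fails ${\cal O}$-extremality for some separable polynomial. That is Proposition~\ref{extrhens} of the paper, a substantial result proved via Newton approximation after decomposing into rank-one pieces; no transfer to the henselization in the style of Theorem~\ref{dhd} yields it, and even if extremality did pass to $K^h$, concluding that $K^h$ is separable-algebraically maximal says nothing about $K$ unless $K=K^h$. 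Moreover, your key computation is false: in an immediate extension the values $v(z-a_\nu)$ are strictly increasing but in general bounded above in $vK$ (for an Artin-Schreier defect extension one has, up to the shift coming from your normalization $vz=0$, $\dist(z,K)\leq 0^-$), so $vf(a_\nu)\rightarrow\infty$ fails, and ``$f$ has no zero in $K$, hence $v\im_{\cal O}(f)$ has no maximum'' is a non sequitur. What must be shown is that $(vf(a_\nu))_\nu$ is \emph{cofinal} in $v\im_K(f)$; this is Lemma~\ref{apCs}, whose proof needs the uniqueness of the extension of $v$ to $K(z)$ (hence henselianity) to get $vf(c)=nv(z-c)$ for all $c\in K$ --- a hypothesis that can genuinely fail for subextensions of $K^h|K$. (Your device of rescaling $z$ so that $vz=0$, in place of the paper's Corollary~\ref{corKO}, is fine; it is not where the problem lies.)

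In the converse direction the gap is more serious. You defer the refinement of $(a_\nu)$ to a pseudo-Cauchy sequence of algebraic type --- precisely the step where Delon's proof had gaps, fixed in the paper by the ball argument of Lemma~\ref{vfincrss} and by Ershov via Ramsey theory --- but even granting it, your claim that minimality of $\deg f$ among \emph{separable} polynomials forces $f$ to be the Kaplansky-minimal polynomial of the sequence is unjustified: the polynomial of least degree whose values eventually increase along the sequence may be \emph{inseparable} of smaller degree (separability is not inherited by derivatives or by smaller-degree witnesses, so your minimal choice controls nothing here), and then Theorem~3 of [Ka] only produces a purely inseparable immediate extension, which contradicts nothing, since separable-algebraically maximal fields may well admit such extensions. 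Also, ``the value of the minimal polynomial tends to $\infty$, thus the supremum equals $\infty$'' is false: along a pseudo-Cauchy sequence of algebraic type these values increase but need not be cofinal in $vK\cup\{\infty\}$. The paper closes exactly this hole with Corollary~\ref{samimincomp}: in a separable-algebraically maximal field every immediate algebraic extension is purely inseparable \emph{and lies in the completion} --- this rests on the dependent Artin-Schreier defect extension construction of Proposition~\ref{ipie} --- whence every pseudo-Cauchy sequence of algebraic type without limit in $K$ has breadth $\{0\}$ and a unique limit, purely inseparable over $K$. Combined with the guarantee of Lemma~\ref{vfincrss} that a root of $f$ occurs as a limit, that root is simultaneously separable and purely inseparable over $K$, hence lies in $K$, a contradiction. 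Your plan contains no substitute for this use of the paper's Artin-Schreier classification, and degree-minimality alone cannot supply one.
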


\begin{theorem}                             \label{extrsamh}
A henselian valued field $K$ of positive characteristic is
separable-algebraically maximal if and only if it is ${\cal O}$-extremal
with respect to every separable $p$-polynomial in one variable with
coefficients in $K$.
\end{theorem}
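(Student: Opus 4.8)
The implication from separable-algebraic maximality to ${\cal O}$-extremality is immediate from Theorem~\ref{extrsam}: a separable $p$-polynomial is in particular a separable polynomial, so a separable-algebraically maximal field, being ${\cal O}$-extremal with respect to every separable polynomial in one variable, is a fortiori ${\cal O}$-extremal with respect to every separable $p$-polynomial. (This direction uses neither henselianity nor positive characteristic.) The whole point of the theorem is the converse, and I would prove it by contraposition: assuming that the henselian field $K$ of characteristic $p$ is not separable-algebraically maximal, I would exhibit a single separable $p$-polynomial in one variable witnessing the failure of ${\cal O}$-extremality. I may assume $vK\neq\{0\}$, since under a trivial valuation there are no proper immediate extensions at all.

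Assume then that $(K,v)$ has a proper immediate separable-algebraic extension, and choose $z$ in it, $z\notin K$, with $[K(z):K]$ minimal. Then $K(z)|K$ has no proper intermediate field, so $[K(z):K]$ is prime; being immediate over the henselian field $K$, it is a defect extension, so by the Lemma of Ostrowski (\ref{LoO}) its degree is exactly $p$. As $K(z)|K$ is moreover separable, it is an Artin-Schreier extension, so after replacing $z$ by a suitable generator $\vartheta$ we have $\vartheta^p-\vartheta=a\in K$. Now fix $d\in K$ with $vd$ so large that $\psi:=d\vartheta$ has positive value and that the pseudo-limit $\psi$ of the induced pseudo-Cauchy sequence can be approximated within ${\cal O}$; then $\psi$ satisfies the separable $p$-polynomial $g(X):=X^p-d^{p-1}X-d^pa$, whose additive part ${\cal A}(X)=X^p-d^{p-1}X$ has nonzero linear coefficient, and $K(\psi)=K(\vartheta)$ is immediate. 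Choosing a pseudo-Cauchy sequence $(b_\nu)$ in ${\cal O}$ with $v(\psi-b_\nu)$ increasing cofinally, additivity of ${\cal A}$ gives $vg(b_\nu)=v{\cal A}(b_\nu-\psi)\to\infty$. Since the roots of $g$ are $\psi+cd$ with $c$ ranging over the prime field, and none of these lies in $K$ (as $\vartheta\notin K$), $g$ has no zero in ${\cal O}$; hence $\sup v\im_{\cal O}(g)=\infty$ is not attained and $K$ fails to be ${\cal O}$-extremal with respect to the separable $p$-polynomial $g$, as required.

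The delicate point, and the one where both hypotheses are genuinely used, is the assertion that the minimal immediate separable extension $K(z)|K$ is Artin-Schreier over $K$ itself, rather than over some auxiliary intermediate or ramification field. I would establish this using the pro-$p$ structure of $\Gal(K\sep|K^r)$ together with Lemma~\ref{tow} and the invariance of the defect on passing to the absolute ramification field (Proposition~\ref{dlta}), which present $L.K^r|K^r$ as an Artin-Schreier extension; minimality of the degree then forces $K(z)|K$ to be a single Artin-Schreier step. Alternatively one can argue valuation-theoretically, via Kaplansky's theory, that for a pseudo-Cauchy sequence of algebraic type the minimal polynomial may be taken so that all its roots are again pseudo-limits, whence they lie in $K(z)$ and the extension is normal. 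By contrast, the replacement of $K$-extremality by ${\cal O}$-extremality is not an obstacle: the scaling $\vartheta\mapsto d\vartheta$ above converts the Artin-Schreier polynomial into a separable $p$-polynomial with a root of positive value, so the witnessing sequence can be taken inside the valuation ring while remaining within the allowed class of polynomials.
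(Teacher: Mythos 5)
Your forward direction (quoting Theorem~\ref{extrsam}) and your scaling device $\vartheta\mapsto d\vartheta$ for passing from $K$-extremality to ${\cal O}$-extremality are sound and parallel the paper's own strategy (cf.\ Corollary~\ref{corKO}). The genuine gap sits exactly at the point you yourself flag as delicate, and neither of your two proposed repairs closes it. Minimality of $[K(z):K]$ does give that $K(z)|K$ has no proper subextension, but the inference ``no proper intermediate field, so $[K(z):K]$ is prime'' is false as field theory: a quartic extension whose Galois closure has group $S_4$ has no proper intermediate fields, since $S_3$ is maximal in $S_4$. Nothing in the valuation-theoretic setting rescues this: a minimal immediate separable-algebraic extension of a henselian field has degree $p^\nu$ by the Lemma of Ostrowski, but $\nu$ need not be $1$. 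Moreover, even when the degree is $p$, ``separable of degree $p$ in characteristic $p$'' does not imply ``Artin-Schreier'': that requires normality, and a degree-$p$ immediate separable extension of a henselian field need not be normal (its Galois closure has the form $L.T$ with $T|K$ tame cyclic of degree dividing $p-1$, which can be nontrivial). Your first repair --- that Lemma~\ref{tow} and Proposition~\ref{dlta} present $L.K^r|K^r$ as a tower of Artin-Schreier steps, so ``minimality forces $K(z)|K$ to be a single Artin-Schreier step'' --- fails because that tower lives over $K^r$ and does not descend: its intermediate fields do not cut out subextensions of $L|K$, and an Artin-Schreier generator of $L.K^r|K^r$ need not lie in $L$, let alone satisfy $\wp(\vartheta)\in K$. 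Your second repair also fails: distinct roots of the minimal polynomial that are limits of the same pseudo Cauchy sequence generate conjugate but in general distinct fields, so they need not lie in $K(z)$, and no normality follows.

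This is precisely where the paper does something different. It never claims the minimal extension is Artin-Schreier, nor even of degree $p$; instead it invokes Theorem~13 of [Ku3], which asserts that a minimal purely wild extension of a henselian field is generated by a root of a $p$-polynomial ${\cal A}(X)+c$ --- of degree $p^\nu$, not necessarily normal --- and then runs the value-set argument (Lemma~\ref{apCs} as used in Proposition~\ref{propersh}, followed by Corollary~\ref{corKO}) on that $p$-polynomial; for Theorem~\ref{extrsamh} the immediate extension is taken separable, so this $p$-polynomial is separable. Without that input, or an actual proof that minimal immediate separable extensions of henselian fields are Artin-Schreier over $K$ itself (which is not true in general), your argument does not go through. (A minor, repairable slip elsewhere: your values $vg(b_\nu)$ increase cofinally in $v\im_{\cal O}(g)$ but certainly not ``to $\infty$''; the correct statement is that $v\im_{\cal O}(g)$ has no maximum, which is what Lemma~\ref{apCs} delivers.)
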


\begin{theorem}                             \label{KO}
In Theorems~\ref{extram}, \ref{extramh}, \ref{extrsam}
and~\ref{extrsamh}, ``${\cal O}$-extremal'' can be replaced by
``$K$-extremal''.
\end{theorem}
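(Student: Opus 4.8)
The plan is to reduce Theorem~\ref{KO} to a single statement about each of the four classes $\mathcal C$ of polynomials occurring in Theorems~\ref{extram}, \ref{extramh}, \ref{extrsam} and~\ref{extrsamh} (respectively: all polynomials in one variable, all $p$-polynomials in one variable, all separable polynomials in one variable, all separable $p$-polynomials in one variable). Since each of those theorems already shows that the relevant maximality property is equivalent to ${\cal O}$-extremality with respect to $\mathcal C$, it suffices to prove, for each such $\mathcal C$, that ${\cal O}$-extremality and $K$-extremality with respect to $\mathcal C$ are equivalent. I would establish this by closing the cycle of implications $[\text{maximal}]\Rightarrow[{\cal O}\text{-extremal w.r.t.\ }\mathcal C]\Rightarrow[K\text{-extremal w.r.t.\ }\mathcal C]\Rightarrow[\text{maximal}]$, in which the first implication is the corresponding earlier theorem.

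The new content is the middle implication, which I would prove by a scaling argument. First, each class $\mathcal C$ is stable under the substitution $X\mapsto cX$ for $c\in K^{\times}$: this preserves being a one-variable polynomial, sends a $p$-polynomial $\mathcal A(X)+c_0$ to the $p$-polynomial $\mathcal A(cX)+c_0$ (as $\mathcal A(cX)$ is again additive), and preserves separability since $(f(cX))'=c\,f'(cX)$ with $c\neq 0$. Now assume $K$ is ${\cal O}$-extremal with respect to all of $\mathcal C$, and fix a nonconstant $f\in\mathcal C$ of degree $n$ with leading coefficient $c_n$. By the Newton polygon behaviour at infinity there is a threshold $\gamma_0\in vK$, which may be chosen arbitrarily small, such that every $a\in K$ with $va<\gamma_0$ satisfies $vf(a)=vc_n+n\,va$. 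Choosing $\gamma_0<0$ small enough that in addition $vc_n+n\gamma_0\le\max v\im_{\cal O}(f)$ (possible since $n>0$), pick $c\in K^{\times}$ with $vc=\gamma_0$ and set $f_c(X):=f(cX)\in\mathcal C$. Then $\{vf(a)\mid va\ge\gamma_0\}=v\im_{\cal O}(f_c)$, which has a maximum $\mu$ by ${\cal O}$-extremality with respect to $f_c$; since ${\cal O}\subseteq\{a\mid va\ge\gamma_0\}$ we get $\max v\im_{\cal O}(f)\le\mu$, so every $a$ with $va<\gamma_0$ satisfies $vf(a)<vc_n+n\gamma_0\le\mu$. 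Hence $\mu=\max v\im_K(f)$ is attained, proving $K$-extremality with respect to $f$.

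For the last implication, $[K\text{-extremal w.r.t.\ }\mathcal C]\Rightarrow[\text{maximal}]$, I would use the classical connection between maximality and attainment of suprema of polynomial values over $K$: if $(K,v)$ is not (separable-)algebraically maximal, a pseudo-Cauchy sequence without limit in $K$ produces, by Kaplansky's theory, a polynomial $f$ of minimal degree for which $\sup_{a\in K}vf(a)$ is approached along the sequence but never attained, so that $v\im_K(f)$ has no maximum. This $f$ can be taken separable in the separable case, and for the henselian $p$-polynomial versions of Theorems~\ref{extramh} and~\ref{extrsamh} it can be reduced to a (separable) $p$-polynomial by the same reduction already used there. Since the witnessing elements lie in $K$, this is precisely a failure of $K$-extremality with respect to $\mathcal C$, which is the contrapositive of what we want.

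I expect the main obstacle to be the control of the region $\{a\mid va<0\}$ in the scaling argument: one must ensure that the values of $f$ outside ${\cal O}$ cannot exceed the maximum produced by ${\cal O}$-extremality of the scaled polynomial, and that the substitution $X\mapsto cX$ keeps us inside the prescribed class — which is why additivity-preservation for $p$-polynomials is essential in the henselian cases. A secondary point requiring care is the henselian reduction to $p$-polynomials in the final implication, which must be imported from the proofs of Theorems~\ref{extramh} and~\ref{extrsamh} rather than reproved here.
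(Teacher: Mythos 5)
Your middle implication --- that ${\cal O}$-extremality with respect to each of the four polynomial classes implies $K$-extremality with respect to the same class --- is correct, and your Newton-polygon-at-infinity argument is in fact more elementary than the paper's route: the paper gets this implication from Corollary~\ref{corKO}, whose proof runs through the pseudo Cauchy sequence machinery of Lemma~\ref{vfincrss}, whereas you only need that outside a sufficiently large ball the leading term of $f$ dominates, plus stability of each class under $X\mapsto cX$ (which the paper also uses, in the form $f(c^{-1}X)$). Note also that the paper's own proof of Theorem~\ref{KO} is simply the observation that the proofs of Theorems~\ref{extram}, \ref{extramh}, \ref{extrsam} and~\ref{extrsamh} treat $K$- and ${\cal O}$-extremality simultaneously, so your cycle $[\mbox{maximal}]\Rightarrow[{\cal O}\mbox{-extremal}]\Rightarrow[K\mbox{-extremal}]\Rightarrow[\mbox{maximal}]$ is a legitimately different architecture.

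However, your last implication has a genuine gap for Theorems~\ref{extram} and~\ref{extrsam}, where the field is \emph{not} assumed henselian. You claim that if $(K,v)$ is not (separable-)algebraically maximal, then Kaplansky's theory produces a polynomial $f$ for which the supremum of $vf$ over $K$ is ``approached along the sequence but never attained'', i.e.\ that the values $vf(c_\nu)$ along a pseudo Cauchy sequence are cofinal in $v\im_K (f)$. That cofinality is precisely the second conclusion of Lemma~\ref{apCs}, and it is proved there only under the hypothesis that the extension of $v$ from $K$ to $K(a)$ is \emph{unique}; without uniqueness one only knows that the values $vf(c_\nu)$ are eventually strictly increasing, while the contributions $v(c-a_i)$ coming from roots attached to the other extensions of $v$ are uncontrolled, so $v\im_K(f)$ could a priori still have a maximum. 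The paper repairs this by first proving Proposition~\ref{extrhens} --- $K$- or ${\cal O}$-extremality with respect to every separable polynomial in one variable already forces henselianity, a nontrivial argument via coarsenings and the Newton algorithm --- and only then invoking Lemma~\ref{apCs} with the uniqueness supplied by henselianity. Your proposal contains no substitute for this step; indeed, this is exactly one of the gaps in Delon's treatment that the paper points out right after stating Theorem~\ref{KO}. For Theorems~\ref{extramh} and~\ref{extrsamh} your final implication is fine, since henselianity is a hypothesis there; to close the cycle for Theorems~\ref{extram} and~\ref{extrsam} you must either import Proposition~\ref{extrhens} or give another proof that $K$-extremality with respect to the relevant class implies henselianity.
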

\sn
Theorems~\ref{extram} and~\ref{extrsam} in the ``$K$-extremal'' version
were presented by Delon in [D1], but the proofs had gaps in both
directions.

\pars
In [A--Ku--Pop], we prove:
\begin{theorem}                             \label{extr=>hd}
Every extremal field is henselian and defectless. Every finite extension
of an extremal field is again extremal.
\end{theorem}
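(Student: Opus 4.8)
The plan is to prove the two assertions separately, reducing the maximality statements to the one-variable and $p$-polynomial characterizations already established, and using a restriction-of-scalars argument via the norm for the statement about finite extensions. First I would show that an extremal field $(K,v)$ is henselian and algebraically maximal. By definition, extremality includes ${\cal O}$-extremality with respect to every polynomial in one variable, so Theorem~\ref{extram} immediately yields that $(K,v)$ is algebraically maximal. In particular it is separable-algebraically maximal, and as recorded after the definition of that notion, a separable-algebraically maximal field is henselian. Thus extremality delivers henselianity and (separable-)algebraic maximality at once.

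Next I would upgrade algebraic maximality to defectlessness. If $\chara Kv=0$ the characteristic exponent is $p=1$, the defect is always trivial, and a henselian field is automatically defectless. In equal characteristic $p>0$ I would invoke Theorem~\ref{si=d}: since $(K,v)$ is already separable-algebraically maximal, it remains only to prove that it is inseparably defectless, and for this I would use the first characterization in Theorem~\ref{extrid}, namely that $K$ be $K$-extremal with respect to every $p$-polynomial $b-\sum_{i=1}^{n}b_iX_i^p$ whose $b_1,\dots,b_n$ form a basis of a finite extension of $K^p$. The point to bridge is that the hypothesis only supplies ${\cal O}$-extremality, whereas Theorem~\ref{extrid} asks for $K$-extremality. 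I would close this gap by rescaling: choosing the basis so that $vb_1,\dots,vb_n$ lie in distinct cosets modulo $p\,vK$, one gets $v\sum_i b_ia_i^p=\min_i(vb_i+p\,va_i)$ for all $a_i\in K$, so any tuple with value exceeding $vb$ forces the lower bound $va_i\geq (vb-vb_i)/p$; substituting $X_i=c_iY_i$ for suitable $c_i\in K$ confines the search to ${\cal O}^n$ and replaces the polynomial by one of the same shape over $K$, to which ${\cal O}$-extremality applies. Hence $K$ is $K$-extremal with respect to every such polynomial, so it is inseparably defectless, and Theorem~\ref{si=d} gives that $(K,v)$ is henselian and defectless. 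In mixed characteristic all finite extensions are separable, so defectlessness coincides with separable defectlessness, which for a henselian field follows from its separable-algebraic maximality by the standard theory of the defect.

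For the second assertion let $(L|K,v)$ be finite with $K$ extremal. By the first part $K$ is henselian and defectless, so $v$ extends uniquely to $L$, the extension is defectless, and $L|K$ admits a valuation basis $b_1,\dots,b_m$; moreover the unique extension $w$ of $v$ to $L$ satisfies $w(y)=\frac{1}{[L:K]}\,v(N_{L|K}(y))$. Given $f\in L[X_1,\dots,X_n]$, I would pass to a polynomial $F\in L[Y_{jk}]$ in $nm$ variables via $X_j=\sum_k b_kY_{jk}$, so that $w(f(\mathbf a))=w(F(\mathbf a'))$ under the coordinate correspondence $a_j=\sum_k b_ka_{jk}$. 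Taking the norm $P:=N_{L(\mathbf Y)|K(\mathbf Y)}(F)\in K[\mathbf Y]$, one has $P(\mathbf a')=N_{L|K}(F(\mathbf a'))$ for tuples $\mathbf a'$ with entries in $K$, whence $vP(\mathbf a')=[L:K]\,w(f(\mathbf a))$. The region $\mathbf a\in{\cal O}_L^n$ corresponds, through the valuation basis, to $va_{jk}\geq -wb_k$, which after a further rescaling $Y_{jk}=c_kZ_{jk}$ becomes $\mathbf Z\in{\cal O}_K^{nm}$. Applying the extremality of $K$ to $P(c_\bullet Z_\bullet)\in K[\mathbf Z]$ produces a maximum of $vP$, hence of $w\circ f$ on ${\cal O}_L^n$; since $f$ was arbitrary, $L$ is extremal.

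The main obstacle is the passage between ${\cal O}$-extremality and the sharper maximality statements. In the first assertion this means turning the ${\cal O}$-extremality supplied by the hypothesis into the $K$-extremality demanded by Theorem~\ref{extrid}, the rescaling that exploits the incongruence of $vb_1,\dots,vb_n$ modulo $p\,vK$; in the second it means controlling how the domain ${\cal O}_L^n$ transforms under restriction of scalars and the norm, so that ${\cal O}_K$-extremality of the \emph{single} polynomial $P$ recaptures the maximum of $w\circ f$. Verifying that these rescalings lose no maximum, in particular that the tuples excluded by the lower bounds on the $va_i$ genuinely yield smaller values, is the technical heart of the argument.
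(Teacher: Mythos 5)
First, a point of framing: the paper does not prove Theorem~\ref{extr=>hd} at all; it quotes it from [A--Ku--Pop] (listed as ``in preparation''), and the closest statement it does prove, Corollary~\ref{extr2}, deduces ``inseparably defectless'' from extremality \emph{only} when $vK$ is divisible or a $\Z$-group. That restriction is not cosmetic: it flags exactly the two places where your argument has genuine gaps, both at the spots you yourself call the technical heart. In the first assertion, your bridge from ${\cal O}$-extremality to the $K$-extremality demanded by Theorem~\ref{extrid} rests on ``choosing the basis so that $vb_1,\dots,vb_n$ lie in distinct cosets modulo $p\,vK$''. No such choice exists in general: a basis of $L|K^p$ whose values lie in pairwise distinct cosets modulo $vK^p=pvK$ is valuation independent, so it exists if and only if $[L:K^p]=(vL:vK^p)$, i.e.\ if and only if $L|K^p$ is defectless \emph{with trivial residue field extension}. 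Hence the choice is impossible whenever $[Lv:(Kv)^p]>1$ (which happens for suitable $L$ as soon as $Kv$ is imperfect, e.g.\ $L=K^p(y)$ with $yv\notin(Kv)^p$), and in the remaining cases its existence is equivalent to the defectlessness of $L|K^p$ --- the very thing being proved, so the argument is circular. What your rescaling actually needs is that $b_1,\dots,b_n$ be a valuation basis, and for these purely inseparable extensions that is again tantamount to defectlessness; the circle can be broken by inducting along a tower of degree-$p$ extensions of $K^p$ inside $K$, using at each step a valuation basis of the already-proved-defectless lower field, but that restructuring is the missing idea and is not in your text. Your one-sentence disposal of mixed characteristic is also unsupported: Theorem~\ref{si=d} is explicitly restricted to positive characteristic, and there is no ``standard theory'' giving ``henselian and separable-algebraically maximal $\Rightarrow$ defectless'' in residue characteristic $p$; indeed, algebraic maximality does not in general imply defectlessness (Delon's example, cited in the introduction), so this case is left unproved.

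In the second assertion, the fatal step is ``the region $va_{jk}\geq -wb_k$ \dots\ after a further rescaling $Y_{jk}=c_kZ_{jk}$ becomes $\mathbf{Z}\in{\cal O}_K^{nm}$''. This requires, for each $k$, an element $c_k\in K$ with $vc_k=\min\{\alpha\in vK\mid\alpha\geq -wb_k\}$, and such a minimum need not exist: if $vK$ is densely ordered and $wb_k\notin vK$ (a ramified basis vector), then $I_k=\{a\in K\mid va\geq -wb_k\}$ is a non-principal fractional ${\cal O}_K$-ideal, a strictly increasing union of sets $c\,{\cal O}_K$. Extremality of $K$ then yields a maximum of $w\circ f$ on each box $\prod_k c_k{\cal O}_K$, but existence of maxima does not pass from the members of a directed increasing family of regions to their union, so no maximum on ${\cal O}_L^n$ follows. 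To run your reduction one must know beforehand that the value group of an extremal field is divisible or a $\Z$-group (in either case the needed minima do exist); establishing such structural constraints on extremal fields is a substantial part of the actual proof in [A--Ku--Pop], and nothing in your proposal supplies it.
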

\n
These results were proved by Yu.~Ershov in [Er2] for ``$K$-extremal'' in
the place of ``${\cal O}$-extremal''. But in this case they are trivial
consequences of the fact that every $K$-extremal valued field is
algebraically closed (cf.\ [A--Ku--Pop]).

\pars
We also obtain that the properties ``algebraically maximal'',
``separ\-able-algebraically maximal'' and ``inseparably defectless''
are elementary in the language of valued fields, see
Corollary~\ref{extr1el}, Corollary~\ref{extrsamel} and
Corollary~\ref{idelem}. By Theorem~\ref{si=d}, this fact provides
an easy proof of the following result, which was proved by Ershov [Er1],
and independently by Delon [D1], by different methods:

\begin{theorem}
The property ``henselian and defectless valued field of characteristic
$p>0$'' is elementary in the language of valued fields.
\end{theorem}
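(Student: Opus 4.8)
The plan is to obtain this as an immediate corollary of Theorem~\ref{si=d} combined with the elementariness of the properties occurring on its right-hand side, so that essentially no new work is needed. Fix the prime $p$. The class of valued fields of characteristic $p$ is elementary, being axiomatized by the axioms for valued fields together with the single sentence $p\cdot 1=0$ asserting that the $p$-fold sum $1+\cdots+1$ vanishes. Within this class, Theorem~\ref{si=d} tells us that a valued field is henselian and defectless if and only if it is both separable-algebraically maximal and inseparably defectless. Note that henselianity is automatically subsumed here, since a separable-algebraically maximal valued field is already henselian.

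First I would record, invoking Corollary~\ref{extrsamel}, that ``separable-algebraically maximal'' is axiomatized by a set $\Sigma_1$ of first-order sentences in the language of valued fields, and, invoking Corollary~\ref{idelem}, that ``inseparably defectless'' is axiomatized by a set $\Sigma_2$ of such sentences; concretely, each $\Sigma_j$ is a scheme of ${\cal O}$-extremality statements ranging over the relevant class of ($p$-)polynomials, exactly as delivered by Theorems~\ref{extrid}, \ref{extrsam} and~\ref{extrsamh}. The union of the valued-field axioms, the characteristic-$p$ axiom, $\Sigma_1$ and $\Sigma_2$ is then a set of first-order sentences in the language of valued fields. By the preceding paragraph, a valued field is a model of this set precisely when it is a henselian and defectless valued field of characteristic $p$, and hence the class of such fields is elementary.

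I expect no genuine obstacle at this stage: the entire content has already been absorbed into Theorem~\ref{si=d} and into the effective, scheme-level form of the elementariness results. The one point deserving a word is the routine closure fact that an intersection of two elementary classes---here realized as the union of two axiom schemes---is again elementary; this is immediate once $\Sigma_1$ and $\Sigma_2$ are exhibited as genuine sets of sentences rather than merely as abstract elementary properties. In other words, the theorem is best read as a clean packaging of the harder maximality-and-extremality equivalences established in the earlier sections.
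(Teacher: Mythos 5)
Your proposal is correct and follows exactly the paper's intended route: the paper derives this result precisely by combining Theorem~\ref{si=d} with Corollary~\ref{extrsamel} and Corollary~\ref{idelem}, so that the class is axiomatized by the valued-field axioms, the characteristic-$p$ sentence, and the two elementary schemes. Your observation that henselianity is subsumed because separable-algebraically maximal fields are henselian matches the paper's own remark, and no gap remains.
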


\pars
In Section~\ref{sectac}, we will give another characterization of
henselian defectless fields, in terms of their completion
(Theorem~\ref{:}). We will also show that a henselian field of positive
characteristic is separably defectless if and only if its completion is
defectless (Theorem~\ref{csdd}).

A field of positive characteristic is called \bfind{Artin-Schreier
closed} if it admits no non-trivial Artin-Schreier extensions. In
Section~\ref{sectfwdASe} we will prove:
\begin{theorem}                             \label{AScphic}
Every Artin-Schreier closed non-trivially valued field lies dense in its
perfect hull, and its completion is perfect. In particular, every
separable-algebraically closed non-trivially valued field lies dense in
its algebraic closure.
\end{theorem}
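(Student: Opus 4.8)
Throughout, let $p=\chara K>0$, let $\wp(X)=X^p-X$ be the Artin-Schreier polynomial, and write $\Phi$ for the Frobenius endomorphism $x\mapsto x^p$. Artin-Schreier closedness of $K$ means precisely that $\wp\colon K\to K$ is surjective. Denote by $\hat K$ the completion of $(K,v)$ and by $K^{1/p^\infty}$ the perfect hull. Since $v(\Phi x-\Phi y)=p\,v(x-y)$, the map $\Phi$ is a homeomorphism of $K^{1/p^\infty}$ onto itself that multiplies the valuation by $p$; in particular $K^{1/p}=\Phi^{-1}(K)=\{a^{1/p}:a\in K\}$ and $v(a^{1/p}-c)=\tfrac1p\,v(a-c^p)$ for $a,c\in K$. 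The first part of the plan is to reduce the density assertion to the single statement that $K^p$ is dense in $K$. Indeed, the displayed identity shows that $K$ is dense in $K^{1/p}$ if and only if $K^p$ is dense in $K$; applying the homeomorphism $\Phi^{-n}$ then gives that $K^{1/p^n}$ is dense in $K^{1/p^{n+1}}$ for every $n$, and by transitivity of density $K$ is dense in $\bigcup_n K^{1/p^n}=K^{1/p^\infty}$. (Here one uses that $vK$ is cofinal in its $p$-divisible hull $v(K^{1/p^\infty})$, so that precision measured in $vK$ suffices.)

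The core step, and the one I expect to carry all the weight, is to prove that $K^p$ is dense in $K$, i.e.\ that for every $a\in K$ and every $\gamma\in vK$ there is $c\in K$ with $v(a-c^p)>\gamma$. The point is that Artin-Schreier closedness approximates elements of large negative value by $p$-th powers, and a scaling trick reduces the general case to that situation. We may assume $a\neq 0$. Since $v$ is non-trivial, choose $s\in K$ with $vs<0$ as negative as we like and put $d:=a s^p$, so that $vd=va+p\,vs$ is negative once $vs$ is negative enough. Pick $y\in K$ with $\wp(y)=y^p-y=d$; because $vd<0$ one checks $vy<0$ and $v(y^p)=p\,vy=vd$, whence $vy=\tfrac1p\,vd$. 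Setting $c:=y/s$, the computation runs as follows:
\[
c^p \;=\; \frac{y^p}{s^p} \;=\; \frac{d+y}{s^p} \;=\; a+\frac{y}{s^p},
\qquad\text{so}\qquad
v(a-c^p) \;=\; vy-p\,vs \;=\; \tfrac1p\,va-(p-1)\,vs .
\]
As $vs$ may be taken arbitrarily negative, the right-hand side exceeds any prescribed $\gamma$, proving the claim and hence the density of $K$ in $K^{1/p^\infty}$.

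It remains to show that $\hat K$ is perfect and to deduce the corollary. By the previous two paragraphs $K^p$ is dense in $K$, hence dense in $\hat K$; thus $\Phi(\hat K)\supseteq K^p$ is dense in $\hat K$. On the other hand $\Phi$ multiplies $v$ by $p$, so it maps the complete space $\hat K$ onto a complete, and therefore closed, subset of $\hat K$. A closed dense subset equals the whole space, so $\Phi(\hat K)=\hat K$; that is, every element of $\hat K$ has a $p$-th root, so $\hat K$ is perfect. Finally, if $K$ is separable-algebraically closed then it is in particular Artin-Schreier closed, since Artin-Schreier extensions are separable, so $K$ lies dense in $K^{1/p^\infty}$; and as $K$ has no proper separable-algebraic extension, its algebraic closure is purely inseparable over $K$, i.e.\ $\overline{K}=K^{1/p^\infty}$. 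Hence $K$ lies dense in $\overline{K}$, completing the proof.
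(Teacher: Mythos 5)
Your proof is correct, but it takes a genuinely different route from the paper. The paper derives the theorem from its classification machinery: it quotes an external result (Corollary~2.17 of [Ku2]) saying that an Artin-Schreier closed non-trivially valued field has $p$-divisible value group and perfect residue field, so that every purely inseparable extension is immediate, and then invokes the corollaries of Proposition~\ref{ipie} (Corollary~\ref{+4.5} and Corollary~\ref{nodepincomp}, which rest on the theory of dependent Artin-Schreier defect extensions and distances) to conclude that the perfect hull lies in $K^c$ and that $K^c$ admits no proper immediate purely inseparable extension, hence is perfect. You instead prove directly that $K^p$ is dense in $K$: solve $y^p-y=as^p$ with $vs\ll 0$ and observe that $(y/s)^p=a+y/s^p$, so $v\bigl(a-(y/s)^p\bigr)=\tfrac1p va-(p-1)vs$ is arbitrarily large; the Frobenius scaling $v(\Phi x-\Phi y)=p\,v(x-y)$ then upgrades this to density of $K$ in $K^{1/p^\infty}$, and perfectness of the completion follows from the clean topological observation that $\Phi(\hat K)$ is both dense (it contains $K^p$) and complete, hence closed. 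This buys self-containedness: you need neither the citation to [Ku2] nor any of the paper's apparatus of distances, $\sim_K$, and dependent/independent extensions -- in effect you have extracted and inverted the algebraic core of the proof of Proposition~\ref{ipie} (the substitution $d=as^p$, $c=y/s$ is exactly the deformation $X^p-X-(\eta/b)^p$ run backwards) for this special case. What the paper's route buys in exchange is reusability: its intermediate results (every purely inseparable immediate extension lies in $K^c$, and $K^c$ has no proper immediate purely inseparable extension) are stronger statements that feed into later sections, whereas your argument proves exactly the theorem at hand. Two small points you gloss over, both easily repaired: the claim $vy=\tfrac1p vd$ for $vd<0$ is the paper's Lemma~\ref{valASgen}, and ``$vs$ as negative as we like'' uses that $vK^{<0}$ is coinitial in the divisible hull $\widetilde{vK}^{<0}$, which holds since $m\delta\in vK$ and $m\delta\leq\delta$ for suitable $m\geq 1$ when $\delta\leq 0$.
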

\n
The second part of this theorem is well known (cf.\ [W], Theorem~30.28).

\parm
Several of the results of this paper, and in particular
Theorem~\ref{si=d}, have been inspired by the work presented in
Francoise Delon's thesis [D1].

%
%
\section{Preliminaries}
For the basic facts of valuation theory, we refer the reader to [En],
[Ri], [W] and [Z--S]. For ramification theory, we recommend [En] and
[N]. In parts of this paper we will assume some familiarity with the
theory of pseudo Cauchy sequences as presented in the first half of
[Ka]. Note that our ``pseudo Cauchy sequence'' is what Kaplansky calls
``pseudo convergent set''.

The algebraic closure of a field $K$ will be denoted by $\tilde{K}$.
Note that if $v$ is a valuation on $K$, then any possible extension of
$v$ to $\tilde{K}$ has residue field $\widetilde{Kv}$ and value group
$\widetilde{vK}$, the divisible hull of $vK$ (isomorphic to $\Q\otimes
vK$).

%
%
\subsection{Defectless extensions, defect and immediate
extensions}                                 \label{sectddie}
The defect is multiplicative in the following sense. Let $L|K$ and $M|L$
be finite extensions. Assume that the extension of $v$ from $K$ to $M$
is unique. Then the defect satisfies the following product formula
\begin{equation}         \label{pf}
\mbox{\rm d}(M:K) = \mbox{\rm d}(M:L)\cdot\mbox{\rm d}(L:K)
\end{equation}
which is a consequence of the multiplicativity of the degree of field
extensions and of ramification index and inertia degree.
This formula implies:

\begin{lemma}                                         \label{md}
$M | K$ is defectless if and only if $M | L$ and $L | K$ are defectless.
\end{lemma}

Together with Theorem~\ref{dhd}, this lemma yields:

\begin{corollary}      \label{ed}
If $(K,v)$ is a defectless field and $L$ is a finite extension of $K$,
then $L$ is also a defectless field with respect to every extension of
$v$ from $K$ to $L$. Conversely, if there exists a finite extension $L$
of $K$ for which equality holds in the fundamental
inequality (\ref{fundineq}) and such that $L$ is a defectless field with
respect to every extension of $v$ from $K$ to $L$, then $(K,v)$ is a
defectless field. The same holds for ``separably defectless'' in the
place of ``defectless'' if $L|K$ is separable, and for ``inseparably
defectless'' if $L|K$ is purely inseparable.
\end{corollary}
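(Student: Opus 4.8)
The plan is to reduce everything to the henselian case via Theorem~\ref{dhd} and then to exploit the multiplicativity of the defect recorded in Lemma~\ref{md} (equivalently, the product formula (\ref{pf})). Let $w_1,\dots,w_{\rm g}$ be the distinct extensions of $v$ to $L$, with associated henselizations $(L,w_i)^h$. The standard decomposition $L\otimes_K K^h\cong\prod_{i=1}^{\rm g}(L,w_i)^h$ gives, upon taking $K^h$-dimensions, $[L:K]=\sum_i[(L,w_i)^h:K^h]$. Since $K^h|K$ and $(L,w_i)^h|(L,w_i)$ are immediate, the ramification indices ${\rm e}_i$ and inertia degrees ${\rm f}_i$ are unchanged under henselization, so Ostrowski's Lemma (\ref{LoO}) applied over the henselian field $K^h$ (where the valuation extends uniquely) yields $[(L,w_i)^h:K^h]={\rm e}_i{\rm f}_i d_i$ with $d_i:={\rm d}((L,w_i)^h|K^h)$. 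Hence
\[
[L:K]=\sum_{i=1}^{\rm g}{\rm e}_i{\rm f}_i d_i ,
\]
and equality in the fundamental inequality (\ref{fundineq}) for $L|K$ holds if and only if every $d_i=1$, that is, if and only if every $(L,w_i)^h|K^h$ is a defectless extension. This dictionary between the global fundamental inequality and the local henselian defects is the engine of both directions.

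For the forward assertion, recall that by Theorem~\ref{dhd} a field is defectless if and only if its henselization is, which for a henselian field means simply that all of its finite extensions are defectless extensions. Fix an extension $w$ of $v$ to $L$; then $(L,w)^h$ is a finite extension of $K^h$. Given an arbitrary finite extension $M|(L,w)^h$, the composite $M|K^h$ is a finite extension of the defectless henselian field $K^h$, hence defectless; since the valuation extends uniquely over $K^h$, Lemma~\ref{md} applied to the tower $M|(L,w)^h|K^h$ forces $M|(L,w)^h$ to be defectless as well. Thus every finite extension of $(L,w)^h$ is defectless, so $(L,w)^h$, and therefore $(L,w)$ by Theorem~\ref{dhd}, is a defectless field. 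As $w$ was arbitrary, this proves the first claim.

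For the converse I would combine the two hypotheses through the same tower trick. The equality hypothesis, via the displayed formula, gives $d_i=1$ for all $i$, so each $(L,w_i)^h|K^h$ is defectless; fix $i=1$. The hypothesis that $(L,w_1)$ is a defectless field gives, by Theorem~\ref{dhd}, that $(L,w_1)^h$ is a defectless field. Now take any finite extension $N|K^h$ and form $N\cdot(L,w_1)^h$ inside an algebraic closure of $K^h$. Then $N\cdot(L,w_1)^h|(L,w_1)^h$ is defectless because $(L,w_1)^h$ is a defectless field, while $(L,w_1)^h|K^h$ is defectless by the above; Lemma~\ref{md} makes the composite $N\cdot(L,w_1)^h|K^h$ defectless, and a second application of Lemma~\ref{md} to the factorization through $N$ shows $N|K^h$ is defectless. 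Hence $K^h$, and thus $K$, is a defectless field.

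The separable and inseparable variants run along the same lines: one restricts $M$, $N$, and the auxiliary composita to separable (respectively purely inseparable) extensions, using that towers and composita of separable (respectively purely inseparable) extensions are again of the same type, so the relevant instances of Lemma~\ref{md} and Theorem~\ref{dhd} still apply; in the purely inseparable case the valuation extends uniquely, so ${\rm g}=1$ and the bookkeeping simplifies. I expect the main obstacle to be precisely the establishment of the degree formula $[L:K]=\sum_i{\rm e}_i{\rm f}_i d_i$ together with the invariance of ${\rm e}_i$ and ${\rm f}_i$ under henselization, since this is what translates the single global equality in (\ref{fundineq}) into the vanishing of every local defect $d_i$. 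Once that translation is in place, the remainder is a routine application of the multiplicativity of the defect.
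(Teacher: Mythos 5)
Your proof is correct and takes essentially the approach the paper intends: the paper offers no separate argument, deriving the corollary directly from Lemma~\ref{md} and Theorem~\ref{dhd}, and your write-up simply fills in that reduction, with the standard decomposition $L\otimes_K K^h\cong\prod_{i=1}^{\rm g}(L,w_i)^h$ (valid since $K^h|K$ is separable algebraic) supplying the translation between equality in the fundamental inequality and the vanishing of the local defects $d_i$. The tower manipulations via Lemma~\ref{md} over the henselian field $K^h$, and the separable/purely inseparable bookkeeping, are all sound.
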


Recall that an infinite algebraic extension $(L|K,v)$ with unique
extension of the valuation is called defectless if every finite
subextension is defectless. This definition is compatible with the
definition of ``defectless'' for finite extensions as given in the
introduction, because by Lemma~\ref{md} every subextension of a finite
defectless extension is again defectless. We have:

\begin{lemma}                               \label{transdl}
Let $L|K$ and $L'|L$ be (not necessarily finite) extensions of valued
fields such that the extension of $v$ from $K$ to $L'$ is unique. If
both $L|K$ and $L'|L$ are defectless, then so is $L'|K$.
\end{lemma}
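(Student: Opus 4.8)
The plan is to prove that defectlessness is transitive for (possibly infinite) algebraic extensions by reducing everything to the finite case, where we may invoke the multiplicativity of the defect (formula~(\ref{pf})) and Lemma~\ref{md}. First I would observe that by the very definition of ``defectless'' for infinite algebraic extensions, it suffices to show that every \emph{finite} subextension $(N|K,v)$ of $(L'|K,v)$ is defectless. So I would fix an arbitrary finite extension $N$ of $K$ with $N\subseteq L'$, and the goal becomes to verify $\mbox{\rm d}(N|K)=1$.

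The key step is to sandwich this finite subextension between $K$ and a suitable finite piece of $L$. Since $N|K$ is finite, $N$ is generated over $K$ by finitely many elements, each algebraic over $L$; let $N_0$ be the subfield of $L$ generated over $K$ by all the coefficients appearing in the minimal polynomials over $L$ of a generating set of $N$. Then $N_0$ is a finite subextension of $L|K$, and $N\subseteq N_0.N\subseteq L'$ with $N_0.N|N_0$ finite. Because $L|K$ is defectless, $N_0|K$ is defectless; because $L'|L$ is defectless and $N_0.N$ is a finite subextension of $L'|L$ containing $N_0$, the extension $N_0.N|N_0$ is defectless as well (here one uses that $N_0.N$ is finite over $L$, so it is a finite subextension of $L'|L$, hence defectless, and then Lemma~\ref{md} lets us pass to the subextension above $N_0$).

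Now the product formula~(\ref{pf}) applied to the tower $K\subseteq N_0\subseteq N_0.N$, which is legitimate since the extension of $v$ is unique on $L'\supseteq N_0.N$, gives $\mbox{\rm d}(N_0.N|K)=\mbox{\rm d}(N_0.N|N_0)\cdot\mbox{\rm d}(N_0|K)=1$. Finally, $N$ is a subextension of the finite defectless extension $N_0.N|K$, so by Lemma~\ref{md} again $N|K$ is defectless, i.e.\ $\mbox{\rm d}(N|K)=1$. Since $N$ was an arbitrary finite subextension of $L'|K$, this shows $L'|K$ is defectless.

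The main obstacle I anticipate is purely bookkeeping rather than conceptual: making sure that the auxiliary finite field $N_0\subseteq L$ is chosen so that $N_0.N$ is genuinely a \emph{finite} extension of both $N_0$ and $L$ (so that the results quoted for finite extensions apply), and that the uniqueness of the valuation descends correctly to each field in the tower $K\subseteq N_0\subseteq N_0.N$ so that~(\ref{pf}) and Lemma~\ref{md} are applicable. Once the finite intermediate field is set up correctly, the argument is a routine combination of the product formula and the hereditary behaviour of defectlessness for subextensions.
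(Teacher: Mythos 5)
Your overall strategy --- fix a finite subextension $N|K$ of $L'|K$, descend to a finite piece $N_0$ of $L$, and finish with the product formula (\ref{pf}) and Lemma~\ref{md} --- is the same as the paper's, but the crucial middle step is wrong as written, and it is precisely the step where all the work lies. You claim that $N_0.N$ ``is a finite subextension of $L'|L$, hence defectless.'' This is false: a subextension of $L'|L$ must contain $L$, and $N_0.N$ is only finite over $K$; it is not an extension of $L$ at all (and $L|K$ may be infinite). What \emph{is} a finite subextension of $L'|L$, and hence defectless, is $N.L|L$. But Lemma~\ref{md} only relates the members of a single tower of finite extensions, and $N_0.N$ does not sit between $L$ and $N.L$; there is no tower to which Lemma~\ref{md} applies that connects the extension $N.L|L$ (over the large base $L$) with the extension $N_0.N|N_0$ (over the finite base $N_0$). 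In effect, the statement you need --- that defectlessness over the base $L$ descends to defectlessness over a finite base $N_0$ --- is essentially the content of the lemma being proved: granting the lemma, $N_0.N|K$ is a finite subextension of the defectless extension $L'|K$, whence $N_0.N|N_0$ is defectless by Lemma~\ref{md}; but you cannot assume this, so the argument is circular at its key point.

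The paper closes exactly this gap by choosing the finite field $L_0\subseteq L$ (your $N_0$) to carry more than just the degree. Besides arranging $[N.L_0:L_0]=[N.L:L]$, one picks representatives $\alpha_1,\ldots,\alpha_k$ of the cosets of $v(N.L)$ over $vL$ and a basis $\zeta_1,\ldots,\zeta_\ell$ of $(N.L)v|Lv$, and enlarges $L_0$ so that $\alpha_i\in v(N.L_0)$ and $\zeta_j\in (N.L_0)v$ for all $i,j$. Since elements lying in distinct cosets modulo $vL$ remain in distinct cosets modulo the smaller group $vL_0$, and residues linearly independent over $Lv$ remain independent over $L_0v$, one gets $(v(N.L_0):vL_0)\geq (v(N.L):vL)$ and $[(N.L_0)v:L_0v]\geq [(N.L)v:Lv]$; combining this with the inequality coming from (\ref{LoO}) and with $[N.L_0:L_0]=[N.L:L]=(v(N.L):vL)\,[(N.L)v:Lv]$ forces equality everywhere, so $N.L_0|L_0$ is defectless. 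Only after this does your concluding step (product formula plus Lemma~\ref{md}) go through. Note that your choice of $N_0$ via coefficients of minimal polynomials controls only the degree, not the value-group and residue-field data, and that --- contrary to your closing remark --- this transfer of defectlessness to a finite base is the conceptual heart of the proof, not routine bookkeeping.
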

\begin{proof}
Let $F|K$ be any finite subextension of $L'|K$. Since $L'|L$ is
defectless, so is its finite subextension $F.L|L$.
Hence, $[F.L:L]=(v(F.L):vL)[(F.L)v:Lv]$. Pick a set $\alpha_1,\ldots,
\alpha_k$ of generators of $vF.L$ over $vL$, and a basis $\zeta_1,
\ldots,\zeta_{\ell}$ of $(F.L)v|Lv$. Choose a finite subextension
$L_0|K$ of $L|K$ such that
\[[F.L_0:L_0]=[F.L:L]\,,\; \alpha_1,\ldots,\alpha_k\in v(F.L_0)\,, \;
\zeta_1,\ldots,\zeta_{\ell}\in (F.L_0)v\;.\]
Then $(v(F.L_0):vL_0)\geq (v(F.L):vL)$, $[(F.L_0)v:L_0v]\geq
[(F.L)v:Lv]$, and
\begin{eqnarray*}
[F.L:L] & = & [F.L_0:L_0]\>\geq\>(v(F.L_0):vL_0)[(F.L_0)v:L_0v]\\
 & \geq & (v(F.L):vL)[(F.L)v:Lv]\>=\>[F.L:L]\;,
\end{eqnarray*}
where the first inequality follows from (\ref{LoO}).
Hence, equality must hold everywhere, and we obtain that $F.L_0|L_0$
is defectless. Since $L|K$ is assumed to be defectless, also
$L_0|K$ is defectless. By Lemma~\ref{md} it follows that $F.L_0|K$
is defectless. Again by Lemma~\ref{md}, also the subextension $F|K$
is defectless. This proves that $L'|K$ is defectless.
\end{proof}

Let $(K,v)$ be any valued field. A valuation $w$ on $K$ is a
\bfind{coarsening} of $v$ if its valuation ring ${\cal O}_w$ contains
the valuation ring ${\cal O}_v$ of $v$. Note that we do not exclude the
case of $w=v$. If $H$ is a convex subgroup of $vK$, then it gives rise
to a coarsening $w$ through the definition ${\cal O}_w:= \{x\in K\mid
\exists\alpha\in H:\>\alpha \leq vx\}$. Then $v$ induces a valuation
$\ovl{w}$ on $Kw$ through the definition ${\cal O}_{\ovl{w}}:=\{xw\mid
x\in {\cal O}_v\}$, and there are canonical isomorphisms $wK\isom vK/H$
and $\ovl{w}(Kw)\isom H$.
If $(K,w)$ is any valued field and if $w'$ is any valuation on the
residue field $Kw$, then $w\circ w'$, called the \bfind{composition of
$w$ and $w'$}, will denote the valuation whose valuation ring is the
subring of the valuation ring of $w$ consisting of all elements whose
$w$-residue lies in the valuation ring of $w'$. (Note that we identify
equivalent valuations.) In our above situation, $v$ is the composition
of $w$ and $\ovl{w}$. While $w\circ w'$ does actually not mean the
composition of $w$ and $w'$ as mappings, this notation is used because
in fact, up to equivalence the place associated with $w\circ w'$ is
indeed the composition of the places associated with $w$ and $w'$.

\begin{lemma}               \label{defc}
Take a henselian field $(K,v)$, a finite extension $(L|K,v)$ and a
coarsening $w$ of $v$ on $L$. Then also $(K,w)$ is henselian. If
$(L|K,v)$ is defectless, then also $(L|K,w)$ is defectless.
\end{lemma}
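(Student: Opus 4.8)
The plan is to treat the two assertions in turn, working throughout with the decomposition $v=w\circ\ovl{w}$ of $v$ as the composition of the coarsening $w$ with the valuation $\ovl{w}$ that $v$ induces on the residue field $Kw$.

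First I would show that $(K,w)$ is henselian. This is the standard fact that a coarsening of a henselian valuation is again henselian: since $(K,v)$ is henselian, $v$ has a unique extension to the separable-algebraic closure $K\sep$, and as every extension of $w$ to $K\sep$ is the coarsening of some extension of $v$, the valuation $w$ inherits this uniqueness and is therefore henselian (cf.\ [En]). Applying the same reasoning on the residue side shows that $(Kw,\ovl{w})$ is henselian as well. Because $(K,w)$ and $(Kw,\ovl{w})$ are henselian, $w$ extends uniquely from $K$ to $L$ and $\ovl{w}$ extends uniquely from $Kw$ to $Lw$; this, together with the uniqueness of $v$ on $L$, guarantees that all the defects occurring below are well defined.

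For the second assertion I would compare the extensions $(L|K,v)$, $(L|K,w)$ and $(Lw|Kw,\ovl{w})$ by means of the Lemma of Ostrowski. Let $H$ be the convex subgroup of $vL$ defining $w$ and set $H_K:=H\cap vK$. The canonical isomorphisms $wL\isom vL/H$ and $\ovl{w}(Lw)\isom H$ (and the analogous ones over $K$) yield $(vL:vK)=(wL:wK)\cdot(\ovl{w}(Lw):\ovl{w}(Kw))$ for the ramification indices, while $Lv=(Lw)\ovl{w}$ and $Kv=(Kw)\ovl{w}$ give $[Lv:Kv]=[(Lw)\ovl{w}:(Kw)\ovl{w}]$ for the inertia degrees. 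Writing out (\ref{LoO}) for $v$, for $w$, and for $\ovl{w}$ on $Lw|Kw$, substituting the latter into the instance for $w$, and using these two identities to match the ramification and inertia factors against those for $v$, I obtain the multiplicativity of the defect along the composition,
\[{\rm d}(L|K,v)\;=\;{\rm d}(L|K,w)\cdot{\rm d}(Lw|Kw,\ovl{w})\,.\]
Since $(L|K,v)$ is defectless, the left-hand side is $1$; as each factor on the right is a power of the characteristic exponent $p$ and hence at least $1$, both factors must equal $1$. In particular ${\rm d}(L|K,w)=1$, so $(L|K,w)$ is defectless.

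The only delicate points are the careful tracking of the value groups and residue fields through the composition $v=w\circ\ovl{w}$ and the verification that all three extensions admit unique prolongations, so that their defects are indeed defined; the henselianity established in the first step is precisely what secures the latter, and the composition formula for the defect is then a purely formal consequence of the three instances of Ostrowski's Lemma.
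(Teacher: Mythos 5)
Your proposal is correct and follows essentially the same route as the paper: both proofs rest on the decomposition $v=w\circ\ovl{w}$, the index identity $(vL:vK)=(wL:wK)\cdot(\ovl{w}(Lw):\ovl{w}(Kw))$, and the identifications $(Lw)\ovl{w}=Lv$, $(Kw)\ovl{w}=Kv$, with the henselianity of $(K,w)$ obtained from the same composition argument. The only cosmetic difference is that the paper squeezes a single chain of fundamental inequalities, whereas you package the same computation as multiplicativity of the defect via three applications of Ostrowski's Lemma, which costs you the extra (standard, and correctly supplied) observation that $(Kw,\ovl{w})$ is henselian so that all three defects are defined.
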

\begin{proof}
If there are two distinct extensions $w_1$ and $w_2$ of $w$ from $K$
to $\tilde{K}$, and if we take any extension of
$\ovl{w}$ to the algebraic closure $\widetilde{Kw}=\tilde{K}w_1=
\tilde{K}w_2$ of $Kw$, then the compositions $w_1\circ\ovl{w}$ and
$w_2\circ\ovl{w}$ will be distinct extensions of $v$ to $\tilde{K}$.
This shows that $(K,v)$ cannot be henselian if $(K,w)$ isn't.

Now assume that $(L|K,v)$ is defectless, that is, $[L:K]=(vL:vK)\cdot
[Lv:Kv]$. We have that $(vL:vK)=(wL:wK)(\ovl{w}(Lw):\ovl{w}(Kw))$,
$(Lw)\ovl{w}=Lv$ and $(Kw)\ovl{w}=Kv$. Therefore,
\begin{eqnarray*}
[L:K] & \geq & (wL:wK)[Lw:Kw]\\
& \geq & (wL:wK)(\ovl{w}(Lw):\ovl{w}(Kw))[(Lw)\ovl{w}:(Kw)\ovl{w}]\\
& = & (vL:vK)[Lv:Kv]\;=\;[L:K]\;.
\end{eqnarray*}
This shows that equality holds everywhere, which proves that
$(L|K,w)$ is defectless.
\end{proof}


In the next lemma, the relation between immediate and defectless
extensions is studied.
\begin{lemma}                        \label{ivd}
Let $K$ be a valued field and $F|K$ an arbitrary immediate extension. If
$L|K$ is a finite defectless extension admitting a unique extension of
the valuation, then the same holds for $F.L|F$, and $F.L|L$ is
immediate. Moreover,
\[[F.L:F] = [L:K]\;,\]
i.e., $F$ is linearly disjoint from $L$ over $K$.
\end{lemma}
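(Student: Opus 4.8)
The plan is to extend the valuation from $F$ to the compositum $F.L$ and compare the fundamental inequality for $F.L|F$ against the defectless equality for $L|K$. First I would record two elementary bounds. On the one hand, since $F.L$ is generated over $F$ by the finitely many algebraic generators of $L$ over $K$, the minimal polynomial over $F$ of each such generator divides its minimal polynomial over $K$, so $[F.L:F]\leq[L:K]$. On the other hand, let $w_1,\dots,w_{\rm g}$ be the distinct extensions of $v$ from $F$ to $F.L$, with ramification indices ${\rm e}_i=(w_i(F.L):vF)$ and inertia degrees ${\rm f}_i=[(F.L)w_i:Fv]$. Each $w_i$ restricts on $K$ to $v$, so $w_i|_L$ is an extension of the valuation from $K$ to $L$; since that extension is assumed unique, $w_i|_L=v|_L$. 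Consequently $vL=w_i(L)\subseteq w_i(F.L)$ and $Lv\subseteq(F.L)w_i$. Because $F|K$ is immediate we have $vF=vK$ and $Fv=Kv$, whence ${\rm e}_i\geq(vL:vK)$ and ${\rm f}_i\geq[Lv:Kv]$, so ${\rm e}_i{\rm f}_i\geq(vL:vK)[Lv:Kv]=[L:K]$, the last equality because $L|K$ is defectless.

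The decisive step is to feed these bounds into the fundamental inequality (\ref{fundineq}) for $F.L|F$:
\[[L:K]\;\geq\;[F.L:F]\;\geq\;\sum_{i=1}^{\rm g}{\rm e}_i{\rm f}_i\;\geq\;{\rm g}\cdot[L:K]\;.\]
Since $[L:K]\geq 1$, this forces ${\rm g}=1$, so the extension of $v$ from $F$ to $F.L$ is unique, and it forces all inequalities to be equalities. In particular $[F.L:F]={\rm e}_1{\rm f}_1$, so by the Lemma of Ostrowski (\ref{LoO}) the defect of $F.L|F$ is trivial and the extension is defectless; moreover $[F.L:F]=[L:K]$, which is exactly the linear disjointness of $F$ and $L$ over $K$.

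Finally I would read off that $F.L|L$ is immediate. Equality in the chain also yields ${\rm e}_1=(vL:vK)$ and ${\rm f}_1=[Lv:Kv]$. Using $vF=vK$, the relation $(v(F.L):vL)(vL:vK)=(v(F.L):vF)={\rm e}_1=(vL:vK)$ gives $(v(F.L):vL)=1$, i.e.\ $v(F.L)=vL$; symmetrically, $Fv=Kv$ together with ${\rm f}_1=[Lv:Kv]$ gives $[(F.L)v:Lv]=1$, i.e.\ $(F.L)v=Lv$. Thus $F.L|L$ has the same value group and residue field as $L$ and is immediate.

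The main obstacle is the identification $w_i|_L=v|_L$ for every extension $w_i$: this is precisely where the hypothesis of a unique extension of $v$ from $K$ to $L$ is essential, since without it the local factors ${\rm e}_i{\rm f}_i$ could drop below $[L:K]$ and the counting argument would collapse. Everything else is a matter of chaining the two bounds through the fundamental inequality and extracting the resulting equalities.
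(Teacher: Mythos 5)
Your proof is correct and follows essentially the same route as the paper's: both sandwich $[F.L:F]$ between $[L:K]$ and the value-group/residue-field data via the fundamental inequality, using the immediacy of $F|K$ and the defectlessness of $L|K$ (with its unique extension of $v$) and then reading off all conclusions from the forced equalities. If anything, your version is slightly more complete, since by keeping all ${\rm g}$ extensions in the sum you explicitly obtain uniqueness of the extension of $v$ from $F$ to $F.L$ (forced by ${\rm g}\cdot[L:K]\leq[L:K]$), a point the paper's single-extension chain leaves implicit.
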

\begin{proof}
$v(F.L)$ contains $vL$ and $(F.L)v$ contains $Lv$. On the other
hand, we have $vF=vK$ and $Fv = Kv$ by hypothesis. Therefore,
\begin{eqnarray*}
[F.L:F] & \geq & (v(F.L):vF)\cdot[(F.L)v:Fv]\\
 & \geq & (vL:vK)\cdot [Lv:Kv] \>=\> [L:K] \geq [F.L:F]\;,
\end{eqnarray*}
hence equality holds everywhere. This shows that $[F.L:F] = [L:K]$ and
that $F.L|F$ is defectless. Furthermore it follows that $v(F.L) = vL$
and $(F.L)v=Lv$, i.e., $F.L|L$ is immediate.
\end{proof}

As an immediate consequence we get:
\begin{corollary}                          \label{lid}
If $K$ is an inseparably defectless field then every immediate extension
is separable. If $K$ is a henselian defectless field then every
immediate extension is regular.
\end{corollary}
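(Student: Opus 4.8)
The plan is to read both assertions off the linear disjointness conclusion of Lemma~\ref{ivd}, using the standard characterizations of separable and regular extensions. Recall that in characteristic $p$ an extension $F|K$ is separable exactly when $F$ is linearly disjoint from $K^{1/p}$ over $K$ (equivalently, from the perfect hull $K^{1/p^\infty}$), and it is regular exactly when $F$ is linearly disjoint from the algebraic closure $\tilde{K}$ over $K$. Moreover, linear disjointness of $F$ from an algebraic extension $M|K$ is equivalent to linear disjointness of $F$ from every finite subextension of $M|K$, since any $K$-linearly independent finite subset of $M$ already lies in some finite subextension. So in each case it suffices to check linear disjointness of $F$ from the finite subextensions of the relevant extension, and this is precisely what Lemma~\ref{ivd} delivers once its hypotheses are verified.

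For the first statement, I would assume $K$ is inseparably defectless and take an arbitrary immediate extension $F|K$. Each finite subextension $L$ of $K^{1/p}|K$ is purely inseparable, so the valuation extends uniquely to $L$; and since $K$ is inseparably defectless, $L|K$ is a defectless extension. Thus $L|K$ meets the hypotheses of Lemma~\ref{ivd}, which gives $[F.L:F]=[L:K]$, i.e.\ $F$ is linearly disjoint from $L$ over $K$. Letting $L$ range over all finite subextensions of $K^{1/p}|K$ yields linear disjointness of $F$ from $K^{1/p}$ over $K$, hence $F|K$ is separable. (In characteristic $0$ the assertion is vacuous, since every extension is separable and $p=1$.)

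For the second statement, I would assume $K$ is henselian and defectless and again take $F|K$ immediate. Let $L$ be any finite subextension of $\tilde{K}|K$. Henselianity forces a unique extension of the valuation to $L$, and the defectless-field hypothesis forces equality in the fundamental inequality for $L|K$; with $\mathrm{g}=1$ this says $L|K$ is a defectless extension (this is also the content of Corollary~\ref{ed}). Lemma~\ref{ivd} then gives $[F.L:F]=[L:K]$, so $F$ is linearly disjoint from $L$ over $K$, and running over all finite $L$ gives linear disjointness of $F$ from $\tilde{K}$ over $K$, which is exactly regularity of $F|K$. The only points requiring care are the verification of the hypotheses of Lemma~\ref{ivd} — uniqueness of the valuation extension (automatic for purely inseparable $L$ in the first case, supplied by henselianity in the second) and defectlessness of the finite subextensions — together with the passage from $K^{1/p}$ and $\tilde{K}$ to their finite subextensions; none of these poses a genuine obstacle, as all the real work has already been carried out in Lemma~\ref{ivd}.
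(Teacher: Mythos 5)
Your proposal is correct and is essentially the paper's own argument: the paper derives the corollary as an immediate consequence of Lemma~\ref{ivd}, exactly as you do, by verifying its hypotheses (unique valuation extension, automatic for purely inseparable extensions and supplied by henselianity otherwise; defectlessness from the respective defectless-field hypothesis) and then passing from linear disjointness with all finite subextensions of $K^{1/p}$ (resp.\ $\tilde{K}$) to the standard MacLane-type characterizations of separable (resp.\ regular) extensions. You have merely spelled out the finite-character reduction that the paper leaves implicit.
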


Let $(K,v)$ be a henselian field and $p$ the characteristic exponent of
its residue field $Kv$. An algebraic extension $(L|K,v)$ is called
a \bfind{tame extension} if for every finite subextension $(L_0|K,v)$,
the following conditions hold:
\sn
1) \ $p$ is prime to $(vL_0:vK)$,\n
2) \ $L_0v|Kv$ is separable,\n
3) \ $(L_0|K,v)$ is defectless.
\sn
On the other hand, an algebraic extension $(L|K,v)$ is called a
\bfind{purely wild extension} if
\sn
1) \ $vL/vK$ is a $p$-group,\n
2) \ $Lv|Kv$ is purely inseparable.
\sn
If $p=1$, that is, $\chara Kv=0$, then every algebraic extension of a
henselian field is tame, and only the trivial extension is purely wild.

By Proposition 4.1 of [Ku--P--Ro], the absolute ramification field $K^r$
of $K$ is the unique maximal tame algebraic extension of $K$; it is a
normal separable extension of $K$. By Lemma 4.2 of [Ku--P--Ro], an
algebraic extension $L|K$ is purely wild if and only if it is linearly
disjoint from $K^r|K$. From these facts it follows that $K^rv$ is the
separable-algebraic closure of $Kv$ and $vK^r$ is the \bfind{$p$-prime
divisible hull} of $vK$:
\begin{equation}                            \label{vgrfK^R}
vK^r\>=\>\bigcup_{n\in\N\setminus p\N} \frac{1}{n}\,\Z
\;\;\;\mbox{ and }\;\;\; K^rv\>=\> (Kv)\sep\;.
\end{equation}

We will now consider the behaviour of the defect when a finite extension
$L|K$ of a henselian field $K$ is shifted up through a tame extension
$N|K$. We need the following information on $K^r$:

\begin{lemma}                    \label{rf}
Let $(K,v)$ be an arbitrary valued field and $p$ the characteristic
exponent of $Kv$. Take an algebraic extension $L|K$. Then $L^r=L.K^r$;
hence if $L\subset K^r$, then $L^r=K^r$. The separable--algebraic
closure $K\sep$ is a $p$--extension of $K^r$.
\end{lemma}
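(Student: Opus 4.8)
The plan is to reduce everything to the central claim $L^r=L.K^r$. The assertion ``if $L\subset K^r$ then $L^r=K^r$'' is then immediate, since for $L\subset K^r$ one has $L.K^r=K^r$; and the last sentence only repeats, in the language of $p$-extensions, the fact recalled in the introduction that $\Gal(K\sep|K^r)$ is a pro-$p$-group. For the central claim I would first record the standard field-theoretic fact that for an algebraic extension $L|K$ one has $L\sep=L.K\sep$ (because $L.K\sep|L$ is separable while $\tilde K|L.K\sep$ is purely inseparable), so that $L^r$, $K^r$ and $L.K^r$ all lie inside the single separable closure $K\sep=L\sep$, with respect to the one fixed extension of $v$. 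Applying the formulas recalled above for $vK^r$ and $K^rv$ to $L$ as well as to $K$ (legitimate, as $L$ is again an arbitrary valued field and $\chara Lv=\chara Kv=p$), I record that $vL^r$ is the $p$-prime divisible hull of $vL$ and $L^rv=(Lv)\sep$, and likewise for $K$.

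For the inclusion $L.K^r\subseteq L^r$ I would use that the class of tame extensions is closed under base extension (a standard property of the maximal tame extension, cf.\ [Ku--P--Ro]): since $K^r|K$ is tame, so is $L.K^r|L$, and hence $L.K^r$ is contained in the maximal tame extension $L^r$ of $L$. For the reverse inclusion I would show that $L^r|L.K^r$ is trivial. It is tame, being a subextension of the tame extension $L^r|L$; so it suffices to see that it is immediate, i.e.\ that $v(L.K^r)=vL^r$ and $(L.K^r)v=L^rv$, for a tame immediate extension is both defectless and immediate and therefore trivial. The residue equality is easy: $(L.K^r)v$ contains $Lv$ and $K^rv=(Kv)\sep$, and $Lv\cdot(Kv)\sep=(Lv)\sep$ by the ``compositum'' description of the separable closure applied to the algebraic residue extension $Lv|Kv$; since $(L.K^r)v\subseteq L^rv=(Lv)\sep$, equality follows.

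The delicate point, which I expect to be the main obstacle, is the value-group equality $v(L.K^r)=vL^r$, i.e.\ that the $p$-prime divisible hull of $vL$ is already generated by $vL$ together with $vK^r$. Writing $A=vK$, $B=vL$ and $\hat A,\hat B$ for their $p$-prime divisible hulls inside $\widetilde{vK}=\widetilde{vL}$, this is the purely group-theoretic statement $\hat B=B+\hat A$. The inclusion $B+\hat A\subseteq\hat B$ is clear; for the reverse one uses that $B/A=vL/vK$ is torsion, so that $\widetilde{vK}/\hat A$ realizes the $p$-primary component of the torsion group $\widetilde{vK}/A$, on which multiplication by any integer prime to $p$ is an automorphism --- this allows one to correct a given $y\in\hat B$ by an element of $B$ so as to land in $\hat A$. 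A cleaner route that avoids this computation is purely ramification-theoretic: via $L\sep=L.K\sep$ the restriction map identifies $\Gal(L\sep|L)$ with $\Gal(K\sep|L\cap K\sep)$ compatibly with the ramification filtrations, so that the ramification group of $L\sep|L$ is $\Gal(K\sep|L\cap K\sep)\cap\Gal(K\sep|K^r)$ (compatibility of ramification groups with subgroups, cf.\ [En], [N]); since this intersection is exactly $\Gal(L\sep|L.K^r)$, the Galois correspondence gives $L^r=L.K^r$ at once. Finally, as $K\sep$ is separably closed and separable over $K^r$, it equals $(K^r)\sep$ and is Galois over $K^r$ with Galois group the pro-$p$ ramification group $\Gal(K\sep|K^r)$; hence every finite subextension of $K\sep|K^r$ has $p$-power degree, which is precisely the statement that $K\sep$ is a $p$-extension of $K^r$.
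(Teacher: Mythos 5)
Your proposal is correct, and it in fact contains two arguments; the second one (your ``cleaner route'') is essentially the paper's own proof, while the first is a genuinely different, invariant-theoretic argument. The paper proves $L^r=L.K^r$ by writing $L|K$ as a purely inseparable extension on top of a separable one: for the separable part it cites Endler (20.15)~b), i.e.\ compatibility of ramification groups with subgroups of $\Gal(K\sep|K)$, and for the purely inseparable part it uses the canonical isomorphism $\Gal(L\sep|L)\isom\Gal(K\sep|K)$ under which an intermediate field $K'$ of $K\sep|K$ corresponds to $L.K'$; the last assertion is disposed of by citing Endler (20.18). Your second route packages exactly these two steps into the single statement that restriction identifies $\Gal(L\sep|L)$ with $\Gal(K\sep|L\cap K\sep)$ ``compatibly with the ramification filtrations''. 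Be aware that the cited fact (compatibility with subgroups) only covers the separable step; the compatibility across the purely inseparable jump from $L\cap K\sep$ to $L$ is precisely the point the paper isolates and argues via $\Gal(K)\isom\Gal(L)$, so in a write-up you should make that step explicit rather than subsume it under the citation.

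Your first route is sound but needs two repairs. First, the paper defines ``tame extension'' and states the formulas (\ref{vgrfK^R}) only over henselian fields, whereas here $K$ and $L$ are arbitrary; all your tameness statements and the formulas for $vL^r$, $L^rv$ must therefore be routed through henselizations. This is harmless -- one has $L^h=L.K^h\subseteq L.K^r\subseteq L^r$ and henselization is immediate -- but it has to be said. Second, the inclusion $L.K^r\subseteq L^r$, which you import as ``base change of tameness, cf.\ [Ku--P--Ro]'', is not among the facts the paper quotes from that source and is dangerously close in content to the lemma itself: over henselian fields ``tame'' means ``contained in the ramification field'', so base change of tameness applied to $N=K^r$ \emph{is} the inclusion $L.K^r\subseteq L^r$. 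If your source proves base change independently this is fine; otherwise that half of the argument is circular. What your Route A genuinely buys is the harder reverse inclusion $L^r\subseteq L.K^r$ by pure computation of invariants: the residue-field identity $(L.K^r)v=(Lv)\sep$ and the correct group lemma that $vL$ together with the $p$-prime divisible hull of $vK$ already generates the $p$-prime divisible hull of $vL$, so that $L^r|L.K^r$ is tame and immediate, hence trivial. The paper's (and your second) group-theoretic route gets both inclusions simultaneously and needs no value-group computation, at the cost of invoking ramification theory of $\Gal(K\sep|K)$.
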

\begin{proof}
For separable extensions, the first assertion follows from [En], page
166, (20.15) b) (where we put $N= K\sep$ since we define $K^r$ to be
the ramification field of the separable extension $K\sep|K\,$). Since
every algebraic extension can be viewed as a purely inseparable
extension of a separable extension, it remains to show the first
assertion for a purely inseparable extension $L|K$. Here, it follows
from the fact that Gal$(K) \cong$ Gal$(L)$ and that by this isomorphism,
the Galois group of an intermediate field $K'$ of $K\sep|K$ is
isomorphic to the Galois group of the intermediate field $L.K'$ of
$L\sep|L$. The second assertion follows from [En],
p.\ 167, Theorem (20.18).
%
%
\end{proof}

The next proposition shows the invariance of the defect under lifting up
through tame extensions.

\begin{proposition}                 \label{dlta}
Let $K$ be a henselian field and $N$ an arbitrary tame algebraic
extension of $K$. If $L|K$ is a finite extension, then
\[\mbox{\rm d}(L|K)\>=\> \mbox{\rm d}(L.N | N)\;.\]
In particular, $L|K$ is defectless if and only if $L.N | N$ is
defectless. This implies: $K$ is a defectless field if and only if $N$
is a defectless field, and the same holds for ``separably defectless''
and ``inseparably defectless'' in the place of ``defectless''.
\end{proposition}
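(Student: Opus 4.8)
The plan is to prove the defect equality $\mbox{\rm d}(L|K)=\mbox{\rm d}(L.N|N)$ first and then read off all the remaining assertions from it. Since $K$ is henselian, $v$ extends uniquely to every algebraic extension, so all the fields occurring below — $N$, $L$, and their composita — are henselian, every defect is well defined, and the product formula (\ref{pf}) is available for finite towers. The one structural input I will lean on is Lemma~\ref{rf}: because $N$ is tame it is contained in the maximal tame extension $K^r$, so $L.N\subseteq L.K^r=L^r$. Hence $L.N|L$ is a subextension of the maximal tame extension $L^r|L$, is therefore itself tame, and in particular defectless.

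For finite $N$ this already settles the equality. Indeed $\mbox{\rm d}(N|K)=1$ because $N|K$ is tame, and $\mbox{\rm d}(L.N|L)=1$ by the remark above; applying (\ref{pf}) to the towers $K\subseteq N\subseteq L.N$ and $K\subseteq L\subseteq L.N$ gives
\[\mbox{\rm d}(L.N|N)=\mbox{\rm d}(L.N|N)\,\mbox{\rm d}(N|K)=\mbox{\rm d}(L.N|K)=\mbox{\rm d}(L.N|L)\,\mbox{\rm d}(L|K)=\mbox{\rm d}(L|K).\]

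For arbitrary (possibly infinite) tame $N$ I would reduce to the finite case, and this is where I expect the real work to lie. Since $L.N|N$ is finite, there should be a finite subextension $N_0|K$ of $N|K$ — necessarily tame — that already carries all the relevant invariants: enlarging $N_0$ so that $[L.N_0:N_0]=[L.N:N]$, so that representatives of the finitely many cosets of $v(L.N)/vN$ lie in $v(L.N_0)$ and stay independent modulo $vN_0$, and so that a basis of the residue extension $(L.N)v|Nv$ is realized over $N_0v$, forces $(v(L.N_0):vN_0)=(v(L.N):vN)$ and $[(L.N_0)v:N_0v]=[(L.N)v:Nv]$. Comparing the two instances of the Lemma of Ostrowski (\ref{LoO}) then yields $\mbox{\rm d}(L.N_0|N_0)=\mbox{\rm d}(L.N|N)$, and the finite case gives $\mbox{\rm d}(L.N_0|N_0)=\mbox{\rm d}(L|K)$. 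This value-group and residue-field bookkeeping — entirely parallel to the proof of Lemma~\ref{transdl} — is the only genuinely technical point; everything else is formal.

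The equality shows at once that $L|K$ is defectless if and only if $L.N|N$ is. For the field-theoretic statements I would use that every finite extension $M|N$ has the form $M=L.N$ with $L|K$ finite: writing $M=N(\theta)$ and choosing a finite subextension $N_0|K$ of $N|K$ containing the coefficients of the minimal polynomial of $\theta$ over $N$, one sets $L=N_0(\theta)$. Hence if $N$ is a defectless field, then $\mbox{\rm d}(L|K)=\mbox{\rm d}(L.N|N)=1$ for every finite $L|K$, so $K$ is defectless; and if $K$ is a defectless field, then $\mbox{\rm d}(M|N)=\mbox{\rm d}(L|K)=1$ for every finite $M|N$, so $N$ is defectless. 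The separable case is identical, since $\theta$ may be taken separable, making $L|K$ separable. For the inseparable case one direction is immediate, because $L|K$ purely inseparable forces $L.N|N$ purely inseparable; for the other, given a finite purely inseparable $M|N$ I would pick a finite purely inseparable $L|K$ with $M\subseteq L.N$ (possible since $N|K$ is separable), so that $\mbox{\rm d}(L.N|N)=\mbox{\rm d}(L|K)=1$ because $K$ is inseparably defectless, whence $M|N$ is defectless by Lemma~\ref{md}.
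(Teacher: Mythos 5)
Your finite case is correct, and it is a clean argument: for finite tame $N$, both $N|K$ and $L.N|L$ are tame (the latter because $L.N\subseteq L.K^r=L^r$ by Lemma~\ref{rf}, and every subextension of a tame extension is tame), hence defectless, and the two applications of (\ref{pf}) to the towers $K\subseteq N\subseteq L.N$ and $K\subseteq L\subseteq L.N$ give $\mbox{\rm d}(L.N|N)=\mbox{\rm d}(L|K)$. The gap is in the passage to infinite tame $N$. Your bookkeeping conditions (degree stabilization, coset representatives of $v(L.N)/vN$ lying in $v(L.N_0)$ and independent modulo $vN_0$, a basis of $(L.N)v|Nv$ realized in $(L.N_0)v$) only yield the inequalities $(v(L.N_0):vN_0)\geq(v(L.N):vN)$ and $[(L.N_0)v:N_0v]\geq[(L.N)v:Nv]$; comparing the two instances of (\ref{LoO}) then gives only $\mbox{\rm d}(L|K)=\mbox{\rm d}(L.N_0|N_0)\leq\mbox{\rm d}(L.N|N)$, not equality. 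Nothing forces the reverse inequalities: an element of $v(L.N_0)\setminus vN_0$ may lie in $vN$, so that its coset collapses in the limit, and elements of $(L.N_0)v$ independent over $N_0v$ may become dependent over $Nv$; this loss can be compensated at every finite level by new cosets and new residues, so that no finite $N_0$ ever attains the limit invariants. This is exactly how defect is created in a directed union. Note the contrast with Lemma~\ref{transdl}, whose sandwich argument closes only because there the top extension is assumed defectless; here $\mbox{\rm d}(L.N|N)$ is precisely the unknown.

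Moreover, the implication you assert is genuinely false, not merely unproved, and since your reduction step never uses tameness of $N|K$ (tameness enters only through the finite case), the argument as written would prove a false statement. Take $k$ algebraically closed of characteristic $p$, $K=k(t)^h$ with the $t$-adic valuation, $L=K(\vartheta)$ with $\vartheta^p-\vartheta=1/t$, and $N=K.k(t)^{1/p^{\infty}}$ (purely inseparable over $K$, so not tame). Then $\mbox{\rm d}(L|K)=1$, since $(vL:vK)=p=[L:K]$. At each finite level $N_m=K(t^{1/p^m})$ one still has $(v(L.N_m):vN_m)=p$ and defect $1$, witnessed by $v(\vartheta-a_m)=-p^{-(m+1)}\notin vN_m$ for $a_m=\sum_{i=1}^m t^{-p^{-i}}$; but in the union these cosets collapse one after another, and $L.N|N$ is immediate of degree $p$, i.e.\ $\mbox{\rm d}(L.N|N)=p$ (this is Example~\ref{examp1} lifted to henselizations). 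Yet all of your conditions hold already with $N_0=K$, because $v(L.N)/vN$ and $(L.N)v|Nv$ are trivial. So the missing half, $\mbox{\rm d}(L.N|N)\leq\mbox{\rm d}(L|K)$ for tame $N$, is the substantive content and needs structural input; the paper obtains it by reducing at once to $N=K^r$ (using $N^r=K^r$), observing that $L$ is linearly disjoint from $K^r$ over $L_0=L\cap K^r$ so that $L|L_0$ is purely wild, and then using the explicit description of value groups and residue fields of ramification fields ($vL^r$ is the $p$-prime divisible hull of $vL$ and $L^rv=(Lv)\sep$, likewise for $L_0$) to conclude $(vL^r:vL_0^r)=(vL:vL_0)$ and $[L^rv:L_0^rv]=[Lv:L_0v]$. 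A minor separate point: your $M=N(\theta)$ via a primitive element fails for inseparable $M|N$; but, as in the paper, containment $M\subseteq L.N$ together with Lemma~\ref{md} suffices for the field-level statements.
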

\begin{proof}
Since $N^r=K^r$, it suffices to prove our lemma for the case of $N=K^r$,
because then we obtain
\[\mbox{\rm d}(L|K)=\mbox{\rm d}(L.K^r|K^r)= \mbox{\rm d}(L.N^r|N^r)
=\mbox{\rm d}((L.N).N^r|N^r)=\mbox{\rm d}(L.N|N)\]
for general $N$.

We put $L_0 := L\cap K^r$. We have $L.K^r=L^r$ and $L_0^r=K^r$ by
Lemma~\ref{rf}.
%
%
Since $K^r|K$ is normal, $L$ is linearly disjoint from $K^r=L_0^r$ over
$L_0\,$, and $L|L_0$ is thus a purely wild extension.
%

As a finite subextension of the tame extension $K^r|K$, $L_0|K$ is
defectless. Hence by the multiplicativity of the defect (\ref{pf}),
\begin{equation}                            \label{d=d}
\mbox{\rm d}(L|K) = \mbox{\rm d}(L|L_0)\;.
\end{equation}

It remains to show $\mbox{\rm d}(L|L_0)=\mbox{\rm d}(L.K^r|K^r)$. Since
$L|L_0$ is linearly disjoint from $K^r|L_0\,$, we have
\begin{equation}                                  \label{gr0}
[L^r:K^r]\>=\> [L.K^r:K^r]\>=\> [L:L_0]\;.
\end{equation}
Since $L|L_0$ is purely wild, $vL/vL_0$ is a $p$-group and
$Lv|L_0v$ is purely inseparable. On the other hand,
\pars
$vL^r$ is the $p$-prime divisible hull of $vL$ and $L^rv=(Lv)\sep$,
\par
$vL_0^r$ is the $p$-prime divisible hull of $vL_0$ and
$L_0^rv=(L_0v)\sep$.
\sn
It follows that
\begin{equation}                            \label{vLvL0}
(vL^r:vL_0^r)\>=\>(vL:vL_0)\;\;\;\mbox{ and }\;\;\; [L^rv:L_0^rv]\>=\>
[Lv:L_0v]\;.
\end{equation}
From (\ref{d=d}), (\ref{gr0}) and (\ref{vLvL0}), keeping in mind that
$L.K^r=L^r$ and $L_0^r=K^r$, we deduce
\begin{eqnarray*}
\mbox{\rm d}(L.K^r|K^r) & = & \mbox{\rm d}(L^r|L_0^r)\;=\;
\frac{[L^r:L_0^r]}{(vL^r:vL_0^r)[L^rv:L_0^rv]}\\
& = & \frac{[L:L_0]}{(vL:vL_0)\cdot [Lv:L_0v]}
\;=\; \mbox{\rm d}(L | L_0)\;=\;\mbox{\rm d}(L|K)\;.
\end{eqnarray*}

It now remains to show the second assertion of our proposition. Assume
that $N$ is a defectless field and let $L|K$ be an arbitrary finite
extension. Then by hypothesis, $L.N | N$ is defectless; hence by what we
have shown, $L|K$ is defectless. Since $L|K$ was arbitrary, $K$ is shown
to be a defectless field. Note that $L.N|N$ is separable if $L|K$ is
separable, and $L.N|N$ is purely inseparable if $L|K$ is.

Conversely, assume that $K$ is a defectless field. Since any finite
extension $N' | N$ is contained in an extension $L.N | N$ where $L|K$ is
a finite and, by hypothesis, defectless extension, we see that by what
we have shown, $L.N | N$ and by virtue of Lemma~\ref{md} also its
subextension $N' | N$ are defectless. Note that $L|K$ can be chosen to
be separable if $N'|N$ is separable, and to be purely inseparable if
$N'|N$ is purely inseparable. This completes the proof of our lemma.
\end{proof}

%

\begin{lemma}                    \label{tow}
For every finite extension $L|K$, the extension $L.K^r|K^r$ is a tower
of normal extensions of degree $p$.
For every finite extension $L|K$, there is already a finite
tame extension $N$ of $K^h$ such that $L.N|N$ is such a tower.
\end{lemma}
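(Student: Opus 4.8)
The plan is to prove the two assertions in turn: first I build the tower over the infinite field $K^r$, and then I descend it to a finite tame extension of $K^h$.

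For the first assertion I would separate $L.K^r\,|\,K^r$ into its separable and its purely inseparable part. Put $M=L.K^r$, which is finite over $K^r$ since $L|K$ is finite, and let $M_s$ be the separable closure of $K^r$ in $M$, so that $M_s|K^r$ is separable and $M|M_s$ is purely inseparable. Because $K^r|K$ is a normal separable extension of $K$, the field $M_s$ lies in $K\sep$, so $M_s|K^r$ is a finite subextension of $K\sep|K^r$. The decisive input is that $P:=\Gal(K\sep|K^r)$ is a pro-$p$-group. The open subgroup $H\le P$ fixing $M_s$ then has index a power of $p$, and it is subnormal in $P$ with successive index $p$: choosing an open normal $U\le P$ with $U\le H$, the quotient $P/U$ is a finite $p$-group, in which every subgroup is subnormal and any subnormal series refines to one with all factors of order $p$; pulling back gives a chain $H=H_0\triangleleft H_1\triangleleft\cdots\triangleleft H_m=P$ of index $p$. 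Through the Galois correspondence this yields a tower $K^r=F_m\subset F_{m-1}\subset\cdots\subset F_0=M_s$ of Galois, hence normal, degree-$p$ extensions. The purely inseparable part $M|M_s$ is refined into degree-$p$ steps by adjoining $p$-th roots one at a time, each such step being automatically normal. Concatenating the two towers proves the first assertion; in the degenerate cases ($p=1$, or mixed characteristic) one has $M=M_s$ and the inseparable half is vacuous.

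For the second assertion I would use that, by Lemma~\ref{rf}, $(K^h)^r=K^r$, so $K^r$ is the directed union of the finite tame extensions of the henselian field $K^h$ (composita of finite tame extensions being again finite tame). Hence the whole finite tower just built is defined over some finite tame extension $N$ of $K^h$. Writing the tower as $K^r=M_0\subset M_1\subset\cdots\subset M_t=L.K^r$ with $M_{i+1}=M_i(\theta_i)$, where $\theta_i$ is a root of its degree-$p$ minimal polynomial $g_i\in M_i[X]$, normal over $M_i$ --- splitting completely in $M_{i+1}$ in the separable case, of the shape $X^p-c_i$ in the purely inseparable case --- I would enlarge $N$ until it contains: all the $\theta_i$ (so each $\theta_i\in L.N$); the finitely many coefficients from $K^r$ occurring when the coefficients of the $g_i$ and the conjugates of the $\theta_i$ are expressed as polynomials in the $\theta_j$ over $K^r$; and enough so that $[L.N:N]=[L.K^r:K^r]$. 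Setting $N_i:=N(\theta_0,\ldots,\theta_{i-1})$, each $g_i$ now lies in $N_i[X]$; being irreducible over $M_i\supseteq N_i$ it is a fortiori irreducible over $N_i$, so $[N_{i+1}:N_i]=p$, and since all conjugates of $\theta_i$ lie in $N_{i+1}$, the step $N_{i+1}|N_i$ is normal of degree $p$. The degree equality $[L.N:N]=p^t=[L.K^r:K^r]$ forces $N_t=L.N$, so $N=N_0\subset N_1\subset\cdots\subset N_t=L.N$ is the required tower.

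The main obstacle is the descent in the second assertion: one must guarantee that after passing from $K^r$ down to the finite $N$, each step keeps degree exactly $p$ and stays normal. Degree is the cleaner half, handled by the elementary observation that irreducibility of $g_i$ passes from the larger field $M_i$ to the smaller $N_i$ once $g_i\in N_i[X]$; normality is secured by additionally pushing the finite splitting data of each $g_i$ into $N$. The only genuinely infinitary ingredient --- that a finite extension of $K^r$ is already defined over a finite tame subextension --- rests on $K^r=(K^h)^r$ being the directed union of finite tame extensions of $K^h$, which I would take from the structure theory of the absolute ramification field recalled above.
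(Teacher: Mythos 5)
Your proof of the first assertion is correct and takes the same route as the paper: split $L.K^r|K^r$ into its separable part $M_s|K^r$ and purely inseparable part, use that $\Gal (K\sep|K^r)$ is a pro-$p$-group to produce a chain of open subgroups with successive indices $p$, each normal in the next (the paper quotes this as well known; your derivation via the finite quotient $P/U$ is a correct way to prove it), and refine the purely inseparable part into degree-$p$ steps.

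In the second assertion, however, one of your three requirements on $N$ cannot be met: you propose to ``enlarge $N$ until it contains all the $\theta_i$''. This is impossible. By hypothesis $N$ is a tame extension of $K^h$, hence $N\subseteq K^r$, because $K^r=(K^h)^r$ is the maximal tame extension of $K^h$; on the other hand, $\theta_i\notin M_i\supseteq K^r$ whenever the step $M_{i+1}|M_i$ is non-trivial, so no tame extension of $K^h$ can contain any of the $\theta_i$ --- the entire point of the lemma is that the tower sits strictly above the ramification field. What you actually need, and what your parenthetical ``so each $\theta_i\in L.N$'' shows you intend, is the weaker condition $\theta_i\in L.N$; and this is obtained by the same device as your second requirement: since $L|K$ is finite, $L.K^r=K^r[L]$, so each $\theta_i$ is a $K^r$-linear combination of finitely many elements of $L$, and you adjoin those finitely many coefficients from $K^r$ to $N$. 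This is exactly what the paper's one-sentence proof means by taking $N$ to be generated over $K^h$ by ``all the finitely many elements of $K^r$ that are needed to define the extensions in the tower''. With this repair the rest of your descent is sound and in fact supplies details the paper leaves to the reader: $g_i\in N_i[X]$ stays irreducible over $N_i\subseteq M_i$, the conjugates of $\theta_i$ land in $N_{i+1}$ once their $K^r$-coefficients are put into $N$, and $N_t\subseteq L.N$ together with $[N_t:N]=p^t=[L.N:N]$ forces $N_t=L.N$ (your third requirement, $[L.N:N]=[L.K^r:K^r]$, deserves a word too, but it does hold for all sufficiently large finite $N$ by adjoining the coefficients of the finitely many relations expressing products of a basis of $L.K^r|K^r$ chosen inside $L$).
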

\begin{proof}
We know from Lemma~\ref{rf} that $K\sep|K^r$ is a $p$--extension, that
is, $\Gal K\sep|K^r$ is a pro-$p$-group. For pro-$p$-groups $G$, the
following is well known: for every open subgroup $H\subset G$ there
exists a chain of open subgroups $H=H_0\subset H_1\subset \ldots\subset
H_n = G$ such that $H_{i-1}\lhd H_{i}$ and $(H_{i}:H_{i-1})=p$ for
$i=1,\ldots,n$. Hence by Galois correspondence, every finite separable
extension of $K^r$ is a tower of Galois extensions of degree $p$. Since
every finite purely inseparable extension is a tower of purely
inseparable extensions of degree $p$, this proves our first assertion.

We take $N$ to be generated over $K^h$ by all the finitely many elements
of $K^r$ that are needed to define the extensions in the tower
$L.K^r|K^r$.
\end{proof}

\begin{corollary}
A valued field $(K,v)$ is henselian and defectless if and only if all of
its finite defectless extensions are algebraically maximal.
\end{corollary}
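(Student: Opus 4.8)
The plan is to prove the two implications separately. The forward direction follows almost immediately from the material already developed, while the backward direction is where the real work lies, using the tame-extension reduction of Proposition~\ref{dlta} together with the finiteness refinement in Lemma~\ref{tow}.

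For the forward implication, I assume $(K,v)$ is henselian and defectless and show that every finite extension $L|K$ is algebraically maximal. Such an $L$ is again henselian (any algebraic extension of a henselian field is henselian) and, by Corollary~\ref{ed}, again defectless; so it suffices to see that a henselian defectless field $L$ is algebraically maximal. If it had a proper immediate algebraic extension $F$, I would pick $\alpha\in F\setminus L$: since every subextension of an immediate extension is immediate, $L(\alpha)|L$ is a proper finite immediate extension, and because $L$ is henselian the valuation extends uniquely, so the Lemma of Ostrowski (\ref{LoO}) forces $[L(\alpha):L]=p^\nu$ with $\nu>0$, a non-trivial defect contradicting defectlessness. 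Hence every finite extension of $K$, in particular every finite defectless one, is algebraically maximal.

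For the backward implication, I assume every finite defectless extension of $(K,v)$ is algebraically maximal. Applied to the trivial (hence defectless) extension $K|K$, this shows $(K,v)$ is algebraically maximal, hence separable-algebraically maximal, hence henselian; in particular $K=K^h$. Suppose for contradiction that $K$ is not defectless, so some finite $L|K$ has $\mbox{\rm d}(L|K)=p^\nu$ with $\nu>0$. By the second part of Lemma~\ref{tow} there is a finite tame extension $N$ of $K=K^h$ such that $L.N|N$ is a tower $N=N_0\subset N_1\subset\cdots\subset N_m=L.N$ of normal degree-$p$ extensions. As a finite tame extension, $N|K$ is defectless, and by Proposition~\ref{dlta} we have $\mbox{\rm d}(L.N|N)=\mbox{\rm d}(L|K)=p^\nu>1$. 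Now I exploit the multiplicativity of the defect (\ref{pf}): each factor $\mbox{\rm d}(N_i|N_{i-1})$ lies in $\{1,p\}$ and their product exceeds $1$, so some step has defect $p$. Letting $i$ be the least such index, the tower $N_{i-1}|N$ consists of defectless degree-$p$ steps, hence is defectless by Lemma~\ref{md}, so $N_{i-1}|K$ is defectless as well. Thus $N_{i-1}$ is a finite defectless extension of $K$; but $N_i|N_{i-1}$ is a degree-$p$ defect extension, hence immediate, giving $N_{i-1}$ a proper immediate algebraic extension and contradicting the hypothesis. Therefore $K$ is defectless.

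The hard part is the backward direction, and the main obstacle there is that an arbitrary defect extension $L|K$ is neither normal nor a degree-$p$ tower, so I must descend to a controlled finite situation. The crucial device is the finiteness statement in Lemma~\ref{tow}, which yields a \emph{finite} tame $N$ and keeps every intermediate field $N_{i-1}$ finite over $K$; this is exactly what allows the hypothesis, phrased for finite defectless extensions, to be invoked. Once finiteness is secured, locating the first step of the tower that carries defect isolates the forbidden proper immediate degree-$p$ extension over the defectless field $N_{i-1}$.
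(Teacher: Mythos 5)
Your proof is correct and follows essentially the same route as the paper: the forward direction via Corollary~\ref{ed} and the fact that a proper immediate algebraic extension of a henselian field would be a defect extension, and the backward direction via the finite tame extension of Lemma~\ref{tow}, Proposition~\ref{dlta}, Lemma~\ref{md}, and the equivalence of ``defect'' and ``immediate'' for degree-$p$ extensions. The only cosmetic difference is that you run the tower argument by contradiction (locating the first defective step), where the paper argues by induction that every step is defectless; these are the same argument in contrapositive form.
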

\begin{proof}
Suppose that $(K,v)$ is henselian and defectless and $(L,v)$ is a finite
extension. Then $(L,v)$ is henselian, and by Corollary~\ref{ed} it is
also defectless.

Suppose now that all finite extensions of $(K,v)$ are algebraically
maximal. Then $(K,v)$ itself is algebraically maximal and hence
henselian. Take any finite extension $(L|K,v)$; we wish to show that it
is defectless. Take a finite tame extension $N|K$ as in the preceding
lemma. Let $L_1|L_2$ be any extension of degree $p$ in the tower $L.N|N$
such that $(L_2|N,v)$ is defectless. Since the finite tame extension
$(N|K,v)$ is defectless, we know by Lemma~\ref{md} that the finite
extension $(L_2|K,v)$ is defectless. So by our hypothesis, $(L_2,v)$ is
algebraically maximal. Thus, $(L_1|L_2,v)$ is not immediate and hence it
is defectless. By induction over the extensions in the tower, together
with repeated applications of Lemma~\ref{md}, this shows that
$(L.N|N,v)$ is defectless. From Proposition~\ref{dlta} it now follows
that $(L|K,v)$ is defectless.
\end{proof}

%
%
\subsection{Immediate extensions and pseudo Cauchy sequences}
%
\begin{lemma}                               \label{apCs}
Take an algebraic extension $(K(a)|K,v)$ and let $f\in K[X]$ be the
minimal polynomial of $a$ over $K$. Suppose that $(c_\nu)_{\nu<\lambda}$
is a pseudo Cauchy sequence in $K$ without limit in $K$, having
$a$ as a limit in $L$. Then $(c_\nu)_{\nu<\lambda}$ is of algebraic
type, and for some $\mu<\lambda$, the values
$(vf(c_\nu))_{\mu<\nu<\lambda}$ are strictly increasing. If in addition,
the extension of $v$ from $K$ to $K(a)$ is unique, then the sequence of
values is cofinal in $v\im_K (f)$, and in particular, $v\im_K (f)$ has
no maximal element.
\end{lemma}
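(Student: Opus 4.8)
The plan is to analyze the sequence $(c_\nu)$ via the standard Kaplansky machinery for pseudo Cauchy sequences (pCs), exploiting that $a$ is a limit of $(c_\nu)$ in $L$ but the sequence has no limit in $K$. Recall that by definition of a pseudo Cauchy sequence the values $v(c_\rho - c_\sigma)$ are strictly increasing for $\rho<\sigma$ in the index set. The first step is to establish that $(c_\nu)$ is of algebraic type rather than transcendental. By Kaplansky's dichotomy, a pCs in $K$ is of algebraic type precisely when there exists a polynomial $g\in K[X]$ such that $v g(c_\nu)$ is eventually strictly increasing; otherwise it is of transcendental type. Since $a$ is algebraic over $K$ with minimal polynomial $f$, and $(c_\nu)\to a$, I would show that $f$ itself serves as such a polynomial, which simultaneously yields both the algebraic-type conclusion and the eventual strict increase of $(vf(c_\nu))$.

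Concretely, I would expand $f$ in a Taylor expansion about the limit $a$: writing $f(c_\nu) = \sum_{i\ge 1} \tfrac{f^{(i)}(a)}{i!}\,(c_\nu-a)^i$ (using $f(a)=0$), and similarly expanding about a fixed $c_\mu$. The key quantitative input is that $v(c_\nu - a)$ is strictly increasing and cofinal with the increasing sequence $v(c_\rho-c_\nu)$ for large indices, since $a$ is a limit of the pCs. For $i\ge 1$ the term of lowest index $i=1$ should dominate the value of the sum once $v(c_\nu-a)$ is large enough, because the values of the higher-order terms grow at least twice as fast. Thus for $\nu$ beyond some $\mu$ one gets $vf(c_\nu) = v f'(a) + v(c_\nu - a)$ up to the dominance argument, and since $v(c_\nu-a)$ is strictly increasing, so is $vf(c_\nu)$. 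This gives the algebraic-type conclusion and the strict increase on $(\mu,\lambda)$ in one stroke. I would need to handle the degenerate possibility $f'(a)=0$; but as $f$ is the minimal polynomial and $v$ is a valuation, one checks that the lowest-order nonvanishing derivative term still controls the value, so the argument persists with $vf'(a)$ replaced by the value of the appropriate Taylor coefficient.

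For the final assertion, assume in addition that the extension of $v$ to $K(a)$ is unique. The goal is to show that the increasing values $vf(c_\nu)$ are cofinal in $v\operatorname{im}_K(f)$, so that this set has no maximum. The natural approach is by contradiction: suppose some $b\in K$ gave $vf(b)$ strictly larger than all the $vf(c_\nu)$, or at least not below the cofinal tail. I would compare $vf(b)$ with $vf(c_\nu)$ using that $f(b)-f(c_\nu)$ is controlled by $v(b-c_\nu)$ through the Taylor expansion again, and that $v(b-c_\nu)\le v(c_\nu-a)$ cannot keep up with the strictly increasing tail unless $b$ is itself a pseudo-limit of $(c_\nu)$. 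But a pseudo-limit of $(c_\nu)$ lying in $K$ is exactly what the hypothesis forbids. Here the uniqueness of the extension is essential: it ensures that $v(b-c_\nu)$ and $v(a-c_\nu)$ interact in $K(a)$ the way the valuation on $K(a)$ dictates, with no ambiguity from multiple extensions, so that $vf(b)$ cannot eventually exceed the increasing sequence $vf(c_\nu)$.

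The main obstacle I anticipate is the cofinality argument in the last step, specifically ruling out an element $b\in K$ whose value $vf(b)$ sits above the entire increasing tail. The delicate point is translating "$b$ is not a pseudo-limit in $K$" into a bound forcing $vf(b)$ back down into the cofinal range; this requires the uniqueness of the valuation extension to pin down $v(b-a)$ relative to the $v(c_\nu-a)$, and to guarantee that the breakthrough value $v(a-c_\nu)$ governing the tail is genuinely unattained by any $K$-element. I would lean on the standard pCs-limit characterization (an element is a pseudo-limit iff its distance to the $c_\nu$ is eventually the breakthrough value) and on the fact that absence of a limit in $K$ means no $K$-element achieves this, converting the cofinality claim into the non-existence of a $K$-pseudo-limit.
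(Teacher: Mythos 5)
Your skeleton is the same as the paper's: use $f$ itself as the witness of algebraic type, and reduce cofinality to the non-existence of a limit of $(c_\nu)_{\nu<\lambda}$ in $K$ via Lemma~3 of [Ka]. But your central quantitative claim in the first part is false. You assert that once $\gamma_\nu:=v(c_\nu-a)$ is ``large enough'', the first-order Taylor term dominates, so that $vf(c_\nu)=vf'(a)+\gamma_\nu$ eventually. What you are missing is that the strictly increasing sequence $(\gamma_\nu)$ need not ever pass the fixed thresholds this dominance requires; in the very situations this lemma is built for it is bounded above by $0$ (Corollary~\ref{wpC}). Take Example~\ref{examp1} of the paper: $K=k(t)^{1/p^\infty}$, $f(X)=X^p-X-\frac{1}{t}$, $a=\vartheta$, $c_\nu=a_\nu$, $\gamma_\nu=-\frac{1}{p^{\nu+1}}<0$. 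Here $f(X)=(X-\vartheta)^p-(X-\vartheta)$, so $vf(c_\nu)=\min\{p\gamma_\nu,\gamma_\nu\}=p\gamma_\nu$, whereas your formula predicts $vf'(\vartheta)+\gamma_\nu=\gamma_\nu$ (note $f'=-1\neq 0$, so this is not the inseparability issue you set aside: the lowest-order nonvanishing term simply does not control the value). The failure is visible in the paper's proof: \emph{all} $p$ roots $\vartheta+i$, $i\in\F_p$, of $f$ are limits of the sequence, because $v(c_\nu-\vartheta-i)=\min\{\gamma_\nu,0\}=\gamma_\nu$ for $i\neq 0$, and each of them contributes a strictly increasing factor to $vf(c_\nu)=\sum_{j=1}^{n}v(c_\nu-a_j)$; your argument implicitly assumes $a$ is the only such root. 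The conclusion survives, but by a different argument: either the paper's (split the roots into limits, whose factors increase strictly, and non-limits, whose factors are eventually constant by Lemma~3 of [Ka]), or a repaired expansion argument: writing $f(X)=\sum_{i\geq 1}d_i(X-a)^i$ (Hasse--Taylor coefficients; your $f^{(i)}(a)/i!$ is not available in characteristic $p$), one has $vf(c_\nu)\geq\min_i(vd_i+i\gamma_\nu)$ with equality whenever the minimum is attained by a single $i$; since $(\gamma_\nu)$ is strictly increasing, two branches $vd_i+i\gamma_\nu$ and $vd_j+j\gamma_\nu$ can coincide for at most one $\nu$ per pair $(i,j)$, and beyond those finitely many indices $vf(c_\nu)$ is a minimum of finitely many strictly increasing functions, hence strictly increasing.

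The second genuine gap is in the cofinality step: you name the right target (a $b\in K$ with $vf(b)$ above the whole tail would have to be a limit of $(c_\nu)$ in $K$, which is forbidden) and rightly insist that uniqueness of the extension is essential, but you never supply the mechanism, and a Taylor comparison of $f(b)-f(c_\nu)$ will not produce it. The obstruction is that $vf(b)=\sum_{j}v(b-a_j)$ can a priori be large because $b$ is close, in the fixed extension of $v$ to $\tilde{K}$, to some \emph{other} root $a_j$ of $f$, without $b$ being anywhere near a limit of the sequence; nothing in your sketch excludes this, and when the extension of $v$ to $K(a)$ is not unique it genuinely happens, which is why the hypothesis cannot be dropped. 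The paper excludes it by conjugation invariance: uniqueness means that $v$ and $v\circ\sigma$ agree on $K(a)$ for every $\sigma\in\mbox{\rm Aut}(\tilde{K}|K)$, whence $v(a_j-c)=v(\sigma a-c)=v\sigma(a-c)=v(a-c)$ for every root $a_j$ and every $c\in K$, and therefore $vf(c)=n\,v(a-c)$ for all $c\in K$. Only with this identity does ``$vf(b)>vf(c_\nu)$ for all $\nu$'' translate into ``$v(a-b)>v(a-c_\nu)$ for all $\nu$'', i.e.\ into $b$ being a limit in $K$ by Lemma~3 of [Ka], the desired contradiction; the same identity is what makes the increasing values $vf(c_\nu)$ cofinal in $v\im_K(f)$. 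Without this step, or an equivalent use of the uniqueness hypothesis, your proof of the last assertion does not close.
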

\begin{proof}
Write $f(X)=\prod_{i=1}^n (X-a_i)$ with $a=a_1$ and $a_i\in \tilde{K}$.
Since $a$ is a  limit of $(c_\nu)_{\nu<\lambda}$ we have that
$v(a-c_\nu)=v(c_{\nu+1}-c_\nu)$ is strictly increasing with $\nu$. The
same holds for $a_i$ in the place of $a$ if $a_i$ is also a limit
of $(c_\nu)_{\nu<\lambda}\,$. If it is not, then by Lemma~3 of [Ka]
there is some $\mu_i<\lambda$ such that $v(a-a_i)\leq v(a-c_{\mu_i})$.
For $\mu_i<\nu<\lambda$ we have that $v(a-c_{\mu_i})<v(a-c_\nu)$, which
by the ultrametric triangle law yields $v(a_i-c_\nu)=\min\{v(a-a_i),
v(a-c_\nu)\}=v(a-a_i)$, which does not depend on $\nu$. So if we take
$\mu$ to be the maximum of all these $\mu_i\,$, then for
$\mu<\nu<\lambda$, the value
\[vf(c_\nu)\;=\;v\prod_{i=1}^n (c_\nu-a_i)\;=\;
\sum_{i=1}^{n}v(c_\nu-a_i)\]
is strictly increasing with $\nu$. Consequently, $(c_\nu)_{\nu<\lambda}$
is of algebraic type.

Now assume in addition that the extension of $v$
from $K$ to $K(a)$ is unique. Thus for all $\sigma\in\mbox{\rm Aut}
(\tilde{K}|K)$, the valuations $v$ and $v\circ\sigma$ agree on $K(a)$.
Choosing $\sigma$ such that $\sigma a=a_i\,$, we find that
\[v(a_i-c)\;=\;v(\sigma a-c)\;=\;v\sigma (a-c)\;=\;v(a-c)\]
for all $c\in K$. So we obtain that
\[vf(c)\;=\;\sum_{i=1}^{n}v(c-a_i)\;=\;nv(a-c)\;.\]
Suppose that there is some $c\in K$ such that
\[nv(a-c)\;=\; vf(c)\;>\;vf(c_\nu)\;=\;nv(a-c_\nu)\]
for all $\nu$. This implies that $v(a-c)>v(a-c_\nu)$ which by Lemma~3 of
[Ka] means that $c\in K$ is a limit of $(c_\nu)_{\nu<\lambda}$,
contradicting our hypothesis. This proves that the sequence
$(vf(c_\nu))_{\mu<\nu<\lambda}$ is cofinal in $v\im_K (f)$. Since it has
no last element, it follows that $v\im_K (f)$ has no maximal element.
\end{proof}

\begin{corollary}                           \label{expCsalg}
If $K$ admits a proper immediate algebraic extension, then there is a
pseudo Cauchy sequence of algebraic type in $K$ without a limit in $K$.
\end{corollary}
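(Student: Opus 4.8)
The plan is to combine the standard correspondence between proper immediate extensions and limitless pseudo Cauchy sequences, which goes back to Kaplansky [Ka], with Lemma~\ref{apCs}.

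First I would choose an element $a$ in a proper immediate algebraic extension $(L|K,v)$ with $a\notin K$, and let $f\in K[X]$ be its minimal polynomial. The heart of the matter is to produce a pseudo Cauchy sequence $(c_\nu)_{\nu<\lambda}$ in $K$ that has no limit in $K$ but has $a$ as a limit. To this end, consider the set $\{v(a-c)\mid c\in K\}$. Since $a\notin K$ and $L|K$ is immediate, we have $v(a-c)\in vK$ for every $c\in K$, so this set lies in $vK$. I claim it has no maximal element: if $\gamma=v(a-c_0)$ were the maximum, I would pick $d\in K$ with $vd=\gamma$, so that $(a-c_0)/d$ is a $v$-unit; because $L|K$ is immediate, its residue lies in $Kv$, so there is $e\in K$ with $v\bigl((a-c_0)/d-e\bigr)>0$, i.e.\ $v(a-c_0-ed)>\gamma$, contradicting maximality.

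Given this, I would select elements $c_\nu\in K$ so that the values $v(a-c_\nu)$ form a strictly increasing sequence cofinal in $\{v(a-c)\mid c\in K\}$; the ultrametric triangle law then shows that $(c_\nu)_{\nu<\lambda}$ is a pseudo Cauchy sequence and that $a$ is a limit of it. It has no limit in $K$: by Lemma~3 of [Ka], any limit $c\in K$ would satisfy $v(a-c)>v(a-c_\nu)$ for all $\nu$, which is impossible since the values $v(a-c_\nu)$ are cofinal in $\{v(a-c)\mid c\in K\}$. Finally, since $a$ is algebraic over $K$ and $(c_\nu)_{\nu<\lambda}$ is a limitless pseudo Cauchy sequence in $K$ having $a$ as a limit, Lemma~\ref{apCs} applies directly and yields that $(c_\nu)_{\nu<\lambda}$ is of algebraic type, which is exactly the assertion.

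The main obstacle is the construction of the limitless pseudo Cauchy sequence, and in particular the no-maximum claim, where the immediacy of the extension (both $vL=vK$ and $Lv=Kv$) is used in an essential way; the conclusion that the sequence is of algebraic type then comes for free from Lemma~\ref{apCs}. Since these constructions are standard in Kaplansky's theory, one could alternatively cite [Ka] for the existence of such a sequence and invoke Lemma~\ref{apCs} only at the very end.
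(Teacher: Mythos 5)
Your proof is correct and follows the same route as the paper: produce a pseudo Cauchy sequence in $K$ without limit in $K$ but having $a$ as a limit, then apply Lemma~\ref{apCs} to conclude it is of algebraic type. The only difference is that you reprove the existence of that sequence by hand (essentially reproducing the argument of Theorem~\ref{KT1}, i.e.\ Theorem~1 of [Ka], including the no-maximum claim via immediacy), whereas the paper simply cites Theorem~1 of [Ka] for this step, as you yourself note at the end.
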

\begin{proof}
Suppose that $K$ admits a proper immediate algebraic extension $L|K$,
and pick $a\in L\setminus K$. Then by Theorem~1 of [Ka], there is a
pseudo Cauchy sequence in $K$ without a limit in $K$, but having $a$ as
a limit. By the foregoing lemma, this pseudo Cauchy sequence is
of algebraic type.
\end{proof}

%
%
\subsection{Cuts and distances}             \label{sectcad}
Take any totally ordered set $(S,<)$. A \bfind{cut} $\Lambda$ in $S$
is a pair of sets $(\Lambda^{\rm L},\Lambda^{\rm R})$, where:
\sn
a)\ $\Lambda^{\rm L}$ is an \bfind{initial segment} of $S$, i.e., if
$\alpha\in\Lambda^{\rm L}$ and $\beta<\alpha$, then $\beta\in
\Lambda^{\rm L}$,
\n
b)\ $\Lambda^{\rm L}\cup\Lambda^{\rm R}=S$ and $\Lambda^{\rm L}\cap
\Lambda^{\rm R}=\emptyset$ (or equivalently, $\Lambda^{\rm R}=
S\setminus \Lambda^{\rm L}$).
\sn
Note that then, $\Lambda^{\rm R}$ is a \bfind{final segment} of $S$,
i.e., if $\alpha\in\Lambda^{\rm R}$ and $\beta>\alpha$, then
$\beta\in\Lambda^{\rm R}$.

\pars
If $\Lambda_1$ and $\Lambda_2$ are cuts in $S$, then we will write
$\Lambda_1<\Lambda_2$ if $\Lambda_1^{\rm L}\subsetuneq
\Lambda_2^{\rm L}$, and $\Lambda_1=\Lambda_2$ if $\Lambda_1^{\rm L}
=\Lambda_2^{\rm L}$. But we also want to compare two cuts $\Lambda_1$
and $\Lambda_2$ if $\Lambda_2$ is a cut in a totally ordered set $(T,<)$
and $\Lambda_1$ is a cut in some subset $S$ of $T$, endowed with the
restriction of $<$. Here we have at least two canonical ways of
comparison. What suits our purposes best is what could be called
\bfind{initial segment comparison} or just \bfind{left comparison}. (We
leave it to the reader to figure out the analogous definition for the
final segment comparison and to show that this leads to different
results.) For every cut $\Lambda$ in $S$, we define $\Lambda^{\rm L}
\uparrow T$ to be the least initial segment of $T$ containing
$\Lambda^{\rm L}\,$, that is, $\Lambda^{\rm L}\uparrow T$ is the unique
initial segment of $T$ in which $\Lambda^{\rm L}$ forms a cofinal
subset. Then we set
\[\Lambda\uparrow T\>:=\>(\,\Lambda^{\rm L}\uparrow T\,,\,T\setminus
(\Lambda^{\rm L}\uparrow T)\,)\;.\]
Observe that $\Lambda\mapsto\Lambda\uparrow T$ is an order preserving
embedding of the set of cuts of $S$ in the set of cuts in $T$. Now we
can write $\Lambda_1<\Lambda_2$ if $\Lambda_1\uparrow T<\Lambda_2$,
$\Lambda_1=\Lambda_2$ if $\Lambda_1\uparrow T=\Lambda_2$, and
$\Lambda_1>\Lambda_2$ if $\Lambda_1\uparrow T>\Lambda_2\,$. That is,
$\Lambda_1<\Lambda_2$ if $\Lambda_1^{\rm L}$ is contained but not
cofinal in $\Lambda_2^{\rm L}\,$, and $\Lambda_1=\Lambda_2$ if
$\Lambda_1^{\rm L}$ is a cofinal subset of $\Lambda_2^{\rm L}\,$.

\pars
We can embed $S$ in the set of all cuts of $S$ by sending $s\in S$
to the cut
\[s^+\;:=\; (\{t\in S\mid t\leq s\}\,,\,\{t\in S\mid t>s\})\;.\]
We identify $s$ with $s^+$. Then for any cut $\Lambda$, we have $s\leq
\Lambda$ if and only if $s\in\Lambda^{\rm L}$, and equality holds if and
only if $s$ is the maximal element of $\Lambda^{\rm L}$. We also define
\[s^-\;:=\; (\{t\in S\mid t< s\}\,,\,\{t\in S\mid t\geq s\})\;.\]
For any subset $M\subseteq S$, we let $M^+$ denote the cut
\[M^+\>=\>(\{s\in S\mid \exists m\in M: s\leq m\}\,,\,
\{s\in S\mid s>M\})\;.\]
That is, if $M^+=(\Lambda^{\rm L},\Lambda^{\rm R})$ then
$\Lambda^{\rm L}$ is the least initial segment of $S$ which contains
$M$, and $\Lambda^{\rm R}$ is the largest final segment which does not
meet $M$. If $M=\emptyset$ then $\Lambda^{\rm L}=\emptyset$ and
$\Lambda^{\rm R}=M$, and if $M=S$, then $\Lambda^{\rm L}=M$ and
$\Lambda^{\rm R}=\emptyset$. Symmetrically, we set
\[M^-\>=\>(\{s\in S\mid s<M\}\,,\,
\{s\in S\mid \exists m\in M: s\geq m\})\;.\]

\pars
Take two cuts $\Lambda_1=(\Lambda^{\rm L}_1,\Lambda^{\rm R}_1)$ and
$\Lambda_2= (\Lambda^{\rm L}_2,\Lambda^{\rm R}_2)$ in some ordered
abelian group. We let $\Lambda_1+\Lambda_2$ be the cut $(\Lambda^{\rm L},
\Lambda^{\rm R})$ defined by $\Lambda^{\rm L}:=\Lambda^{\rm L}_1+
\Lambda^{\rm L}_2$ (note that the sum of two initial segments is always
an initial segment). This is called the \bfind{left sum} of two cuts;
the \bfind{right sum} is defined by setting $\Lambda^{\rm R}:=
\Lambda^{\rm R}_1+ \Lambda^{\rm R}_2$. In general, left and right sum
are not equal. For instance, the left sum of $0^-$ and $0^+$ is $0^-$,
and the right sum is $0^+$. In this paper, we will only use the left
sum, without mentioning this any further.

We call a cut $\Lambda$ \bfind{idempotent} if $\Lambda+\Lambda=\Lambda$.
Lemma~\ref{charidemp} below will show that in a divisible ordered
abelian group, idempotency of a cut does not depend on whether we take
left or right sums.

We leave the easy proof of the following observation to the reader:
\begin{lemma}
If $G'\supset G$ is an extension of ordered abelian groups, then the
operation $\uparrow$ is an addition preserving embedding of the ordered
set of cuts in $G$ in the ordered set of cuts in $G'$.
\end{lemma}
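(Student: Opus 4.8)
The plan is to reduce everything to the left-hand parts of the cuts, since both the ordering of cuts and the left sum are defined solely through these initial segments. Recall that for a cut $\Lambda$ in $G$, the set $\Lambda^{\rm L}\uparrow G'$ is the least initial segment of $G'$ containing $\Lambda^{\rm L}$, that is, its downward closure $\{g'\in G'\mid g'\leq s \mbox{ for some } s\in\Lambda^{\rm L}\}$. The order-embedding part is precisely the order-preserving embedding already observed above; the one thing worth recording is why it is injective and reflects the order. Because $\Lambda^{\rm L}$ is an initial segment of $G$, one has $(\Lambda^{\rm L}\uparrow G')\cap G=\Lambda^{\rm L}$, so $\Lambda^{\rm L}$ can be recovered from $\Lambda^{\rm L}\uparrow G'$ by intersecting with $G$. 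Monotonicity of the downward closure then gives order preservation, and intersecting back with $G$ gives both order reflection and injectivity.

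The substantial point is that $\uparrow$ preserves the left sum, that is,
\[(\Lambda_1+\Lambda_2)\uparrow G'\;=\;(\Lambda_1\uparrow G')+(\Lambda_2\uparrow G')\,.\]
As each side is determined by its left part, and the left part of a left sum is the sumset of the left parts, this reduces to the identity $(A+B)^{\downarrow}=A^{\downarrow}+B^{\downarrow}$, where $A:=\Lambda_1^{\rm L}$, $B:=\Lambda_2^{\rm L}$ are initial segments of $G$ and $(\,\cdot\,)^{\downarrow}$ denotes the downward closure in $G'$. The inclusion $A^{\downarrow}+B^{\downarrow}\subseteq(A+B)^{\downarrow}$ is immediate: if $x\leq a$ and $y\leq b$ with $a\in A$, $b\in B$, then $x+y\leq a+b\in A+B$. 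For the reverse inclusion I would use the group structure: given $g'\in(A+B)^{\downarrow}$, choose $a\in A$, $b\in B$ with $g'\leq a+b$, and write $g'=(g'-b)+b$; here $b\in B\subseteq B^{\downarrow}$, while $g'-b\leq a\in A$ shows $g'-b\in A^{\downarrow}$, so $g'\in A^{\downarrow}+B^{\downarrow}$.

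The proof is genuinely routine, as the author indicates. The only place where anything is actually used is the reverse inclusion above, where the existence of additive inverses in $G'$ lets us peel off the summand $b$; and the identification of $\Lambda^{\rm L}\uparrow G'$ with a downward closure relies on $\Lambda^{\rm L}$ being an initial segment (so that it is cofinal in its downward closure and meets $G$ in exactly itself). Everything else is monotonicity and bookkeeping, so I expect no real obstacle beyond keeping the left-part reductions straight.
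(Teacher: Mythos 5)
Your proof is correct. The paper gives no proof of this lemma (it is explicitly left to the reader), so there is nothing to compare against; your argument — reducing everything to left parts, identifying $\Lambda^{\rm L}\uparrow G'$ with the downward closure of $\Lambda^{\rm L}$ in $G'$, and establishing $(A+B)^{\downarrow}=A^{\downarrow}+B^{\downarrow}$ by peeling off the summand $b$ via additive inverses — is precisely the routine verification the author intended, with the two genuinely non-vacuous points (order reflection via $(\Lambda^{\rm L}\uparrow G')\cap G=\Lambda^{\rm L}$, and the reverse inclusion in the sumset identity) correctly identified and handled.
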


If $S$ is a subset of an ordered abelian group $G$ and $n\in\N$, we set
$n\cdot S:=\{n\alpha\mid\alpha\in S\}$ and $nS:=\{\alpha_1+\ldots+
\alpha_n \mid\alpha_1,\ldots,\alpha_n\in S\}$. If $\Lambda^{\rm L}$ is
an initial segment, then $n\Lambda^{\rm L}$ is again an initial segment,
and $n\cdot\Lambda^{\rm L}$ is cofinal in $n\Lambda^{\rm L}$. If in
addition $G$ is $n$-divisible, then $n\cdot\Lambda^{\rm L}=n\Lambda^{\rm
L}$. Corresponding assertions hold for $\Lambda^{\rm R}$ in the place of
$\Lambda^{\rm L}$.

In every ordered abelian group, $n\cdot\Lambda^{\rm L}$ and
$n\Lambda^{\rm L}$ define the same cut $n\Lambda:=(n\cdot\Lambda^{\rm
L})^+=(n\Lambda^{\rm L})^+$. Note that $n\Lambda$ coincides with the
$n$-fold (left) sum of $\Lambda$.

\begin{lemma}                                    \label{charidemp}
Let $\Lambda=(\Lambda^{\rm L},\Lambda^{\rm R})$ be a cut in some ordered
abelian group $\Gamma$, and $n>1$ a fixed natural number. The following
assertions are equivalent:
\sn
a) \ $\Lambda$ is idempotent,\nn
b) \ $\Lambda^{\rm L}+\Lambda^{\rm L}=\Lambda^{\rm L}$,\nn
c) \ $i\Lambda = \Lambda$ for every natural number $i>1$,\nn
d) \ $n\Lambda = \Lambda$.
\sn
If $\,\Gamma$ is divisible, then these assertions are also equivalent to
each of the following:
\sn
e) \ $\Lambda^{\rm R}+\Lambda^{\rm R}=\Lambda^{\rm R}$,\nn
f) \ $n\cdot\Lambda^{\rm L}=\Lambda^{\rm L}$,\nn
g) \ $n\cdot\Lambda^{\rm R}=\Lambda^{\rm R}$,\nn
h) \ $\forall\alpha\in \Gamma:\> \alpha\in\Lambda^{\rm L}
\Leftrightarrow n\alpha\in\Lambda^{\rm L}$,\nn
i) \ $\forall\alpha\in \Gamma:\> \alpha\in\Lambda^{\rm R}
\Leftrightarrow n\alpha\in\Lambda^{\rm R}$,\nn
k) \ $\Lambda=H^+$ or $\Lambda=H^-$ for some convex subgroup $H$ of
$\Gamma$.
\end{lemma}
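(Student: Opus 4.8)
The plan is to prove the chain in two layers: first the four assertions a)--d), which hold in an arbitrary ordered abelian group $\Gamma$, and then the remaining ones under the divisibility hypothesis. For the first layer I would observe that a)$\Leftrightarrow$b) is immediate from the definition of the left sum (the left part of $\Lambda+\Lambda$ is $\Lambda^{\rm L}+\Lambda^{\rm L}$) together with the definition of equality of cuts. Next b)$\Rightarrow$c) follows by induction, since the $i$-fold left sum $i\Lambda$ has left part $i\Lambda^{\rm L}$, and $\Lambda^{\rm L}+\Lambda^{\rm L}=\Lambda^{\rm L}$ forces $i\Lambda^{\rm L}=\Lambda^{\rm L}$ for every $i$; and c)$\Rightarrow$d) is the special case $i=n$. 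The only step of the first layer that needs an idea is d)$\Rightarrow$b), for which I would first record a monotonicity observation governed by the sign of $0$: if $0\in\Lambda^{\rm L}$ then $k\Lambda^{\rm L}\subseteq(k+1)\Lambda^{\rm L}$ (add $0$), whereas if $0\notin\Lambda^{\rm L}$, so that every element of $\Lambda^{\rm L}$ is negative, then $(k+1)\Lambda^{\rm L}\subseteq k\Lambda^{\rm L}$ (adding a negative element keeps one inside the initial segment $k\Lambda^{\rm L}$). Since d) asserts $n\Lambda^{\rm L}=\Lambda^{\rm L}$, in either case the chain $\Lambda^{\rm L},2\Lambda^{\rm L},\ldots,n\Lambda^{\rm L}$ is squeezed between its first and last terms, so all terms coincide; in particular $2\Lambda^{\rm L}=\Lambda^{\rm L}$, which is b).

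For the divisible layer I would add the chain d)$\Leftrightarrow$f)$\Leftrightarrow$h) on the left and its mirror e)$\Leftrightarrow$g)$\Leftrightarrow$i) on the right. Using the fact recorded just before the lemma that $n\cdot\Lambda^{\rm L}=n\Lambda^{\rm L}$ when $\Gamma$ is $n$-divisible, d) ($n\Lambda^{\rm L}=\Lambda^{\rm L}$) is literally f) ($n\cdot\Lambda^{\rm L}=\Lambda^{\rm L}$); and f)$\Leftrightarrow$h) is just unwinding the biconditional in h), whose two directions say $n\cdot\Lambda^{\rm L}\subseteq\Lambda^{\rm L}$ and, after applying the bijection $\alpha\mapsto n\alpha$, $\Lambda^{\rm L}\subseteq n\cdot\Lambda^{\rm L}$. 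Reversing the order on $\Gamma$ interchanges $\Lambda^{\rm L}$ with $\Lambda^{\rm R}$, left sums with right sums, and $H^+$ with $H^-$, so the identical arguments deliver e)$\Leftrightarrow$g)$\Leftrightarrow$i).

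It then remains to tie these two blocks to k), and this is the heart of the matter. The easy direction k)$\Rightarrow$b) (and, by the order-reversal symmetry, k)$\Rightarrow$e)) is a direct computation: for a convex subgroup $H$ the left part of $H^+$ is $\{x\le 0\}\cup(H\cap\{x>0\})$ and that of $H^-$ is $\{x<0\}\setminus H$, and each of these is closed under addition, the one inclusion $\Lambda^{\rm L}\subseteq\Lambda^{\rm L}+\Lambda^{\rm L}$ for $H^-$ using halving and hence divisibility. For the substantial direction b)$\Rightarrow$k) I would first upgrade idempotency to a cone property: c) gives $i\Lambda^{\rm L}=\Lambda^{\rm L}$, i.e. $i\cdot\Lambda^{\rm L}=\Lambda^{\rm L}$ by divisibility, for every positive integer $i$, and since $\alpha\mapsto i\alpha$ is a bijection of the torsion-free divisible group $\Gamma$ one also gets $\frac1i\cdot\Lambda^{\rm L}=\Lambda^{\rm L}$; hence $q\cdot\Lambda^{\rm L}=\Lambda^{\rm L}$ for every $q\in\Q$ with $q>0$. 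I would then split on whether $0\in\Lambda^{\rm L}$. If $0\notin\Lambda^{\rm L}$, set $H:=\Lambda^{\rm R}\cap(-\Lambda^{\rm R})$; this set contains $0$, is symmetric, is scale-invariant (being built from the scale-invariant $\Lambda^{\rm R}$), and is convex, and scale-invariance together with convexity forces closure under addition, so $H$ is a convex subgroup. A short computation identifies $\Lambda^{\rm L}=\{x<0\}\setminus H=H^{-,{\rm L}}$, i.e. $\Lambda=H^-$. If $0\in\Lambda^{\rm L}$ the symmetric choice $H:=\Lambda^{\rm L}\cap(-\Lambda^{\rm L})$ yields $\Lambda=H^+$. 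Combining, b)$\Leftrightarrow$k)$\Leftrightarrow$e), which together with the first block and the two mirror chains gives all the asserted equivalences.

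The step I expect to be the real obstacle is b)$\Rightarrow$k), and inside it the verification that $H=\Lambda^{\rm R}\cap(-\Lambda^{\rm R})$ is genuinely a subgroup. Convexity and symmetry are immediate, but closure under addition is precisely where one must exploit both the cone property and divisibility (for $x,y\in H$ one bounds $x+y$ inside the interval $[-2z,2z]$ with $z=\max(|x|,|y|)$ and uses $\pm 2z\in H$ by scaling), and it is exactly this closure that rules out the "gap'' cuts lying strictly below $0^-$ which are idempotent but not of the form $H^\pm$ in non-divisible groups. I would therefore take care to isolate and state the cone property as a separate observation before constructing $H$.
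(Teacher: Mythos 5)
Your proof is correct, and outside part k) it is essentially the paper's own argument: the same monotonicity squeeze $\Lambda\leq 2\Lambda\leq\ldots\leq n\Lambda$ (or its reverse, according to the side on which $0$ lies) handles d)$\Rightarrow$b), and the same two facts --- $n\cdot\Lambda^{\rm L}=n\Lambda^{\rm L}$ under divisibility, and bijectivity of $\alpha\mapsto n\alpha$ --- handle f), g), h), i); your order-reversal symmetry for e)$\Leftrightarrow$g)$\Leftrightarrow$i) (and for e)$\Rightarrow$k)) repackages what the paper does by observing that e) is the $n=2$ instance of g). The genuine divergence is in k). Your $H$ (namely $\Lambda^{\rm L}\cap(-\Lambda^{\rm L})$ or $\Lambda^{\rm R}\cap(-\Lambda^{\rm R})$) is the same set as the paper's $\{\pm\alpha\mid 0\leq\alpha\in\Lambda^{\rm L}\}\cup\{\pm\alpha\mid 0\geq\alpha\in\Lambda^{\rm R}\}$, but the proofs that it is a subgroup differ: you first upgrade b) to the cone property $q\cdot\Lambda^{\rm L}=\Lambda^{\rm L}$ for all rational $q>0$ (which costs divisibility and the chain b)$\Rightarrow$c)) and then deduce additive closure from scale-invariance plus convexity via $-2z\leq x+y\leq 2z$; the paper instead notes that $\Lambda=H^+$ resp.\ $H^-$ holds automatically, and that additive closure of $H$ is \emph{equivalent}, by a purely additive computation, to b) when $0\in\Lambda^{\rm L}$ and to e) when $0\in\Lambda^{\rm R}$, after which it quotes the already-proved equivalence b)$\Leftrightarrow$e), so that divisibility enters k) only through that equivalence. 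The paper's route is leaner and delivers k)$\Leftrightarrow$b) in one stroke; yours pays extra for the cone property but makes the geometric content explicit (idempotent cut sets in divisible groups are convex cones, hence edges of convex subgroups), and your separate verification of k)$\Rightarrow$b),e) --- with halving needed exactly in the $H^-$ case --- correctly isolates where divisibility is indispensable, consistently with Example~\ref{exampcut}, which shows that k) does not imply a) in non-divisible groups.
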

\begin{proof}
The equivalence of a) and b) holds by definition. a) $\Rightarrow$ c) is
proved by induction on $i$. Further, c) $\Rightarrow$ d) is trivial. We
have that $\Lambda\leq 2\Lambda\leq\ldots \leq n\Lambda$ or $\Lambda\geq
2\Lambda\geq\ldots \geq n\Lambda$, depending on whether $0\in
\Lambda^{\rm L}$ or $0\in \Lambda^{\rm R}$. Thus, $n\Lambda= \Lambda$
implies $\Lambda=2\Lambda=\Lambda+\Lambda$; that is, d) implies a).

\pars
Now assume that $\Gamma$ is divisible. Then $\alpha\mapsto n\alpha$
and $\alpha\mapsto \frac{1}{n}\alpha$ are order preserving isomorphisms.
Therefore, f) and g) are equivalent. The equivalence of f) with d) holds
since the divisibility implies that $n\cdot \Lambda^{\rm L}=n
\Lambda^{\rm L}$.
%
%
Further, f) is equivalent to
$n\Lambda^{\rm L}\subseteq\Lambda^{\rm L}\,\wedge\,\Lambda^{\rm L}
\subseteq n \Lambda^{\rm L}$, and this in turn is equivalent to
$n\Lambda^{\rm L}\subseteq\Lambda^{\rm L}\,\wedge \,\frac{1}{n}
\Lambda^{\rm L}\subseteq\Lambda^{\rm L}$. This is equivalent to
$\forall\alpha\in \Gamma:\> \alpha\in\Lambda^{\rm L} \Rightarrow
n\alpha\in\Lambda^{\rm L} \,\wedge\,\alpha\in\Lambda^{\rm L} \Rightarrow
\frac{1}{n}\alpha\in\Lambda^{\rm L}$. But the latter implication can be
reformulated as $n\alpha\in\Lambda^{\rm L}\Rightarrow\alpha\in
\Lambda^{\rm L}$. This proves that f) and h) are equivalent. In the same
way, the equivalence of g) with i) is proved.

As for $\Lambda^{\rm L}$, divisibility also implies that
$n\cdot \Lambda^{\rm R}=n\Lambda^{\rm R}$. Taking $n=2$ in what we have
already proved, we see that e) is equivalent to the $n=2$ case of g),
and hence to a).

Finally, it remains to show the equivalence of k) with the other
conditions. Set
\[H\;:=\;\{\pm \alpha\mid 0\leq \alpha\in \Lambda^{\rm L}\}\,\cup\,
\{\pm \alpha\mid 0\geq\alpha\in \Lambda^{\rm R}\}\;.\]
Note that exactly one of the two sets is empty, depending on whether
$0\in \Lambda^{\rm L}$ or $0\in \Lambda^{\rm R}$. It is easy to see that
$\Lambda=H^+$ if $0\in \Lambda^{\rm L}$ and $\Lambda=H^-$ if $0\in
\Lambda^{\rm R}$. Hence, it suffices to prove that $H$ is a convex
subgroup if and only if $\Lambda^{\rm L}+\Lambda^{\rm L}=\Lambda^{\rm
L}$. Observe that $H$ is always convex and closed under $\alpha\mapsto
-\alpha$. Hence, $H$ is a convex subgroup if and only if it is closed
under addition. In the case of $0\in \Lambda^{\rm L}$, this holds if and
only if $\Lambda^{\rm L}+\Lambda^{\rm L}=\Lambda^{\rm L}$, and in the
case of $0\in \Lambda^{\rm R}$, this holds if and only if $\Lambda^{\rm
R}+\Lambda^{\rm R}=\Lambda^{\rm R}$. But as we have already shown that
b) and e) are equivalent, we see that k) is equivalent with b).
\end{proof}

In a non-divisible group, a condition like $\forall i\in\N:
i\Lambda^{\rm L}=
\Lambda^{\rm L}$ can only hold if $\Lambda^{\rm L}$ is empty, and
condition a) is in general not equivalent to e), h), i), k):
\begin{example}                             \label{exampcut}
Take $\Gamma:=\Z\times\Q$ with the lexicographic ordering, and set
\[\Lambda\>:=\>(\{(m,q)\mid -1\geq m\in\Z\,,\,q\in\Q\}\,,\,
\{(m,q)\mid 0\leq m\in\Z\,,\,q\in\Q\})\;.\]
Then $\Lambda$ satisfies e), h), i), and $\Lambda=H^-$ for the convex
subgroup $H=\{0\}\times\Q$ of $\Gamma$. But $\Lambda$ is not idempotent
since
\sn
$\Lambda+\Lambda\>=$\n
$(\{(m,q)\mid -2\geq m\in\Z\,,\,q\in\Q\}\,,\,
\{(m,q)\mid -1\leq m\in\Z\,,\,q\in\Q\})\><\>\Lambda$.
\sn
On the other hand, the cut induced by $\Lambda$ in the divisible hull
$\tilde{\Gamma}$ of $\Gamma$ is
\sn
$\Lambda\uparrow\tilde{\Gamma}\>=$\n
$(\{(m,q)\mid -1\geq m\in\Q\,,\,q
\in \Q\}\,,\, \{(m,q)\mid -1< m\in\Q\,,\,q\in\Q\})$.
\sn
Since $(-\frac{1}{2},0)$ is in the right cut set while
$2(-\frac{1}{2},0)=(1,0)$ is in the left cut set, this cannot be equal
to $H^+$ or $H^-$ for any convex subgroup $H$ of $\,\tilde{\Gamma}$.
\end{example}

\parm
Take any extension $(L|K,v)$ of valued fields, and $z\in L$. We define
\[\Lambda^{\rm L}(z,K)\;:=\;\{v(z-c)\mid c\in K \mbox{ and } v(z-c)\in
vK\}\;.\]
Further, we set $\Lambda^{\rm R}(z,K):=vK\setminus \Lambda^{\rm L}(z,K)$.

\begin{lemma}
$\Lambda^{\rm L}(z,K)$ is an initial segment of $vK$. Thus,
$\Lambda^{\rm R}(z,K)=\{\alpha\in vK\mid \forall c\in K:\, v(z-c)<
\alpha\}$, and $(\Lambda^{\rm L}(z,K)\,,\,\Lambda^{\rm R}(z,K))=
\Lambda^{\rm L}(z,K)^+$ is a cut in $vK$.
\end{lemma}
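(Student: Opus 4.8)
The plan is to establish the three assertions in turn, the only real content being the initial-segment claim, which rests on a single application of the ultrametric triangle law.

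First I would prove that $\Lambda^{\rm L}(z,K)$ is an initial segment of $vK$. Take $\alpha\in\Lambda^{\rm L}(z,K)$, say $\alpha=v(z-c)$ with $c\in K$, and let $\beta\in vK$ with $\beta<\alpha$; I must produce $c'\in K$ with $v(z-c')=\beta$. Choosing $d\in K$ with $vd=\beta$ and setting $c':=c-d$, one has $z-c'=(z-c)+d$, and since $vd=\beta<\alpha=v(z-c)$ the ultrametric law gives $v(z-c')=\min\{\alpha,\beta\}=\beta\in vK$. Hence $\beta\in\Lambda^{\rm L}(z,K)$, as required.

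Next I would identify $\Lambda^{\rm R}(z,K)$. The clean formulation is: for $\alpha\in vK$, one has $\alpha\in\Lambda^{\rm L}(z,K)$ if and only if there exists $c\in K$ with $v(z-c)\geq\alpha$. The implication from right to left is the crux. If $v(z-c)=\alpha$ we are done directly; if instead $v(z-c)>\alpha$, then picking $d\in K$ with $vd=\alpha$ and replacing $c$ by $c+d$, the same triangle-law computation as above gives $v(z-(c+d))=\min\{v(z-c),vd\}=\alpha$, so $\alpha\in\Lambda^{\rm L}(z,K)$; the converse is trivial. Negating this equivalence and recalling $\Lambda^{\rm R}(z,K)=vK\setminus\Lambda^{\rm L}(z,K)$ yields exactly $\Lambda^{\rm R}(z,K)=\{\alpha\in vK\mid\forall c\in K:\,v(z-c)<\alpha\}$.

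Finally, the cut statement follows formally. Since $\Lambda^{\rm R}(z,K)$ is defined as the complement of $\Lambda^{\rm L}(z,K)$ in $vK$, the pair automatically satisfies the two defining conditions a) and b) of a cut once we know $\Lambda^{\rm L}(z,K)$ is an initial segment. To see that the pair equals $\Lambda^{\rm L}(z,K)^+$, I would observe that for any initial segment $I$ of $vK$ the left set $\{s\in vK\mid\exists m\in I:\,s\leq m\}$ of $I^+$ collapses to $I$ itself, directly by the defining property of an initial segment; hence $I^+=(I,vK\setminus I)$, which is our pair with $I=\Lambda^{\rm L}(z,K)$. I expect no serious obstacle here; the only place demanding care is the triangle-law step, where one must keep track of the strict inequalities $\beta<\alpha$ (respectively $\alpha<v(z-c)$) so that the minimum in the ultrametric estimate is genuinely attained and equals the desired value, rather than surviving only as a lower bound.
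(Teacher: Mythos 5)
Your proof is correct and takes essentially the same approach as the paper: the initial-segment property is established by choosing $d\in K$ with $vd=\beta<\alpha=v(z-c)$ and applying the ultrametric triangle law to conclude $v(z-c\mp d)=\beta\in\Lambda^{\rm L}(z,K)$. If anything, you are more explicit than the paper, which proves only the initial-segment claim and leaves the identification of $\Lambda^{\rm R}(z,K)$ (which needs the same triangle-law trick once more, to exclude $c$ with $v(z-c)>\alpha$ and $v(z-c)\notin vK$) and the cut property to the reader under the word ``Thus''.
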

\begin{proof}
Take $\alpha\in\Lambda^{\rm L}(z,K)$ and $\beta\in vK$ such that
$\beta<\alpha$. Pick $c,d\in K$ such that $v(z-c)=\alpha$ and
$vd=\beta$. Then $\beta=vd=\min\{vd,v(z-c)\}=v(z-c-d)\in
\Lambda^{\rm L}(z,K)$. This proves that
$\Lambda^{\rm L}(z,K)$ is an initial segment of $vK$.
\end{proof}

We have seen in Lemma~\ref{charidemp} that in a divisible ordered
abelian group we have many nice characterizations of idempotent cuts; in
particular, we are very interested in characterization k) which shows
that idempotent cuts correspond to upper or lower edges of convex
subgroups. This is the reason for the following definition. Take an
element $z$ in any valued field extension $(L,v)$ of $(K,v)$. Then
the \bfind{distance of $z$ from $K$} is the cut
\[\dist(z,K)\;:=\;(\Lambda^{\rm L}(z,K),
\Lambda^{\rm R}(z,K))\uparrow\widetilde{vK}\]
in the divisible hull $\widetilde{vK}$ of $vK$.

\parm
Take two elements $y,z$ in some valued field extension of $(K,v)$. We
define
\[z\,\sim_K\, y\]
to mean that $v(z-y)>\dist(z,K)$. Note that by our identification of the
value $v(z-y)$ with the cut $v(z-y)^+$,
\[v(z-y)>\dist(z,K)\mbox{ \ \ if and only if \ \ }
v(z-y)>\Lambda^{\rm L}(z,K)\;.\]

\begin{lemma}                               \label{sim1}
\mbox{ }\n
1) \ If $z\sim_K y$ then $v(z-c)=v(y-c)$ for all $c\in K$ such that
$v(z-c)\in vK$, whence $\Lambda^{\rm L}(z,K)=\Lambda^{\rm L}(y,K)$ and
$\dist(z,K)=\dist(y,K)$.
\sn
2) \ If $\Lambda^{\rm L}(z,K)$ has no maximal element, then the
following are equivalent:
\n
a) \ $z\sim_K y$,
\n
b) \ $v(z-c)=v(y-c)$ for all $c\in K$ such that $v(z-c)\in vK$,
\n
c) \ $v(z-y)\geq\dist(z,K)$.
\end{lemma}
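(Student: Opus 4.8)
The plan is to reduce both parts to a single computation with the ultrametric triangle law applied to $y-c=(y-z)+(z-c)$, after rewriting the relation $z\sim_K y$ in the pointwise form recorded just before the lemma: $z\sim_K y$ holds exactly when $v(z-y)>\alpha$ for every $\alpha\in\Lambda^{\rm L}(z,K)$.

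For 1), I would fix $c\in K$ with $\alpha:=v(z-c)\in vK$, so that $\alpha\in\Lambda^{\rm L}(z,K)$ and hence $v(y-z)=v(z-y)>\alpha=v(z-c)$. The ultrametric law then forces $v(y-c)=\min\{v(y-z),v(z-c)\}=v(z-c)$, which is the first assertion. Letting $c$ range over all such elements shows that every value attained as some $v(z-c)\in vK$ is also attained as $v(y-c)\in vK$, i.e.\ $\Lambda^{\rm L}(z,K)\subseteq\Lambda^{\rm L}(y,K)$; since $\dist$ is by definition the lift $\Lambda^{\rm L}(\cdot,K)\uparrow\widetilde{vK}$ and $\uparrow$ is order preserving, this already yields $\dist(z,K)\le\dist(y,K)$. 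To upgrade these to the full equalities $\Lambda^{\rm L}(z,K)=\Lambda^{\rm L}(y,K)$ and $\dist(z,K)=\dist(y,K)$ one still needs the reverse inclusion, which I discuss below.

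For 2) I would establish the cycle a)$\Rightarrow$b)$\Rightarrow$c)$\Rightarrow$a). The implication a)$\Rightarrow$b) is exactly the computation above. For b)$\Rightarrow$c) I argue by contraposition: if $v(z-y)<\dist(z,K)$, then because $\Lambda^{\rm L}(z,K)$ is cofinal in the left set of $\dist(z,K)$ there is $c\in K$ with $\alpha:=v(z-c)\in\Lambda^{\rm L}(z,K)$ and $\alpha>v(z-y)$; the same triangle-law computation now gives $v(y-c)=v(z-y)<\alpha=v(z-c)$, contradicting b). The no-maximal-element hypothesis is decisive for c)$\Rightarrow$a): since $\Lambda^{\rm L}(z,K)$ has no last element, the left set of $\dist(z,K)$ has none either, whereas the cut $v(z-y)^+$ attached to the single value $v(z-y)$ does have a last element; hence $v(z-y)\ge\dist(z,K)$ can never be an equality of cuts and must be the strict inequality of a).

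The step I expect to be the main obstacle is the reverse inclusion $\Lambda^{\rm L}(y,K)\subseteq\Lambda^{\rm L}(z,K)$ needed to complete 1), equivalently the symmetry of $\sim_K$. The difficulty is structural: $\Lambda^{\rm L}$ records only those approximation values $v(z-c)$ that happen to lie in $vK$, and a value $v(y-c)\in vK$ with $v(y-c)\ge v(z-y)$ need not be matched by any $v(z-c')\in vK$, because the corresponding $v(z-c')$ may fall into $vL\setminus vK$. Controlling this escape, and deciding under exactly which side conditions on $\Lambda^{\rm L}(z,K)$ (such as the absence of a maximal element, or the position of $v(z-y)$ relative to $vK$) it cannot occur, is where the real care is required; by contrast the forward direction and the equivalence in 2) are routine once the pointwise computation is in hand.
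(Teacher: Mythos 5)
Your part 2) and the pointwise assertion of part 1) are correct and essentially identical to the paper's own argument. The paper proves 1) by exactly your computation ($v(z-c)\in vK$ puts this value in $\Lambda^{\rm L}(z,K)$, so $v(z-y)>v(z-c)$ and the ultrametric law gives $v(y-c)=v(z-c)$); it gets a)$\Rightarrow$b) from this; it proves b)$\Rightarrow$c) directly (from $v(z-c)=v(y-c)$ one gets $v(z-y)\geq\min\{v(z-c),v(y-c)\}=v(z-c)$ for every admissible $c$), where you argue by contraposition --- an equivalent one-line ultrametric argument; and it proves c)$\Rightarrow$a) just as you do, from the fact that equality of the cuts would force $v(z-y)$ to be a maximal element of the left cut set, which the no-maximal-element hypothesis excludes.

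The reverse inclusion that you single out as the main obstacle is not something you failed to see how to do: it cannot be done, and your diagnosis of the mechanism (values $v(z-c')$ escaping into $vL\setminus vK$) is exactly the reason. The paper's proof of 1) consists solely of the pointwise computation; the clause ``whence $\Lambda^{\rm L}(z,K)=\Lambda^{\rm L}(y,K)$ and $\dist(z,K)=\dist(y,K)$'' receives no justification, and indeed only $\Lambda^{\rm L}(z,K)\subseteq\Lambda^{\rm L}(y,K)$ and $\dist(z,K)\leq\dist(y,K)$ follow, as you derived. In fact the stated equalities are false in general: let $K=k(t)$ carry the $t$-adic valuation, so $vK=\Z$, and in $L=K(t^{1/2})$ put $z=t^{1/2}$ and $y=t+t^{3/2}$ (so $y\notin K$; even $y=t\in K$ works). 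For every $c\in K$ we have $v(z-c)=\min\{\frac{1}{2},vc\}$, hence $\Lambda^{\rm L}(z,K)=\Z_{\leq 0}$ and $\dist(z,K)=0^+$, while $v(z-y)=\frac{1}{2}$ exceeds every element of $\Lambda^{\rm L}(z,K)$, so $z\sim_K y$. The pointwise assertion holds, as it must; but taking $c=0$ gives $v(y-0)=1\in vK$, so $1\in\Lambda^{\rm L}(y,K)\setminus\Lambda^{\rm L}(z,K)$, and $\dist(y,K)=1^+>0^+=\dist(z,K)$. The relation $\sim_K$ is genuinely asymmetric: here $z\sim_K y$ but not $y\sim_K z$. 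Nor does importing the hypothesis of part 2) rescue the equality: with $vK=\Z[\frac{1}{2}]$ (say $K=\bigcup_n k(t^{1/2^n})$), $y=t^{1/2}\in K$ and $z=t^{1/2}+t^{1/3}$, the set $\Lambda^{\rm L}(z,K)=\{q\in vK\mid q<\frac{1}{3}\}$ has no maximal element and $z\sim_K y$, yet $\Lambda^{\rm L}(y,K)=vK$.

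What is true, and what the paper actually needs, is part 1) under the additional hypothesis that $v(z-c)\in vK$ for every $c\in K$, which holds in particular when $(K(z)|K,v)$ is immediate (Theorem~\ref{KT1}). Then the pointwise equality applies to all $c\in K$, so the two sets of values $\{v(z-c)\mid c\in K\}\cap vK$ and $\{v(y-c)\mid c\in K\}\cap vK$ literally coincide, and the full equalities follow. In every place where the ``whence'' clause is invoked (the proofs of Lemma~\ref{fasimfbh} and of Lemma~\ref{l2}), the element in the role of $z$ does generate an immediate extension, so those applications survive. Your proposal should therefore be completed not by proving the reverse inclusion, but by adding this hypothesis to part 1), or else by weakening its conclusion to the inclusion $\Lambda^{\rm L}(z,K)\subseteq\Lambda^{\rm L}(y,K)$ and the inequality $\dist(z,K)\leq\dist(y,K)$ that you established.
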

\begin{proof}
1): \ Assume that $z\sim_K y$.
If $v(z-c)\in vK$, then $v(z-c)\in\Lambda^{\rm L}(z,K)$ and therefore,
$v(z-y)>v(z-c)$. Hence, $v(y-c)=\min\{v(z-c),v(z-y)\}=v(z-c)$.
\sn
2): \ The implication a)$\Rightarrow$b) follows from part 1). Now assume
that b) holds, and take any $c\in K$ such that $v(z-c)\in vK$. Then
because of $v(z-c)=v(y-c)$, we obtain $v(z-y)\geq\min\{v(z-c),v(y-c)\}
=v(z-c)$. This shows that $v(z-y)\geq\dist(z,K)$. We have proved that b)
implies c).

Since $\Lambda^{\rm L}(z,K)$ has no maximal element, $v(z-c)$ cannot be
the maximal element of $\Lambda^{\rm L}(z,K)$. Thus, $v(z-y)\geq\dist(z,K)$
implies $v(z-y)>\dist(z,K)$, which proves the implication
c)$\Rightarrow$a).
\end{proof}

\begin{lemma}                               \label{sim2}
If $(K,v)\subseteq (L,v)\subseteq (L(z),v)$, then
\begin{equation}                            \label{dged}
\mbox{\rm dist}(z,L)\>\geq\> \mbox{\rm dist}(z,K)\;.
\end{equation}
If ``$\,>$'' holds, then there exists an element $y\in L$ such that
$z\sim_K y$.
\end{lemma}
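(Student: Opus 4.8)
The plan is to reduce both assertions to the behaviour of the left cut sets $\Lambda^{\rm L}(z,K)$ and $\Lambda^{\rm L}(z,L)$ under the order-preserving embedding $\uparrow$, working inside the common divisible hull $\widetilde{vL}$; recall that $vK\subseteq vL$ gives $\widetilde{vK}\subseteq\widetilde{vL}$, and that $\dist(z,L)$ is a cut in $\widetilde{vL}$ while $\dist(z,K)$ is a cut in $\widetilde{vK}$, so the comparison is the left comparison of the lift $\dist(z,K)\uparrow\widetilde{vL}$ with $\dist(z,L)$.

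For the inequality (\ref{dged}) I would observe that every $c\in K$ is also an element of $L$, so whenever $c\in K$ and $v(z-c)\in vK\subseteq vL$, the value $v(z-c)$ also lies in $\Lambda^{\rm L}(z,L)$. Hence $\Lambda^{\rm L}(z,K)\subseteq\Lambda^{\rm L}(z,L)$ as subsets of $vL$. Now the left cut set of $\dist(z,K)\uparrow\widetilde{vL}$ is the least initial segment of $\widetilde{vL}$ containing $\Lambda^{\rm L}(z,K)$, while the left cut set of $\dist(z,L)$ is the least initial segment containing $\Lambda^{\rm L}(z,L)$; the containment of the generating sets forces the former initial segment into the latter. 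By the definition of left comparison, containment of left cut sets is precisely $\leq$ of cuts, so $\dist(z,K)\leq\dist(z,L)$.

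For the second assertion, assume the strict inequality $\dist(z,K)<\dist(z,L)$ holds. By definition of left comparison this means that the left cut set of $\dist(z,K)\uparrow\widetilde{vL}$ is contained but \emph{not cofinal} in the left cut set of $\dist(z,L)$. Since $\Lambda^{\rm L}(z,K)$ is cofinal in the former and $\Lambda^{\rm L}(z,L)$ is cofinal in the latter, non-cofinality yields an element $\delta\in\Lambda^{\rm L}(z,L)$ with $\delta>\alpha$ for every $\alpha\in\Lambda^{\rm L}(z,K)$. By the definition of $\Lambda^{\rm L}(z,L)$ we may write $\delta=v(z-y)$ for some $y\in L$ with $v(z-y)\in vL$. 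Then $v(z-y)>\Lambda^{\rm L}(z,K)$, which by the identification of $v(z-y)$ with the cut $v(z-y)^+$ is exactly $v(z-y)>\dist(z,K)$, i.e.\ $z\sim_K y$, as desired.

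The main obstacle is purely bookkeeping across the two divisible hulls: I must ensure that ``not cofinal'', phrased for the initial segments generated inside $\widetilde{vL}$, actually produces a witness of the form $v(z-y)$ with $y\in L$ rather than an abstract element of $\widetilde{vL}$. This goes through because the cofinal generating set $\Lambda^{\rm L}(z,L)$ consists precisely of such realized values, so any element of the left cut set of $\dist(z,L)$ that lies strictly above all of $\Lambda^{\rm L}(z,K)$ can be replaced by a (necessarily still larger) element of $\Lambda^{\rm L}(z,L)$, which is of the required form.
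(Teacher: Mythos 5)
Your proposal is correct and follows essentially the same route as the paper's proof: establish $\Lambda^{\rm L}(z,K)\subseteq\Lambda^{\rm L}(z,L)$ to get (\ref{dged}), and in the strict case extract an element $v(z-y)\in\Lambda^{\rm L}(z,L)$ lying above all of $\Lambda^{\rm L}(z,K)$, which is exactly $z\sim_K y$. The only difference is that you spell out the cofinality bookkeeping in $\widetilde{vL}$ that the paper's two-line proof leaves implicit.
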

\begin{proof}
Since $K\subseteq L$, we have that $\Lambda^{\rm L}(z,K)\subseteq
\Lambda^{\rm L}(z,L)$, whence (\ref{dged}). If ``$\,>$'' holds, then
there exists an element $y\in L$ such that $v(z-y)>\Lambda^{\rm L}
(z,K)$, i.e., $z\sim_K y$.
\end{proof}

We define
\[v(z-K)\;:=\;\{v(z-c)\mid c\in K\}\;=\;v\im_K(X-z)\]
Note that
\[\Lambda^{\rm L}(z,K)=v(z-K)\cap vK\>.\]
Hence if $vK(z)=vK$, then $\Lambda^{\rm L}(z,K)=v(z-K)$.

\begin{theorem}   {\rm\ \ \ (cf.\ Theorem 1 of [Ka])} \label{KT1}\n
Let $L$ be an immediate extension of $K$. Then for every element $z\in
L\setminus K$ it follows that $v(z-K)$ has no
maximal element and that $v(z-K)=\Lambda^{\rm L}(z,K)$.
In particular, $vz$ is not maximal in $\Lambda^{\rm L}(z,K)$ and
therefore, $vz<\mbox{\rm dist}(z,K)$.
\end{theorem}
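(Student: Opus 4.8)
The plan is to exploit immediacy in two separate ways: once through the equality $vL=vK$ to identify $v(z-K)$ with $\Lambda^{\rm L}(z,K)$, and once through the equality $Lv=Kv$ to rule out a maximal element. First I would observe that since $(L|K,v)$ is immediate, $vL=vK$. For any $c\in K$ the difference $z-c$ lies in $L$, so $v(z-c)\in vL=vK$. Hence every value in $v(z-K)$ already lies in $vK$, and from the remark preceding the theorem (that $\Lambda^{\rm L}(z,K)=v(z-K)\cap vK$) we get $v(z-K)=\Lambda^{\rm L}(z,K)$ at once.

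The heart of the argument is to show that $v(z-K)$ has no maximal element, and I would argue by contradiction. Suppose $\gamma:=v(z-c_0)$ is maximal for some $c_0\in K$. Since $\gamma\in vK$, pick $d\in K$ with $vd=\gamma$; then the element $(z-c_0)/d\in L$ has value $0$, so its residue is a nonzero element of $Lv$. Because $Lv=Kv$ by immediacy, there is $e\in K$ whose residue equals that of $(z-c_0)/d$, which means $v((z-c_0)/d-e)>0$, i.e.\ $v(z-(c_0+ed))>vd=\gamma$. Setting $c_1:=c_0+ed\in K$ then yields $v(z-c_1)>\gamma$, contradicting maximality. Thus no maximal element exists. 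The only care needed is to check that the improved approximant $c_1$ really lands back in $K$, which it does since $c_0,d,e\in K$.

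Finally, the ``in particular'' clause follows immediately: $vz=v(z-0)$ lies in $v(z-K)=\Lambda^{\rm L}(z,K)$, and since this set has no maximal element, $vz$ is not maximal in it. By the identification of $vz$ with the cut $(vz)^+$ and the description of $\dist(z,K)$ as $\Lambda^{\rm L}(z,K)^+$ lifted to $\widetilde{vK}$, an element of $\Lambda^{\rm L}(z,K)$ equals the distance precisely when it is the maximal element of $\Lambda^{\rm L}(z,K)$; as $vz$ is not maximal, we conclude $vz<\dist(z,K)$.

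I expect the main obstacle to be the residue-lifting step that uses $Lv=Kv$ — this is the essential content borrowed from Kaplansky's Theorem~1 — but it is short once the matching of values (via $vL=vK$) has reduced the element to a unit whose residue can be approximated from $K$. The two halves of the immediacy hypothesis are exactly what make the successive approximation work, so the structure of the proof is to apply each half in turn.
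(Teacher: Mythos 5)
Your proof is correct and follows essentially the same route as the paper's: you use $vL=vK$ to identify $v(z-K)$ with $\Lambda^{\rm L}(z,K)$, and then use $Lv=Kv$ to lift a residue from $Kv$ and strictly improve any given approximation $c_0$ (the paper multiplies $z-c$ by $d$ with $vd(z-c)=0$ rather than dividing, but this is the identical successive-approximation step). The handling of the ``in particular'' clause via the identification of $vz$ with the cut $(vz)^+$ also matches the paper's conventions.
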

\begin{proof}
Take $z\in L\setminus K$. Then $\infty\notin v(z-K)$. If $(L|K,v)$ is
immediate, then $vL=vK$ and therefore, $v(z-K)= \Lambda^{\rm L}(z,K)$.
Take any $c\in K$. Then $v(z-c)\in vL=vK$ and thus there exists $d\in K$
such that $vd(z-c)=0$. So $d(z-c)v\in Lv=Kv$. Hence, there exists $d'\in
K$ such that $(d(z-c)-d')v = 0$, which means that $v(z-c-d'd^{-1})> -vd
=v(z-c)$. Since $c+d'd^{-1}\in K$ and $v(z-c-d'd^{-1})\in vL=vK$, this
shows that $v(z-c)$ was not the maximal element of $v(z-K)$. This proves
that $v(z-K)$ has no maximal element.
\end{proof}


The following is a corollary to Lemma~\ref{apCs}:
\begin{corollary}                           \label{exoptappr}
If $(K(z)|K,v)$ is an algebraic extension and $(K,v)$ is
algebraically maximal, then $v(z-K)$ has a maximum.
\end{corollary}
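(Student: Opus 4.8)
The plan is to argue by contraposition: I will show that if $v(z-K)$ has no maximum, then $(K,v)$ admits a proper immediate algebraic extension, contradicting algebraic maximality. First I would dispose of the trivial case $z\in K$, in which $\infty=v(z-z)\in v(z-K)$ is already the maximum; so I may assume $z\notin K$, and then $\infty\notin v(z-K)\subseteq vK$.

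Next I would manufacture a pseudo Cauchy sequence having $z$ as a limit. Since $v(z-K)$ has no maximum, I can choose elements $c_\nu\in K$, indexed by the cofinality $\lambda$ of $v(z-K)$, so that the values $v(z-c_\nu)$ are strictly increasing and cofinal in $v(z-K)$. For $\mu<\nu$ the identity $c_\nu-c_\mu=(z-c_\mu)-(z-c_\nu)$ together with the ultrametric law gives $v(c_\nu-c_\mu)=v(z-c_\mu)$; from this the pseudo Cauchy condition $v(c_\rho-c_\nu)>v(c_\nu-c_\mu)$ for $\mu<\nu<\rho$ is immediate, and $z$ is a limit of $(c_\nu)_{\nu<\lambda}$ because $v(z-c_\nu)$ increases strictly.

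The decisive step is to check that $(c_\nu)_{\nu<\lambda}$ has \emph{no} limit in $K$. If $c\in K$ were a limit, then $v(c-c_\nu)=v(z-c_\nu)$ for all large $\nu$, whence $v(z-c)\geq\min\{v(z-c_\nu),v(c-c_\nu)\}=v(z-c_\nu)$ for every $\nu$. Since $\{v(z-c_\nu)\}$ is cofinal in $v(z-K)$ and $v(z-c)\in v(z-K)$, this would force $v(z-c)$ to be the maximum of $v(z-K)$, contrary to our assumption. Hence no limit exists in $K$. Because $z$ is algebraic over $K$ and is a limit of $(c_\nu)_{\nu<\lambda}$, Lemma~\ref{apCs} now applies and shows that $(c_\nu)_{\nu<\lambda}$ is of algebraic type. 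A pseudo Cauchy sequence of algebraic type with no limit in $K$ yields, by Kaplansky's construction (Theorem~3 of [Ka]; this is the converse of the direction recorded in Corollary~\ref{expCsalg}), a proper immediate algebraic extension of $(K,v)$. This contradicts algebraic maximality, so $v(z-K)$ must have a maximum.

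The main obstacle is precisely the no-limit verification in the third step: it is the point at which the hypothesis ``$v(z-K)$ has no maximum'' is translated into the exact input demanded by Kaplansky's theory, and one must track cofinality carefully so as not to conflate ``$z$ has no optimal approximation in $K$'' with the genuinely stronger statement that the approximating sequence admits no limit in $K$. The hypothesis that $K(z)|K$ is algebraic enters only through Lemma~\ref{apCs}, which is what upgrades the sequence to algebraic type and thereby produces an \emph{algebraic} immediate extension.
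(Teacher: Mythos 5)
Your proof is correct and follows essentially the same route as the paper's: extract a sequence $(c_\nu)_{\nu<\lambda}$ with $v(z-c_\nu)$ strictly increasing and cofinal in $v(z-K)$, observe it is pseudo Cauchy with limit $z$ but no limit in $K$, invoke Lemma~\ref{apCs} to get algebraic type, and conclude via Theorem~3 of [Ka] that $(K,v)$ is not algebraically maximal. The only cosmetic difference is that you verify the no-limit-in-$K$ step by a direct ultrametric computation, whereas the paper cites Lemma~3 of [Ka] for it; both are sound.
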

\begin{proof}
If $v(z-K)$ has no maximum, then there is a sequence
$(c_\nu)_{\nu<\lambda}$ without last element (so $\lambda$ is a limit
ordinal) and such that $(v(z-c_\nu))_{\nu<\lambda}$ is strictly
increasing and cofinal in $v(z-K)$. The former implies that
$(c_\nu)_{\nu<\lambda}$ is a pseudo Cauchy sequence with $z$ as a limit.
The latter implies that $(c_\nu)_{\nu<\lambda}$ has no limit in $K$
since by Lemma~3 of [Ka], any limit $b$ satisfies $v(z-b)>v(z-c_\nu)$
for all $\nu<\lambda$. By Lemma~\ref{apCs}, $(c_\nu)_{\nu<\lambda}$ is
of algebraic type. Hence by Theorem~3 of [Ka], there is a non-trivial
immediate algebraic extension of $K$, which shows that $K$ cannot be
algebraically maximal.
\end{proof}

Does the converse of Theorem~\ref{KT1} also hold, that is, if $v(z-K)$
has no maximal element, is then the extension $(K(z)|K,v)$ immediate?
This is far from being true. Under certain additional conditions
however, the converse holds:
\begin{lemma}                         \label{ueGp1}  
Take an extension $(K(z)|K,v)$ of valued fields of degree
$p=\mbox{\rm char}(Kv)$ and such that the extension of $v$ from $K$
to $K(z)$ is unique.
\n
1) \ If $v(z-K)$ has no maximal element, then $(K(z)|K,v)$ is immediate.
\n
2) \ If $(K(y)|K,v)$ is an immediate extension and if $y\sim_K z$
in some common valued extension field of $K(z)$ and $K(y)$, then
$(K(z)|K,v)$ is also an immediate extension.
\end{lemma}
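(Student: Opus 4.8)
The plan is to prove part~2 by reducing it to part~1, and to prove part~1 in contrapositive form: the failure of immediacy forces $v(z-K)$ to have a maximum. Throughout I use the standing hypotheses $[K(z):K]=p=\chara(Kv)$ and the uniqueness of the extension of $v$ to $K(z)$.

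For part~1, I first note that Ostrowski's lemma (\ref{LoO}) gives $p=(vK(z):vK)\cdot[K(z)v:Kv]\cdot p^\nu$; since $p$ is prime, $(K(z)|K,v)$ is immediate exactly when $\nu=1$, and otherwise it is defectless with $(vK(z):vK)\cdot[K(z)v:Kv]=p$. So it suffices to show that a defectless extension has a maximal $v(z-K)$. Here I would use the standard fact that a finite extension with unique extension of the valuation is defectless if and only if it has a valuation basis over $K$; concretely, choosing $\pi_1=1,\pi_2,\ldots,\pi_e$ whose values represent the cosets of $vK$ in $vK(z)$ and $u_1=1,u_2,\ldots,u_f$ of value $0$ whose residues form a $Kv$-basis of $K(z)v$, the products $\pi_k u_j$ form a valuation basis containing $1$. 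Relabel it as $1=\omega_0,\omega_1,\ldots,\omega_{p-1}$ and write $z=\sum_{i=0}^{p-1}c_i\omega_i$ with $c_i\in K$ and $c_i\neq 0$ for some $i\geq 1$ (as $z\notin K$). Then for every $c\in K$ the valuation-basis property gives
\[v(z-c)=\min\Bigl\{v(c_0-c),\ \min_{1\leq i\leq p-1}v(c_i\omega_i)\Bigr\}.\]
Taking $c=c_0$ yields $v(z-c_0)=\min_{i\geq 1}v(c_i\omega_i)=:M$, while for every other $c$ the right-hand side is $\leq M$. Hence $M$ is the maximum of $v(z-K)$. Contrapositively, if $v(z-K)$ has no maximum then the extension cannot be defectless, so $\nu=1$ and $(K(z)|K,v)$ is immediate.

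For part~2, I would transfer the absence of a maximum from $y$ to $z$ and then quote part~1. Since $y\sim_K z$, Lemma~\ref{sim1}(1) gives $\Lambda^{\rm L}(z,K)=\Lambda^{\rm L}(y,K)$ and $\dist(z,K)=\dist(y,K)$, and the definition of the relation yields $v(z-y)>\dist(z,K)=\dist(y,K)$. Because $(K(y)|K,v)$ is immediate, Theorem~\ref{KT1} shows $v(y-K)=\Lambda^{\rm L}(y,K)$ and that this set has no maximal element, so $v(y-c)<\dist(y,K)$ for every $c\in K$. Combining these, the ultrametric inequality gives, for every $c\in K$,
\[v(z-c)=v\bigl((y-c)+(z-y)\bigr)=\min\{v(y-c),\,v(z-y)\}=v(y-c)\in vK.\]
Therefore $v(z-K)=v(y-K)$, which has no maximal element, and part~1 shows that $(K(z)|K,v)$ is immediate.

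The main obstacle is the single structural input used in part~1: the equivalence, for a finite extension with unique valuation extension, between defectlessness and the existence of a valuation basis, together with the fact that such a basis may be taken to contain $1$. Once a valuation basis containing $1$ is at hand, reading off the maximum of $v(z-K)$ is routine, and the rest is bookkeeping with Ostrowski's lemma and the ultrametric inequality. (Part~1 could instead be approached through pseudo Cauchy sequences: the absence of a maximum produces, via Lemma~\ref{apCs}, a pseudo Cauchy sequence of algebraic type having $z$ as a limit, after which one would need to identify the minimal polynomial of the sequence with that of $z$; the valuation-basis argument is preferable precisely because it sidesteps that delicate degree comparison.)
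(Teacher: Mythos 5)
Your proposal is correct and follows essentially the same route as the paper: part 1 is proved contrapositively by producing a valuation basis containing $1$ for a non-immediate (hence defectless) extension and reading off the maximum $v(z-c_0)$ of $v(z-K)$, and part 2 transfers the absence of a maximum from $v(y-K)$ to $v(z-K)$ via the relation $\sim_K$ and then quotes part 1. The only cosmetic difference is that the paper verifies the valuation basis property directly in the two prime-degree cases ($e=p$ or $f=p$) rather than invoking the general defectless-iff-valuation-basis fact, and it cites Lemma~\ref{sim1} where you redo the ultrametric computation; the substance is identical.
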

\begin{proof}
1): \ Since the extension of $v$ from $K$ to $K(z)$ is unique, we have
\[p\;=\;[K(z):K]\;=\;(vK(z):vK)[K(z)v:Kv]\,p^\nu\]
by (\ref{LoO}). Assume that $(K(z)|K,v)$ is not immediate. Then
$(vK(z):vK)=p$ or $[K(z)v:Kv]=p$. If $(vK(z):vK)=p$, then we can choose
$b_1,\ldots,b_p\in K(z)$ such that the values $vb_1,\ldots,vb_p$ belong
to distinct cosets modulo $vK$. If $[K(z)v:Kv]=p$, then we can choose
$b_1,\ldots,b_p\in K(z)$ such that $vb_1=\ldots=vb_p=0$ and the residues
$b_1v,\ldots,b_p v$ form a basis of $K(z)v|Kv$. In both cases, we obtain
that $b_1,\ldots,b_p$ is a valuation basis of $(K(z)|K,v)$. In the first
case, given any $c_1,\ldots,c_p\in K$, this follows from the ultrametric
triangle law since all values $vc_ib_i\,$, $1\leq i\leq p$, must be
distinct. In the second case, we may assume w.l.o.g.\ (after suitable
renumbering) that $vc_1=\min_i vc_i\,$; then $vc_1^{-1}c_ib_i \geq 0$
and we obtain
\[\left(\sum_{i=1}^{p}c_1^{-1}c_ib_i\right)v\>=\> b_1v+\sum_{i=2}^{p}
(c_1^{-1}c_i)v\,b_iv\>\ne\>0\;.\]
This yields $v\sum_{i=1}^{p}c_1^{-1}c_ib_i=0$ and thus
$v\sum_{i=1}^{p}c_ib_i=vc_1=\min_i vc_i=\min_i vc_ib_i\,$.

Without loss of generality, we can choose $b_1=1$. We write
$z=\sum_{i=1}^{p}c_ib_i\,$. For every $c\in K$, we obtain
\[v(z-c) \;=\;\min \{v(c_1-c),vc_2,\ldots,vc_p\}\;\leq\;
\min \{vc_2,\ldots,vc_p\}\;.\]
The maximum value $\min \{vc_2,\ldots,vc_p\}$ is assumed for $c_1-c=0$.
That is, $v(z-c_1)$ is the maximum of $v(z-K)$.
\sn
2): \ Assume that $(K(y)|K,v)$ is an immediate extension. Then by
Theorem~\ref{KT1}, $v(y-K)=\Lambda^{\rm L}(y,K)$ has no maximal
element. By Lemma~\ref{sim1}, $z\sim_K y$ implies that $v(y-c)=v(z-c)$
for all $c\in K$ such that $v(y-c)\in vK$, that is, for all $c\in K$. It
follows that $v(z-K)$ has no maximal element. Now part 1) shows that
$(K(z)|K,v)$ is an immediate extension.
\end{proof}

\begin{lemma}                               \label{fasimfbh}
Assume that $(K,v)$ is henselian, that $(K(z)|K,v)$ is an immediate
extension, and that $z\sim_K y$ in some common valued extension field of
$K(z)$ and $K(y)$. Take a polynomial $f\in K[X]$ of degree smaller than
$p=\chara Kv$. Then $f(z)\sim_K f(y)$.
\end{lemma}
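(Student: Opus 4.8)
The plan is to reduce the assertion to a statement about pseudo Cauchy sequences that forces the hypothesis $\deg f<p$ into play. First I would dispose of the degenerate cases. If $z\in K$, then $\Lambda^{\rm L}(z,K)=vK$, so $\dist(z,K)$ is the top cut and $z\sim_K y$ gives $y=z$, whence the claim is trivial. If $z\notin K$, then $f(z)\notin K$: otherwise $z$ would be a root of $f(X)-f(z)\in K[X]$, hence algebraic over $K$ of degree strictly between $1$ and $p$, which is impossible for an immediate extension of the henselian field $K$, since by the Lemma of Ostrowski (\ref{LoO}) a nontrivial immediate algebraic extension (the valuation extends uniquely) has degree a power of $p$. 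Consequently $K(f(z))|K$ is again immediate, and Theorem~\ref{KT1} shows that $\Lambda^{\rm L}(f(z),K)=v(f(z)-K)$ has no maximal element. Since a value exceeding every element of a set without maximum lies strictly above its induced cut, $f(z)\sim_K f(y)$ will follow once I show that $v(f(z)-f(y))>v(f(z)-c)$ for every $c\in K$ with $v(f(z)-c)\in vK$.

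Fix such a $c$ and put $g:=f-c$, so that $\deg g<p$ and $g(z)-g(y)=f(z)-f(y)$. Factoring $g(X)=a_n\prod_{i=1}^n(X-\rho_i)$ with $\rho_i\in\tilde{K}$ and using the telescoping identity
\[g(z)-g(y)\;=\;a_n(z-y)\sum_{j=1}^n\Bigl(\prod_{i<j}(y-\rho_i)\Bigr)\Bigl(\prod_{i>j}(z-\rho_i)\Bigr)\,,\]
everything reduces to locating the roots $\rho_i$ relative to $z$. Set $\delta:=v(z-y)>\dist(z,K)$. The decisive claim is that no root lies nearer to $z$ than the distance permits, that is $v(z-\rho_i)\le\dist(z,K)<\delta$ for all $i$. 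Granting this, I order the roots so that $v(z-\rho_1)=\max_i v(z-\rho_i)$; then $v(y-\rho_i)=v(z-\rho_i)$ for every $i$, so the $j=1$ summand has value $\sum_{i\ge2}v(z-\rho_i)$, which is the minimum of the $n$ summand values, and the ultrametric inequality yields
\[v(f(z)-f(y))\;\ge\;va_n+\delta+\sum_{i\ge2}v(z-\rho_i)\;=\;\delta+vg(z)-v(z-\rho_1)\;>\;vg(z)\;=\;v(f(z)-c)\,,\]
as required (any cancellation among the summands only raises the left-hand side).

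The main obstacle is the decisive claim, and this is precisely where $\deg f<p$ is used. Suppose some root $\rho:=\rho_i$ satisfied $v(z-\rho)>\dist(z,K)$. Since $K(z)|K$ is immediate, Theorem~\ref{KT1} furnishes a sequence $(c_\nu)$ in $K$ with $v(z-c_\nu)$ strictly increasing and cofinal in $\Lambda^{\rm L}(z,K)$; this is a pseudo Cauchy sequence with no limit in $K$. As $v(z-\rho)$ exceeds every $v(z-c_\nu)$, we get $v(\rho-c_\nu)=v(z-c_\nu)$, so $\rho$ is likewise a limit of $(c_\nu)$; by the theory of pseudo Cauchy sequences ([Ka]) such a limit generates an immediate extension, whence $K(\rho)|K$ is immediate. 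But $[K(\rho):K]\le\deg f<p$, and a nontrivial immediate algebraic extension of the henselian field $K$ has degree a power of $p$ by (\ref{LoO}); hence $\rho\in K$, which forces $v(z-\rho)\in\Lambda^{\rm L}(z,K)$ and thus $v(z-\rho)\le\dist(z,K)$, a contradiction. This is the only point where the degree restriction is needed: for $\deg f=p$ a cluster of $p$ roots could sit inside a degree‑$p$ immediate extension, so the claim (and indeed the lemma, e.g.\ for $f=X^p$) may fail. Alternatively, one may run the whole argument through Lemma~\ref{sim1}, proving instead that $v(f(z)-c)=v(f(y)-c)$ for every admissible $c$.
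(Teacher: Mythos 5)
Your reduction (to showing $v(f(z)-f(y))>v(f(z)-c)$ for all $c\in K$), the telescoping identity, and the ultrametric estimate that derives the lemma from your decisive claim are all correct, and this is a genuinely different route from the paper's, which reduces via part 2) of Lemma~\ref{sim1} to proving $vf(z)=vf(y)$ and obtains that from the eventual stability of the values $vf(c_\nu)$ along a pseudo Cauchy sequence having both $z$ and $y$ as limits. However, your proof of the decisive claim rests on a citation that does not exist: you assert that ``by the theory of pseudo Cauchy sequences ([Ka]) such a limit generates an immediate extension, whence $K(\rho)|K$ is immediate.'' Kaplansky's Theorem~3 produces an immediate extension generated by a suitably chosen root of a polynomial of \emph{minimal} degree among those whose values along the sequence are not eventually constant; it says nothing about an arbitrary limit of the sequence, and the general principle you invoke is false. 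For instance, take $K$ and $\vartheta$ as in Example~\ref{examp1} (with $k=\F_p$, $p$ odd), so that all values $v(\vartheta-a_n)$ are negative, and let $d$ be a unit whose residue generates $\F_{p^2}|\F_p$; then $\rho=\vartheta+d$ satisfies $v(\rho-a_n)=v(\vartheta-a_n)$, so $\rho$ is a limit of the same pseudo Cauchy sequence, yet a short computation with conjugates shows $K(\rho)=K(\vartheta,d)$, whose residue field contains $\F_{p^2}$, so $K(\rho)|K$ is \emph{not} immediate. (Your application is to $\rho$ of degree $<p$, where the principle is vacuously true only because no such limit exists at all --- which is exactly what you are trying to prove, so you cannot use it.)

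The gap is repairable with ingredients you already have, and the repair is precisely the mechanism of the paper's own proof. If some root $\rho$ of $g$ satisfied $v(z-\rho)>\dist(z,K)$, then, as you correctly show, $\rho$ would be a limit of $(c_\nu)$, and $\rho$ is algebraic over $K$ of degree at most $\deg g<p$. By Lemma~\ref{apCs} (applied with $a=\rho$), $(c_\nu)$ is then of algebraic type, witnessed by the minimal polynomial of $\rho$; hence the minimal degree of a witnessing polynomial is $<p$, and Theorem~3 of [Ka] yields a proper immediate \emph{algebraic} extension of $K$ of degree $<p$ (proper because $(c_\nu)$ has no limit in $K$). Since $K$ is henselian, the valuation extends uniquely to that extension, so the Lemma of Ostrowski (\ref{LoO}) forces its degree to be a power of $p$ --- a contradiction. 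This proves your decisive claim without ever asserting that $K(\rho)|K$ itself is immediate; with this substitution your argument is complete and stands as a correct, more computational alternative to the paper's proof.
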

\begin{proof}
Let $f\in K[X]$ be a polynomial of degree $<p$. Since $(K(z)|K,v)$ is
immediate and $f(z)\in K(z)$, we know from Theorem~\ref{KT1} that
$\Lambda^{\rm L}(f(z),K)$ has no maximal element. Hence by part 2) of
Lemma~\ref{sim1} it suffices to show that $v(f(z)-c)= v(f(y)-c)$ for all
$c\in K$. Since $f-c$ is again of degree $<p$, we see that it suffices
to show that $vf(z)=vf(y)$ for all polynomials $f$ of degree $<p$.

Again by Theorem~\ref{KT1} we know that $\Lambda^{\rm L}(z,K)$ has no
maximal element. Since $z\sim_K y$, part 1) of Lemma~\ref{sim1} shows
that $\Lambda^{\rm L}(z,K)=\Lambda^{\rm L}(y,K)$. As in the proof of
Corollary~\ref{exoptappr} we find a pseudo Cauchy sequence
$(c_\nu)_{\nu<\lambda}$ in $(K,v)$ that has both $z$ and $y$ as a limit,
but no limit in $K$. For every polynomial of degree $<p$, the value of
the sequence $(f(c_\nu))_{\nu<\lambda}$ must eventually be fixed since
otherwise, Theorem~3 of [Ka] would show the existence of an immediate
extension of $(K,v)$ of degree less than $p$. But since $(K,v)$ is
henselian, the Lemma of Ostrowski shows that this is impossible, since
the defect must be a power of $p$. Now one shows like in the proof of
Theorem~2 of [Ka] that both $vf(z)$ and $vf(y)$ are equal to the
eventually fixed value of the sequence $(f(c_\nu))_{\nu<\lambda}$.
\end{proof}

We will show that we can drop the condition that $(K,v)$ be henselian if
the element $y$ is purely inseparable over $K$. To this end, we need the
following result which is proved in [Ku7]:
\begin{lemma}
Let $(K,v)$ be a valued field, $K^h$ its henselization w.r.t.\ a fixed
extension of $v$ to the algebraic closure $\tilde{K}$, and $y\in
\tilde{K}$. If
\[\dist(y,K^h)\>>\>\dist(y,K)\;,\]
then $y$ is not purely inseparable over $K$.
\end{lemma}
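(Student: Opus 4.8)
The plan is to prove the contrapositive: assuming $y$ is purely inseparable over $K$, I will show that $\dist(y,K^h)=\dist(y,K)$. Since the inequality $\dist(y,K^h)\geq\dist(y,K)$ always holds by Lemma~\ref{sim2}, it suffices to rule out strict inequality, so suppose $\dist(y,K^h)>\dist(y,K)$. Then by Lemma~\ref{sim2} there is an element $c\in K^h$ with $z\sim_K y$ in the role $z=c$, that is, $v(y-c)>\dist(y,K)$. Part 1) of Lemma~\ref{sim1} then forces $\Lambda^{\rm L}(c,K)=\Lambda^{\rm L}(y,K)$ and $\dist(c,K)=\dist(y,K)$. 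After disposing of the degenerate case in which the witness can be taken in $K$ itself (where any gain takes place within $vK$ and is excluded by the definition of $\Lambda^{\rm L}(y,K)=v(y-K)\cap vK$), we may assume $c\in K^h\setminus K$. Since $K^h|K$ is immediate, Theorem~\ref{KT1} applied to $c$ gives that $v(c-K)=\Lambda^{\rm L}(c,K)$ has no maximal element.

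Next I would produce a single pseudo Cauchy sequence witnessing both elements. Exactly as in the proofs of Corollary~\ref{exoptappr} and Corollary~\ref{expCsalg}, the absence of a maximal element in $v(c-K)$ yields a pseudo Cauchy sequence $(a_\nu)_{\nu<\lambda}$ in $K$, without limit in $K$, for which $v(c-a_\nu)$ is strictly increasing and cofinal in $\Lambda^{\rm L}(c,K)$, and which has $c$ as a limit. Because $v(y-c)>\dist(y,K)$ dominates every value $v(c-a_\nu)$, we obtain $v(y-a_\nu)=v(c-a_\nu)$ for all large $\nu$, so the very same sequence $(a_\nu)$ also has the purely inseparable element $y$ as a limit. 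Since $c$ is algebraic over $K$, Lemma~\ref{apCs} shows that $(a_\nu)$ is of algebraic type.

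The contradiction should now come from the fact that one pseudo Cauchy sequence of algebraic type cannot produce immediate behaviour of two incompatible arithmetic kinds. The element $c$ lies in $K\sep$ and generates a separable extension, with separable minimal polynomial $g$, whereas $y$ generates a purely inseparable extension $K(y)$ with $y^q=b\in K$ for some power $q$ of $p$. Evaluating along $(a_\nu)$, Lemma~\ref{apCs} gives that $v(g(a_\nu))$ is ultimately strictly increasing, while $v(a_\nu^q-b)=v((a_\nu-y)^q)=q\,v(a_\nu-y)$ is strictly increasing as well. By Kaplansky's theory of pseudo Cauchy sequences of algebraic type (Theorem~3 of [Ka]), the immediate algebraic extension attached to $(a_\nu)$ is determined up to $K$-isomorphism by the sequence, and the minimal polynomial generating it is an invariant of the sequence. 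The aim is to show that the separable limit $c$ lying inside the henselization forces this minimal polynomial to be separable, while the purely inseparable limit $y$ forces it to carry an inseparable irreducible factor; this contradicts the uniqueness, since a separable and a proper purely inseparable simple extension of $K$ can never be $K$-isomorphic.

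The hard part will be this last step, namely reading off Kaplansky's minimal polynomial of $(a_\nu)$ from each of the two limits at once. The genuine obstacle is that the extension of $v$ from $K$ to $K(c)$ need not be unique and that $K(y)|K$ need not itself be immediate, so the clean correspondence between algebraic-type pseudo Cauchy sequences and immediate simple extensions -- which underlies the cofinality (strong) part of Lemma~\ref{apCs} and Kaplansky's uniqueness -- is not directly at hand. I would overcome this by reducing the configuration to a unique-extension core: $c$ already lies in $K^h$, where the valuation is unique, and $K^h(y)=K(y)^h$ is the henselization of $K(y)$, so one can arrange for the relevant minimal polynomials to be compared in a setting where Lemma~\ref{apCs} delivers cofinality and Kaplansky's uniqueness applies to a genuine immediate extension. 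Once the two limits are governed by the same invariant minimal polynomial, the separable/purely inseparable incompatibility closes the argument.
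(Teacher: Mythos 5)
Your preparatory work is correct and is the natural reduction: choosing $c\in K^h$ with $v(y-c)\in vK$ exceeding $\Lambda^{\rm L}(y,K)$, ruling out $c\in K$, using the immediacy of $K^h|K$ and Theorem~\ref{KT1} to see that $v(c-K)=\Lambda^{\rm L}(c,K)=\Lambda^{\rm L}(y,K)$ has no maximal element, and producing a pseudo Cauchy sequence in $K$ without limit in $K$ that has both $c$ and the purely inseparable $y$ as limits; by Lemma~\ref{apCs} it is of algebraic type. The gap is your final step, and it is not a repairable technicality: the mechanism you invoke is unsound. Kaplansky's uniqueness (Theorem~3 of [Ka]) concerns only roots of polynomials of \emph{minimal} degree among those whose values along the sequence fail to stabilize. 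Arbitrary algebraic limits such as $c$ and $y$ need not be roots of any such polynomial, their degrees over $K$ may well exceed that minimal degree, and the extensions $K(c)$, $K(y)$ are in no way invariants of the sequence: every element of the form (limit)$\,+\,$(element of the breadth ideal) is again a limit, so limits of all sorts of algebraic types coexist. Worse, the incompatibility you assert --- that one pseudo Cauchy sequence of algebraic type cannot have both a separable-algebraic limit and a purely inseparable limit --- is refuted by this very paper: Proposition~\ref{ipie} produces, from a purely inseparable $\eta\notin K^c$, an Artin-Schreier generator $\vartheta$ with $b\vartheta\sim_K\eta$; taking a pseudo Cauchy sequence witnessing $v(\eta-K)$ (Theorem~1 of [Ka]), the separable element $b\vartheta$ and the purely inseparable element $\eta$ are then limits of one and the same sequence. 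This coexistence is precisely what a dependent Artin-Schreier defect extension is. Hence any argument of your shape that does not use the hypothesis $c\in K^h$ in an essential way must fail.

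That hypothesis is exactly what your last step never exploits. What separates $c\in K^h$ from an Artin-Schreier approximant is not separability but henselization membership: $c$ is fixed by the decomposition group and $K(c)^h=K^h$, whereas an Artin-Schreier defect extension $K(\vartheta)|K$ admits a unique extension of $v$ and satisfies $[K(\vartheta)^h:K^h]=p$, so $\vartheta$ never lies in $K^h$. Your proposed repair (``reduce to a unique-extension core'') does not supply this either: over $K^h$, over $K(y)$, or over $K(y)^h$ the sequence acquires a limit \emph{in the field} ($c$, respectively $y$), so Kaplansky's theory of algebraic-type sequences without limit in the field --- and in particular its uniqueness statement --- is simply not applicable there. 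Note finally that the paper itself gives no proof of this lemma; it quotes it from [Ku7] (``Approximation types of elements in henselizations''), a separate paper devoted to exactly this point. The fact that the henselization cannot approximate purely inseparable elements any better than $K$ itself does is genuinely deeper than a direct application of Kaplansky's theorems, which is why it is outsourced; your proposal, as it stands, does not bridge that depth.
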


\begin{lemma}                               \label{fasimfb}
Assume that $(K(z)|K,v)$ is an immediate extension and that $z\sim_K y$
in some common valued extension field of $K(z)$ and $K(y)$. Suppose that
$y$ is purely inseparable over $K$. Take a polynomial $f\in K[X]$ of
degree smaller than $p=\chara Kv$. Then $f(z)\sim_K f(y)$.
\end{lemma}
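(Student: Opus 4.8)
The plan is to reduce to the henselian situation already handled in Lemma~\ref{fasimfbh} by passing to the henselization $K^h$ of $K$ (with respect to the fixed extension of $v$ to $\tilde K$), and to spend the purely inseparable hypothesis on $y$ precisely where the base change from $K$ to $K^h$ has to be controlled. Concretely, I would establish three things: (i) $z\sim_{K^h}y$; (ii) $(K^h(z)|K^h,v)$ is immediate; (iii) the conclusion $f(z)\sim_{K^h}f(y)$ produced by Lemma~\ref{fasimfbh} descends to $f(z)\sim_K f(y)$. The whole point is that enlarging the base field can only increase distances, so the relation $\sim$ is easy to push \emph{down} to $K$ but hard to lift \emph{up} to $K^h$; the purely inseparable hypothesis is what makes the single upward step possible.

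For (i), I would first note that $z\sim_K y$ together with part~1) of Lemma~\ref{sim1} gives $\dist(z,K)=\dist(y,K)$. Since $y$ is purely inseparable over $K$, the preceding lemma (from [Ku7]) forbids $\dist(y,K^h)>\dist(y,K)$; as $\dist(y,K^h)\geq\dist(y,K)$ always holds by Lemma~\ref{sim2}, this forces $\dist(y,K^h)=\dist(y,K)=\dist(z,K)$. Hence $v(z-y)>\dist(z,K)=\dist(y,K^h)$, that is $y\sim_{K^h}z$; now part~1) of Lemma~\ref{sim1} applied over $K^h$ yields $\dist(z,K^h)=\dist(y,K^h)$, so $v(z-y)>\dist(z,K^h)$ and therefore $z\sim_{K^h}y$. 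This is the crucial (and only) place where pure inseparability of $y$ enters, and the symmetry of $\sim$ coming from Lemma~\ref{sim1} is what lets me transport the distance equality known for $y$ over to $z$.

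For (ii), I would observe that $K^h\subseteq K(z)^h$, the henselization of $K(z)$ inside the same ambient field, since $K(z)^h$ is henselian and contains $K$. Thus $K^h(z)=K^h\cdot K(z)$ is an intermediate field of the immediate extension $K(z)^h|K(z)$ and is therefore itself immediate over $K(z)$, giving $vK^h(z)=vK(z)$ and $K^h(z)v=K(z)v$. Combining this with $vK(z)=vK=vK^h$ and $K(z)v=Kv=K^h v$ (from immediacy of $K(z)|K$ and of $K^h|K$) shows $vK^h(z)=vK^h$ and $K^h(z)v=K^h v$, i.e.\ $(K^h(z)|K^h,v)$ is immediate. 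This works whether $z$ is algebraic or transcendental over $K$.

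Finally, for (iii), the hypotheses of Lemma~\ref{fasimfbh} now hold over the henselian field $K^h$ (with the same $f\in K[X]\subseteq K^h[X]$, still of degree $<p$), so I obtain $f(z)\sim_{K^h}f(y)$, i.e.\ $v(f(z)-f(y))>\dist(f(z),K^h)$. Since $\dist(f(z),K^h)\geq\dist(f(z),K)$ by Lemma~\ref{sim2}, the same inequality holds with $K$ in place of $K^h$, giving $v(f(z)-f(y))>\dist(f(z),K)$ and hence $f(z)\sim_K f(y)$. I expect step (i) to be the main obstacle: enlarging the base field a priori increases the distance and so can destroy $\sim$, and it is exactly the purely inseparable hypothesis, via the [Ku7] lemma, that pins $\dist(y,K^h)$ down to $\dist(y,K)$ and keeps the relation intact.
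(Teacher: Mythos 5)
Your proposal is correct and follows essentially the same route as the paper's own proof: pass to the henselization, use the quoted [Ku7] lemma (via pure inseparability of $y$) to get $\dist(y,K^h)=\dist(y,K)$ and hence $z\sim_{K^h}y$, note that $(K^h(z)|K^h,v)$ is immediate because $K^h(z)$ sits inside the immediate extension $K(z)^h|K(z)$, apply Lemma~\ref{fasimfbh}, and descend to $K$. The only difference is that you spell out the symmetry step through Lemma~\ref{sim1} and the descent through Lemma~\ref{sim2}, which the paper leaves implicit.
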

\begin{proof}
Since henselizations are immediate extensions and since $K^h(z)$ lies
in the henselization of $K(z)$, we know that $(K^h(z)|K^h,v)$ is an
immediate extension. From the previous lemma we infer that
$\dist(y,K^h)= \dist(y,K)$. Therefore, $z\sim_K y$ implies that
$z\sim_{K^h} y$. From Lemma~\ref{fasimfbh} we obtain that
$f(z)\sim_{K^h} f(y)$, whence $f(z)\sim_K f(y)$.
\end{proof}

\parm
If $\alpha\in vK$ and $\Lambda$ is a cut in $vK$, then $\alpha+\Lambda:=
(\alpha+\Lambda^{\rm L}\,,\,\alpha+\Lambda^{\rm R})$. Since addition of
$\alpha$ is an order preserving isomorphism of $vK$, this is again a
cut. The proof of the following lemma is straightforward:
\begin{lemma}                  \label{aat}
For every $c\in K$,
\begin{eqnarray*}
\Lambda^{\rm L}(z+c,K) \;=\; \Lambda^{\rm L}(z,K) &\mbox{ and }&
\mbox{\rm dist}(z+c,K) \;=\; \mbox{\rm dist}(z,K)\;,\\
\Lambda^{\rm L}(cz,K) \;=\; vc+\Lambda^{\rm L}(z,K) &\mbox{ and }&
\mbox{\rm dist}(cz,K) \;=\; vc+\mbox{\rm dist}(cz,K)\;,\\
z\,\sim_K\, y & \Rightarrow & z+c \>\sim_K\> y+c \;,\\
c\ne 0\>\wedge\> z\,\sim_K\, y & \Rightarrow & cz \,\sim_K\, cy \;.
\end{eqnarray*}
\end{lemma}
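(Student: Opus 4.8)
The plan is to treat the four assertions in order, reducing each to an elementary change of variables in the defining sets together with the order-preserving nature of translation by a fixed value. Throughout I will use the definitions $\Lambda^{\rm L}(z,K)=\{v(z-c)\mid c\in K,\ v(z-c)\in vK\}$ and $\dist(z,K)=(\Lambda^{\rm L}(z,K),\Lambda^{\rm R}(z,K))\uparrow\widetilde{vK}$, together with the identification of a value with the cut it induces, so that $z\sim_K y$ holds exactly when $v(z-y)>\Lambda^{\rm L}(z,K)$.

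First I would establish the translation invariance. As $d$ runs over $K$, so does $c-d$; hence $\{z+c-d\mid d\in K\}=\{z-d'\mid d'\in K\}$, and the condition $v(z+c-d)\in vK$ is preserved under this bijection. This gives $\Lambda^{\rm L}(z+c,K)=\Lambda^{\rm L}(z,K)$ directly, and since $\dist$ depends only on $\Lambda^{\rm L}$ (via the cut it determines in $\widetilde{vK}$), the equality $\dist(z+c,K)=\dist(z,K)$ follows at once.

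Next, for the multiplicative statement (where $c\neq 0$ is tacitly assumed, so that $vc\in vK$ is a genuine group element) I would write each $d\in K$ as $d=cd'$ with $d'\in K$; then $v(cz-d)=vc+v(z-d')$, and since $vc\in vK$ the membership $v(cz-d)\in vK$ is equivalent to $v(z-d')\in vK$. This yields $\Lambda^{\rm L}(cz,K)=vc+\Lambda^{\rm L}(z,K)$. Because adding the fixed element $vc$ is an order-preserving automorphism of $vK$ that commutes with the addition-preserving embedding $\uparrow\widetilde{vK}$, it carries the cut $\dist(z,K)$ to $vc+\dist(z,K)$, giving $\dist(cz,K)=vc+\dist(z,K)$ (correcting the evident misprint ``$vc+\dist(cz,K)$'' in the statement).

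Finally, the two implications for $\sim_K$ are immediate consequences. For $z\sim_K y\Rightarrow z+c\sim_K y+c$, note that $(z+c)-(y+c)=z-y$ and $\dist(z+c,K)=\dist(z,K)$, so the defining inequality is literally unchanged. For the last implication, from $v(z-y)>\Lambda^{\rm L}(z,K)$ I would add $vc$ to both sides: since translation by $vc$ preserves the order, $vc+v(z-y)>vc+\alpha$ for every $\alpha\in\Lambda^{\rm L}(z,K)$, i.e.\ $v(cz-cy)>\Lambda^{\rm L}(cz,K)$, which is exactly $cz\sim_K cy$. No step presents a genuine obstacle; the only point requiring care is that these comparisons take place between a value in the extension group and cuts in $\widetilde{vK}$, so I must invoke the order-preserving property of $\alpha\mapsto vc+\alpha$ at the level of the left comparison rather than manipulating the cut sets naively.
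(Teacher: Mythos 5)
Your proof is correct, and it is precisely the ``straightforward'' argument the paper has in mind: the paper gives no proof of Lemma~\ref{aat} at all, stating only that it is straightforward, and your change-of-variables in the defining set $\Lambda^{\rm L}$, combined with the fact that translation by $vc$ is an order-preserving automorphism commuting with the embedding $\uparrow\widetilde{vK}$, is exactly the intended reasoning. You are also right on the two points of care: the multiplicative assertions tacitly require $c\ne 0$, and the displayed ``$vc+\dist(cz,K)$'' is a misprint for $vc+\dist(z,K)$.
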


%
%
%
\subsection{Properties of Artin-Schreier extensions}
In this section, we collect a few facts about Artin-Schreier extensions
of valued fields. Throughout this section, we assume that
$K(\vartheta)|K$ is an Artin-Schreier extension of degree $p$ with
$\vartheta^p-\vartheta=a\in K$.

\begin{lemma}                               \label{allASg}
If $\chara K=p$, then $\vartheta'$ is another Artin-Schreier generator
of $L|K$ if and only if $\vartheta' =i\vartheta +c$ for some
$i\in\{1,\ldots,p-1\}$ and $c\in K$.
\end{lemma}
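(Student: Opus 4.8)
The plan is to prove both directions of the equivalence, working entirely inside the field $K$ of characteristic $p$. Recall that $\vartheta$ satisfies $\wp(\vartheta)=\vartheta^p-\vartheta=a\in K$, and that the additive polynomial $\wp(X)=X^p-X$ has the property $\wp(b+c)=\wp(b)+\wp(c)$ and kernel $\F_p=\{0,1,\ldots,p-1\}$ inside any extension field (this is exactly the additivity fact and the description of the roots $\vartheta,\vartheta+1,\ldots,\vartheta+p-1$ recalled in the introduction).

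First I would prove the easy direction: suppose $\vartheta'=i\vartheta+c$ with $i\in\{1,\ldots,p-1\}$ and $c\in K$. Then $L=K(\vartheta)=K(\vartheta')$ since $i$ is invertible in $\F_p\subseteq K$, so $\vartheta'$ generates the same extension. It remains to check that $\vartheta'$ is an Artin-Schreier generator, i.e.\ that $\wp(\vartheta')\in K$. Using additivity of $\wp$ together with $\wp(i\vartheta)=i\wp(\vartheta)$ (valid because $i\in\F_p$, so $i^p=i$ and the Frobenius respects the $\F_p$-scalar), I compute $\wp(\vartheta')=\wp(i\vartheta+c)=\wp(i\vartheta)+\wp(c)=i\wp(\vartheta)+\wp(c)=ia+\wp(c)\in K$. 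Hence $\vartheta'$ is an Artin-Schreier generator of $L|K$.

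For the converse, suppose $\vartheta'$ is any Artin-Schreier generator of $L|K$, so $L=K(\vartheta')$ and $a':=\wp(\vartheta')\in K$. Since $\{1,\vartheta,\ldots,\vartheta^{p-1}\}$ is a $K$-basis of $L$, I can write $\vartheta'=\sum_{j=0}^{p-1}c_j\vartheta^j$ with $c_j\in K$. The key step is to compare $\wp(\vartheta')$ with the element of $K$ it must equal. The cleanest route is the Galois-theoretic one: the Galois group of $L|K$ is cyclic of order $p$, generated by the automorphism $\sigma$ with $\sigma\vartheta=\vartheta+1$. Applying $\sigma$ to $\wp(\vartheta')=a'\in K$ gives $\wp(\sigma\vartheta')=a'=\wp(\vartheta')$, so $\wp(\sigma\vartheta'-\vartheta')=0$ by additivity, whence $\sigma\vartheta'-\vartheta'\in\ker\wp=\F_p$. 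Because $\vartheta'\notin K$ (it generates $L$), this difference $i:=\sigma\vartheta'-\vartheta'$ is a nonzero element of $\F_p$, so $i\in\{1,\ldots,p-1\}$. Now $\sigma(\vartheta'-i\vartheta)=\sigma\vartheta'-i(\vartheta+1)=(\vartheta'+i)-i\vartheta-i=\vartheta'-i\vartheta$, so $\vartheta'-i\vartheta$ is fixed by $\sigma$ and hence lies in $K$; calling it $c$, I obtain $\vartheta'=i\vartheta+c$ as required.

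The main obstacle, and the only place demanding genuine care, is the converse direction: one must justify that $L|K$ is Galois with the explicit generator $\sigma:\vartheta\mapsto\vartheta+1$, and that $\sigma$ acts $K$-linearly so that applying it to the relation $\wp(\vartheta')=a'$ is legitimate. This is precisely the normality/Galois structure of Artin-Schreier extensions recalled in the introduction (the roots of $X^p-X-a$ are $\vartheta,\vartheta+1,\ldots,\vartheta+p-1$). If one prefers to avoid invoking Galois theory, the alternative is a direct computation: expand $\wp(\vartheta')=\wp\bigl(\sum_j c_j\vartheta^j\bigr)$ using additivity to get $\sum_j\wp(c_j\vartheta^j)=\sum_j(c_j^p\vartheta^{pj}-c_j\vartheta^j)$, reduce $\vartheta^{p}=\vartheta+a$ to express everything in the basis $\{1,\vartheta,\ldots,\vartheta^{p-1}\}$, and force all coefficients of $\vartheta,\ldots,\vartheta^{p-1}$ to vanish because $\wp(\vartheta')\in K$; this degree bookkeeping pins down $c_2=\cdots=c_{p-1}=0$ and $c_1\in\F_p$, recovering $\vartheta'=c_1\vartheta+c_0$. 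The Galois argument is shorter and I would present it as the primary proof.
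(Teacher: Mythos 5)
Your proof is correct and follows essentially the same route as the paper: both directions rest on the facts that roots of an Artin-Schreier polynomial differ by elements of $\F_p$, that $\wp(i\vartheta+c)=ia+c^p-c\in K$, and that an element of $L$ fixed by a generator $\sigma$ of $\Gal L|K$ lies in $K$. The only cosmetic difference is that you normalize $\sigma$ so that $\sigma\vartheta=\vartheta+1$, whereas the paper works with an arbitrary non-trivial $\sigma$ (so $\sigma\vartheta-\vartheta=j$) and then rescales by the $i$ with $ij=j'$; the underlying fixed-field argument is identical.
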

\begin{proof}
If $\vartheta,\vartheta'$ are roots of the same polynomial $X^p-X-a$,
then $\vartheta-\vartheta'$ is a root of $X^p-X$, whose roots are
$0,1,\ldots,p-1\in\F_p\,$. Hence, $\vartheta,\vartheta+1,\ldots,
\vartheta+p-1$ are all roots of $X^p-X-a$. Pick a non-trivial $\sigma\in
\Gal L|K$. We then have that $\sigma\vartheta-\vartheta=j$ for some
$j\in\F_p^\times$. If $\vartheta,\vartheta'$ are any two Artin-Schreier
generators of $L|K$ such that $\sigma\vartheta-\vartheta=
\sigma\vartheta'-\vartheta'$, then we have $\sigma(\vartheta-\vartheta')
=\vartheta-\vartheta'$. Since $\sigma$ is a generator of $\Gal L|K\isom
\Z/p\Z$, it follows that $\tau(\vartheta-\vartheta')=\vartheta-
\vartheta'$ for all $\tau\in\Gal L|K$, that is, $\vartheta-\vartheta'
\in K$. If $\vartheta,\vartheta'$ are any two Artin-Schreier generators
of $L|K$ such that $\sigma\vartheta-\vartheta=j\in\F_p^\times$ and
$\sigma\vartheta'-\vartheta'=j'\in\F_p^\times$, then there is some $i\in
\{1,\ldots,p-1\}$ such that $ij=j'$ and therefore, $\sigma i\vartheta-
i\vartheta =ij=j'$. Then by what we have shown before, $\vartheta'=i
\vartheta+c$ for some $c\in K$.

Conversely, if $\vartheta$ is an Artin-Schreier generator of $L|K$ and
if $i\in\{1,\ldots,p-1\}$ and $c\in K$, then $(i\vartheta+c)^p-(i
\vartheta+c)=i(\vartheta^p-\vartheta)+c^p-c\in K$. But $i\vartheta+c$
cannot lie in $K$, so $K(i\vartheta+c)=L$ since $[L:K]$ is a prime. This
shows that also $i\vartheta+c$ is an Artin-Schreier generator of $L|K$.
\end{proof}

We will frequently use the following easy observation:
\begin{lemma}                               \label{valASgen}
If $va\leq 0$, then $v\vartheta=\frac{1}{p}va$, and if $va\geq 0$,
then $v\vartheta=va$.
\end{lemma}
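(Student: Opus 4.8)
The plan is to analyze the Artin-Schreier relation $\vartheta^p - \vartheta = a$ by splitting into two cases according to the sign of $va$, using the ultrametric triangle law to compare the values of $\vartheta^p$ and $\vartheta$. The key observation is that since $\vartheta^p - \vartheta = a$, we have $a \in K$, and we want to pin down $v\vartheta$ in terms of $va$. The only subtlety is that $v\vartheta$ need not lie in $vK$, but the relation forces it into a specific value.

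First I would consider the case $va \leq 0$. Here I claim $v\vartheta < 0$ as well: if $v\vartheta \geq 0$ we would have $v(\vartheta^p) \geq 0$ and $v\vartheta \geq 0$, so $va = v(\vartheta^p - \vartheta) \geq 0$, and one checks this can only be consistent with $va \leq 0$ in the trivial-value case. Granting $v\vartheta < 0$, we get $v(\vartheta^p) = p\,v\vartheta < v\vartheta$, so the term $\vartheta^p$ strictly dominates (has strictly smaller value than) $-\vartheta$. By the ultrametric triangle law, $va = v(\vartheta^p - \vartheta) = \min\{p\,v\vartheta, v\vartheta\} = p\,v\vartheta$, which gives $v\vartheta = \frac{1}{p}\,va$.

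Next I would treat the case $va \geq 0$. The plan is to show $v\vartheta \geq 0$ here, for otherwise $v\vartheta < 0$ would force $va = p\,v\vartheta < 0$ by the argument just given, contradicting $va \geq 0$. So $v\vartheta \geq 0$, hence $v(\vartheta^p) = p\,v\vartheta \geq v\vartheta \geq 0$. Now from $\vartheta = \vartheta^p - a$ we get $v\vartheta \geq \min\{v(\vartheta^p), va\} = \min\{p\,v\vartheta, va\}$. If $v\vartheta > 0$, then $p\,v\vartheta > v\vartheta$, so the minimum is achieved by $va$ and strict domination gives $v\vartheta = va$; and if $v\vartheta = 0$ one argues directly that $va = 0$ as well, again yielding $v\vartheta = va$. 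In all subcases we obtain $v\vartheta = va$.

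The main obstacle, such as it is, is making the sign dichotomy for $v\vartheta$ airtight and handling the boundary where $va = 0$ carefully, since there the two regimes overlap and one must check both formulas agree (they do: both give $v\vartheta = 0$). The essential mechanism throughout is that whenever two terms in a sum have distinct values the value of the sum equals the minimum, and the only way the formula could fail is if $v(\vartheta^p)$ and $v\vartheta$ coincided, i.e.\ $p\,v\vartheta = v\vartheta$, forcing $v\vartheta = 0$ and thus the degenerate case already covered.
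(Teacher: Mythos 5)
Your approach --- a case split on the sign of $v\vartheta$ driven by the ultrametric law --- is the same as the paper's, and your treatment of $va<0$ and of the boundary $va=0$ is sound and matches the paper's argument. The genuine gap is the subcase $va>0$, $v\vartheta=0$, which you dismiss with ``one argues directly that $va=0$ as well''. No such argument exists: when $v\vartheta=0$ both $\vartheta^p$ and $\vartheta$ are units, the ultrametric inequality gives only $va\geq 0$, and genuine cancellation can occur, because the residue of $\vartheta$ then satisfies $(\vartheta v)^p-(\vartheta v)=av=0$, an equation that has the nonzero roots $1,\ldots,p-1$ in the residue field. Concretely, let $K=\F_p(t)$ with the $t$-adic valuation and $a=t$, so that $X^p-X-t$ is irreducible over $K$; under the extension of $v$ to $K(\vartheta_0)$ that gives the root $\vartheta_0$ the value $vt>0$, the element $\vartheta=\vartheta_0+1$ is also an Artin-Schreier generator of the same degree-$p$ extension with $\vartheta^p-\vartheta=t$, and yet $v\vartheta=0\ne vt=va$. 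So the subcase you wave away is not vacuous, and in it the asserted conclusion $v\vartheta=va$ actually fails; your closing remark that this ``degenerate case'' is ``already covered'' is precisely where the proof breaks.

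In fairness, this is not only a defect of your write-up: the paper's own proof makes the symmetric unjustified claim ``For $va>0$ we must have $v\vartheta>0$'', which the same example refutes. The implication $va>0\Rightarrow v\vartheta=va$ is only true for a suitably normalized generator, namely the root of $X^p-X-a$ with residue $0$ (the one produced by Hensel's Lemma); for an arbitrary generator one can conclude only that $v\vartheta\in\{0,\,va\}$. (In the paper's applications this suffices: for instance Corollary~\ref{wpC} really only needs $v(\vartheta-c)\geq 0\Rightarrow v(a-c^p+c)\geq 0$, which is immediate from the ultrametric inequality.) So the honest repair of your argument is either to add the normalization $\vartheta v=0$ in the case $va>0$, or to weaken the conclusion there to $v\vartheta\in\{0,va\}$; the ultrametric mechanism you rely on genuinely cannot exclude the cancellation subcase.
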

\begin{proof}
We have that $\vartheta^p-\vartheta=a$. If $v\vartheta\ne 0$, then
$v\vartheta^p=pv\vartheta\ne v\vartheta$ and therefore, $va=
v(\vartheta^p-\vartheta)=\min\{pv\vartheta, v\vartheta\}\ne 0$ by the
ultrametric triangle law. If $va=0$, we thus have $v\vartheta=0$. For
$va<0$ we must have $v\vartheta <0$, whence $pv\vartheta<v\vartheta$ and
$va=pv\vartheta$. For $va>0$ we must have $v\vartheta >0$, whence
$v\vartheta<pv\vartheta$ and $va=v\vartheta$.
\end{proof}

The following lemma gives a first classification of Artin-Schreier
extensions of valued fields.

\begin{lemma}                          \label{classASE}
Assume that $\chara Kv=p$. If $va>0$ or if $va=0$ and $X^p-X-av$ has a
root in $Kv$, then $\vartheta$ lies in the henselization of $K$ (with
respect to every extension of the valuation to the algebraic closure of
$K$) and there are precisely $p$ many distinct extensions of $v$ from
$K$ to $K(\vartheta)$; hence, equality holds in the fundamental
inequality (\ref{fundineq}).

If $va=0$ and $X^p-X-av$ has no root in $Kv$, then $K(\vartheta)v|Kv$ is
a separable extension of degree $p$ and $(K(\vartheta)|K,v)$ is
defectless.

If $(K(\vartheta)|K,v)$ has non-trivial defect, then $va<0$.
\end{lemma}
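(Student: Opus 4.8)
The plan is to handle all three assertions by reducing the Artin-Schreier polynomial $f(X):=X^p-X-a$ modulo the maximal ideal and exploiting Hensel's Lemma in the henselization. Set $\bar f(X):=X^p-X-av$, a polynomial over $Kv$. Since $\chara Kv=p$ we have $\Fp\subseteq Kv$, so an Artin-Schreier-type polynomial over $Kv$ is either irreducible or splits into $p$ distinct linear factors, according to whether it has no root or at least one root in $Kv$; moreover $\bar f'=-1$, so all roots of $\bar f$ are simple. I will use repeatedly that for every $b\in\mathcal O$ the derivative $f'(b)=pb^{p-1}-1$ has residue $-1$, hence $vf'(b)=0$; this holds in equal characteristic (where $pb^{p-1}=0$) and in mixed characteristic (where $v(pb^{p-1})>0$) alike. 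Note also that $\min(\vartheta,K)=f$, since $f$ is monic of degree $p=[K(\vartheta):K]$ and has $\vartheta$ as a root.

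For the first assertion I would unify the two hypotheses into the single statement that $\bar f$ has a root in $Kv$: if $va>0$ then $av=0$ and $0$ is such a root, while if $va=0$ the root is given. Then $\bar f$ splits into $p$ distinct linear factors over $Kv$. Lifting a representative $b\in\mathcal O$ of each such root, one has $vf(b)>0$ and $vf'(b)=0$, so Hensel's Lemma in $K^h$ produces a root of $f$ in $K^h$ reducing to it. As the residues are distinct, this yields $p$ distinct roots of $f$ in $K^h$, so $f$ splits completely there; since $f$ has exactly $p$ roots in $\tilde K$, every root of $f$, in particular $\vartheta$, lies in $K^h$, and this argument applies to the henselization with respect to each extension of $v$ to $\tilde K$. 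Because the number of extensions of $v$ from $K$ to $K(\vartheta)$ equals the number of distinct monic irreducible factors of $f=\min(\vartheta,K)$ over $K^h$, there are exactly $p$ extensions; as $K^h|K$ is immediate, each contributes $e_i=f_i=1$, whence $\sum_{i=1}^p e_i f_i=p=[K(\vartheta):K]$ and equality holds in the fundamental inequality~(\ref{fundineq}).

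For the second assertion, Lemma~\ref{valASgen} gives $v\vartheta=0$, so $\vartheta v$ is defined, and reducing $\vartheta^p-\vartheta=a$ shows that $\vartheta v$ is a root of $\bar f$ in $K(\vartheta)w$ for every extension $w$ of $v$. Since $\bar f$ has no root in $Kv$ it is irreducible over $Kv$, so $Kv(\vartheta v)|Kv$ is an Artin-Schreier extension of degree $p$, in particular separable. Thus $[K(\vartheta)w:Kv]\geq p$ for every $w$, and the fundamental inequality $\sum_i e_i f_i\leq p$ forces a unique extension with $e_1=1$ and $f_1=p$. The Lemma of Ostrowski~(\ref{LoO}) then makes the defect trivial, so $(K(\vartheta)|K,v)$ is defectless and $K(\vartheta)v=Kv(\vartheta v)$ is separable of degree $p$ over $Kv$. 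The third assertion follows by contraposition: a degree-$p$ defect extension is immediate and in particular admits a unique extension of $v$. If $va\geq 0$, then either the first assertion applies, giving $p>1$ extensions and contradicting uniqueness, or the second applies, giving a defectless extension and contradicting non-trivial defect; hence $va<0$. The step I expect to require the most care is the passage through the henselization in the first assertion: identifying the number of extensions of $v$ with the number of irreducible factors of $f$ over $K^h$, and confirming that the residue computation $f'(b)v=-1$ lets Hensel's Lemma operate uniformly in the equal and mixed characteristic cases.
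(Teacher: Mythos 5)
Your proof is correct and follows essentially the same route as the paper's: reduce $X^p-X-a$ modulo $v$, apply Hensel's Lemma to the $p$ distinct simple roots of the reduction to split $f$ in every henselization, settle the irreducible-reduction case by combining $[Kv(\vartheta v):Kv]=p$ with the fundamental inequality, and obtain the last assertion by contraposition from the first two. The only noteworthy (but minor) divergence is the counting step: the paper exhibits $p$ distinct conjugate extensions $v\circ\sigma_i$, distinguished by the residues of the roots, whereas you invoke the standard bijection between extensions of $v$ to $K(\vartheta)$ and the irreducible factors of $\min(\vartheta,K)$ over $K^h$ together with the immediateness of $K^h|K$ to get $e_i=f_i=1$; both rest on the conjugation theorem and yield the same conclusion.
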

\begin{proof}
If $va>0$, then the reduction of $X^p-X-a$ modulo $v$ is $X^p-X$ which
splits completely in $Kv$ and has $p$ many distinct roots since $\chara
Kv=p>0$. Then by Hensel's Lemma, $X^p-X-a$ splits completely in every
henselization of $K$.

If $va=0$ and $X^p-X-av$ has a root in $Kv$, then $X^p-X-av$ splits
completely in $Kv$  and has $p$ many distinct roots. Hence again,
$X^p-X-a$ splits completely in every henselization of $K$.

In both cases, pick one extension of $v$ to $K(\vartheta)$ and
call it again $v$. The roots of $X^p-X-a$ are in one-to-one
correspondence with the roots of $X^p-X-av$. Hence, the roots
$\eta_1,\ldots,\eta_p$ of $X^p-X-a$ have distinct residues in
$Kv$, say, $c_1v,\ldots,c_pv$ with $c_i\in K$. If $\sigma_i$ is the
automorphism of $K(\vartheta)|K$ which sends $\eta_1$ to $\eta_i\,$,
then $v\circ\sigma_i(\eta_1-c_i)=v(\sigma_i\eta_1-c_i)=v(\eta_i-c_i)>0$
and $v\circ\sigma_j(\eta_1-c_i)=v(\eta_j-c_i)=0$ for $j\ne i$. This
shows that the extensions $v\circ\sigma_i$ are distinct for $1\leq
i\leq p$. Since all extensions are conjugate and therefore must be of
the form $v\circ\sigma_i\,$, we find that there are
precisely $p$ many distinct extensions.

If $va=0$ and $X^p-X-av$ has no root in $Kv$, then $[Kv(\vartheta v):Kv]
=p$ since $\vartheta v$ is a root of $X^p-X-av$ and this polynomial is
irreducible over $Kv$. We obtain that $p=[K(\vartheta):K]\geq
[K(\vartheta)v:Kv]\geq [Kv(\vartheta v):Kv]=p$ and see that equality
must hold everywhere. So $K(\vartheta)v=Kv(\vartheta v)$ is a separable
extension of $Kv$, and $K(\vartheta)|K$ is defectless.

By what we have proved, $K(\vartheta)|K$ is defectless whenever
$va\geq 0$. This yields the last assertion of our lemma.
\end{proof}

If $\chara K=p>0$, then the Artin-Schreier polynomial is additive.
If $\vartheta$ is a root of $X^p-X-a$ and if $c\in K$, then
\[(\vartheta-c)^p\,-\,(\vartheta-c)\;=\;\vartheta^p-\vartheta-c^p+c\;=\;
a-c^p+c\;,\]
that is, $\vartheta-c$ is a root of the polynomial $X^p-X-(a-c^p+c)$.

\begin{remark}
Since $K(\vartheta)=K(\vartheta-c)$, this shows that $p$-th powers
appearing in $a$ can be replaced by their $p$-th roots without changing
the extension. This allows to deduce normal forms for $a$ that serve
various purposes. They are key tools in [Ku4] and [Ku8] and in related
work of S.~Abhyankar and H.~Epp.
\end{remark}

\begin{corollary}                                   \label{wpC}
Assume that $\chara K=p$ and that $(K(\vartheta)|K,v)$ has non-trivial
defect. Then $v(\vartheta-c)<0$ for every $c\in K$, and consequently,
$\mbox{\rm dist}(\vartheta,K)\leq 0^-$.
\end{corollary}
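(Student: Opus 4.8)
The plan is to fix an arbitrary $c\in K$ and reduce to the classification already obtained in Lemma~\ref{classASE}, exploiting the fact that the defect is a property of the extension and not of the chosen Artin-Schreier generator. Concretely, the computation recorded just before this corollary shows that $\vartheta-c$ is a root of $X^p-X-a_c$ with $a_c:=a-c^p+c\in K$. Since $\vartheta\notin K$ and $[K(\vartheta):K]=p$, the element $\vartheta-c$ is again an Artin-Schreier generator of the \emph{same} extension $K(\vartheta-c)=K(\vartheta)$ over $K$.

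First I would observe that $(K(\vartheta-c)|K,v)$ is literally the extension $(K(\vartheta)|K,v)$, so by hypothesis it has non-trivial defect. Applying the last assertion of Lemma~\ref{classASE} to the generator $\vartheta-c$ (in place of $\vartheta$) and its defining value $a_c$ (in place of $a$), I get $va_c<0$. Then Lemma~\ref{valASgen}, applied to $\vartheta-c$ with $va_c<0\le 0$, yields
\[v(\vartheta-c)\;=\;\frac{1}{p}\,va_c\;<\;0\;.\]
Since $c\in K$ was arbitrary, this proves the first assertion $v(\vartheta-c)<0$ for every $c\in K$.

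For the distance statement I would unwind the definitions: the first claim says that $\Lambda^{\rm L}(\vartheta,K)=\{v(\vartheta-c)\mid c\in K,\ v(\vartheta-c)\in vK\}$ consists entirely of negative elements of $vK$, so it is contained in the strictly negative part of $\widetilde{vK}$. By the definition of $0^-$ and of the left comparison of cuts, the least initial segment of $\widetilde{vK}$ containing $\Lambda^{\rm L}(\vartheta,K)$ is then contained in $\{\alpha\in\widetilde{vK}\mid\alpha<0\}$, which is exactly the left cut set of $0^-$; hence $\dist(\vartheta,K)\le 0^-$.

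There is essentially no deep obstacle here: all of the real work is in the two preparatory lemmas. The one point that must be handled with care is the invariance of the defect hypothesis under the change of generator $\vartheta\mapsto\vartheta-c$, i.e.\ recognizing that $K(\vartheta-c)=K(\vartheta)$ so that Lemma~\ref{classASE} may legitimately be applied to $a_c$ for every $c$ simultaneously; the only other mild subtlety is translating the pointwise inequalities $v(\vartheta-c)<0$ into the cut inequality $\dist(\vartheta,K)\le 0^-$ via the definitions of $\Lambda^{\rm L}$ and of the embedding $\uparrow\widetilde{vK}$.
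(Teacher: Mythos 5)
Your proof is correct and is essentially the paper's own argument: both rest on the observation that $\vartheta-c$ is an Artin-Schreier generator of the same extension with constant $a-c^p+c$, combined with Lemma~\ref{valASgen} and the last assertion of Lemma~\ref{classASE}. The only difference is organizational: the paper argues contrapositively (assuming $v(\vartheta-c)\geq 0$ for some $c$, deducing $v(a-c^p+c)\geq 0$ and hence defectlessness), whereas you run the same two lemmas in the direct order, which is logically equivalent.
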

\begin{proof}
If there exists $c\in K$ such that $v(\vartheta - c)\geq 0$, then
$\vartheta - c$ is a root of the polynomial $X^p-X-(a-c^p+c)$ and by
Lemma~\ref{valASgen} we have $v(a-c^p+c)=v(\vartheta-c)\geq 0$. But then
by Lemma~\ref{classASE}, the field $K(\vartheta)=K(\vartheta-c)$ cannot
be a defect extension of $K$.
\end{proof}

The converse is also true, in the following sense:
\begin{lemma}                               \label{uniqextv}
Assume that $\chara K=p$. If $\mbox{\rm dist}(\vartheta,K)\leq 0^-$ and
$v(\vartheta-K)$ has no maximal element, then the extension of $v$ from
$K$ to $K(\vartheta)$ is unique, $(K(\vartheta)|K,v)$ is immediate and
consequently, $K(\vartheta)|K$ is an Artin-Schreier defect extension.
\end{lemma}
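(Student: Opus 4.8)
The plan is to establish the three conclusions in order: first uniqueness of the extension of $v$, then immediacy via Lemma~\ref{ueGp1}, and finally the defect statement as a formal consequence.

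First I would unpack the hypothesis $\dist(\vartheta,K)\le 0^-$ into the pointwise assertion that $v(\vartheta-c)<0$ for \emph{every} $c\in K$. Indeed, if $v(\vartheta-c)\ge 0$ for some $c$, then $\vartheta-c$ is a root of $X^p-X-(a-c^p+c)$, and Lemma~\ref{valASgen} forces $v(\vartheta-c)=v(a-c^p+c)\in vK$; this value would then lie in $\Lambda^{\rm L}(\vartheta,K)$ while being $\ge 0$, contradicting $\dist(\vartheta,K)\le 0^-$. This is exactly the reasoning behind Corollary~\ref{wpC}, run in reverse. Taking $c=0$ yields $v\vartheta<0$, hence $va<0$ by Lemma~\ref{valASgen}.

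Next I would show that the extension of $v$ to $K(\vartheta)$ is unique. Since $K(\vartheta)|K$ is Galois of prime degree $p$, there are either one or $p$ extensions; suppose there were $p$. Then the decomposition group is trivial, so $\vartheta$ together with all its conjugates $\vartheta+1,\ldots,\vartheta+p-1$ lies in the henselization $K^h$ with respect to the fixed extension, i.e.\ $X^p-X-a$ splits in $K^h$. The conjugates differ from $\vartheta$ by the units $1,\ldots,p-1$, so $v(\vartheta-(\vartheta+i))=0$; membership of $\vartheta$ in the decomposition field $K^h$ means precisely that $\vartheta$ can be separated from all its conjugates by an element of $K$ (cf.\ [En], [N]), which here forces some $c\in K$ with $v(\vartheta-c)>0$. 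Concretely this is the splitting situation of the first part of Lemma~\ref{classASE}: after the generator change $\vartheta\mapsto\vartheta-c$ one is in the case $v(a-c^p+c)\ge 0$. This contradicts the first step, so the extension must be unique. Now the hypothesis that $v(\vartheta-K)$ has no maximal element, together with uniqueness and $[K(\vartheta):K]=p=\chara Kv$, lets me invoke part~1 of Lemma~\ref{ueGp1} to conclude that $(K(\vartheta)|K,v)$ is immediate. An immediate extension of degree $p$ with unique extension of the valuation is a defect extension by the Ostrowski dichotomy recorded in the introduction (for $[L:K]=p$, defect $\Leftrightarrow$ immediate), and since $K(\vartheta)|K$ is Artin-Schreier it is an Artin-Schreier defect extension.

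The main obstacle is the uniqueness step, and more precisely the implication ``$\vartheta\in K^h\Rightarrow\exists c\in K:\ v(\vartheta-c)>0$''. The subtle point is that the naive improvement process (using $vK^h=vK$ and $K^hv=Kv$ to successively correct $c$) only shows that $v(\vartheta-c)$ can be pushed upward, not that it ever crosses $0$; what actually forces positivity is that $\vartheta$ must be separated from its unit-distance conjugates \emph{inside} $K^h$, so that the splitting case of Lemma~\ref{classASE} applies. A secondary care is to keep the separating element in $K$ rather than merely in $K^h$.
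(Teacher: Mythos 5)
Your architecture is the same as the paper's: translate the distance hypothesis into $v(\vartheta-c)<0$ for all $c\in K$, prove that the extension of $v$ to $K(\vartheta)$ is unique, then invoke part 1) of Lemma~\ref{ueGp1} for immediacy and the prime-degree dichotomy ``immediate $\Leftrightarrow$ defect'' for the final clause. Your first step is correct (it is the argument of Corollary~\ref{wpC} run backwards), and so are the last two. The gap is in the uniqueness step, at exactly the implication you yourself single out: ``$\vartheta\in K^h\Rightarrow\exists\,c\in K:\,v(\vartheta-c)>0$''. You present this as a known characterization of decomposition fields (``membership of $\vartheta$ in $K^h$ means precisely that $\vartheta$ can be separated from all its conjugates by an element of $K$, cf.\ [En], [N]''), but no such characterization exists in those references. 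What ramification theory and Krasner's Lemma give is the \emph{converse}: if some $c\in K$ satisfies $v(\vartheta-c)>v(\vartheta-\sigma\vartheta)=0$ for all conjugates $\sigma\vartheta\neq\vartheta$, then $K^h(\vartheta)\subseteq K^h(c)=K^h$, so $\vartheta\in K^h$. (Separation by an element of $K^h$ is trivially available, namely $c=\vartheta$; the entire content lies in forcing the separating element into $K$.) The direction you need is not formal decomposition theory at all; it is precisely the nontrivial approximation statement that the paper itself does not prove internally but imports from [Ku7], whose sole role in the paper's two-line proof is to deliver uniqueness of the extension from $\dist(\vartheta,K)\leq 0^-$. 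As written, your argument assumes a statement that is (modulo the paper's other lemmas) equivalent to the one being proved.

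To see why the claim is not automatic: since $K^h|K$ is immediate, membership $\vartheta\in K^h$ only guarantees, via Theorem~\ref{KT1}, that $v(\vartheta-K)$ has no maximal element, and this is on its face perfectly compatible with $v(\vartheta-K)$ being bounded above by $0$ --- which is exactly the configuration your hypothesis describes and which has to be excluded. In rank one the exclusion is easy, because there the completion is henselian, hence $K^h$ embeds into $K^c$ over $K$ and every element of $K^h$ is approximable from $K$ cofinally in $vK$; but for valuations of higher rank one has $K^h\not\subseteq K^c$ in general, and ruling out the bounded configuration requires genuinely finer tools (for instance, a Kaplansky-style argument showing that any extension of $v$ to $K(\vartheta)$ making $\vartheta$ a limit of the approximating pseudo Cauchy sequence is uniquely determined --- which over a non-henselian base runs into the possibility of polynomials of degree $<p$ whose values along the sequence do not stabilize, a case your sketch cannot handle). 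Either cite [Ku7] at this point, as the paper does, or supply that missing argument; without it the proof does not go through.
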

\begin{proof}
In [Ku7] we show that the assumption that $\mbox{\rm dist}(\vartheta,K)
\leq 0^-$ implies that the extension of $v$ from $K$ to $K(\vartheta)$
is unique. Since $v(\vartheta-K)$ has no maximal element,
Lemma~\ref{ueGp1} yields that $(K(\vartheta)|K,v)$ is immediate.
\end{proof}

\pars
We will also need the following fact:
\begin{lemma}                               \label{AScpieASc}
Let $K$ be an Artin-Schreier closed field of characteristic $p>0$. Then
also every purely inseparable extension of $K$ is Artin-Schreier closed.
\end{lemma}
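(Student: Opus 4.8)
The plan is to show directly that the Artin-Schreier map $\wp\colon x\mapsto x^p-x$ is surjective on $L$, which is exactly what ``Artin-Schreier closed'' amounts to. Since $L|K$ is purely inseparable, every $a\in L$ satisfies $a^{p^n}\in K$ for some $n\geq 0$; I would call the least such $n$ the \emph{exponent} of $a$ and run an induction on it. In the base case $n=0$ we have $a\in K$, and the Artin-Schreier closedness of $K$ yields a root $\vartheta\in K\subseteq L$ of $X^p-X-a$, so $\wp(\vartheta)=a$.

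For the inductive step, fix $a\in L$ of exponent $n\geq 1$ and note that $a^p$ has exponent $n-1$, since $(a^p)^{p^{n-1}}=a^{p^n}\in K$. By the induction hypothesis there is some $w\in L$ with $w^p-w=a^p$. The key point is then a purely formal identity: setting $\vartheta:=w-a\in L$ and using that Frobenius is additive in characteristic $p$, one computes
\[
\vartheta^p-\vartheta\;=\;w^p-a^p-w+a\;=\;(w^p-w)-a^p+a\;=\;a\;.
\]
Hence $\vartheta\in L$ is a root of $X^p-X-a$, which completes the induction and shows that $\wp$ maps $L$ onto $L$.

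I expect no serious obstacle here; the whole argument rests on spotting the substitution $\vartheta=w-a$, which reduces solving $\wp(\vartheta)=a$ in $L$ to solving $\wp(w)=a^p$ one exponent lower, where the induction hypothesis already provides a solution. The only points needing a line of justification are that ``Artin-Schreier closed'' is equivalent to surjectivity of $\wp$ (an irreducible polynomial $X^p-X-a$ would define a non-trivial Artin-Schreier extension, and if it has a root in the field then it splits there), and that the exponent of $a^p$ really drops by one, so the induction is well founded. No valuation theory is used, so the statement in fact holds for every purely inseparable extension, finite or infinite.
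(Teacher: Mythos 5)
Your proof is correct, and it takes a genuinely different route from the paper's. Both arguments rest on the same two ingredients---additivity of $\wp$ in characteristic $p$ and Artin-Schreier closedness of $K$ applied to the element $a^{p^n}\in K$ (the paper writes $m$ for your exponent $n$)---but they are organized in opposite directions. The paper works top-down: it takes a root $\vartheta$ of $X^p-X-a$ in $\tilde{L}$, raises the equation to the $p^m$-th power to get $\wp(\vartheta^{p^m})=a^{p^m}\in K$, concludes $\vartheta^{p^m}\in K$ from the hypothesis on $K$, and then pins $\vartheta$ down inside $L$ by a degree comparison: $[K(\vartheta):K]\geq[K(a):K]=p^m$ because $a=\wp(\vartheta)\in K(\vartheta)$, while $[K(\vartheta):K]\leq p^m$ because $\vartheta^{p^m}\in K$; hence $K(\vartheta)=K(a)\subseteq L$. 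You instead work bottom-up by induction on the exponent, and the substitution $\vartheta=w-a$, i.e.\ $\wp(w-a)=\wp(w)-\wp(a)=a^p-(a^p-a)=a$, replaces the degree argument entirely. What your route buys: it is constructive and more elementary---it never leaves $L$, needs no algebraic closure and no counting of degrees; unwinding the induction even yields the closed-form root $\vartheta=\beta-(a+a^p+\cdots+a^{p^{n-1}})$, where $\beta\in K$ is any root of $X^p-X-a^{p^n}$. What the paper's route buys: it shows directly that every root of $X^p-X-a$ in $\tilde{L}$ already lies in $K(a)$, a slightly sharper localization---though your explicit formula gives the same conclusion implicitly, since $\beta\in K$ and $a+a^p+\cdots+a^{p^{n-1}}\in K(a)$. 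Both arguments apply verbatim to infinite purely inseparable extensions, as you note, and the two small facts you flag (equivalence of Artin-Schreier closedness with surjectivity of $\wp$, and the drop of the exponent of $a^p$) are indeed all that needs checking.
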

\begin{proof}
If $\chara K=0$ then every purely inseparable extension is trivial and
there is nothing to show. So let $\chara K=p>0$. Assume $L$ to be a
purely inseparable extension of the Artin-Schreier closed field $K$.
Take $a\in L$ and let $\vartheta\in\tilde{L}$ be a root of $X^p-X-a$.
Let $m\geq 0$ be the minimal integer such that $a^{p^m}\in K$. Then
$(\vartheta^{p^m})^p-\vartheta^{p^m}=(\vartheta^p-\vartheta)^{p^m}=
a^{p^m}$. Since $K$ is Artin-Schreier closed by assumption, it follows
that $\vartheta^{p^m}\in K$. The field $K(\vartheta)$ contains $a=
\vartheta^p-\vartheta$ and thus, $[K(\vartheta):K]\geq [K(a):K]=p^m$. On
the other hand, $p^m\geq [K(\vartheta):K]$ since $\vartheta^{p^m}\in K$.
Consequently, $[K(\vartheta):K]= [K(a):K]$, showing that $\vartheta \in
K(a)\subseteq L$.
\end{proof}


%
%
%
\section{Inseparably defectless fields}     \label{sectidf}
In this section, we shall give a characterization of inseparably
defectless fields. Throughout, we assume that $\chara K=p$.
Recall that every purely inseparable algebraic
extension admits a unique extension of the valuation. Every defectless
field and in particular every trivially valued field is inseparably
defectless. Note that a valued field can be \bfind{inseparably maximal},
that is, it does not admit proper immediate purely inseparable
extensions, without being inseparably defectless. The field $(F,v)$ of
Example~3.25 in [Ku5] is of this kind.

Let us observe that for an inseparably defectless field $(K,v)$,
every immediate extension is separable. Indeed, it follows from
Lemma~\ref{ivd} that every immediate extension of $(K,v)$ is
linearly disjoint from the defectless extension $(K^{1/p^{\infty}}|
K,v)$. In the literature, one can find the expression ``excellent''
for those fields for which all immediate extensions are separable
(cf.\ \fvklit{DEL1}, D\'efinition 1.41). But there are also other
properties of certain valuation rings for which this expression is used.

By definition, $(K,v)$ is an inseparably defectless field if and
only if the extension $(K^{1/p^{\infty}}|K,v)$ is defectless. For this
to hold, it is already sufficient that $(K^{1/p}|K,v)$ is defectless:
\begin{lemma}                               \label{charid1/p}
The field $(K,v)$ is inseparably defectless if and only if
$(K^{1/p}|K,v)$ is defectless, and this holds if and only
if $(K|K^p,v)$ is defectless.
\end{lemma}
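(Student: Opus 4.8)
The plan is to prove the two equivalences separately: the equivalence with $(K|K^p,v)$ comes essentially for free from a Frobenius twist, while the equivalence with inseparable defectlessness requires a tower argument built on top of it.

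First I would dispose of the equivalence with $(K|K^p,v)$. The Frobenius map $\phi\colon K^{1/p}\to K$, $x\mapsto x^p$, is a field isomorphism carrying the pair $(K^{1/p},K)$ onto $(K,K^p)$. Transporting the valuation $v$ of $K^{1/p}$ through $\phi$ yields the valuation $\frac1p v$ on $K$, since $v(y^{1/p})=\frac1p v(y)$ for $y\in K$, and multiplication by $p$ is an order-isomorphism of the value group. Hence $\phi$ is an isomorphism of valued-field extensions $(K^{1/p}|K,v)\isom(K|K^p,\frac1p v)$. I would note explicitly that an order-isomorphism of value groups leaves $(vL:vK)$, $[Lv:Kv]$ and the degree unchanged, so that the defect of each finite subextension is preserved; this gives at once that $(K^{1/p}|K,v)$ is defectless if and only if $(K|K^p,v)$ is.

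For the equivalence with inseparable defectlessness, the forward direction is immediate: if $(K,v)$ is inseparably defectless then every finite subextension of $K^{1/p}|K$ is a finite purely inseparable extension of $K$, hence defectless, so $(K^{1/p}|K,v)$ is defectless by the definition of defectlessness for algebraic extensions. For the converse I would iterate the Frobenius twist: the map $x\mapsto x^{p^{i-1}}$ identifies, up to rescaling the value group by $p^{i-1}$, the extension $(K^{1/p^i}|K^{1/p^{i-1}},v)$ with $(K^{1/p}|K,v)$. Thus, assuming $(K^{1/p}|K,v)$ defectless, every step $(K^{1/p^i}|K^{1/p^{i-1}},v)$ of the tower $K\subseteq K^{1/p}\subseteq K^{1/p^2}\subseteq\cdots$ is defectless. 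Since purely inseparable extensions carry a unique extension of the valuation, repeated application of Lemma~\ref{transdl}, by induction on $n$, shows that $(K^{1/p^n}|K,v)$ is defectless for every $n$. Any given finite purely inseparable extension $F|K$ is generated by finitely many $p$-power roots, hence lies in some $K^{1/p^n}$; being a finite subextension of the defectless extension $K^{1/p^n}|K$, it is defectless. Therefore $(K,v)$ is inseparably defectless.

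The main obstacle I anticipate is organizational rather than deep: because the degree of imperfection $[K:K^p]$ may be infinite, the intermediate extensions $K^{1/p^i}|K^{1/p^{i-1}}$ need not be finite, so one cannot simply chain the finite-extension multiplicativity of Lemma~\ref{md}. Instead everything must be phrased through the definition of defectlessness for (possibly infinite) algebraic extensions and closed up by the infinite version, Lemma~\ref{transdl}, whose hypothesis of a unique valuation extension is exactly what pure inseparability supplies. The Frobenius rescaling of the value group is the only other point needing care, and it is handled by the remark above that order-isomorphisms of value groups leave ramification index, inertia degree and degree invariant.
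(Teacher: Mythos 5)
Your proof is correct and follows essentially the same route as the paper's: the forward direction is noted as trivial, the converse uses the Frobenius twist to make each step $(K^{1/p^i}|K^{1/p^{i-1}},v)$ of the tower defectless, chains them with Lemma~\ref{transdl} (whose unique-extension hypothesis is supplied by pure inseparability), and captures any finite purely inseparable extension inside some $K^{1/p^n}$; the equivalence with $(K|K^p,v)$ is likewise a Frobenius transport in both treatments. Your version merely spells out the value-group rescaling and the invariance of degree, ramification index and inertia degree, which the paper leaves implicit in the phrase ``valuation preserving.''
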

\begin{proof}
The first implication ``$\Rightarrow$'' is trivial. Assume that
$(K^{1/p}|K,v)$ is defectless. The Frobenius endomorphism sends the
extension
\[(K^{1/p^2}| K^{1/p},v)\]
onto the extension $(K^{1/p}|K,v)$
and is valuation preserving. Consequently, also the former extension is
defectless. By induction, we find that $(K^{1/p^m}| K^{1/p^{m-1}},v)$ is
defectless for every $m\geq 1$. By a repeated application of
Lemma~\ref{transdl}, also $(K^{1/p^m}|K,v)$ is defectless. Since every
finite subextension of $K^{1/p^{\infty}}|K$ is already contained in
$K^{1/p^m}$ for some $m$, it follows that $(K^{1/p^{\infty}}|K,v)$ is
defectless.

The second equivalence is proved again by use of the Frobenius
endomorphism.
\end{proof}

\begin{lemma}
The field $(K,v)$ is inseparably defectless if and only if for every
finite (possibly trivial) subextension $L|K^p$ of $K|K^p$ and every
subextension $L(b)|L$ of $K|L$ of degree $p$, the set $v(b-L)$ has a
maximal element.
\end{lemma}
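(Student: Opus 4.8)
The plan is to reduce the statement to a single fact about purely inseparable extensions of degree $p$. First I would invoke Lemma~\ref{charid1/p} to replace ``$(K,v)$ is inseparably defectless'' by ``$(K|K^p,v)$ is defectless'', i.e.\ by the condition that every finite subextension of $K|K^p$ is defectless. Since $K|K^p$ is purely inseparable, every such finite subextension $F|K^p$ can be written as a tower $K^p=F_0\subset F_1\subset\cdots\subset F_m=F$ of steps of degree $p$, where each intermediate field $F_i$ is a finite subextension of $K|K^p$ and each step has the form $F_i=F_{i-1}(b_i)$ with $b_i\in K$. By repeated use of the multiplicativity of the defect (Lemma~\ref{md}), $F|K^p$ is defectless if and only if every step $F_i|F_{i-1}$ is defectless. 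Hence ``$(K|K^p,v)$ is defectless'' is equivalent to the assertion that every degree-$p$ step $L(b)|L$, with $K^p\subseteq L\subseteq K$ a finite subextension and $b\in K$, is defectless; these are exactly the extensions appearing in the statement, since $L(b)\subseteq K$ is itself a finite subextension of $K|K^p$.

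The crux is therefore the following equivalence for a single such step: \emph{$L(b)|L$ is defectless if and only if $v(b-L)$ has a maximal element.} Here $b\in K$ gives $b^p\in K^p\subseteq L$, so $L(b)|L$ is purely inseparable of degree $p$ and the extension of $v$ is unique. Because $\chara K=p$ forces $\chara Kv=p$, the Lemma of Ostrowski together with the remark in the introduction (for a degree-$p$ extension, ``defect extension'' coincides with ``immediate'') shows that $L(b)|L$ is defectless exactly when it is \emph{not} immediate. Now Theorem~\ref{KT1} says that an immediate extension forces $v(b-L)$ to have no maximal element, while part~1) of Lemma~\ref{ueGp1} says conversely that if $v(b-L)$ has no maximal element then the degree-$p$ extension (with its unique valuation extension) is immediate. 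Combining these, $L(b)|L$ is immediate if and only if $v(b-L)$ has no maximal element, which is precisely the desired equivalence after taking contrapositives.

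Assembling the two ingredients finishes both directions. For ``$\Rightarrow$'', if $(K,v)$ is inseparably defectless then each $L(b)|K^p$ is a finite defectless subextension of $K|K^p$, so by Lemma~\ref{md} its sub-step $L(b)|L$ is defectless, whence $v(b-L)$ has a maximal element. For ``$\Leftarrow$'', if $v(b-L)$ has a maximal element for every admissible $L$ and $b$, then every degree-$p$ step is defectless, so by the tower argument every finite subextension of $K|K^p$ is defectless; thus $(K|K^p,v)$, and therefore $(K,v)$, is inseparably defectless.

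The only point requiring real care is the single degree-$p$ equivalence: one must verify that the step is genuinely purely inseparable of degree $p$ (so that the valuation extension is unique and Ostrowski applies with $\chara Kv=p$), and one must correctly pair Theorem~\ref{KT1} with part~1) of Lemma~\ref{ueGp1} so as to obtain the full biconditional ``immediate $\Leftrightarrow$ $v(b-L)$ has no maximal element'' rather than just one implication. Everything else is bookkeeping via Lemma~\ref{charid1/p} and the multiplicativity of the defect.
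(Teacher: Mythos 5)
Your proposal is correct and follows essentially the same route as the paper's own proof: reduction via Lemma~\ref{charid1/p}, decomposition of finite subextensions of $K|K^p$ into degree-$p$ steps handled by repeated application of Lemma~\ref{md}, and then the key biconditional for a single step obtained by pairing Theorem~\ref{KT1} (immediate $\Rightarrow$ no maximal element) with part~1) of Lemma~\ref{ueGp1} (no maximal element $\Rightarrow$ immediate, using uniqueness of the valuation extension and that degree-$p$ defect extensions are exactly the immediate ones). Nothing is missing.
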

\begin{proof}
By the previous lemma, $(K,v)$ is inseparably defectless if and only if
every finite subextension $(F|K^p,v)$ of $(K|K^p,v)$ is defectless. But
$F|K$ is a tower of purely inseparable extensions of degree $p=\chara
K$, so the latter holds if and only if each extension in the tower is
defectless. So by a repeated application of Lemma~\ref{md}
we see that $(K,v)$ is inseparably defectless if and only
if for every finite subextension $L|K^p$ of $K|K^p$ and every
subextension $L(b)|L$ of $K|L$ of degree $p$, the extension $(L(b)|L,v)$
is defectless. The latter is equivalent to $v(b-L)$ having a maximal
element. Indeed, if $(L(b)|L,v)$ is a defect extension, then it is
immediate and by Theorem~\ref{KT1}, $v(b-L)$ has no maximal element. The
converse holds by part 1) of Lemma~\ref{ueGp1} since the extension of
$v$ from $L$ to $L(b)$ is unique.
\end{proof}

Now we are able to give the
\sn
{\bf Proof of Theorem~\ref{extrid}}: \ The first assertion is an easy
consequence of the last lemma. Given $L$ and $b$ as in that lemma, we
take $b_1\ldots,b_n$ to be a $K^p$-basis of $L$. Then $c\in L$ if and
only if $c=\sum_{i=1}^{n} b_ic_i^p$ for some $c_1,\ldots,c_n\in K$.
Hence, $v(b-L)$ has a maximum if and only if $(K,v)$ is $K$-extremal
with respect to the polynomial (\ref{b-bixp}).

To prove the second assertion of Theorem~\ref{extrid}, we assume that
$vK$ is divisible or a $\Z$-group. The same is then true for $vK^p$ and
for $vL$ for every $L$ as in the previous lemma. Further, we note that
in the previous lemma, we can restrict the scope to all $b\in {\cal O}$.
As well, we can restrict the scope to all defectless extensions
$(L|K^p,v)$. So we can choose $b_1,\ldots,b_n$ to be a valuation basis
of $(L|K^p,v)$. If $vK^p$ is divisible, then $vL=vK^p$ and we can
assume in addition that $vb_1=\ldots=vb_n=0$. If $vK^p$ is a $\Z$-group
with least positive element $\alpha$, then we can assume in addition
that for $1\leq i\leq n$, $vb_i= \frac{\ell_i}{p^m}\alpha$ for some
$\ell_i\in\{0,\ldots,p^m-1\}$, with $m\geq 0$ fixed; so $0\leq vb_i<
\alpha$. Now it remains to show that $v(b-L)$ has a maximal element if
and only if $(K,v)$ is ${\cal O}$-extremal with respect to the
polynomial (\ref{b-bixp}). We observe that $0\leq v(b-0)\in v(b-L)$.
Take $c\in L$ such that $v(b-c)\geq 0$. We write $c=\sum_{i=1}^{n}
b_ic_i^p$ with $c_1,\ldots,c_n\in K$. It follows that
\[0\>\leq\>vc\>=\>v\sum_{i=1}^{n} b_ic_i^p\>=\>
\min_i vb_ic_i^p\>.\]
Hence for $1\leq i\leq n$, $vb_i+vc_i^p\geq 0$, and by our assumptions
on the values $vb_i$, this implies that $vc_i^p\geq 0$ and hence $c_i\in
{\cal O}$. This shows that the image of ${\cal O}^n$ under the
polynomial (\ref{b-bixp}) is a final segment of $v(b-L)$, hence one of
the sets has a maximal element if and only if the other has.    \QED

\begin{corollary}                           \label{extr2}
Every extremal field with value group a divisible or a $\Z$-group is
inseparably defectless.
\end{corollary}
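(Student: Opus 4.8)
The plan is to read off the corollary as a direct specialization of the second part of Theorem~\ref{extrid}. Recall that, by definition, an extremal field $K$ is ${\cal O}$-extremal with respect to \emph{every} polynomial in any finite number of variables with coefficients in $K$. In particular, $K$ is then ${\cal O}$-extremal with respect to every $p$-polynomial of the shape (\ref{b-bixp}), and \emph{a fortiori} with respect to exactly those distinguished polynomials (\ref{b-bixp}) singled out in Theorem~\ref{extrid}, namely those in which $b,b_1,\ldots,b_n\in{\cal O}$, the $b_i$ form a valuation basis of a finite defectless extension of $K^p$, and the values $vb_1,\ldots,vb_n$ lie below every positive element of $vK$. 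So the required ${\cal O}$-extremality comes for free from extremality.

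First I would dispose of the characteristic: we are in Section~\ref{sectidf}, where $\chara K=p>0$, so the statement has genuine content (in characteristic $0$ there are no nontrivial purely inseparable extensions and inseparable defectlessness is vacuous). With $\chara K=p$ fixed, the argument is then simply to invoke the standing hypothesis that $vK$ is divisible or a $\Z$-group, which is precisely the assumption under which the second half of Theorem~\ref{extrid} is valid. That theorem asserts that, under this hypothesis on $vK$, the field $K$ is inseparably defectless \emph{if and only if} it is ${\cal O}$-extremal with respect to each of the distinguished polynomials (\ref{b-bixp}). Feeding the extremality of $K$ into the ``if'' direction yields at once that $K$ is inseparably defectless.

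I do not expect any real obstacle here: the corollary is an immediate consequence of Theorem~\ref{extrid}. The only point worth flagging is that one must use the \emph{second} (the ${\cal O}$-extremal) characterization of inseparable defectlessness, which is exactly the one gated by the hypothesis on $vK$, rather than the first ($K$-extremal) characterization --- for extremality is an ${\cal O}$-extremality statement, not a $K$-extremality statement. Thus the divisible-or-$\Z$-group assumption is not a mere technical convenience but precisely the input that lets the ${\cal O}$-extremality built into ``extremal'' be funneled through Theorem~\ref{extrid} to conclude inseparable defectlessness.
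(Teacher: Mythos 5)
Your proposal is correct and is precisely the paper's (implicit) argument: the corollary is stated without proof as an immediate consequence of the second part of Theorem~\ref{extrid}, since an extremal field is by definition ${\cal O}$-extremal with respect to all polynomials, hence in particular with respect to the distinguished $p$-polynomials (\ref{b-bixp}), and the divisibility-or-$\Z$-group hypothesis is exactly what activates that ${\cal O}$-extremal characterization. Your remarks on the positive-characteristic convention and on why the $K$-extremal version cannot be used are both accurate.
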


\begin{corollary}                           \label{idelem}
The property ``inseparably defectless'' is elementary in the language of
valued fields.
\end{corollary}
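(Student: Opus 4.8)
The plan is to read the corollary straight off the first assertion of Theorem~\ref{extrid}, which characterizes inseparable defectlessness as $K$-extremality with respect to an explicitly described family of polynomials, and then to verify that every clause of that characterization is first-order expressible. Throughout this section $\chara K=p$, and the basic point is that membership in $K^p$ is definable: for $a\in K$ one has $a\in K^p$ if and only if $\exists c\,(c^p=a)$, where $c^p$ is a term of the language of rings. Consequently the $K^p$-span $V:=\sum_{i=1}^n b_iK^p=\{\sum_i b_ic_i^p\mid c_1,\ldots,c_n\in K\}$ of a tuple $b_1,\ldots,b_n\in K$, as well as conditions on $V$, are definable uniformly in the parameters $b_1,\ldots,b_n$.

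First I would express the side condition of Theorem~\ref{extrid} that $b_1,\ldots,b_n$ form a basis of a finite extension of $K^p$ inside $K$. Since $K^p\subseteq L\subseteq K$ forces $b^p\in K^p\subseteq L$ for every $b\in K$, we always have $[L(b):L]\in\{1,p\}$, so the only substantive requirement on $L=V$ is that it be a field, while $b$ ranges freely over $K$. Now ``$V$ is a (necessarily finite) extension of $K^p$'' is captured by ``$1\in V$ and $V\cdot V\subseteq V$'', i.e.\ by
\[\exists c_1,\ldots,c_n\ \Big(\sum_i b_ic_i^p=1\Big)\ \wedge\ \bigwedge_{1\le i,j\le n}\exists c_1,\ldots,c_n\ \Big(b_ib_j=\sum_k b_kc_k^p\Big),\]
a formula with free variables $b_1,\ldots,b_n$; indeed a finite-dimensional $K^p$-subalgebra of the field $K$ containing $1$ is automatically a field. (One may conjoin $K^p$-linear independence of the $b_i$, itself first-order, to match ``basis'' literally; but any spanning tuple yields the same image set $v(b-V)$, so the basis requirement may equally well be dropped.)

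Next I would express $K$-extremality with respect to $b-\sum_i b_iX_i^p$, with $b,b_1,\ldots,b_n$ as parameters, exactly as the paper does for a single polynomial:
\[\exists Y_1,\ldots,Y_n\ \forall X_1,\ldots,X_n\ :\ v\Big(b-\sum_i b_iX_i^p\Big)\ \le\ v\Big(b-\sum_i b_iY_i^p\Big),\]
which is a formula in the parameters $b,b_1,\ldots,b_n$. For each fixed $n\in\N$ I would then form the sentence $\sigma_n$ asserting that for all $b,b_1,\ldots,b_n\in K$, if $V=\sum_i b_iK^p$ is a field then $(K,v)$ is $K$-extremal with respect to $b-\sum_i b_iX_i^p$. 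By the first assertion of Theorem~\ref{extrid}, a valued field of characteristic $p$ is inseparably defectless if and only if it satisfies the scheme $\{\sigma_n\mid n\in\N\}$ adjoined to the axioms of valued fields of characteristic $p$; in characteristic $0$ inseparable defectlessness is automatic and the characteristic is itself elementary, so no further sentences are needed there. Hence the property is axiomatizable, i.e.\ elementary in the language of valued fields.

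I expect the only real obstacle to be the second paragraph: checking that ``$b_1,\ldots,b_n$ is a basis of a finite extension of $K^p$'' is first-order, and in particular justifying that it suffices to require $V$ to contain $1$ and to be closed under multiplication (so that it is automatically a finite field extension of $K^p$). Everything else is a routine transcription of the quantifier structure already used in the paper's treatment of extremality.
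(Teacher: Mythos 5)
Your proof is correct, and it follows the same overall route as the paper: both read the corollary off the first assertion of Theorem~\ref{extrid} and then verify that the class of admissible parameter tuples is first-order definable, using the definability of membership in $K^p$. Where you genuinely diverge is in how that definability is arranged, which is the only substantive content of this proof. The paper quantifies over generator tuples $a_1,\ldots,a_\nu\in K$ and takes as basis the monomials $a_1^{e_1}\cdots a_\nu^{e_\nu}$, $0\le e_i<p$, imposing only their $K^p$-linear independence; this parametrizes the finite extensions of $K^p$ inside $K$ as the fields $K^p(a_1,\ldots,a_\nu)$, so the tuple is literally a basis and Theorem~\ref{extrid} applies verbatim, with $n=p^\nu$ running over powers of $p$. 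You instead quantify over arbitrary tuples $b_1,\ldots,b_n$ and express that their $K^p$-span $V$ is a field by the closure conditions $1\in V$ and $V\cdot V\subseteq V$, invoking the (correct) fact that a finite-dimensional $K^p$-subalgebra of the field $K$ is itself a field; you then rightly observe that the basis requirement can be dropped, since a spanning tuple produces the same image set $v(b-V)$ and hence the same extremality condition as any basis extracted from it. Both encodings are valid: yours trades the explicit monomial basis for a closure condition plus the spanning-versus-basis equivalence, and does not need to restrict $n$ to powers of $p$; the paper's avoids that extra equivalence step but needs the observation that the monomials span $K^p(a_1,\ldots,a_\nu)$. Your additional remarks (handling characteristic $0$ vacuously, and the optional conjunction of linear independence) are harmless and correct.
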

\begin{proof}
The property can be axiomatized by an infinite scheme of axioms where
$n$ runs through all powers $p^\nu$ of $p$. Each of the axioms
quantifies over all $b\in K$ and all bases of finite extensions of
$K^p$. The latter is done by quantifying over all choices of
$a_1,\ldots,a_\nu\in K$ such that the elements $a_1^{e_1}\cdot
\ldots\cdot a_\nu^{e_\nu}$, $0\leq e_i<p$ are linearly independent over
$K^p$ (which can be expressed by an elementary sentence). Also the
additional conditions concerning the values of these elements and that
they form a valuation basis are elementary in the language of valued
fields.
\end{proof}

For a valued field of finite $p$-degree, one knows several properties
which are equivalent to ``inseparably defectless''. The following
theorem is due to F.~Delon [D1]:         

\begin{theorem}                             \label{charinsdl}
Let $K$ be a field of characteristic $p>0$ and finite $p$-degree
$[K:K^p]$. Then for the valued field $(K,v)$, the property of being
inseparably defectless is equivalent to each of the following
properties:\sn
{\bf a)}\ \ $[K:K^p]=(vK:pvK)[Kv:Kv^p]$, i.e.,
$(K|K^p,v)$ is a defectless extension\sn
{\bf b)}\ \ $(K^{1/p}|K,v)$ is a defectless extension\sn
{\bf c)}\ \ every immediate extension of $(K,v)$ is separable\sn
{\bf d)}\ \ there is a separable maximal immediate extension of
$(K,v)$.
\end{theorem}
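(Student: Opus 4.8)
The plan is to prove Theorem~\ref{charinsdl} by establishing a cycle of implications among the listed properties, using the finiteness of the $p$-degree as the crucial extra hypothesis. The equivalence of the defining property ``inseparably defectless'' with \textbf{a)} and \textbf{b)} is nearly immediate from Lemma~\ref{charid1/p}, which already states that inseparable defectlessness is equivalent to $(K^{1/p}|K,v)$ being defectless and to $(K|K^p,v)$ being defectless; property \textbf{a)} is just the statement that equality holds in the fundamental inequality for the finite extension $(K|K^p,v)$, which by definition means that extension is defectless. So the real content lies in connecting these to the separability condition \textbf{c)} and the existence statement \textbf{d)}.

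First I would show that inseparable defectlessness implies \textbf{c)}. This is already recorded in Corollary~\ref{lid} (and restated at the start of Section~\ref{sectidf}): via Lemma~\ref{ivd}, any immediate extension of an inseparably defectless field is linearly disjoint from the defectless purely inseparable extension $(K^{1/p^\infty}|K,v)$, forcing separability. The implication \textbf{c)}~$\Rightarrow$~\textbf{d)} is essentially formal: one takes \emph{any} maximal immediate extension (which exists by a Zorn's lemma argument on the partially ordered set of immediate extensions inside a fixed large field such as $\widetilde{K}$), and property \textbf{c)} guarantees it is automatically separable, so in particular a separable maximal immediate extension exists. The direction that carries the weight is \textbf{d)}~$\Rightarrow$ (inseparable defectlessness), and I expect this to be the main obstacle.

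For \textbf{d)}~$\Rightarrow$~\textbf{a)}, suppose $M|K$ is a separable maximal immediate extension and, for contradiction, that $(K|K^p,v)$ is a defect extension, i.e.\ $(K^{1/p}|K,v)$ is a proper immediate purely inseparable extension. The idea is that the finiteness of $[K:K^p]=p^d$ should let me transport this purely inseparable defect across $M$ to produce a proper \emph{immediate} extension of $M$; since $M|K$ is immediate, $M$ and $K$ share value group and residue field, so the invariants $(vM:pvM)$ and $[Mv:Mv^p]$ coincide with those of $K$, and $[M:M^p]=[K:K^p]=p^d$ as well because $M|K$ is immediate (one checks the $p$-degree is preserved). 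If $(M|M^p,v)$ were defectless, then $M$ would be inseparably defectless, hence by the already-proven direction every immediate extension of $M$ is separable; but I would argue that the persistence of the purely inseparable defect from $K$ forces $(M^{1/p}|M,v)$ to remain a proper immediate extension, contradicting maximality of $M$. The delicate point — and where finite $p$-degree is indispensable — is controlling how the defect of $(K|K^p,v)$ behaves under the immediate extension $M|K$: one needs that a purely inseparable immediate extension of $K$ gives rise to a purely inseparable immediate extension of $M$, which uses Lemma~\ref{ivd}-type linear disjointness arguments together with the fact that finite $p$-degree makes $K^{1/p}|K$ a \emph{finite} extension so that the defect is a single well-defined invariant.

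The hardest step, then, is making rigorous the claim that a maximal immediate extension which is separable cannot coexist with a nontrivial purely inseparable defect in the base — equivalently, that $M$ inherits inseparable defectlessness from the \emph{absence} of proper immediate inseparable extensions only because $[K:K^p]$ is finite. Without finiteness, one cannot localize the inseparable defect to the single finite extension $K^{1/p}|K$, and the argument breaks down (indeed the remark preceding the theorem, citing Example~3.25 of [Ku5], warns that inseparable maximality does \emph{not} imply inseparable defectlessness in general). I would therefore spend most of the effort verifying that $[M:M^p]=[K:K^p]$ and that the defect $\mathrm{d}(M^{1/p}|M)$ equals $\mathrm{d}(K^{1/p}|K)$, using the valuation-preserving Frobenius and the immediacy of $M|K$; once these invariants are pinned down, the equivalences close up and the cycle \emph{(inseparably defectless)} $\Rightarrow$ \textbf{a)} $\Leftrightarrow$ \textbf{b)} $\Rightarrow$ \textbf{c)} $\Rightarrow$ \textbf{d)} $\Rightarrow$ \emph{(inseparably defectless)} is complete.
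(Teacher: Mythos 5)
Your treatment of the easy implications coincides with the paper's: inseparably defectless $\Leftrightarrow$ a) $\Leftrightarrow$ b) from Lemma~\ref{charid1/p}, the implication to c) from Lemma~\ref{ivd}, and c) $\Rightarrow$ d) from the existence of maximal immediate extensions. But your argument for d) $\Rightarrow$ a) has a genuine gap, and it sits exactly at the point you flag as the crux. You assert that $[M:M^p]=[K:K^p]$ holds ``because $M|K$ is immediate (one checks the $p$-degree is preserved)''. This is false for immediate extensions in general: a purely transcendental immediate extension $K(x)|K$ (obtained from a pseudo Cauchy sequence of transcendental type) is separable and multiplies the $p$-degree by $p$. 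Worse, for a \emph{maximal} immediate extension $M$ the equality $[M:M^p]=[K:K^p]$ is, by Corollary~\ref{p-degmax}, \emph{equivalent} to $(K,v)$ being inseparably defectless --- i.e., it is precisely the conclusion you are trying to prove. The same circularity infects your plan to establish $\mathrm{d}(M^{1/p}|M)=\mathrm{d}(K^{1/p}|K)$: since immediacy makes the value group and residue field invariants of $M$ and $K$ agree, that equality of defects is again equivalent to $[M:M^p]=[K:K^p]$, so it cannot be derived from ``the valuation-preserving Frobenius and the immediacy of $M|K$'' alone, and your ``persistence of defect'' contradiction never closes.

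The missing ingredient is the classical fact that a maximal immediate extension is a maximal field, and every maximal field is a defectless field ([W], Theorem~31.21). With it, the paper's proof of d) $\Rightarrow$ a) runs: separability of $M|K$ gives only the \emph{inequality} $[M:M^p]=[M^{1/p}:M]\geq [M.K^{1/p}:M]=[K^{1/p}:K]=[K:K^p]$ (linear disjointness of the separable extension $M|K$ from $K^{1/p}|K$); maximality plus Warner's theorem gives that $(M^{1/p}|M,v)$ is defectless, hence $[M:M^p]=(vM:pvM)[Mv:Mv^p]$ by Lemma~\ref{charid1/p}; immediacy gives $(vM:pvM)[Mv:Mv^p]=(vK:pvK)[Kv:Kv^p]$, which is finite and $\leq [K:K^p]$ by the fundamental inequality. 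The resulting chain $[M:M^p]=(vK:pvK)[Kv:Kv^p]\leq [K:K^p]\leq [M:M^p]$ forces equality everywhere, which is a). A smaller inaccuracy in your write-up: you equate ``$(K|K^p,v)$ is a defect extension'' with ``$(K^{1/p}|K,v)$ is a proper immediate extension''; since $[K:K^p]=p^d$ with possibly $d>1$, a nontrivial defect only produces an immediate degree-$p$ step somewhere in a tower between $K$ and $K^{1/p}$, not necessarily over $K$ itself.
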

\begin{proof}
The equivalence of ``$(K,v)$ inseparably defectless'' with properties a)
and b) follows readily from Lemma~\ref{charid1/p}. By Lemma~\ref{ivd}
every immediate extension of an inseparably defectless field is
linearly disjoint from $K^{1/p^\infty}|K$, i.e., it is separable. This
proves that ``$(K,v)$ inseparably defectless'' implies property c).
Since every valued field admits a maximal immediate extension (cf.\ [Kr]
and [G]), it follows that property c) implies property d).

It now suffices to show that property d) implies property a). Let
$(L,v)$ be a separable maximal immediate extension of $(K,v)$. The
separability implies that $[L:L^p]=[L^{1/p}:L]\geq [L.K^{1/p}:L]=
[K^{1/p}:K]=[K:K^p]$. On the other hand, we have that $vL=vK$ and
$Lv=Kv$. Since $(L,v)$ is a maximal immediate extension, it is a maximal
field. Since every maximal field is a defectless field (cf.\ [W],
Theorem~31.21), the extension $(L^{1/p}|L,v)$ is defectless, and by
Lemma~\ref{charid1/p} we conclude that also $(L|L^p,v)$ is defectless.
Since $(vL:pvL)= (vK:pvK)$ and $[Lv:Lv^p]=[Kv:Kv^p]$ are finite (as
$[K:K^p]$ is finite), it follows that $[L:L^p]$ is finite and equal to
$(vL:pvL)[Lv:Lv^p]$. Consequently,
\begin{eqnarray*}
[L:L^p] & = & (vL:pvL)[Lv:Lv^p]\>=\>(vK:pvK)[Kv:Kv^p]\\
 & \leq & [K:K^p]\>\leq\> [L:L^p]\;.
\end{eqnarray*}
Thus, equality holds everywhere, showing that a) holds.
\end{proof}

From the proof, we also obtain:
\begin{corollary}                           \label{p-degmax}
A given maximal immediate extension of a valued field $(K,v)$ has the
same $p$-degree as $K$ if and only if $(K,v)$ is an inseparably
defectless field.
\end{corollary}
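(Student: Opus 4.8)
The plan is to read the statement directly off the argument for the implication d)$\Rightarrow$a) in the proof of Theorem~\ref{charinsdl}, noting that the separability of the maximal immediate extension was used there only to secure one of the two competing inequalities and plays no role here. Fix any maximal immediate extension $(L,v)$ of $(K,v)$. First I would record the key \emph{separability-free} identity for $L$: since $(L,v)$ is a maximal immediate extension it is a maximal field, hence a defectless field by [W], Theorem~31.21, and in particular inseparably defectless. By Lemma~\ref{charid1/p} the extension $(L|L^p,v)$ is therefore defectless, so that $[L:L^p]=(vL:pvL)[Lv:Lv^p]$. Because $L|K$ is immediate we have $vL=vK$ and $Lv=Kv$, whence $(vL:pvL)=(vK:pvK)$ and $[Lv:Lv^p]=[Kv:Kv^p]$, giving
\[[L:L^p]\;=\;(vK:pvK)[Kv:Kv^p]\;.\]
Crucially, this computation never invoked separability of $L|K$, so it is valid for \emph{any} maximal immediate extension.

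On the other hand, $(K|K^p,v)$ is purely inseparable and hence admits a unique extension of the valuation, so the fundamental inequality (\ref{fundineq}), equivalently the Lemma of Ostrowski (\ref{LoO}), yields
\[[K:K^p]\;\geq\;(vK:pvK)[Kv:Kv^p]\;=\;[L:L^p]\;,\]
with equality precisely when $(K|K^p,v)$ is defectless. By Lemma~\ref{charid1/p} the latter holds if and only if $(K,v)$ is inseparably defectless.

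Combining the two displays, $[K:K^p]=[L:L^p]$ — that is, $L$ has the same $p$-degree as $K$ — if and only if $[K:K^p]=(vK:pvK)[Kv:Kv^p]$, i.e.\ if and only if $(K,v)$ is inseparably defectless. Since $(vK:pvK)[Kv:Kv^p]$ depends only on $(K,v)$, the particular choice of maximal immediate extension is immaterial, which justifies the wording ``a given maximal immediate extension''. The only step that requires genuine care — and the one I regard as the crux — is the observation in the first paragraph that the defectlessness of $(L|L^p,v)$, and thus the identity $[L:L^p]=(vK:pvK)[Kv:Kv^p]$, is available for \emph{every} maximal immediate extension and not merely for the separable one treated in Theorem~\ref{charinsdl}; once this is in hand, the conclusion is a direct reading of the fundamental inequality for $K|K^p$.
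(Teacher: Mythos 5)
Your skeleton is exactly the paper's: the corollary is extracted from the implication d)$\Rightarrow$a) in the proof of Theorem~\ref{charinsdl}, observing that separability of $L|K$ entered only to secure $[K:K^p]\le[L:L^p]$, while the remaining computation pins $[L:L^p]$ to $(vK:pvK)[Kv:Kv^p]$. However, there is a genuine gap, and it sits precisely at the step you single out as the harmless crux. From ``$(L|L^p,v)$ is defectless'' you conclude $[L:L^p]=(vL:pvL)[Lv:Lv^p]$ ``for any maximal immediate extension''. For a possibly infinite extension, ``defectless'' means by definition that every \emph{finite} subextension is defectless, and this does not yield that equality of cardinalities; in fact the equality is false in general for maximal fields. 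Take $F=\F_p(u_1,u_2,\dots)$ and $L=F((t))$, a maximal field (hence a maximal immediate extension, e.g.\ of $(F(t),v_t)$ or of itself): then $(vL:pvL)[Lv:Lv^p]=p\,[F:F^p]=\aleph_0$, whereas $[L:L^p]\ge 2^{\aleph_0}$, since the series $\sum_{i\in S}u_i\,t^{pi}$, with $S$ running over an almost disjoint family of $2^{\aleph_0}$ infinite subsets of $\N$, are linearly independent over $L^p=F^p((t^p))$ (compare coefficients of powers of $t^p$ and use that the $u_i$ are linearly independent over $F^p$). The companion step in your second paragraph has the same flaw: the Lemma of Ostrowski (\ref{LoO}) concerns finite extensions, and when $[K:K^p]$ is infinite the cardinal equality $[K:K^p]=(vK:pvK)[Kv:Kv^p]$ neither implies nor is implied by defectlessness of $(K|K^p,v)$.

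So your argument (and the corollary itself, which is announced with ``From the proof, we also obtain'') must be read under the standing hypothesis of Theorem~\ref{charinsdl} that $[K:K^p]$ is finite; and under that hypothesis it needs exactly the step you omitted and the paper's proof makes explicit. Namely: every finite subextension of $L|L^p$ is purely inseparable, hence carries a unique extension of $v$, and being defectless has degree at most $(vL:pvL)[Lv:Lv^p]=(vK:pvK)[Kv:Kv^p]\le[K:K^p]<\infty$; therefore $[L:L^p]$ is \emph{finite}, and only then does defectlessness of $(L|L^p,v)$ give $[L:L^p]=(vL:pvL)[Lv:Lv^p]$. With this inserted, the rest of your proof is correct and coincides with the paper's. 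The real crux is thus not that separability can be dropped---that part you handle correctly---but that finiteness of the $p$-degree is what makes your ``separability-free identity'' true at all.
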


The very useful upward direction of the following lemma was also
stated by F.~Delon ([D1], Proposition 1.44):         
\begin{lemma}                               \label{delon}
Let $(L|K,v)$ be a finite extension of valued fields. Then $(K,v)$ is
inseparably defectless and of finite $p$-degree if and only if $(L,v)$
is.
\end{lemma}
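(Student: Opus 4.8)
The plan is to treat the two invariants separately: the $p$-degree transfers by an elementary degree count, and inseparable defectlessness follows after reducing to the henselian case and exploiting the Frobenius isomorphism. Throughout, $\chara K=p$.

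\textbf{The $p$-degree.} The Frobenius map $x\mapsto x^p$ is a field isomorphism carrying $L$ onto $L^p$ and $K$ onto $K^p$, so $[L^p:K^p]=[L:K]$. Computing $[L:K^p]$ along the two towers $K^p\subseteq K\subseteq L$ and $K^p\subseteq L^p\subseteq L$ gives
\[[L:K]\,[K:K^p]\;=\;[L:K^p]\;=\;[L:L^p]\,[L^p:K^p]\;=\;[L:L^p]\,[L:K]\;.\]
Since $[L:K]$ is finite and nonzero, $[K:K^p]$ is finite if and only if $[L:L^p]$ is, and then $[K:K^p]=[L:L^p]$. Thus $K$ has finite $p$-degree exactly when $L$ does, and it remains to prove, under this common hypothesis, that $(K,v)$ is inseparably defectless if and only if $(L,v)$ is.

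\textbf{Reduction to the henselian case.} Let $K^h$ be the henselization of $(K,v)$ inside a fixed extension of $v$ to $\tilde{K}$, and put $L^h:=L.K^h$, the henselization of $(L,v)$. Then $K^h$ is henselian, $L^h|K^h$ is finite, and $v$ extends uniquely from $K^h$ to every algebraic extension. Henselizations are immediate separable-algebraic extensions, so they preserve the $p$-degree (a $p$-basis of $K$ remains a $p$-basis of $K^h$); hence $K^h$ and $L^h$ again have finite $p$-degree. By Theorem~\ref{dhd}, $(K,v)$ (resp.\ $(L,v)$) is inseparably defectless if and only if $(K^h,v)$ (resp.\ $(L^h,v)$) is. It therefore suffices to prove the equivalence for $(L^h|K^h,v)$, and I may assume from now on that $K$ is henselian and $v$ extends uniquely to $L$.

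\textbf{Comparison of defects.} By Lemma~\ref{charid1/p}, $(K,v)$ is inseparably defectless if and only if the now finite extension $(K^{1/p}|K,v)$ is defectless, i.e.\ ${\rm d}(K^{1/p}|K)=1$, and likewise for $L$. Since $K^{1/p}\subseteq L^{1/p}$, I compute ${\rm d}(L^{1/p}|K)$ along the two towers $K\subseteq L\subseteq L^{1/p}$ and $K\subseteq K^{1/p}\subseteq L^{1/p}$; the multiplicativity of the defect~(\ref{pf}) applies because $v$ extends uniquely from the henselian field $K$:
\[{\rm d}(L^{1/p}|L)\,{\rm d}(L|K)\;=\;{\rm d}(L^{1/p}|K)\;=\;{\rm d}(L^{1/p}|K^{1/p})\,{\rm d}(K^{1/p}|K)\;.\]
The Frobenius isomorphism $x\mapsto x^p$ maps $L^{1/p}$ onto $L$ and $K^{1/p}$ onto $K$ and multiplies every value by $p$; as multiplication by $p$ is an order isomorphism of value groups it leaves ramification index, inertia degree and field degree unchanged, so it identifies the extension $(L^{1/p}|K^{1/p})$ with $(L|K)$ and gives ${\rm d}(L^{1/p}|K^{1/p})={\rm d}(L|K)$. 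Cancelling the nonzero factor ${\rm d}(L|K)$ yields ${\rm d}(L^{1/p}|L)={\rm d}(K^{1/p}|K)$, so $(K^{1/p}|K,v)$ is defectless if and only if $(L^{1/p}|L,v)$ is, which by Lemma~\ref{charid1/p} is precisely the desired equivalence.

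\textbf{Main obstacle.} The delicate points are the valuation-theoretic bookkeeping in the last two steps: one must reduce to a setting where the defect is multiplicative (the henselization, together with the standard but nontrivial fact that henselization preserves the $p$-degree), and one must check that the Frobenius isomorphism preserves the defect so that ${\rm d}(L^{1/p}|K^{1/p})={\rm d}(L|K)$. The elementary degree count and the two-tower cancellation are then routine.
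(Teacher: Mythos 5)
Your proof is correct, but it takes a genuinely different route from the paper's. The paper also begins with the invariance of the $p$-degree under finite extensions, but then stays entirely at the level of numerical invariants: since $(vL:vK)$ and $[Lv:Kv]$ are finite, it deduces $(vL:pvL)=(vK:pvK)$ and $[Lv:Lv^p]=[Kv:Kv^p]$, so that the equality $[K:K^p]=(vK:pvK)[Kv:Kv^p]$ holds for $K$ if and only if the corresponding equality holds for $L$; by Delon's characterization (Theorem~\ref{charinsdl}, property a)) this is precisely the statement that $(K,v)$ is inseparably defectless if and only if $(L,v)$ is. You instead bypass Theorem~\ref{charinsdl} entirely: you reduce to the henselian case via Theorem~\ref{dhd} (checking, as one must, that henselization preserves the $p$-degree, which holds because it is a separable-algebraic extension), and then derive the sharper conclusion ${\rm d}(K^{1/p}|K)={\rm d}(L^{1/p}|L)$ from the multiplicativity of the defect (\ref{pf}) along the two towers through $L$ and through $K^{1/p}$, together with the Frobenius identification ${\rm d}(L^{1/p}|K^{1/p})={\rm d}(L|K)$, concluding by Lemma~\ref{charid1/p}. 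What the paper's route buys is brevity: Theorem~\ref{charinsdl} is already established at that point, so the proof reduces to a few lines of invariance statements. What your route buys is independence from Theorem~\ref{charinsdl}, whose proof rests on the existence of maximal immediate extensions and on the fact that maximal fields are defectless; your argument needs only Theorem~\ref{dhd}, Lemma~\ref{charid1/p} and the product formula, and it yields the stronger quantitative statement that the two defects are literally equal. Note that both arguments use the finite $p$-degree hypothesis in an essential way (yours to make $K^{1/p}|K$ and $L^{1/p}|L$ finite so that defects are defined and can be cancelled), which is consistent with the paper's remark that generalizing the upward direction to arbitrary $p$-degree (Lemma~\ref{l5}) requires the full Artin-Schreier machinery.
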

\begin{proof}
The $p$-degree of a field does not change under finite extensions.
Assume that one and hence both fields have finite $p$-degree. Since
$[L:K]$ is finite, also $(vL:vK)$ and $[Lv:Kv]$ are finite.
%
%
Hence, also the $p$-degree of $Lv$ is equal to that of $Kv$. The same
can be shown for ordered abelian groups: $(vL:pvL) =(vK:pvK)$ (the
details are left to the reader). It follows that
$[K:K^p]=(vK:pvK)[Kv:Kv^p]$ if and only if $[L:L^p]=(vL:pvL)[Lv:Lv^p]$,
which by Theorem~\ref{charinsdl} means that $(K,v)$ is inseparably
defectless if and only if $(L,v)$ is.
\end{proof}
\n
In Lemma~\ref{l5} in Section~\ref{sectth1} we will generalize the upward
direction to the case of arbitrary $p$-degree.

%
%
\section{Artin-Schreier defect extensions}               \label{+4}
%

%
%
\subsection{Classification of Artin-Schreier defect extensions}
We will consider the following situation:
\sn
$\bullet$ \ $(L|K,v)$ an Artin-Schreier defect extension of valued
fields of characteristic $p>0$,
\n
$\bullet$ \ $\vartheta\in L\setminus K$ an Artin-Schreier generator of
$L|K$,
\n
$\bullet$ \ $a= \wp (\vartheta) = \vartheta ^p - \vartheta \in K$,
\n
$\bullet$ \ $\delta=\dist(\vartheta, K)$.
\sn
Since $(L|K,v)$ is immediate and non-trivial, we know that
$v(\vartheta-K)=\Lambda^{\rm L}(\vartheta,K)$ has no maximal element and
that $\delta > v\vartheta$ (cf.\ Theorem~\ref{KT1}). An element
$\vartheta '\in L$ is another Artin-Schreier generator of $L|K$ if and
only if
\begin{equation}
\vartheta ' = i \vartheta + c \mbox{\ \ \ with \ } c \in K \mbox{\ \
and\ \ }1\leq i \leq p-1.
\end{equation}
(cf.\ Lemma~\ref{allASg}). Consequently, using Lemma~\ref{aat} we
see that $\delta$ is an invariant of the extension $(L|K,v)$:
\begin{lemma}
The distance $\delta$ does not depend on the choice of the
Artin-Schreier generator $\vartheta$.
\end{lemma}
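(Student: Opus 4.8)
The plan is to combine the explicit description of the Artin-Schreier generators, recorded in Lemma~\ref{allASg} and in the displayed relation preceding this lemma, with the transformation rules for distances collected in Lemma~\ref{aat}. Since $\delta$ is defined via the particular generator $\vartheta$, what I want to show is that an arbitrary second generator produces the same distance. So first I would fix any Artin-Schreier generator $\vartheta'$ of $L|K$ and write $\vartheta' = i\vartheta + c$ with $c \in K$ and $1\leq i\leq p-1$, as Lemma~\ref{allASg} permits. The goal then reduces to proving $\dist(\vartheta',K)=\dist(\vartheta,K)$.

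From here the computation is a two-step application of Lemma~\ref{aat}. First, the additive-translation rule shows that adding the element $c\in K$ leaves the distance unchanged, so $\dist(i\vartheta+c,K)=\dist(i\vartheta,K)$. Second, the scaling rule gives $\dist(i\vartheta,K)=vi+\dist(\vartheta,K)$, since $i$ is a nonzero element of $K$. Chaining these two identities expresses $\dist(\vartheta',K)$ in terms of $\dist(\vartheta,K)$ up to the summand $vi$.

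The only point that needs an argument beyond quoting Lemma~\ref{aat}---and hence the closest thing to an obstacle here---is the observation that $vi=0$. This holds because $i$ with $1\leq i\leq p-1$ is a nonzero element of the prime field $\Fp\subseteq K$, and since $\chara K=p$ every such element is a unit of the valuation ring, so its value vanishes. Substituting $vi=0$ yields $\dist(\vartheta',K)=\dist(\vartheta,K)=\delta$, which shows that $\delta$ does not depend on the choice of Artin-Schreier generator.
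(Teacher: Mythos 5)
Your proof is correct and follows exactly the route the paper intends: decompose $\vartheta' = i\vartheta + c$ via Lemma~\ref{allASg}, then apply the translation and scaling rules of Lemma~\ref{aat}, noting that $vi=0$ because $i\in\F_p^\times$ is a unit of the valuation ring. The paper compresses this into a one-line remark before the lemma, so your write-up simply makes explicit the same argument, including the small point about $vi=0$ that the paper leaves implicit.
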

So we can call $\delta$ the \bfind{distance of the Artin-Schreier defect
extension $(L|K,v)$}. From Corollary~\ref{wpC} we know that
\[\delta\leq 0^-\;.\]

\pars
We will now distinguish two types of Artin-Schreier defect extensions.
We will call $(L|K,v)$ a \bfind{dependent Artin-Schreier defect extension}
if there exists an immediate purely inseparable extension $K(\eta)|K$ of
degree $p$ such that
\begin{equation}                            \label{atpi=AS}
\eta\;\sim_K\;\vartheta\;.
\end{equation}
Otherwise, we will speak of an \bfind{independent Artin-Schreier
defect extension}. For the definition and properties of the equivalence
relation ``$\sim_K$'', see Section~\ref{sectcad}. We will now show that
independent Artin-Schreier defect extensions are characterized by
idempotent distances $\delta$. See Lemma~\ref{charidemp} for a bunch of
different criteria which are all equivalent to ``$\delta$ is
idempotent''.

\begin{proposition}                    \label{dep}
In the situation as described above, the Artin-Schreier defect extension
$(L|K,v)$ is independent if and only if its distance $\delta$ is
idempotent:
\[\delta = p\delta\>.\]
\end{proposition}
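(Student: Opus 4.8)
The plan is to exploit the Frobenius identity $(\eta-\vartheta)^p=\eta^p-\vartheta^p$, valid in characteristic $p$, to translate the relation $\eta\sim_K\vartheta$ (for a purely inseparable $\eta$ with $\eta^p\in K$) into a comparison between $\delta$ and $p\delta$. Throughout I use that $\widetilde{vK}$ is divisible, so multiplication by $p$ is an order isomorphism of $\widetilde{vK}$ and hence of its cuts, carrying $\delta$ to $p\delta$; by Lemma~\ref{charidemp} (with $n=p$) idempotency of $\delta$ is then exactly $p\delta=\delta$. Moreover, since $\delta\leq 0^-$ by Corollary~\ref{wpC}, we have $0\in\Lambda^{\rm R}(\vartheta,K)$, so the monotonicity observation in the proof of Lemma~\ref{charidemp} gives $p\delta\leq\delta$ unconditionally. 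Thus ``$\delta$ not idempotent'' means precisely $p\delta<\delta$, and the task reduces to proving that $(L|K,v)$ is dependent if and only if $p\delta<\delta$.

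For the direction ``dependent $\Rightarrow p\delta<\delta$'', suppose $K(\eta)|K$ is an immediate purely inseparable extension of degree $p$ with $\eta\sim_K\vartheta$; then $b:=\eta^p\in K$. Applying Frobenius, $(\eta-\vartheta)^p=b-\vartheta^p=(b-a)-\vartheta$, so that $p\,v(\eta-\vartheta)=v(\vartheta-(b-a))$. The right-hand value lies in $v(\vartheta-K)=\Lambda^{\rm L}(\vartheta,K)$, which by Theorem~\ref{KT1} has no maximal element, and is therefore $<\delta$. On the other hand $\eta\sim_K\vartheta$ gives $v(\eta-\vartheta)>\delta$, whence $p\,v(\eta-\vartheta)>p\delta$. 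Combining, $p\delta<v(\vartheta-(b-a))<\delta$, so $p\delta<\delta$ and $\delta$ is not idempotent.

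For the converse ``$p\delta<\delta\Rightarrow$ dependent'', I would construct the required $\eta$ directly. Since $\Lambda^{\rm L}(\vartheta,K)=v(\vartheta-K)$ is cofinal in $\delta^{\rm L}$ and $p\delta<\delta$, there is $c\in K$ with $v(\vartheta-c)>p\delta$. Put $\eta:=(c+a)^{1/p}\in\tilde K$, so $\eta^p=c+a\in K$ and $\eta$ is purely inseparable over $K$. Frobenius again yields $(\vartheta-\eta)^p=\vartheta^p-(c+a)=\vartheta-c$, hence $p\,v(\vartheta-\eta)=v(\vartheta-c)>p\delta$ and, dividing by $p$, $v(\vartheta-\eta)>\delta$. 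Since $v(\vartheta-\eta)>\delta$ whereas every $v(\vartheta-d)$ with $d\in K$ lies in $\Lambda^{\rm L}(\vartheta,K)$ and is therefore $<\delta$, we have $\eta\notin K$, so $[K(\eta):K]=p$ and $\vartheta\sim_K\eta$. As $(K(\vartheta)|K,v)$ is immediate of degree $p$, part~2) of Lemma~\ref{ueGp1} shows $(K(\eta)|K,v)$ is immediate; then part~1) of Lemma~\ref{sim1} gives $\dist(\eta,K)=\dist(\vartheta,K)=\delta$, from which $\eta\sim_K\vartheta$ follows. Thus $K(\eta)|K$ witnesses dependence.

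The main obstacle I anticipate is the bookkeeping with cuts rather than with single values: one must pass a strict inequality against the cut $\delta$ through the $p$-th power map in both directions and keep careful track of whether a quantity lands in $\Lambda^{\rm L}(\vartheta,K)$ or strictly above it. The two facts that make this work are the unconditional inequality $p\delta\leq\delta$ (forcing non-idempotency to be the strict inequality $p\delta<\delta$) and the cofinality of $v(\vartheta-K)$ in $\delta^{\rm L}$, which produces the element $c$ in the converse. The remaining delicate point is verifying that the constructed $K(\eta)|K$ is genuinely immediate and not merely close to $\vartheta$; this is exactly what part~2) of Lemma~\ref{ueGp1} delivers, using the already-established immediacy of $K(\vartheta)|K$ together with $\vartheta\sim_K\eta$.
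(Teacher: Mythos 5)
Your proof is correct and takes essentially the same route as the paper's: both directions rest on the Frobenius identity $pv(\vartheta-\eta)=v(\vartheta^p-\eta^p)=v(\vartheta-(\eta^p-a))\in\Lambda^{\rm L}(\vartheta,K)$, the reduction of non-idempotency to $p\delta<\delta$ via $\delta\leq 0^-$, and Lemma~\ref{ueGp1} to get immediacy of the constructed $K(\eta)|K$. The additional bookkeeping you supply (the unconditional inequality $p\delta\leq\delta$ and the explicit appeal to Lemma~\ref{sim1} to pass between $\vartheta\sim_K\eta$ and $\eta\sim_K\vartheta$) only spells out steps the paper leaves implicit.
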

\begin{proof}
Assume that $K(\eta)|K$ is purely inseparable of degree $p$, that is,
$\eta^p\in K\setminus K^p$. By definition, (\ref{atpi=AS}) is
equivalent to $v(\vartheta-\eta)>\delta$. Since $v(\vartheta^p-\eta^p)=
v(\vartheta-\eta)^p=pv(\vartheta-\eta)$, this in turn is equivalent to
\[v(\vartheta^p-\eta^p)\>>\> p\delta\;.\]
Here, the left hand side is equal to $v(\vartheta+a-\eta^p)=v(\vartheta-
(\eta^p-a))$ which is a value in $\Lambda^{\rm L}(\vartheta,K)$ and hence is
$\leq\delta$. Consequently, if (\ref{atpi=AS}) holds with $K(\eta)|K$ a
purely inseparable extension of degree $p$, then $p\delta<\delta$, that
is, $\delta$ is not idempotent.

For the converse, assume that $\delta$ is not idempotent. Since
$\delta\leq 0^-$, this implies that $p\delta<\delta$. Then there is
$c\in K$ such that $p\delta< v(\vartheta-c)\leq\delta$. Choose $\eta\in
\tilde{K}$ such that $\eta^p = a+c$. Then $v(\vartheta^p-\eta^p)=
v(\vartheta+a - \eta^p)= v(\vartheta-c)> p\delta$. Hence,
$v(\vartheta-\eta)>\delta$, and it follows that $\eta\sim_K\vartheta$.
Consequently, $\eta\notin K$, and we obtain that $K(\eta)|K$ is a purely
inseparable extension of degree $p$. Finally, we deduce from
Lemma~\ref{ueGp1} that this extension is immediate.
\end{proof}
\begin{corollary}                           \label{noinsnodep}
If $K$ admits no proper immediate purely inseparable extension, then
$K$ admits no dependent Artin-Schreier defect extension.        \QED
\end{corollary}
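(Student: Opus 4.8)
The plan is simply to unwind the definition of a \emph{dependent} Artin-Schreier defect extension. Recall that $(L|K,v)$ is called dependent precisely when there exists an immediate purely inseparable extension $K(\eta)|K$ of degree $p$ satisfying the relation (\ref{atpi=AS}), namely $\eta\sim_K\vartheta$. The crucial observation is that the mere existence of such an $\eta$ already forces $K$ to possess a proper immediate purely inseparable extension: since $[K(\eta):K]=p>1$ and $K(\eta)|K$ is, by the very requirements of the definition, both immediate and purely inseparable, it is a proper immediate purely inseparable extension of $K$.

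With this in hand, the corollary is immediate. First I would note that the hypothesis of the corollary directly rules out the configuration required in the definition: if $K$ admits no proper immediate purely inseparable extension, then no field $K(\eta)$ of the kind demanded by (\ref{atpi=AS}) can exist. Consequently the defining condition for ``dependent'' can never be met, and therefore $K$ admits no dependent Artin-Schreier defect extension. Put differently, every Artin-Schreier defect extension of $K$ (if any exists at all) must be independent.

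There is essentially no obstacle here; the statement is a direct logical consequence of the definition of dependence, which is why the author records it with a \QED immediately after the statement rather than supplying a separate argument. The only point worth a moment's attention is that the witness $K(\eta)$ in the definition genuinely counts as a \emph{proper} immediate purely inseparable extension, but this is built into the definition itself (it is stipulated to be immediate, purely inseparable, and of degree $p$), so no further verification is required.
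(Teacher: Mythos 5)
Your proof is correct and matches the paper's (implicit) argument: the corollary carries a \QED with no proof body precisely because it follows immediately from the definition of ``dependent,'' whose witness $K(\eta)|K$ is by stipulation a degree-$p$ immediate purely inseparable --- hence proper --- extension of $K$. Nothing beyond this unwinding of the definition is needed, and you have identified exactly that.
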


The converse of this corollary is not true: every
separable-algebraic\-ally closed non-trivially valued field $K$ of
characteristic $p > 0$ which is not algebraically closed is a
counterexample. Indeed, its value group is divisible and its residue
field is algebraically closed (see, e.g., [Ku2], Lemma~2.16) and hence,
the proper purely inseparable extension $\tilde{K}|K$ is immediate. But
a closer look shows that the irreversibility comes only from immediate
purely inseparable extensions which lie in the completion $K^c$ of $K$:

\begin{proposition}                           \label{ipie}
Assume that $K$ admits an immediate purely inseparable extension
$K(\eta)|K$ of degree $p$ such that $\eta\notin K^c$, and set
\[\varepsilon \;:=\; \dist(\eta,K)\;.\]
Then $K$ admits a dependent Artin-Schreier defect extension
$K(\vartheta)|K$. More precisely, given any $b \in K^\times$, then
\begin{equation}                            \label{condvb}
(p-1)vb+ v\eta\;>\; p\varepsilon
\end{equation}
if and only if there is an Artin-Schreier generator $\vartheta$ such
that $\vartheta^p-\vartheta=(\eta/b)^p$ and
\begin{eqnarray*}
\vartheta &\sim_K& \frac{\eta}{b}\\
v\vartheta &=& v\eta - vb\\
\dist(\vartheta,K) & = & \dist(\eta,K)-vb\>.
\end{eqnarray*}
All Artin-Schreier defect extensions obtained in this way are
dependent.
\end{proposition}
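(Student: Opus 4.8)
The plan is to set $\zeta:=\eta/b$ and to build $\vartheta$ as a root of $X^p-X-\zeta^p$, then extract everything from the single identity $(\vartheta-\zeta)^p=\vartheta$ together with the distance calculus of Section~\ref{sectcad}. First I would record the setup: since $\zeta^p=\eta^p/b^p\in K$ we have $K(\zeta)=K(\eta)$, so $K(\zeta)|K$ is an immediate purely inseparable extension of degree $p$; by Lemma~\ref{aat}, $\dist(\zeta,K)=\varepsilon-vb$, and by Theorem~\ref{KT1} the set $\Lambda^{\rm L}(\zeta,K)=v(\zeta-K)$ has no maximal element. Letting $\vartheta$ be a root of $X^p-X-\zeta^p$, in characteristic $p$ one has $(\vartheta-\zeta)^p=\vartheta^p-\zeta^p=\vartheta$, hence $v(\vartheta-\zeta)=\tfrac1p\,v\vartheta$; and by Lemma~\ref{valASgen} applied to $a=\zeta^p$ (so $va=p\cdot v\zeta$) we get $v\vartheta=v\zeta$ when $v\zeta\leq 0$ and $v\vartheta=pv\zeta$ when $v\zeta\geq 0$.

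The heart of the argument is the equivalence ``(\ref{condvb}) holds $\iff\vartheta\sim_K\zeta$''. Because $\Lambda^{\rm L}(\zeta,K)$ has no maximal element, Lemma~\ref{sim1}(2) lets me rephrase $\vartheta\sim_K\zeta$ as $v(\vartheta-\zeta)\geq\dist(\zeta,K)=\varepsilon-vb$. In the case $v\zeta\geq 0$ both sides fail: there $v(\vartheta-\zeta)=v\zeta=v\eta-vb$, so $\vartheta\sim_K\zeta$ would force $v\eta\geq\varepsilon$, contradicting $v\eta<\varepsilon$; and $vb\leq v\eta<\varepsilon$ gives $(p-1)vb+v\eta\leq pv\eta\leq p\varepsilon$, so (\ref{condvb}) fails too. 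Hence I may assume $v\zeta<0$, where $v(\vartheta-\zeta)=\tfrac1p\,v\zeta=\tfrac1p(v\eta-vb)$; scaling the inequality $\tfrac1p(v\eta-vb)>\varepsilon-vb$ by $p$ (an order automorphism of $\widetilde{vK}$ compatible with $\uparrow$) turns it into exactly $(p-1)vb+v\eta>p\varepsilon$. Finally (\ref{condvb}) forces $v\zeta<0$, since $v\eta<\varepsilon$ yields $pv\eta\leq p\varepsilon<(p-1)vb+v\eta$, whence $vb>v\eta$. This proves the equivalence; the backward direction of the stated \emph{iff} is then immediate, because $\vartheta\sim_K\zeta$ gives $\dist(\vartheta,K)=\dist(\zeta,K)=\varepsilon-vb$ by Lemma~\ref{sim1}(1) and, via $v\zeta<0$, the equalities $v\vartheta=v\zeta=v\eta-vb$.

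To finish the forward direction I must check that $\vartheta$ really generates an Artin-Schreier defect extension and that it is dependent. Since $\vartheta\sim_K\zeta$, Lemma~\ref{sim1}(1) gives $v(\vartheta-c)=v(\zeta-c)\in vK$ for all $c\in K$, so $v(\vartheta-K)=\Lambda^{\rm L}(\zeta,K)$ has no maximal element; in particular $\vartheta\notin K$ and $[K(\vartheta):K]=p$. From (\ref{condvb}) and $v\eta<\varepsilon$ one extracts $vb>\varepsilon$ (if $vb\leq\varepsilon$ then $vb,v\eta\in\varepsilon^{\rm L}$ force $(p-1)vb+v\eta\leq p\varepsilon$, a contradiction), so $\dist(\vartheta,K)=\varepsilon-vb\leq 0^-$. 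Now Lemma~\ref{uniqextv} applies and shows that $v$ extends uniquely to $K(\vartheta)$, that $(K(\vartheta)|K,v)$ is immediate, and hence that $K(\vartheta)|K$ is an Artin-Schreier defect extension; it is dependent because $\vartheta\sim_K\zeta$ with $K(\zeta)=K(\eta)$ a degree-$p$ immediate purely inseparable extension. This simultaneously proves the closing sentence of the proposition. For the bare existence claim I only need one admissible $b$: as $\eta\notin K^c$, the cut $\varepsilon$ is not cofinal in $vK$, so $vK$ has elements above $\varepsilon$ and any $b$ with $vb$ large enough satisfies $(p-1)vb+v\eta>p\varepsilon$.

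The field-theoretic identities are routine; the delicate part will be the cut arithmetic — comparing the divisible-hull element $v(\vartheta-\zeta)=\tfrac1p(v\eta-vb)$ with the cut $\dist(\zeta,K)=\varepsilon-vb$, justifying that scaling such a comparison by $p$ is valid, and extracting $vb>\varepsilon$ from (\ref{condvb}). I expect this to be the main obstacle, and I would lean on the monotonicity of $\Lambda\mapsto n\Lambda$ and the behaviour of $\uparrow$ recorded around Lemma~\ref{charidemp}, using Lemma~\ref{sim1}(2) to pass freely between the strict and non-strict forms of the relevant inequalities.
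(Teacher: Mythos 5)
Your argument is correct, and its pivotal step is genuinely different from the paper's. The paper works with the same root $\vartheta$ of $X^p-X-(\eta/b)^p$, but it establishes $\vartheta\sim_K\eta/b$ via criterion b) of Lemma~\ref{sim1}(2): substituting $Y=bX$, so that $(b\vartheta)^p=\eta^p+b^p\vartheta$, it proves $v(b\vartheta-c)=v(\eta-c)$ for \emph{every} $c\in K$ by playing the estimate $vb^p\vartheta=(p-1)vb+v\eta\geq p\varepsilon>v(\eta^p-c^p)$ against the ultrametric inequality, and, when (\ref{condvb}) fails, refutes the relation by producing a single $c$ with $vb^p\vartheta<pv(\eta-c)$. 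You instead exploit the identity $(\vartheta-\zeta)^p=\vartheta^p-\zeta^p=\vartheta$ for $\zeta=\eta/b$, which pins down the exact value $v(\vartheta-\zeta)=\frac{1}{p}v\vartheta$, and then run the entire biconditional through criterion c) of Lemma~\ref{sim1}(2): one comparison of $\frac{1}{p}v\vartheta$ with the cut $\dist(\zeta,K)=\varepsilon-vb$, split according to the sign of $v\zeta$. What your route buys: both directions of the equivalence fall out of a single computation, with no quantification over $c\in K$; moreover you are forced to make explicit, and do correctly, two points the paper leaves tacit --- that (\ref{condvb}) yields $vb>\varepsilon$, hence $\dist(\vartheta,K)=\varepsilon-vb\leq 0^-$, which is exactly the hypothesis needed to invoke Lemma~\ref{uniqextv}, and that $\eta\notin K^c$ supplies at least one admissible $b$ for the bare existence claim. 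What the paper's route buys: its estimate measures how the perturbation $b^{p-1}Y$ displaces the root of $Y^p-\eta^p$, the point of view that feeds directly into the deformation and continuity-of-roots discussion in the following subsection, whereas your identity is special to the Artin-Schreier shape of the equation. Finally, the cut manipulations you flagged as delicate (adding $vb$ to, or scaling by $p$, an element-versus-cut inequality; trading $\geq$ for $>$) are indeed legitimate here: $\widetilde{vK}$ is divisible, so these maps are order isomorphisms compatible with $\uparrow$, and the relevant left cut sets have no maximal element by Theorem~\ref{KT1}.
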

\begin{proof}
Let $\vartheta$ be a root of the polynomial
\begin{equation}
X^p - X - \left(\frac{\eta}{b}\right)^p \in K[X].\label{Zh}
\end{equation}
Assume that (\ref{condvb}) holds. Then we have
\begin{equation}                                         \label{eq5}
(p-1)vb + v\eta \;>\; p\varepsilon \;>\; p v\eta
\end{equation}
where the last inequality holds since $\varepsilon>v\eta$ by
Theorem~\ref{KT1}. This gives $vb>v\eta$, showing that
\[v\left(\frac{\eta}{b}\right)^p < \> 0\;.\]
Hence by Lemma~\ref{valASgen},
\begin{equation}                            \label{vtildth}
v\vartheta \>=\> v\,\frac{\eta}{b} \>=\> v\eta - vb\;.
\end{equation}
Putting $Y= bX$ we find that $b\vartheta$ is a root of the
polynomial
\begin{equation}                            \label{changepol}
Y^p - b^{p-1}Y - \eta^p \in K[Y]
\end{equation}
and thus satisfies
\[\eta^p +b^{p}\vartheta \;=\; \eta^p +b^{p-1}b\vartheta
\;=\; (b\vartheta)^p\;.\]
Let $c$ be an arbitrary element of $K$. By (\ref{vtildth}),
(\ref{condvb}) and the definition of $\varepsilon$,
\[vb^p\vartheta \;=\; pvb+v\eta-vb \;=\; (p-1)vb + v\eta\geq
p\varepsilon \;>\; pv(\eta-c) \;=\; v(\eta^p-c^p)\]
which yields, using the ultrametric triangle inequality,
\begin{eqnarray*}
v(\eta-c) & = & \frac{1}{p}v(\eta^p-c^p)\;=\;\frac{1}{p}\min
\{v(\eta^p-c^p)\,,\,vb^p\vartheta\}\\
& = & \frac{1}{p}v(\eta^p +b^p\vartheta -c^p) \;=\;
\frac{1}{p}v((b\vartheta)^p-c^p)\;=\;v(b\vartheta-c)\;.
\end{eqnarray*}
By Lemma~\ref{sim1} this implies that $b\vartheta\sim_K\eta$, which
by Lemma~\ref{aat} implies that
\[\vartheta \;\sim_K\; \frac{\eta}{b}\;.\]
From this, the assertion on the distance of $\vartheta$ follows by
virtue of Lemma~\ref{aat}, while the value $v\vartheta$ has already been
determined in (\ref{vtildth}). By Lemma~\ref{uniqextv}, the extension of
$v$ from $K$ to $K(\vartheta)$ is unique and $(K(\vartheta)|K,v)$ is an
Artin-Schreier defect extension. By definition, it is dependent.

For the converse, assume that (\ref{condvb}) does not hold, i.e.,
$(p-1)vb + v\eta\leq p\varepsilon$. If $v\left(\frac{\eta}{b}\right)^p>
0$, then by Lemma~\ref{valASgen}, $v\vartheta=v\left(\frac{\eta}{b}
\right)^p= pv\frac{\eta}{b} >v\frac{\eta}{b}$ and we cannot have
$\vartheta\sim_K \frac{\eta}{b}$. If $v\left(\frac{\eta}{b}\right)^p
\leq 0$, then again by Lemma~\ref{valASgen}, (\ref{vtildth}) holds, and
so we have
\[vb^p\vartheta \;=\; pvb+v\eta-vb \;=\; (p-1)vb +
v\eta \;\leq\; p\varepsilon\;.\]
Therefore, and since $\Lambda^{\rm L}(\eta,K)$ has no last element,
there is some $c\in K$ such that $vb^p\vartheta<pv(\eta-c)=
v(\eta^p-c^p)$. But then, by the ultrametric triangle inequality,
\[v(\eta-c) \;>\; \frac{1}{p}vb^p\vartheta \;=\;
\frac{1}{p}v(\eta^p +b^p\vartheta -c^p) \;=\; v(b\vartheta-c)\;,\]
which again shows that $\vartheta\sim_K \frac{\eta}{b}$ cannot be
true.
\end{proof}

The following proposition shows an even stronger independence property
than what is expressed in the definition:

\begin{proposition}                               \label{indep}
Let $(L|K,v)$ be an independent Artin-Schreier defect extension, and
take any element $\zeta\in L\setminus K$. Then there exists no
purely inseparable extension $K(\eta)|K$ such that $\zeta\sim_K\eta\,$.
In particular, it follows that
\begin{equation}                                     \label{eq7}
\dist(\zeta,K) = \dist (\zeta,K^{1/p^{\infty}})\;.
\end{equation}
\end{proposition}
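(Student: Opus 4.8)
The plan is to reduce the assertion to the single element $\vartheta$ and then exploit that, by Proposition~\ref{dep}, independence of $(L|K,v)$ means precisely that $\delta$ is idempotent, i.e.\ $\delta=p\delta$ and hence $\delta=p^m\delta$ for every $m$.

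First I would record the key special case: no purely inseparable element $\eta'$ over $K$, of whatever degree, can satisfy $\vartheta\sim_K\eta'$. Suppose one did, say with $(\eta')^{p^m}\in K$. Iterating $\vartheta^p=\vartheta+a$ gives $\vartheta^{p^m}=\vartheta+A_m$ where $A_m:=a+a^p+\dots+a^{p^{m-1}}\in K$. Applying the Frobenius to the relation $v(\vartheta-\eta')>\delta$ in the divisible hull $\widetilde{vK}$ yields
\[
v(\vartheta^{p^m}-(\eta')^{p^m})\;=\;p^m\,v(\vartheta-\eta')\;>\;p^m\delta\;=\;\delta,
\]
the last equality being the iterated idempotency and the strict inequality coming from the order-preserving automorphism $\alpha\mapsto p^m\alpha$ of $\widetilde{vK}$. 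On the other hand $\vartheta^{p^m}-(\eta')^{p^m}=\vartheta-((\eta')^{p^m}-A_m)$ with $(\eta')^{p^m}-A_m\in K$; since $(L|K,v)$ is immediate this value lies in $\Lambda^{\rm L}(\vartheta,K)$ and is therefore $\leq\delta$, a contradiction. (The degenerate case $\eta'\in K$ is just $m=0$.) This is exactly the computation of Proposition~\ref{dep}, now pushed through arbitrary $p$-power heights by idempotency.

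Next comes the reduction to $\vartheta$. Because $[L:K]=p$ is prime and $\zeta\in L\setminus K$, we have $L=K(\zeta)$, so $\vartheta=g(\zeta)$ for some $g\in K[X]$ with $\deg g\leq p-1<p$. Assume, for a contradiction, that $K(\eta)|K$ is purely inseparable with $\zeta\sim_K\eta$. Since $K(\zeta)=L$ is immediate over $K$ and $\eta$ is purely inseparable over $K$, Lemma~\ref{fasimfb} applied to $g$ gives $g(\zeta)\sim_K g(\eta)$, that is, $\vartheta\sim_K g(\eta)$. But $g(\eta)\in K(\eta)$ is again purely inseparable over $K$, contradicting the special case just proved. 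Hence no such $\eta$ exists, which is the first assertion.

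Finally, for the displayed equality I would work inside $\tilde{K}$ with the fixed extension of $v$ and apply Lemma~\ref{sim2} to the chain $K\subseteq K^{1/p^{\infty}}\subseteq K^{1/p^{\infty}}(\zeta)$, giving $\dist(\zeta,K^{1/p^{\infty}})\geq\dist(\zeta,K)$; were the inequality strict, Lemma~\ref{sim2} would produce some $y\in K^{1/p^{\infty}}$ with $\zeta\sim_K y$, and such a $y$ is purely inseparable over $K$, contradicting the first assertion, so equality holds. I expect the only real obstacle to be the passage from the distinguished generator $\vartheta$ to an arbitrary $\zeta$: the right tool is the transfer of $\sim_K$ through polynomials of degree $<p$ (Lemma~\ref{fasimfb}), which lets one rewrite $\vartheta$ as $g(\zeta)$ and carry a hypothetical purely inseparable approximant of $\zeta$ back to one of $\vartheta$, where idempotency of $\delta$ forces the contradiction.
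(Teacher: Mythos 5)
Your proof is correct and follows the same two-step skeleton as the paper's: write $\vartheta=g(\zeta)$ with $\deg g<p$, use Lemma~\ref{fasimfb} to transfer a hypothetical purely inseparable approximant of $\zeta$ to one of $\vartheta$, and deduce (\ref{eq7}) from Lemma~\ref{sim2}. The genuine difference is your treatment of the base case. The paper, once it has $\vartheta\sim_K f(\eta)$ with $f(\eta)$ purely inseparable over $K$, simply declares this impossible ``since $(L|K,v)$ is assumed to be independent''; but read literally against the definition, dependence requires an \emph{immediate} purely inseparable extension \emph{of degree exactly $p$} whose generator is $\sim_K\vartheta$, and $K(f(\eta))|K$ need a priori be neither of degree $p$ nor immediate. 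Your iterated-Frobenius argument supplies exactly the missing justification: from $v(\vartheta-\eta')>\delta$ and $(\eta')^{p^m}\in K$ you get, using $\vartheta^{p^m}=\vartheta+A_m$ with $A_m=a+a^p+\cdots+a^{p^{m-1}}\in K$, that $v\bigl(\vartheta-((\eta')^{p^m}-A_m)\bigr)=p^m v(\vartheta-\eta')>p^m\delta=\delta$ (idempotency via Proposition~\ref{dep} and Lemma~\ref{charidemp}), while immediateness of $L|K$ forces this value into $\Lambda^{\rm L}(\vartheta,K)$ and hence $\leq\delta$ --- so independence excludes purely inseparable approximants of $\vartheta$ of arbitrary $p$-power degree, not only degree $p$. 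This makes your version more self-contained than the paper's at that point; the remaining ingredients (multiplication by $p^m$ as an order-preserving operation on cuts in $\widetilde{vK}$, and the degenerate case $\eta'\in K$) are all handled correctly.
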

\begin{proof}
Since $\zeta\in L\setminus K$, $[K(\zeta) :K]=p=[K(\vartheta):K]$ and
therefore, there is a polynomial $f\in K[X]$ of degree smaller than $p$
such that $\vartheta=f(\zeta)$. Suppose that there exists a purely
inseparable extension $K(\eta)|K$ such that $\zeta\sim_K\eta\,$. But
then by Lemma~\ref{fasimfb}, $\vartheta= f(\zeta) \sim_K f(\eta)\,$.
Since also $K(f(\eta))|K$ is a purely inseparable extension, this is
impossible since $(L|K,v)$ is assumed to be independent.

Equation (\ref{eq7}) is deduced as follows. If it does not hold, then
$\dist(\zeta,K) < \dist(\zeta, K^{1/p^{\infty}})$ in view of $K\subset
K^{1/p^{\infty}}$. But then by virtue of Lemma~\ref{sim2}, there would
exist some $\eta\in K^{1/p^{\infty}}$ such that $\zeta\sim_K\eta\,$,
which we have just shown not to be the case.
\end{proof}

%
%
\subsection{Deformation of Artin-Schreier defect extensions}
For the proof of Proposition~\ref{ipie}, we have transformed an
immediate purely inseparable extension into an immediate separable
extension. This was done by changing the minimal polynomial $Y^p-\eta^p$
to the minimal polynomial (\ref{changepol}) of $b\vartheta$
through addition of the summand $b^{p-1}Y$. The hypothesis on the value
of $b$ just means that it is large enough to guarantee that
$b\vartheta \sim_K\eta$. For this hypothesis, it is necessary that
$\eta$ is not contained in the completion of $K$. On the other hand, an
immediate purely inseparable extension with a generator $\eta$ in the
completion of $K$ cannot be transformed into any immediate separable
extension with a generator $\vartheta$ such that $\vartheta\sim_K\eta$.
Indeed, if $\eta\in K^c$ and $\eta\sim_K\eta'$, then $v(\eta-\eta')>
\widetilde{vK}$, that is, $\eta=\eta'$. Moreover, every henselian field
$K$ is separable-algebraically closed in its completion (cf.\ [W],
Theorem~32.19).

The general idea of the transformation of the minimal polynomial can be
expressed as follows: if $y\notin K^c$ is a root of the polynomial $f\in
K[X]$, then for a given polynomial $g\in K[X]$, a root $z$ of $g$ will
satisfy $y\sim_K z$ as soon as the coefficients of the polynomial $f-g$
have large enough values. This follows in general from the principle of
Continuity of Roots. But we wanted to give a self-contained proof for
our special case, because it is particularly simple and explicit and
leads to the following deformation theory.

For any fixed $a\in K$, we consider the following family of polynomials
defined over $K$:
\begin{equation}
f_{a,b}(Y)\;:=\;Y^p-b^{p-1}Y-a\,,\qquad b\in K^\times\;.
\end{equation}
This family can be viewed as a deformation of the polynomial
$Y^p-a$, with this polynomial as its limit for $vb\rightarrow\infty$:
\begin{eqnarray*}
Y^p-b^{p-1}Y-a & \longrightarrow & Y^p-a\\
vb & \longrightarrow & \infty \;.
\end{eqnarray*}
But it is not necessarily true that the ramification theoretical
properties are preserved in the limit, as Example~\ref{examp1}
in Section~\ref{sectexamp} will show.

Associated with this family through the transformation $Y=bX$ is the
family
\begin{equation}
g_{a,b}(X)\;:=\;X^p-X-\frac{a}{b^p}\,,\qquad b\in K^\times\;,
\end{equation}
where $\vartheta_{a,b}$ is a root of $g_{a,b}$ if and only if
$b\vartheta_{a,b}$ is a root of $f_{a,b}\,$.

We summarize the properties of these families in the following theorem:
\begin{theorem}                             \label{deform}
\n
a) \ If $pvb\geq va$, then the polynomial $g_{a,b}(X)$ induces a
Artin-Schreier extension for which equality holds in the fundamental
inequality (\ref{fundineq}); if $pvb> va$, then this extension lies in
the henselization of $K$.
\n
b) \ Suppose that the polynomial $Y^p-a$ induces an immediate extension
which does not lie in the completion of $K$. Then for each $b\in
K^\times$ of large enough value, the polynomial $g_{a,b}(X)$ induces a
dependent Artin-Schreier defect extension; every root $b\vartheta_{a,b}$
of $f_{a,b}(X)$ will then satisfy
\[b\vartheta_{a,b}\sim_K a^{1/p}\;.\]
``Large enough value'' means that
\begin{equation}                            \label{condb}
(p-1)vb + \frac{va}{p} \> >\> p\,\dist(a^{1/p},K)\;.
\end{equation}
If this condition is violated, then $b\vartheta_{a,b}\sim_K
a^{1/p}$ does not hold.
\n
c) \ Suppose that a root $\vartheta_{a,1}$ of the polynomial $f_{a,1}(X)
=X^p-X-a$ satisfies
\begin{equation}                            \label{condc}
v\vartheta_{a,1}\> >\>p\,\dist(\vartheta_{a,1},K)\;.
\end{equation}
Then the polynomial $X^p-a$ induces an immediate extension which does
not lie in the completion, and for every $b$ in the valuation ring
${\cal O}$ of $K$ and every root $\vartheta_{a,b}$ of $g_{a,b}$,
$K(\vartheta_{a,b})|K$ is a dependent Artin-Schreier defect extension
with $b\vartheta_{a,b} \sim_K a^{1/p}$. If condition (\ref{condc}) is
violated, then $\vartheta_{a,1} \sim_K a^{1/p}$ does not hold.
\end{theorem}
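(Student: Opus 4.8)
The plan is to read all three parts through the single fact that $g_{a,b}$ is the Artin--Schreier normalisation of $f_{a,b}$, so that a root $\vartheta_{a,b}$ of $g_{a,b}$ satisfies $\vartheta_{a,b}^p-\vartheta_{a,b}=a/b^p$, and then to feed this into the lemmas of Section~2.4 together with Proposition~\ref{ipie}. Part a) is then a direct application of Lemma~\ref{classASE} to the Artin--Schreier generator $a/b^p$, whose value is $va-pvb$: the hypothesis comparing $pvb$ with $va$ fixes the sign of $v(a/b^p)$, and that lemma yields equality in the fundamental inequality, and in the strict case a root lying in the henselization via Hensel's Lemma. Lemma~\ref{valASgen} records $v\vartheta_{a,b}$ explicitly in each regime if one wants it.

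For part b) I would set $\eta:=a^{1/p}$, so that $K(\eta)|K$ is the given immediate purely inseparable extension of degree $p$ with $\eta\notin K^c$, and put $\varepsilon:=\dist(\eta,K)$. Since $v\eta=\frac{1}{p}va$, the growth condition (\ref{condb}) is word for word the condition (\ref{condvb}) of Proposition~\ref{ipie}. As $(\eta/b)^p=a/b^p$, a root $\vartheta_{a,b}$ of $g_{a,b}$ is exactly an Artin--Schreier generator with $\vartheta_{a,b}^p-\vartheta_{a,b}=(\eta/b)^p$, so that proposition gives $\vartheta_{a,b}\sim_K\eta/b$; multiplying by $b$ and using Lemma~\ref{aat} yields $b\vartheta_{a,b}\sim_K a^{1/p}$, and the extension is dependent. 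The converse half of Proposition~\ref{ipie} supplies the last assertion, that violating (\ref{condb}) makes $b\vartheta_{a,b}\sim_K a^{1/p}$ fail. Thus part b) is essentially a translation of Proposition~\ref{ipie}.

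Part c) turns on one computation. Writing $\vartheta:=\vartheta_{a,1}$, in characteristic $p$ one has $(\vartheta-a^{1/p})^p=\vartheta^p-a=\vartheta$, hence $v(\vartheta-a^{1/p})=\frac{1}{p}v\vartheta$. Therefore condition (\ref{condc}) is equivalent to $v(\vartheta-a^{1/p})>\dist(\vartheta,K)$, that is, to $\vartheta\sim_K a^{1/p}$. This equivalence immediately delivers the final sentence of the theorem (violation of (\ref{condc}) means $\vartheta\not\sim_K a^{1/p}$), shows $a^{1/p}\notin K$ so that $K(a^{1/p})|K$ is purely inseparable of degree $p$, and forces $a^{1/p}\notin K^c$: an element of the completion that is $\sim_K$-related to $\vartheta$ would have to equal $\vartheta$ (as remarked just before the theorem), which is impossible since $\vartheta$ is separable over $K$.

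It then remains to promote $\vartheta\sim_K a^{1/p}$ to immediacy of $K(a^{1/p})|K$, and this is the main obstacle: the relation $\sim_K$ alone does not exclude value or residue extensions, so one must show that (\ref{condc}) actually places us in the defect regime. My plan is to use (\ref{condc}) to rule out the defectless alternatives of Lemma~\ref{classASE} (after the substitutions $\vartheta\mapsto\vartheta-c$, which replace $a$ by $a-c^p+c$ and are controlled by Lemma~\ref{valASgen}), so as to obtain $\dist(\vartheta,K)\le 0^-$ with $v(\vartheta-K)$ having no maximal element; Lemma~\ref{uniqextv} then makes the extension of $v$ unique and $(K(\vartheta)|K,v)$ immediate, and part~2) of Lemma~\ref{ueGp1} transfers immediacy across $\vartheta\sim_K a^{1/p}$ to the degree-$p$ purely inseparable $K(a^{1/p})|K$. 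This establishes the first clause of part c). To close the argument I would invoke part b): for any $b\in\mathcal O$ we have $(p-1)vb+\frac{va}{p}\ge\frac{va}{p}=v\vartheta>p\,\dist(\vartheta,K)=p\,\dist(a^{1/p},K)$ by (\ref{condc}) and Lemma~\ref{sim1}, so (\ref{condb}) holds for every such $b$ and part b) furnishes the dependent Artin--Schreier defect extension $K(\vartheta_{a,b})|K$ with $b\vartheta_{a,b}\sim_K a^{1/p}$. The delicate point throughout is precisely the case analysis securing immediacy; everything else is bookkeeping with $\sim_K$ and the distance via Lemmas~\ref{aat}, \ref{sim1} and~\ref{uniqextv}.
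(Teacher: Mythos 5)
Your parts a) and b) coincide with the paper's proof (Lemma~\ref{classASE} applied to the generator with $\wp$-image $a/b^p$; Proposition~\ref{ipie} with $\eta=a^{1/p}$ plus Lemma~\ref{aat}), and your identity $(\vartheta-a^{1/p})^p=\vartheta^p-a=\vartheta$, giving $v(\vartheta-a^{1/p})=\frac{1}{p}v\vartheta$ and hence the equivalence of (\ref{condc}) with $\vartheta\sim_K a^{1/p}$, is correct and does yield the last sentence of c). The genuine gap is exactly at the point you flag as ``the main obstacle'', and it cannot be closed: condition (\ref{condc}) by itself does not imply that $K(\vartheta_{a,1})|K$ is immediate, nor that $a^{1/p}\notin K^c$, nor even that $a^{1/p}\notin K$. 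Take $K=k(t)$ with the $t$-adic valuation, $\chara k=p$, and $a=t^{-p}$. A root of $X^p-X-a$ is $\vartheta=\vartheta'+t^{-1}$ with $\wp(\vartheta')=t^{-1}$, so $v\vartheta=-1$; since $v(\vartheta-c)=\min\{-\frac{1}{p},v(c-t^{-1})\}$ for every $c\in K$, one gets $\Lambda^{\rm L}(\vartheta,K)=\{m\in\Z\mid m\leq -1\}$, i.e.\ $\dist(\vartheta,K)=(-1)^+$, and (\ref{condc}) reads $-1>-p$: it holds. Yet $a^{1/p}=t^{-1}\in K$, the extension $K(\vartheta)|K=K(\vartheta')|K$ is ramified and defectless, and $v(\vartheta-K)$ attains the maximum $-\frac{1}{p}$ at $c=t^{-1}$. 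So your plan to extract ``$v(\vartheta-K)$ has no maximal element'' from (\ref{condc}) via Lemma~\ref{classASE} and Lemma~\ref{uniqextv} must fail, and your completion argument fails as well: the remark preceding the theorem requires the element of $K^c$ in the \emph{left} slot of the non-symmetric relation $\sim_K$, and in this example $t^{-1}\in K^c$ and $\vartheta\sim_K t^{-1}$, but $\vartheta\neq t^{-1}$.

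For comparison, the paper's proof of c) quotes the second half of the proof of Proposition~\ref{dep} (with $c=0$), an argument carried out under the standing assumption of that subsection that $K(\vartheta)|K$ is an Artin-Schreier \emph{defect}, hence immediate, extension. Immediacy is used precisely where you got stuck: it forces $v(\vartheta-a^{1/p})\in vK$ whenever $a^{1/p}\in K$, which rules out $a^{1/p}\in K$, and it is the hypothesis of Lemma~\ref{ueGp1}, part 2), which then makes $K(a^{1/p})|K$ immediate. In other words, part c) has to be read with ``$K(\vartheta_{a,1})|K$ is an Artin-Schreier defect extension'' among the hypotheses -- the example above shows the literal statement is false without it -- and under that reading your remaining steps (Lemma~\ref{ueGp1}, part 2), then part b)) go through essentially as in the paper. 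The missing idea is therefore not a cleverer derivation of immediacy from (\ref{condc}); immediacy must be imported as a hypothesis, not proved.
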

\begin{proof}
a): \ Both assertions follow from Lemma~\ref{classASE}.
\sn
b): \ All assertions follow from Proposition~\ref{ipie} where
$\eta=a^{1/p}$.
\sn
c): \ Assume that condition (\ref{condc}) holds. Then it follows from
the second part of the proof of Proposition~\ref{dep}, where we set
$c=0$ and $\vartheta= \vartheta_{a,1}\,$, that the polynomial $X^p-a$
induces an immediate extension which does not lie in the completion, and
that $\vartheta_{a,1} \sim_K a^{1/p}$. The latter implies that
$v\vartheta_{a,1} = va^{1/p}=\frac{va}{p}$ and that
$\dist(\vartheta_{a,1},K)= \dist(a^{1/p},K)$; hence,
it implies that (\ref{condc}) is equivalent to
\begin{equation}                            \label{cond1}
\frac{va}{p}\> >\>p\,\dist(a^{1/p},K)\;.
\end{equation}
Consequently, (\ref{condb}) will hold for every
$b\in {\cal O}$, so it follows from part b) that for every root
$\vartheta_{a,b}$ of $g_{a,b}$, $K(\vartheta_{a,b})|K$ is a dependent
Artin-Schreier defect extension with $b\vartheta_{a,b} \sim_K a^{1/p}$.

The last assertion of part c) is seen as follows. We have shown that if
$\vartheta_{a,1} \sim_K a^{1/p}$ holds, then (\ref{condc}) and
(\ref{cond1}) are equivalent. But if (\ref{cond1}) is violated, then by
part b), $\vartheta_{a,1} \sim_K a^{1/p}$ cannot hold.
\end{proof}
\n
Note that
\begin{equation}                            \label{pd=d^p}
p\,\dist(a^{1/p},K)\>=\>\dist(a,K^p)\;.
\end{equation}

\parm
A deformation which at first sight seems to be different from the above
has been used by B.~Teissier in [T]. Starting from the Artin-Schreier
polynomial $X^p-X-a$, we set $X=aY$ and then divide the polynomial by
$a^p$, which leads to the polynomial
\[Y^p-a^{1-p}Y-a^{1-p}\>=\>Y^p-a^{1-p}(1+Y)\;.\]
Hence, $\vartheta^p-\vartheta=a$ if and only if for
$\tilde\vartheta=\vartheta/a$,
\begin{equation}                            \label{eq1unit}
\tilde\vartheta^p-a^{1-p}(1+\tilde\vartheta)=0\;.
\end{equation}
We assume that $va<0$. Then $\vartheta^p-\vartheta=a$ implies that
$va=pv\vartheta<v\vartheta$ and therefore,
\[v\tilde\vartheta\>=\>v\vartheta-va\>>\>0\;.\]
That is, $1+\tilde\vartheta$ is a $1$-unit in ${\cal O}$.
Reducing this $1$-unit to $1$ deforms equation (\ref{eq1unit}) to
\[\ovl{\vartheta}^p-a^{1-p}\>=\>0\;,\]
viewed as an equation in an associated graded ring. In fact, we have
reduced equation (\ref{eq1unit}) modulo the ${\cal O}$-ideal
\[a^{1-p}\tilde\vartheta{\cal O}\>=\>a^{-p}\vartheta {\cal O} \>=\>
\vartheta^{1-p^2} {\cal O}\;.\]
Analyzing the above transformation, one sees that its advantage is that
it leeds to equations with integral coefficients. However, if we
multiply the polynomial $Y^p-a^{1-p}$ by $a^p$ and then set $X=aY$, we
obtain the polynomial $X^p-a$. So we have just replaced the polynomial
$X^p-X-a$ by $X^p-a$. From Theorem~\ref{deform} together with
(\ref{pd=d^p}) we see that this procedure preserves the valuation
theoretical behaviour of the associated roots if and only if
\[va \> >\> p\,\dist(a,K^p)\;.\]

%
%
\subsection{Fields without dependent Artin-Schreier defect extensions}
\label{sectfwdASe}
If $K$ admits any immediate purely inseparable extension that does not
lie in the completion $K^c$ of $K$, then $K$ satisfies the hypothesis of
Proposition~\ref{ipie}. To show this, suppose that $\tilde\eta\in
K^{1/p^{\infty}} \setminus K^c$ such that $K(\tilde\eta)|K$ is an
immediate extension. We may assume that $\tilde\eta^p\in K^c$
(otherwise, we replace $\tilde \eta$ by a suitable $p^{\nu}$-th power).
Since $\tilde\eta\notin K^c$, we have that $\Lambda^{\rm L}
(\tilde\eta,K)$ is bounded from above in $vK$ and $\Lambda^{\rm L}
(\tilde\eta^p,K^p)= p\Lambda^{\rm L}(\tilde\eta,K)$ is bounded from
above in $vK^p=pvK$. On the other hand, since $\tilde{\eta}^p\in K^c$,
there is some $b\in K$ such that $v(\tilde\eta^p-b)>\Lambda^{\rm L}
(\tilde\eta^p,K^p)$. We choose $\eta\in K^{1/p}$ such that $\eta^p=b$.
Then $v(\tilde\eta-\eta)= \frac{1}{p}v(\tilde\eta^p-b)> \Lambda^{\rm
L}(\tilde\eta,K)$, that is,
\[\eta\;\sim_K\;\tilde\eta\;.\]
By Lemma~\ref{ueGp1}, this shows that $K(\eta)|K$ is an immediate
extension; since $\Lambda^{\rm L}(\eta,K)=\Lambda^{\rm L}
(\tilde\eta,K)\ne vK$, it is not contained in $K^c$. We may now apply
Proposition~\ref{ipie} to obtain:
\begin{corollary}                           \label{+4.5}
Assume that $K$ does not admit any dependent Artin-Schreier defect
extension. Then every immediate purely inseparable extension lies in the
completion of $K$.                                      \QED
\end{corollary}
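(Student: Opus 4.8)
The plan is to prove the contrapositive: assuming that $K$ admits some immediate purely inseparable extension $L|K$ with $L\not\subseteq K^c$, I will construct a dependent Artin-Schreier defect extension, contradicting the hypothesis. First I would pick any $\tilde\eta\in L\setminus K^c$; the intermediate field $K(\tilde\eta)|K$ is then itself immediate (every intermediate field of an immediate extension is immediate) and purely inseparable, so I may work with this single generator. As $\tilde\eta\in K^{1/p^{\infty}}$, repeatedly raising to the $p$-th power eventually lands in $K\subseteq K^c$; replacing $\tilde\eta$ by $\tilde\eta^{p^{j}}$ for the maximal $j$ with $\tilde\eta^{p^{j}}\notin K^c$, I reduce to the situation $\tilde\eta\notin K^c$ but $\tilde\eta^{p}\in K^c$, still with $K(\tilde\eta)|K$ immediate.

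The decisive step is to deform $\tilde\eta$ into a degree-$p$ generator $\eta\in K^{1/p}$ lying in the same $\sim_K$-class. Since $\tilde\eta\notin K^c$ and the extension is immediate, $\Lambda^{\rm L}(\tilde\eta,K)=v(\tilde\eta-K)$ is bounded above in $vK$, and the Frobenius relation $v(\tilde\eta^{p}-c^{p})=p\,v(\tilde\eta-c)$ gives $\Lambda^{\rm L}(\tilde\eta^{p},K^{p})=p\,\Lambda^{\rm L}(\tilde\eta,K)$, bounded above in $vK^{p}=pvK$. Because $\tilde\eta^{p}\in K^c$, I can choose $b\in K$ with $v(\tilde\eta^{p}-b)$ exceeding this cut, and then put $\eta:=b^{1/p}\in K^{1/p}$, so that
\[
v(\tilde\eta-\eta)\;=\;\tfrac1p\,v(\tilde\eta^{p}-b)\;>\;\Lambda^{\rm L}(\tilde\eta,K)\,,
\]
i.e.\ $\eta\sim_K\tilde\eta$. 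Two points must be checked: first, $\eta\notin K$, for otherwise $b\in K^{p}$ and $\tilde\eta\sim_K\eta\in K$ would force $\Lambda^{\rm L}(\tilde\eta,K)=vK$ by Lemma~\ref{sim1}, contradicting $\tilde\eta\notin K^c$; hence $[K(\eta):K]=p$. Second, by Lemma~\ref{sim1} again $\Lambda^{\rm L}(\eta,K)=\Lambda^{\rm L}(\tilde\eta,K)\neq vK$, so $\eta\notin K^c$ as well. Finally, part~2) of Lemma~\ref{ueGp1} transports immediacy along $\eta\sim_K\tilde\eta$, giving that $K(\eta)|K$ is immediate.

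At this point the hypotheses of Proposition~\ref{ipie} are exactly met --- an immediate purely inseparable extension $K(\eta)|K$ of degree $p$ with $\eta\notin K^c$ --- and that proposition produces a dependent Artin-Schreier defect extension of $K$, which is the desired contradiction. I expect the only delicate part to be the deformation in the middle paragraph: one uses $\tilde\eta\notin K^c$ twice, once so that the target cut $\Lambda^{\rm L}(\tilde\eta^{p},K^{p})$ is bounded (allowing a finite approximation $b$ of $\tilde\eta^{p}$ to overshoot it), and once to guarantee that the resulting $\eta$ is genuinely of degree $p$ and still escapes the completion. The scaling factor $p$ relating the cuts of $\tilde\eta$ and $\tilde\eta^{p}$, together with the initial reduction to $\tilde\eta^{p}\in K^c$, is where care is needed; the rest is a direct appeal to Lemmas~\ref{sim1} and~\ref{ueGp1} and to Proposition~\ref{ipie}.
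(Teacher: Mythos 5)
Your proposal is correct and follows essentially the same route as the paper: reduce to $\tilde\eta\notin K^c$ with $\tilde\eta^p\in K^c$, use the Frobenius to transfer boundedness of the cut, approximate $\tilde\eta^p$ by $b\in K$ beyond that cut, set $\eta=b^{1/p}$ so that $\eta\sim_K\tilde\eta$, invoke Lemmas~\ref{sim1} and~\ref{ueGp1} to get an immediate degree-$p$ purely inseparable extension outside $K^c$, and conclude with Proposition~\ref{ipie}. Your only addition is the explicit verification that $\eta\notin K$ (so that $[K(\eta):K]=p$), a point the paper leaves implicit.
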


\begin{lemma}                               \label{ASdown}
%
If $K$ is Artin-Schreier closed, then so is $K^c$. If $K$ admits no
dependent (or no independent) Artin-Schreier defect extension, then the
same holds for $K^c$.
\end{lemma}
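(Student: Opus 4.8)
The statement has three parts, all under the standing hypothesis $\chara K=p>0$: (i) if $K$ is Artin--Schreier closed then so is $K^c$; (ii) ``$K$ has no dependent Artin--Schreier defect extension'' passes to $K^c$; (iii) the same for ``no independent''. The two facts I would lean on are the additivity of $\wp(X)=X^p-X$ and the immediacy of $K^c|K$ (so $vK^c=vK$ and $\widetilde{vK^c}=\widetilde{vK}$, and $K$ is dense in $K^c$). First I would isolate a completeness sublemma: \emph{for every $w\in K^c$ with $vw>0$, the polynomial $X^p-X-w$ has a root in $K^c$.} This I would prove by successive approximation, since the map $g(x)=x^p-w$ satisfies $v(g(x)-g(x'))=p\,v(x-x')$; starting from $x_0=0$ the iterates $x_{n+1}=g(x_n)$ have $v(x_{n+1}-x_n)=p^n\,vw\to\infty$, hence form a Cauchy sequence converging in the complete field $K^c$ to a root $\rho$ with $v\rho=vw$ (Lemma~\ref{valASgen}). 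Note that only completeness, not henselianity, is used.

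For part (i) the plan is short: given any $a\in K^c$, use density to pick $a_0\in K$ with $v(a-a_0)>0$. Since $K$ is Artin--Schreier closed there is $c_0\in K$ with $\wp(c_0)=a_0$, and the sublemma gives $\varepsilon\in K^c$ with $\wp(\varepsilon)=a-a_0$. Additivity then yields $\wp(c_0+\varepsilon)=a$, so $\wp$ is surjective on $K^c$, i.e.\ $K^c$ is Artin--Schreier closed.

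For (ii) and (iii) I would argue by contraposition, turning an Artin--Schreier defect extension of $K^c$ of a given type into one of $K$ of the same type. So let $K^c(\vartheta)|K^c$ be an Artin--Schreier defect extension with $a=\wp(\vartheta)\in K^c$ and distance $\delta:=\dist(\vartheta,K^c)$; by Corollary~\ref{wpC}, $\delta\le 0^-$ and $v(\vartheta-c)<0$ for all $c\in K^c$. The obstacle is precisely that $a$ need not lie in $K$, so $\vartheta$ is not an Artin--Schreier element over $K$; I would repair this by density. Choose $a_0\in K$ with $\mu:=v(a-a_0)>0$, let $\rho\in K^c$ be the root of $X^p-X-(a-a_0)$ furnished by the sublemma (so $v\rho=\mu$), and set $\vartheta_0:=\vartheta-\rho$. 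Then $\wp(\vartheta_0)=a_0\in K$, and $v(\vartheta-\vartheta_0)=\mu>0$ forces $\vartheta_0\notin K^c$ (otherwise $v(\vartheta-\vartheta_0)<0$ by Corollary~\ref{wpC}), whence $[K(\vartheta_0):K]=p$. Since $\rho\in K^c$ we have $K(\vartheta_0)\subseteq K^c(\vartheta)$, and immediacy of $K^c(\vartheta)|K^c$ gives $vK^c(\vartheta)=vK$, so every value $v(\vartheta_0-c)$ with $c\in K^c$ lies in $vK$. Replacing any $c\in K^c$ by some $c_0\in K$ with $v(c-c_0)>v(\vartheta_0-c)$ (density) does not change this value, so $\Lambda^{\rm L}(\vartheta_0,K)=\Lambda^{\rm L}(\vartheta_0,K^c)$ and hence $\dist(\vartheta_0,K)=\dist(\vartheta_0,K^c)$. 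As $v(\vartheta-\vartheta_0)=\mu>\delta$ we get $\vartheta\sim_{K^c}\vartheta_0$, so Lemma~\ref{sim1} gives $\dist(\vartheta_0,K^c)=\delta$ and $\Lambda^{\rm L}(\vartheta_0,K^c)=\Lambda^{\rm L}(\vartheta,K^c)$, which has no maximal element by Theorem~\ref{KT1}. Thus $\dist(\vartheta_0,K)=\delta\le 0^-$ and $v(\vartheta_0-K)=\Lambda^{\rm L}(\vartheta_0,K)$ has no maximal element, so Lemma~\ref{uniqextv} shows that $K(\vartheta_0)|K$ is an Artin--Schreier defect extension of distance $\delta$.

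The conclusion would then come from Proposition~\ref{dep}: an Artin--Schreier defect extension is independent if and only if its distance is idempotent, and idempotency ($\delta=p\delta$) is an intrinsic property of the cut $\delta$ in $\widetilde{vK}=\widetilde{vK^c}$. Since $K^c(\vartheta)|K^c$ and $K(\vartheta_0)|K$ carry the same distance $\delta$, one is dependent (respectively independent) exactly when the other is. Hence a dependent (respectively independent) Artin--Schreier defect extension of $K^c$ yields one of $K$, and contraposition gives the two remaining assertions. The one genuinely delicate point, and the place I expect to spend the most care, is the passage from $a$ to the approximant $a_0\in K$ and the verification that the resulting $K(\vartheta_0)|K$ is still immediate (a defect extension) with unchanged distance; the rest is routine bookkeeping with the distance and $\sim_K$ calculus of Section~\ref{sectcad}.
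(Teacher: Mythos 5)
Your argument stands or falls with the sublemma, and the sublemma is false in the generality the paper needs: completeness of $K^c$ does \emph{not} guarantee that $X^p-X-w$ has a root in $K^c$ whenever $vw>0$, because the paper works with arbitrary Krull valuations, not only valuations of rank one. Your iteration produces differences of value $v(x_{n+1}-x_n)=p^n\,vw$, and these are cofinal in $vK$ only if $vw$ does not lie in a proper convex subgroup of $vK$; otherwise $(x_n)$ is not Cauchy in the valuation topology and the root may genuinely fail to exist. Concretely, take $K=\F_p(s)((t))$ with the composite valuation $v=v_t\circ v_s$, so that $vK=\Z\times\Z$ ordered lexicographically. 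The $v$-topology coincides with the $t$-adic topology, so $K$ is complete, i.e.\ $K=K^c$; and $w=s$ has $vw=(0,1)>0$. But a root $\rho$ of $X^p-X-s$ would have to satisfy $v_t\rho\geq 0$, and comparing coefficients of $t^0$ gives $a_0^p-a_0=s$ with $a_0\in\F_p(s)$, which is impossible since $X^p-X-s$ is irreducible over $\F_p(s)$. Since all three parts of your proof invoke the sublemma (part (i) to produce $\varepsilon$, parts (ii)/(iii) to produce $\rho\in K^c$ and hence $\vartheta_0=\vartheta-\rho\in K^c(\vartheta)$), the argument as written proves the lemma only for valuations of rank one, whereas the paper uses it (and states it) for arbitrary valuations, including ones with non-trivial coarsenings.

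The repair is exactly the paper's device, and it is worth seeing why it sidesteps your obstacle: the paper never asks the auxiliary Artin-Schreier root to lie in $K^c$. Given $a=\wp(\vartheta)\in K^c$, it chooses $\tilde a\in K$ with $v(a-\tilde a)>\dist(\vartheta,K^c)$ and $v(a-\tilde a)\geq 0$ (possible because $\vartheta\notin K^c$ forces $\dist(\vartheta,K^c)<\infty$, by completeness), lets $\tilde\vartheta$ be a root of $X^p-X-\tilde a$ in $\tilde K$, and observes that $\vartheta-\tilde\vartheta$ is a root of $X^p-X-(a-\tilde a)$, so by Lemma~\ref{valASgen} the root $\tilde\vartheta$ can be chosen with $v(\vartheta-\tilde\vartheta)=v(a-\tilde a)>\dist(\vartheta,K^c)\geq\dist(\vartheta,K)$. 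Whether $\vartheta-\tilde\vartheta$ lies in $K^c$ is irrelevant; only its value is used. From $\tilde\vartheta\sim_K\vartheta$ one gets $\dist(\tilde\vartheta,K)=\dist(\vartheta,K)<\infty$ (Lemma~\ref{sim1}), hence $\tilde\vartheta\notin K$, which gives the first assertion; and in the defect case $\dist(\tilde\vartheta,K)\leq 0^-$ (Corollary~\ref{wpC}) together with Lemma~\ref{uniqextv} makes $K(\tilde\vartheta)|K$ an Artin-Schreier defect extension with the same distance, so Proposition~\ref{dep} transfers dependence and independence exactly as in your final step. Your distance bookkeeping in (ii)/(iii) is essentially the paper's; the only piece that must be replaced is the construction of the Artin-Schreier generator over $K$.
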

\begin{proof}
Assume that $K^c(\vartheta)|K$ is an Artin-Schreier extension generated
by a root $\vartheta$ of the polynomial $X^p-X-a$ over $K^c$. Since
$\vartheta\notin K^c$, we have that $\dist(\vartheta, K^c)<\infty$.
Since $a\in K^c$, we may choose an element $\tilde{a}\in K$ such that
$v(a-\tilde{a}) >\dist(\vartheta,K^c)$ with $v(a-\tilde{a}) \geq 0$. Let
$\tilde{\vartheta}$ be a root of the polynomial $X^p-X-\tilde{a}\in
K[X]$. By Lemma~\ref{valASgen}, the root $\vartheta-\tilde{\vartheta}$
of the polynomial $X^p-X-(a-\tilde{a})$ has value $v(\vartheta-
\tilde{\vartheta})=v(a-\tilde{a})>\dist( \vartheta,K^c)\geq
\dist(\vartheta,K)$. Thus, $\dist(\tilde{\vartheta},K)=\dist(\vartheta,K)
\leq\dist(\vartheta,K^c) <\infty$, which shows that
$K(\tilde{\vartheta})|K$ is non-trivial and hence an Artin-Schreier
extension. This proves the first assertion of our lemma.

Now assume that $(K^c(\vartheta)|K,v)$ is an Artin-Schreier defect
extension. By Corollary~\ref{wpC} we have that $\dist(\vartheta,K^c)\leq
0^-$. With $\tilde{\vartheta}$ as before, we obtain that
$\dist(\tilde{\vartheta},K)=\dist(\vartheta,K)\leq 0^-$. By
Lemma~\ref{uniqextv}, this shows that also $(K(\tilde{\vartheta})|K,v)$
is an Artin-Schreier defect extension. The equality of the distances
shows that $K^c(\vartheta)|K^c$ is independent if and only if
$K(\tilde{\vartheta})|K$ is.
\end{proof}

An immediate consequence of this lemma and the preceding corollary is:

\begin{corollary}                           \label{nodepincomp}
If $K$ does not admit any dependent Artin-Schreier defect extension,
then $K^c$ does not admit any proper immediate purely inseparable
extension. In particular, this holds if $K$ is separable-algebra\-ically
maximal.                                                \QED
\end{corollary}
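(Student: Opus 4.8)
The plan is to derive both parts directly from Lemma~\ref{ASdown} and Corollary~\ref{+4.5}, as the statement announces. For the main implication, I would start by assuming that $K$ admits no dependent Artin-Schreier defect extension. By Lemma~\ref{ASdown} this property passes to the completion, so $K^c$ also admits no dependent Artin-Schreier defect extension. At this point I would apply Corollary~\ref{+4.5} with $K^c$ in the role of the base field: it gives that every immediate purely inseparable extension of $K^c$ lies in the completion of $K^c$. Since a completion is already complete, it is its own completion, so the completion of $K^c$ is just $K^c$. Hence every immediate purely inseparable extension of $K^c$ is contained in $K^c$, i.e.\ is trivial, which is exactly the assertion that $K^c$ has no proper immediate purely inseparable extension.

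For the final (``in particular'') clause, the plan is to verify that separable-algebraic maximality already guarantees the hypothesis. I would note that a separable-algebraically maximal field $K$ cannot admit \emph{any} Artin-Schreier defect extension. Indeed, such an extension has degree $p$, and a degree-$p$ defect extension is immediate (as recorded in the introduction); furthermore every Artin-Schreier extension is separable. So an Artin-Schreier defect extension would be a proper immediate separable-algebraic extension, which is excluded by separable-algebraic maximality. In particular $K$ has no dependent Artin-Schreier defect extension, and the first part then yields the conclusion for $K^c$.

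I anticipate no real difficulty, since the corollary is flagged as an immediate consequence of the two preceding results. The only step worth isolating is the application of Corollary~\ref{+4.5} to $K^c$ in place of $K$, together with the observation that the completion of $K^c$ equals $K^c$; this is where the ``lies in the completion'' conclusion collapses to triviality of the immediate purely inseparable extensions of $K^c$.
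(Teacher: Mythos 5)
Your proof is correct and follows exactly the route the paper intends: it combines Lemma~\ref{ASdown} (to transfer the absence of dependent Artin-Schreier defect extensions to $K^c$) with Corollary~\ref{+4.5} applied to $K^c$, noting that $K^c$ is its own completion, and the ``in particular'' clause is handled by the same observation the paper relies on, namely that an Artin-Schreier defect extension is a proper immediate separable extension and so cannot exist over a separable-algebraically maximal field.
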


\pars
We can now give the
\sn
{\bf Proof of Theorem~\ref{AScphic}:} \
Every Artin-Schreier closed non-trivially valued field $K$ of
characteristic $p>0$ has $p$-divisible value group and perfect residue
field (cf.\ Corollary~2.17 of [Ku2]). Therefore, every purely
inseparable extension of $K$ is immediate. Hence by the last corollary,
the perfect hull of $K$ lies in the completion of $K$, i.e., $K$ lies
dense in its perfect hull.

An alternative proof of this fact can be given in the following way. We
represent the extension $K^{1/p^{\infty}}|K$ as an infinite tower of
purely inseparable extensions $K_{\mu+1}|K_\mu$ ($\mu<\nu$ where $\nu$
is some ordinal). Then we only have to show that $(K_{\mu+1},v)$ lies in
$(K_\mu,v)^c$ for every $\mu<\nu$. In view of Proposition~\ref{ipie}, it
suffices to show that $K_\mu$ is Artin-Schreier closed. But this holds
by Lemma~\ref{AScpieASc}.

Since $K^c$ has the same value group and the same residue field as $K$,
also every purely inseparable extension of $K^c$ is immediate. By the
preceding corollary, this yields that $K^c$ must be perfect.     \QED

%
%
\subsection{Persistence results}
Another property of independent Artin-Schreier defect extensions is
their persistence in maximal immediate extensions, in the following
sense:

\begin{lemma}                                     \label{l2}
If $K$ admits an independent Artin-Schreier defect extension
$(K(\vartheta)|K,v)$ with Artin-Schreier generator $\vartheta$ of
distance $\delta = 0^-$, then every algebraically maximal immediate
extension (and in particular, every maximal immediate extension) $M$ of
$K$ contains also an independent Artin-Schreier defect extension of $K$
with an Artin-Schreier generator $\tilde\vartheta$ of distance $0^-$
such that $\tilde\vartheta\sim_K\vartheta$.
\end{lemma}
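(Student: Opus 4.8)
The plan is to locate inside $M$ an Artin-Schreier generator $\tilde\vartheta$ whose Artin-Schreier value $\tilde a=\wp(\tilde\vartheta)=\tilde\vartheta^p-\tilde\vartheta$ lies in $K$ and is close to $a$; all the asserted properties will then follow formally. We may assume $\vartheta\notin M$, since otherwise $\tilde\vartheta=\vartheta$ already works. Recall that $M$, being algebraically maximal, is in particular separable-algebraically maximal and hence henselian. The crucial observation is this: if we can produce $\tilde a\in K$ with $v(\tilde a-a)\ge 0$ such that $X^p-X-\tilde a$ has a root $\tilde\vartheta\in M$, then we are done, because $\wp(\tilde\vartheta-\vartheta)=\tilde a-a$ has value $\ge 0$, so Lemma~\ref{valASgen} gives $v(\tilde\vartheta-\vartheta)\ge 0>0^-=\delta$, whence $\tilde\vartheta\sim_K\vartheta$.

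First I would show $\dist(\vartheta,M)>0^-$, i.e.\ that some $y_0\in M$ satisfies $v(\vartheta-y_0)\ge 0$. By Lemma~\ref{sim2} we have $\dist(\vartheta,M)\ge\dist(\vartheta,K)=0^-$. Since $M$ is algebraically maximal and $\vartheta$ is algebraic over $M$, Corollary~\ref{exoptappr} shows that $v(\vartheta-M)$ attains a maximum $\gamma=v(\vartheta-y_0)$ for some $y_0\in M$. If $\gamma<0$, then, as $\dist(\vartheta,K)=0^-$ forces $\Lambda^{\rm L}(\vartheta,K)$ to be cofinal in the negative part of $vK$, there would be $c\in K$ with $\gamma<v(\vartheta-c)<0$; but $c\in M$ then contradicts the maximality of $\gamma$. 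Hence $\gamma\ge 0$, and $y_0$ is the desired element.

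The key step is then a residue correction that forces the Artin-Schreier value back into $K$. Put $a':=a-\wp(y_0)=\wp(\vartheta-y_0)$; by Lemma~\ref{valASgen} and $v(\vartheta-y_0)\ge 0$ we get $va'\ge 0$, so $a'\in{\cal O}_M$ and $a'v\in Mv=Kv$, the extension $M|K$ being immediate. Choosing $u\in{\cal O}_K$ with $uv=-a'v$ (possible since ${\cal O}_K$ surjects onto $Kv$), we obtain $v(u+a')>0$, so by Lemma~\ref{classASE} the polynomial $X^p-X-(u+a')$ splits in the henselization of $M$, that is, has a root $w\in M$. Setting $\tilde\vartheta:=w+y_0\in M$ and using additivity of $\wp$, one computes $\wp(\tilde\vartheta)=\wp(w)+\wp(y_0)=(u+a')+(a-a')=a+u=:\tilde a\in K$, with $v(\tilde a-a)=vu\ge 0$. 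I expect this simultaneous engineering of a value $\tilde a\in K$ and of a root of $X^p-X-\tilde a$ inside $M$ to be the main obstacle.

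It then remains to verify the listed properties of $\tilde\vartheta$. As noted, $\tilde\vartheta\sim_K\vartheta$. By Corollary~\ref{wpC} we have $v(\vartheta-c)<0$ for every $c\in K$, while $v(\tilde\vartheta-\vartheta)\ge 0$, so $v(\tilde\vartheta-c)=v(\vartheta-c)$ for all $c\in K$; thus $v(\tilde\vartheta-K)=v(\vartheta-K)$ has no maximal element, and in particular $\tilde\vartheta\notin K$. Lemma~\ref{sim1} gives $\dist(\tilde\vartheta,K)=\dist(\vartheta,K)=0^-$, so $\dist(\tilde\vartheta,K)\le 0^-$, and Lemma~\ref{uniqextv} applies to $\tilde\vartheta$ to show that $K(\tilde\vartheta)|K$ is an Artin-Schreier defect extension. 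Finally, Proposition~\ref{dep} shows it is independent, its distance $0^-$ being idempotent. This yields the required independent Artin-Schreier defect extension $K(\tilde\vartheta)|K$ with $\tilde\vartheta\in M$, $\dist(\tilde\vartheta,K)=0^-$, and $\tilde\vartheta\sim_K\vartheta$.
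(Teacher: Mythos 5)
Your proof is correct and takes essentially the same route as the paper's own proof: both use Corollary~\ref{exoptappr} together with $\dist(\vartheta,K)=0^-$ to find an element of $M$ (your $y_0$, the paper's $u$) within distance $\ge 0$ of $\vartheta$, then use immediateness of $M|K$ and henselianity of $M$ to adjust $\wp(\vartheta-y_0)$ by an element of $K$ (your $u$ plays the role of the paper's $-b$) so that the corrected root $\tilde\vartheta\in M$ has $\wp(\tilde\vartheta)\in K$ and $v(\vartheta-\tilde\vartheta)\ge 0$, and both conclude via Lemma~\ref{sim1}, Lemma~\ref{uniqextv} and the idempotency of $0^-$ (Proposition~\ref{dep}).
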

\begin{proof}
If $\vartheta\in M$, there is nothing to show. Assume that $\vartheta
\notin M$. Then $M(\vartheta)|M$ is also an Artin-Schreier extension
with Artin-Schreier generator $\vartheta$. Since $M$ is algebraically
maximal, Corollary~\ref{exoptappr} shows that there exists an element
$u \in M$ satisfying
\[v(\vartheta - u) \;\geq\; \Lambda^{\rm L}(\vartheta,M)\;.\]
On the other hand, $K \subseteq M$ implies
\[\Lambda^{\rm L}(\vartheta,K)\subseteq \Lambda^{\rm L}(\vartheta,M)\;.\]
Since $vM=vK$, this shows that $v(\vartheta - u)\geq 0$. We put
\[a_u\;:=\;\wp (\vartheta - u)\;=\;\wp (\vartheta)-\wp (u)\in M\]
and note that $va_u\geq 0$. Since $M|K$ is immediate, there exists
$b \in K$ such that
\[v(a_u - b)>v(a_u)\geq 0\]
and $vb=va_u\geq 0$. Consequently, the polynomial $X^p-X - (a_u-b) \in
M[X]$ admits a root $\vartheta'$ in the henselian field $M$. But then,
\[\tilde{\vartheta}:=\vartheta'+u\in M\]
is a root of the polynomial $X^p-X -(\wp (\vartheta)-b) \in K[X]$. We
compute:
\[\wp(\vartheta-\tilde{\vartheta}) = \wp(\vartheta)-\wp(\vartheta'+u)
= \wp (\vartheta) - (\wp (\vartheta)-b) = b\;.\]
This shows $v(\vartheta-\tilde{\vartheta})\geq 0$, whence
$\tilde\vartheta\sim_K\vartheta$. In particular, this shows that
$\tilde\vartheta \notin K$ so that $K(\tilde\vartheta)| K$ is
non-trivial and hence an
Artin-Schreier extension. By Lemma~\ref{uniqextv}, the extension of $v$
from $K$ to $K(\tilde\vartheta)$ is unique and $K(\tilde\vartheta)|K$ is
an Artin-Schreier defect extension. Finally, $\tilde\vartheta\sim_K
\vartheta$ implies that $\dist(\tilde\vartheta,K)=\dist(\vartheta,K)
=0^-$ (Lemma~\ref{sim1}) and therefore, $K(\tilde\vartheta)|K$ is an
independent Artin-Schreier defect extension.
\end{proof}

From this lemma, we deduce the following:
\begin{corollary}                           \label{sacinmax}
If there exists a maximal immediate extension in which $K$ is
separable-algebraically closed, then $K$ admits no independent
Artin-Schreier defect extension of distance $0^-$.
\end{corollary}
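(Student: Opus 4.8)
The plan is to prove the contrapositive directly from Lemma~\ref{l2}. Suppose, contrary to the asserted conclusion, that $K$ \emph{does} admit an independent Artin-Schreier defect extension $(K(\vartheta)|K,v)$ with Artin-Schreier generator $\vartheta$ of distance $\delta=0^-$. Let $M$ be the given maximal immediate extension of $K$ in which $K$ is separable-algebraically closed. Since every maximal immediate extension is in particular algebraically maximal, Lemma~\ref{l2} is applicable to $M$, and the aim is to derive a contradiction with the separable-algebraic closedness of $K$ in $M$.

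First I would invoke Lemma~\ref{l2} to obtain an element $\tilde\vartheta\in M$ which is an Artin-Schreier generator of an independent Artin-Schreier defect extension $K(\tilde\vartheta)|K$ with $\tilde\vartheta\sim_K\vartheta$ and $\dist(\tilde\vartheta,K)=0^-$. The crucial observation is that $K(\tilde\vartheta)|K$ is a \emph{non-trivial} Artin-Schreier extension: it is generated by a root of a polynomial of the form $X^p-X-a$ with $a\in K$, hence it is \emph{separable} over $K$, and it is non-trivial because $\tilde\vartheta\notin K$ (as already recorded in the statement of Lemma~\ref{l2}).

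This produces the desired contradiction: $\tilde\vartheta$ is an element of $M$ that is separable-algebraic over $K$ but does not lie in $K$, so $K$ cannot be separable-algebraically closed in $M$, contrary to the choice of $M$. Therefore no independent Artin-Schreier defect extension of $K$ of distance $0^-$ can exist, which is exactly the assertion of the corollary.

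Since this is essentially an immediate consequence of Lemma~\ref{l2}, I do not expect any genuine obstacle. The only point requiring care is to make explicit that an Artin-Schreier extension is separable (being generated by a root of the additive-based separable polynomial $X^p-X-a$), so that the membership $\tilde\vartheta\in M$ truly violates separable-algebraic closedness and not merely full algebraic closedness; it is precisely the separability that lets the argument run with ``separable-algebraically closed'' rather than the stronger ``algebraically closed.''
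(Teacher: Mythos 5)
Your proof is correct and is essentially the paper's own argument: the paper states the corollary as a direct deduction from Lemma~\ref{l2}, and the intended reasoning is exactly your contrapositive — a maximal immediate extension is in particular algebraically maximal, so Lemma~\ref{l2} would place an Artin-Schreier generator $\tilde\vartheta\in M\setminus K$ (separable-algebraic over $K$) inside $M$, contradicting that $K$ is separable-algebraically closed in $M$. Your explicit remark that Artin-Schreier extensions are separable is the right point to make precise, and nothing further is needed.
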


\pars
We will now consider independent Artin-Schreier defect extensions
$(K(\vartheta)|K,v)$ with Artin-Schreier generator $\vartheta$ of
distance $\delta< 0^-$. In this case, Lemma~\ref{charidemp} shows that
$\delta= H^-$ for some non-trivial convex subgroup $H$ of
$\widetilde{vK}$. This means that $v(\vartheta-K)= \Lambda^{\rm L}
(\vartheta,K)$ is cofinal in $(\widetilde{vK})^{<0}\setminus H$. We
denote by $v_\delta$ the coarsening of $v$ on $\tilde K$ with respect to
$H$. Then $v_\delta(\vartheta-K)$ is cofinal in $(\widetilde{vK})^{<0}
/H= (\widetilde{v_\delta K})^{<0}$. Thus, $v_\delta(\vartheta-K)$ has no
maximal element. Since the extension of $v$ from $K$ to $K(\vartheta)$
is unique, the same must hold for $v_\delta$; cf.\ the proof of
Lemma~\ref{defc}. Now Lemma~\ref{ueGp1} shows that also $(K(\vartheta)|
K,v_\delta)$ is an immediate Artin-Schreier extension. As its distance
is $0^-$, it is covered by the case treated in Lemma~\ref{l2}. From
this, we obtain:

\begin{lemma}                            \label{l3}
Assume that for every coarsening $w$ of $v$ (including $v$ itself), there
exists a maximal immediate extension $(M_w,w)$ of $(K,w)$ such that $K$ is
separable-algebraically closed in $M_w$. Then $K$ admits no independent
Artin-Schreier defect extensions.                             \QED
\end{lemma}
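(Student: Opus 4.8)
The plan is to argue by contradiction. Suppose that, contrary to the claim, $K$ admits an independent Artin-Schreier defect extension $(K(\vartheta)|K,v)$ with Artin-Schreier generator $\vartheta$, and set $\delta=\dist(\vartheta,K)$. By Proposition~\ref{dep} the distance $\delta$ is idempotent, and by Corollary~\ref{wpC} we have $\delta\leq 0^-$, so that $0$ lies in the right cut set of $\delta$. Characterization k) of Lemma~\ref{charidemp}, valid since $\widetilde{vK}$ is divisible, then lets me write $\delta=H^-$ for a convex subgroup $H$ of $\widetilde{vK}$, which is trivial exactly when $\delta=0^-$.

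My first step is to reduce everything to the case of distance $0^-$ by coarsening $v$ by $H$. Let $w=v_\delta$ be the corresponding coarsening (so $w=v$ when $H=\{0\}$, and there is nothing to do). The discussion preceding the lemma carries out exactly this reduction when $H$ is non-trivial: it shows that $w(\vartheta-K)$ is cofinal in $(\widetilde{wK})^{<0}$ and hence has no maximal element, that uniqueness of the extension of $v$ to $K(\vartheta)$ forces uniqueness of the extension of $w$, and that Lemma~\ref{ueGp1} therefore makes $(K(\vartheta)|K,w)$ immediate. Being of degree $p$, it is an Artin-Schreier defect extension with respect to $w$, and its $w$-distance is now $0^-$.

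The key point is that this coarsened extension is independent with respect to $w$, and here I would simply invoke Proposition~\ref{dep} a second time, now for the valued field $(K,w)$: since the $w$-distance of $(K(\vartheta)|K,w)$ equals $0^-$, which is idempotent, independence is automatic. To finish, I apply the hypothesis to the coarsening $w$: there is a maximal immediate extension $(M_w,w)$ of $(K,w)$ in which $K$ is separable-algebraically closed. By Corollary~\ref{sacinmax} this rules out any independent Artin-Schreier defect extension of $(K,w)$ of distance $0^-$, contradicting the extension produced above. This contradiction proves the lemma.

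The step I expect to be the only real subtlety is the preservation of independence under the coarsening. One might worry that the immediate purely inseparable extension witnessing dependence could reappear after passing to $w$; what dissolves this worry is the equivalence between independence and idempotency of the distance furnished by Proposition~\ref{dep}, so that independence with respect to $w$ need not be verified by hand but follows from the $w$-distance being $0^-$. The remaining content, namely that coarsening by $H$ converts the distance $H^-$ into $0^-$ while preserving uniqueness and immediacy, is precisely what the paragraph before the lemma establishes, so I would cite it rather than reprove it.
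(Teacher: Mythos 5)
Your proposal is correct and follows essentially the same route as the paper: the paper's ``proof'' of this lemma is exactly the paragraph preceding it (coarsening by the convex subgroup $H$ with $\delta=H^-$, transferring uniqueness of the extension and immediacy via Lemma~\ref{ueGp1} so that the $w$-distance becomes $0^-$) combined with Lemma~\ref{l2}, i.e.\ Corollary~\ref{sacinmax}, applied to the coarsening $w=v_\delta$. Your only addition --- invoking Proposition~\ref{dep} a second time to confirm that the coarsened extension is independent with respect to $w$ because $0^-$ is idempotent --- is a point the paper leaves implicit, and it is handled correctly.
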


The condition of Lemma~\ref{l3} is preserved under finite defectless
extensions:

\begin{lemma}                            \label{l7}
Assume that for every coarsening $w$ of $v$ (including $v$ itself),
$K_0$ admits a maximal immediate extension $(N_w|K_0,w)$ such that $K_0$
is relatively algebraically closed (or separable-algebraically
closed) in $N_w$. If the extension $(K|K_0,v)$ is finite and defectless,
then for every coarsening $w$ of~$v$ (including $v$ itself), $(M_w,w) =
(N_w.K,w)$ is a maximal immediate extension of $(K,w)$ such that $K$ is
relatively algebraically closed (or separable-algebraically closed,
respectively) in $M_w$.
\end{lemma}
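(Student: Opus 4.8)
The plan is to fix a coarsening $w$ of $v$ (the case $w=v$ included) and to prove the assertion for that $w$; the argument will be uniform in $w$, so this suffices. First I would secure the hypotheses needed to apply Lemma~\ref{ivd} over $w$. A maximal immediate extension has no proper immediate extension (any such would, by transitivity of ``immediate'', be a strictly larger immediate extension of $K_0$), so $N_w$ is a maximal field and in particular henselian. Since $K_0$ is separable-algebraically closed in $N_w$ (this holds in both cases, as relative algebraic closedness implies separable-algebraic closedness), the henselization $(K_0,w)^h$, being a separable-algebraic immediate subextension of $N_w|K_0$, must coincide with $K_0$; hence $(K_0,w)$ is henselian and the extension of $w$ from $K_0$ to $K$ is unique. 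Taking $w=v$ shows $(K_0,v)$ is henselian, so $(K|K_0,v)$ is a defectless extension with unique valuation; then Lemma~\ref{defc} (with $K_0$ in the role of its ``$K$'' and $K$ in the role of its ``$L$'') gives that $(K|K_0,w)$ is defectless for every coarsening $w$.

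Next I would invoke Lemma~\ref{ivd} with base field $(K_0,w)$, the immediate extension $N_w|K_0$, and the finite defectless extension $K|K_0$ (which now has a unique extension of $w$). This yields at once that $M_w=N_w.K$ is a finite defectless extension of $N_w$, that $M_w|K$ is immediate, and that $[M_w:N_w]=[K:K_0]$. The first and third facts say precisely that $K$ and $N_w$ are linearly disjoint over $K_0$; the second is half of what we want, namely that $M_w$ is an immediate extension of $(K,w)$.

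The main work, and what I expect to be the principal obstacle, is to upgrade ``$M_w|K$ immediate'' to ``$M_w$ is a maximal immediate extension of $(K,w)$'', i.e.\ to show that $M_w$ is itself a maximal field. Here I would use that $N_w$, being a maximal field, is spherically complete, and that the finite defectless extension $M_w|N_w$ admits a valuation basis $b_1,\dots,b_n$, so that $w\sum_i c_ib_i=\min_i w(c_ib_i)$ for $c_i\in N_w$. In this basis $M_w$ is identified, as an ultrametric space, with $N_w^{\,n}$ carrying the maximum valuation; a nested family of balls in $M_w$ then projects to nested families in each coordinate, which have nonempty intersection by spherical completeness of $N_w$, and reassembling the coordinate limits produces a limit in $M_w$. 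Hence $M_w$ is spherically complete, i.e.\ a maximal field; being immediate over $K$, it is a maximal immediate extension of $(K,w)$. This step (``finite defectless extensions of maximal fields are maximal'') is classical and could alternatively be cited from [Kr] or [W]; it is delicate only because it is the single ingredient not furnished directly by the lemmas of the excerpt.

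Finally I would transfer the relative closedness of $K_0$ in $N_w$ to $K$ in $M_w$, which is a purely field-theoretic point resting on the linear disjointness already obtained. In the relatively-algebraically-closed case the minimal polynomial over $K_0$ of any $\alpha\in\tilde{K_0}$ stays irreducible over $N_w$ (a proper monic factor would have coefficients algebraic over $K_0$ and lying in $N_w$, hence in $K_0$), so $\tilde{K_0}$ and $N_w$ are linearly disjoint over $K_0$; extending a $K_0$-basis of $K$ to a $K_0$-basis of $\tilde{K_0}$ and comparing the unique $N_w$-coordinates of an element $\alpha\in M_w\cap\tilde{K_0}$ in this basis forces those coordinates to lie in $K_0$ and to be supported on the $K$-part, whence $\alpha\in K$. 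In the separable-algebraically-closed case I would run the same coordinate comparison with $K_0\sep$ in place of $\tilde{K_0}$ to handle the maximal separable subextension $K_1$ of $K|K_0$, and then treat the purely inseparable step $K|K_1$ by a descent argument: given $\sigma\in\Gal(K\sep|K)$, its restriction to $K_0\sep=K_1\sep$ lifts, via $N_w\cap K_0\sep=K_0$, to $\tau\in\Gal(N_w.K_0\sep|N_w)$, which extends to an automorphism fixing $K_1$ and then, by uniqueness of $p$-power roots, fixing all of $K$; this extension therefore fixes $M_w=K.N_w$ pointwise while restricting to $\sigma$ on $K\sep$, so $\sigma(\alpha)=\alpha$ for every $\alpha\in M_w$ separable over $K$, forcing such $\alpha$ into $K$. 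This establishes all four assertions for the fixed $w$, and since $w$ was arbitrary the lemma follows.
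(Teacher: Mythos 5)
Your proposal follows the paper's own route step for step: Lemma~\ref{defc} to push defectlessness of $(K|K_0,v)$ down to every coarsening $w$, henselianity of $(K_0,w)$ deduced from its separable-algebraic closedness in the henselian field $N_w$, Lemma~\ref{ivd} to get immediacy of $M_w|K$ together with linear disjointness of $N_w$ and $K$ over $K_0$, and maximality of $M_w$ as a finite extension of the maximal field $N_w$. The two places where you go beyond the paper are exactly the places the paper compresses to a citation or a single clause: your valuation-basis/spherical-completeness proof that a finite defectless extension of a maximal field is again maximal is correct (the paper simply invokes this classical fact), and your Galois-descent argument for the separable-algebraically-closed case is sound (it uses $N_w\cap K_0\sep=K_0$, the Galois lifting along $N_w.K_0\sep|N_w$, and implicitly the fact that $K\sep=K.K_0\sep$); this is a legitimate expansion of the paper's bare assertion that linear disjointness ``shows'' the closedness transfers.

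However, your treatment of the relatively-algebraically-closed case has a genuine gap. You infer from ``every minimal polynomial over $K_0$ stays irreducible over $N_w$'' that $\tilde{K_0}$ and $N_w$ are linearly disjoint over $K_0$. In characteristic $p$ this implication is false: linear disjointness of $N_w$ from $\tilde{K_0}$ is precisely regularity of $N_w|K_0$, which requires separability of $N_w|K_0$ in addition to relative algebraic closedness, while irreducibility of minimal polynomials only controls the simple subextensions of $\tilde{K_0}|K_0$, not the non-simple ones. Concretely, take $K_0=\Fp(x,y)$ and $N=K_0(s,u)$ with $s$ transcendental and $u^p=xs^p+y$: then $K_0$ is relatively algebraically closed in $N$ (so, by your own parenthetical argument, every minimal polynomial over $K_0$ stays irreducible over $N$), yet $1,x^{1/p},y^{1/p}$ are $K_0$-linearly independent and $N$-linearly dependent, since $y^{1/p}=u-sx^{1/p}$; thus $\tilde{K_0}$ and $N$ are not linearly disjoint over $K_0$, and moreover $K=K_0(x^{1/p})$ is linearly disjoint from $N$ over $K_0$ but not relatively algebraically closed in $N.K$. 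So your coordinate-comparison argument collapses precisely in this case, and no purely field-theoretic argument (which is all your last paragraph uses) can close it: one must either show that the valuation-theoretic hypotheses force $N_w|K_0$ to be separable, or treat the elements of $M_w$ purely inseparable over $K$ by a genuinely valuation-theoretic argument. (In fairness, the paper's own proof disposes of this point with one unargued sentence, but it does not commit to the false implication; and your descent argument does settle the separable-algebraically-closed case, which is the one the paper actually uses afterwards.)
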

\begin{proof}
Since $(K|K_0,v)$ is defectless by hypothesis, the same is true for the
extension $(K|K_0,w)$ by Lemma~\ref{defc}. We note that $(K_0,w)$ is
henselian since it is assumed to be separable-algebraically closed in
the henselian field $(N_w,w)$. So we may apply Lemma~\ref{ivd}: since
$(N_w|K_0,w)$ is immediate and $(K|K_0,w)$ is defectless,
$(N_w.K|K,w)$ is immediate and $N_w$ is linearly disjoint from $K$
over $K_0$. The latter shows that $K$ is relatively algebraically closed
(or separable-algebraically closed, respectively) in $N_w.K$. On the
other hand, $(M_w,w)=(N_w.K,w)$ is a maximal field, being a finite
extension of a maximal field.
\end{proof}

\begin{proposition}                            \label{l6}
If $K_0$ is a separable-algebraically maximal field and $K|K_0$ is a
finite defectless extension, then $K$ admits no independent
Artin-Schreier defect extensions.
\end{proposition}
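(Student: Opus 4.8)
The plan is to reduce the statement to the two preparatory lemmas, Lemma~\ref{l7} and Lemma~\ref{l3}, so that the only genuine work is to verify their joint hypothesis for $K_0$. Recall that Lemma~\ref{l3} deduces the absence of independent Artin-Schreier defect extensions from the existence, for every coarsening $w$ of $v$, of a maximal immediate extension $(M_w,w)$ of $(K,w)$ in which $K$ is separable-algebraically closed; and Lemma~\ref{l7} manufactures such $M_w=N_w.K$ for $K$ out of analogous extensions $N_w$ for $K_0$, provided $K|K_0$ is finite and defectless (which is exactly our hypothesis). So I would proceed: first produce, for every coarsening $w$ of $v$, a maximal immediate extension $(N_w|K_0,w)$ in which $K_0$ is separable-algebraically closed; then feed this into Lemma~\ref{l7}; then invoke Lemma~\ref{l3}.

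The core step, and the one I expect to be the main obstacle, is supplying the $N_w$ for all coarsenings $w$ at once. For $w=v$ this is immediate: since $(K_0,v)$ is separable-algebraically maximal it is in particular henselian, a maximal immediate extension $N$ exists (every valued field has one, cf.\ [Kr], [G]), and any element of $N$ that is separable-algebraic over $K_0$ generates an immediate separable-algebraic subextension of $N|K_0$, which is trivial by separable-algebraic maximality; hence $K_0$ is separable-algebraically closed in $N$. The point is therefore to show that separable-algebraic maximality descends to every coarsening, i.e.\ that $(K_0,w)$ is again separable-algebraically maximal. Once this is known, the argument just given (applied to $w$ in place of $v$) produces the desired $N_w$.

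To prove that $(K_0,w)$ is separable-algebraically maximal I would argue by contradiction using the composition $v=w\circ\ovl{w}$, where $\ovl{w}$ is the valuation induced by $v$ on $K_0 w$. Suppose $(L|K_0,w')$ were a proper immediate separable-algebraic extension, with $w'$ extending $w$ to $L$. Immediacy gives $Lw'=K_0 w$, so I may form the composite $v':=w'\circ\ovl{w}$ on $L$, which restricts to $v$ on $K_0$. The key computation is that $(L|K_0,v')$ is then again immediate: writing $H:=\ovl{w}(K_0 w)$, the relations $w'L=wK_0$ and $Lw'=K_0 w$ make the short exact sequences $0\to H\to vK_0\to wK_0\to 0$ and $0\to H\to v'L\to wK_0\to 0$ coincide, forcing $(v'L:vK_0)=1$, while $Lv'=(Lw')\ovl{w}=(K_0 w)\ovl{w}=K_0 v$, so $[Lv':K_0 v]=1$. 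Thus $(L|K_0,v')$ is a proper immediate separable-algebraic extension of $(K_0,v)$, contradicting its separable-algebraic maximality. (Since $(K_0,v)$ is henselian, $v'$ is in fact the unique extension of $v$ to $L$, but only the existence of some immediate extension is needed for the contradiction.)

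With the $N_w$ in hand for all coarsenings $w$, Lemma~\ref{l7} in its ``separable-algebraically closed'' variant applies, because $K|K_0$ is finite and defectless, and yields for each $w$ a maximal immediate extension $(M_w,w)=(N_w.K,w)$ of $(K,w)$ in which $K$ is separable-algebraically closed; Lemma~\ref{l3} then gives that $K$ admits no independent Artin-Schreier defect extensions. The only routine points I am glossing over are the standard exact-sequence bookkeeping for composite valuations used above, and the observation that coarsenings of $v$ on $K$ restrict to coarsenings of $v$ on $K_0$, so that the families of $w$ occurring in Lemmas~\ref{l3} and~\ref{l7} match up.
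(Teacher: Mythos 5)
Your proposal is correct and follows essentially the same route as the paper: both proofs reduce the statement to Lemma~\ref{l7} combined with Lemma~\ref{l3}, the only substantive point being that separable-algebraic maximality of $(K_0,v)$ passes down to every coarsening $(K_0,w)$, which the paper justifies in one sentence (an immediate extension of $(K_0,w)$ would also be immediate for the finer valuation $v$) and which you verify in detail via the composite valuation $v'=w'\circ\ovl{w}$. Your spelled-out value-group and residue-field computation is exactly the content behind the paper's terse assertion, so there is nothing to fix.
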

\begin{proof}
Let $w$ be any coarsening of $v$. Since $(K_0,v)$ is
separable-algebra\-ically maximal, the same is true for $(K_0,w)$ since
every finite separable immediate extension of $(K_0,w)$ would also be
immediate for the finer valuation $v$. Now let $(N_w,w)$ be a maximal
immediate extension of $(K_0,w)$. Since $(K_0,w)$ is
separable-algebraically maximal, it is separable-algebraically closed in
$N_w$. Hence, $K_0$ satisfies the condition of Lemma~\ref{l7}. So
our proposition is a consequence of Lemma~\ref{l7} together with
Lemma~\ref{l3}.
\end{proof}

%
%
\subsection{Generalization of Lemma~\ref{delon} and proof of
Theorem~\ref{si=d}}                         \label{sectth1}
For the generalization of Lemma~\ref{delon}, we will need the following
result:
\begin{lemma}                                        \label{l4}
Let $K \subset K_1 \subset K_2$ be extensions of valued fields of
characteristic $p > 0$ such that $K_1|K$ is finite and purely
inseparable and $K_2|K_1$ is an independent Artin-Schreier defect
extension. Then there exists an Artin-Schreier extension $L|K$ such that
$K_2 = K_1.L$, and every such extension $L|K$ is an independent
Artin-Schreier defect extension.
\end{lemma}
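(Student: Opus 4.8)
The plan is to manufacture a generator of $L$ from an Artin-Schreier generator of $K_2|K_1$ by applying a power of Frobenius, and then to deduce both assertions from the fact that the distance of this generator over $K$ coincides with its (idempotent) distance over $K_1$. For existence, fix an Artin-Schreier generator $\vartheta$ of $K_2|K_1$, so that $\vartheta^p-\vartheta=a\in K_1$ and $K_2=K_1(\vartheta)$. As $K_1|K$ is finite purely inseparable, choose $N$ with $a^{p^N}\in K$. Since Frobenius is additive, $\eta:=\vartheta^{p^N}$ satisfies $\eta^p-\eta=(\vartheta^p-\vartheta)^{p^N}=a^{p^N}\in K$. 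Moreover $\eta\notin K_1$: otherwise $\vartheta$ would be purely inseparable over $K_1$, while as an Artin-Schreier generator it is separable over $K_1$, forcing $\vartheta\in K_1$. Hence $\eta\in K_2\setminus K_1$, the polynomial $X^p-X-a^{p^N}$ is irreducible over $K$ (as $\eta\notin K$), and $L:=K(\eta)$ is an Artin-Schreier extension of $K$ with $K_1.L=K_1(\eta)=K_2$.

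For the second assertion, let $L|K$ be any Artin-Schreier extension with $K_1.L=K_2$ and let $\vartheta$ be an Artin-Schreier generator, $\vartheta^p-\vartheta=a\in K$. Since $L=K(\vartheta)\not\subseteq K_1$ (otherwise $K_1.L=K_1\ne K_2$) and $[K_2:K_1]=p$, we have $\vartheta\in K_2\setminus K_1$ and $K_2=K_1(\vartheta)$, so $\vartheta$ is also an Artin-Schreier generator of the independent extension $K_2|K_1$. Write $\delta:=\dist(\vartheta,K_1)$; by Corollary~\ref{wpC} and Proposition~\ref{dep}, $\delta\leq 0^-$ and $\delta$ is idempotent. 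The central step is to show $\dist(\vartheta,K)=\delta$. Because $K_1|K$ is purely inseparable we have $K_1^{1/p^\infty}=K^{1/p^\infty}$, so Proposition~\ref{indep} applied to $K_2|K_1$ and $\zeta=\vartheta$ gives $\dist(\vartheta,K_1)=\dist(\vartheta,K_1^{1/p^\infty})=\dist(\vartheta,K^{1/p^\infty})$. On the other hand, Lemma~\ref{sim2} applied along $K\subseteq K_1\subseteq K^{1/p^\infty}$ yields $\dist(\vartheta,K)\leq\dist(\vartheta,K_1)\leq\dist(\vartheta,K^{1/p^\infty})$. The two outer distances agree, so all three are equal and $\dist(\vartheta,K)=\delta\leq 0^-$ is idempotent.

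It then remains to apply Lemma~\ref{uniqextv}, for which I must check that $v(\vartheta-K)$ has no maximal element. Suppose $v(\vartheta-c_0)$ were its maximum, with $c_0\in K$. If $v(\vartheta-c_0)\in vK$ it is the maximum of $\Lambda^{\rm L}(\vartheta,K)=v(\vartheta-K)\cap vK$; but $\dist(\vartheta,K)=\dist(\vartheta,K_1)$ forces $\Lambda^{\rm L}(\vartheta,K)$ to be cofinal in $\Lambda^{\rm L}(\vartheta,K_1)=v(\vartheta-K_1)$ (using immediacy of $K_2|K_1$ and Theorem~\ref{KT1}), so this maximum would also be the maximum of $v(\vartheta-K_1)$, contradicting Theorem~\ref{KT1}. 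If instead $v(\vartheta-c_0)\notin vK$, then it exceeds every element of $\Lambda^{\rm L}(\vartheta,K)$, i.e.\ $\vartheta\sim_K c_0$; by Lemma~\ref{sim1} this gives $\Lambda^{\rm L}(\vartheta,K)=\Lambda^{\rm L}(c_0,K)=vK$, contradicting $\delta\leq 0^-$. Hence $v(\vartheta-K)$ has no maximal element, and Lemma~\ref{uniqextv} shows that $L|K$ is an immediate Artin-Schreier defect extension. Its distance is $\dist(\vartheta,K)=\delta$, which is idempotent, so Proposition~\ref{dep} shows that $L|K$ is independent.

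I expect the main obstacle to be this no-maximum verification, since the three relevant distances live in the value groups of $K$, $K_1$ and $K^{1/p^\infty}$; one must argue through the left comparison of cuts and the identity $\Lambda^{\rm L}(\vartheta,K)=v(\vartheta-K)\cap vK$ rather than naively equating sets of values. Everything else is bookkeeping with the distance machinery of Section~\ref{sectcad} and the classification Propositions~\ref{dep} and~\ref{indep}.
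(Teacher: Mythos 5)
Your existence argument is exactly the paper's (a Frobenius twist $\vartheta\mapsto\vartheta^{p^N}$ of an Artin-Schreier generator of $K_2|K_1$), and the skeleton of your second part -- prove $\dist(\vartheta,K)=\dist(\vartheta,K_1)$, deduce that $v(\vartheta-K)$ has no maximal element, then apply Lemma~\ref{uniqextv} and Proposition~\ref{dep} -- is also the paper's. But your central step is not established. Proposition~\ref{indep}, applied to the independent extension $K_2|K_1$ with $\zeta=\vartheta$, gives $\dist(\vartheta,K_1)=\dist(\vartheta,K_1^{1/p^\infty})=\dist(\vartheta,K^{1/p^\infty})$, and Lemma~\ref{sim2} gives
\[
\dist(\vartheta,K)\;\leq\;\dist(\vartheta,K_1)\;=\;\dist(\vartheta,K^{1/p^\infty})\;.
\]
So it is the \emph{middle} and the \emph{right} distances that agree; your sentence ``the two outer distances agree, so all three are equal'' is a non sequitur, since nothing you proved relates $\dist(\vartheta,K)$ to $\dist(\vartheta,K^{1/p^\infty})$. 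Indeed, that outer equality is precisely what Proposition~\ref{indep} would yield for $L|K$ \emph{after} one knows $L|K$ is an independent Artin-Schreier defect extension -- the very thing being proved, so the argument is circular at its core. Nor can it be patched by Lemma~\ref{sim2} alone: if $\dist(\vartheta,K)<\dist(\vartheta,K_1)$ you obtain some $y\in K_1$ with $\vartheta\sim_K y$, but $\sim_K$ does not upgrade to $\sim_{K_1}$ (which is what a contradiction with Proposition~\ref{indep} for $K_2|K_1$ would require) exactly because the two distances may differ.

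The missing idea is the one the paper's proof turns on: exploit the idempotency of $\delta$ via Frobenius. Choose $\nu$ with $K_1^{p^\nu}\subseteq K$ (possible since $K_1|K$ is finite purely inseparable); then $\vartheta^{p^\nu}=\vartheta+a'$ with $a'=a+a^p+\cdots+a^{p^{\nu-1}}\in K$, so by Lemma~\ref{aat} the distances of $\vartheta^{p^\nu}$ and $\vartheta$ over $K$ and over $K_1$ coincide, while applying Frobenius to $K_1$ gives $\dist(\vartheta^{p^\nu},K_1^{p^\nu})=p^\nu\delta=\delta$, the last equality by idempotency (Proposition~\ref{dep} and Lemma~\ref{charidemp}). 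Sandwiching along $K_1^{p^\nu}\subseteq K\subseteq K_1$ with Lemma~\ref{sim2} then forces $\dist(\vartheta,K)=\delta$. Note that idempotency must enter at this point, not merely at the very end as in your write-up: without it the distance can genuinely jump when passing from $K$ to a purely inseparable extension, which is exactly the mechanism behind dependent extensions (Proposition~\ref{ipie}). Granted the distance equality, your verification that $v(\vartheta-K)$ has no maximal element is correct (the paper does it more simply by squeezing $\Lambda^{\rm L}(\vartheta,K)\subseteq v(\vartheta-K)\subseteq v(\vartheta-K_1)$, the left set being cofinal in the right one), and your final applications of Lemma~\ref{uniqextv} and Proposition~\ref{dep} are fine.
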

\begin{proof}
Let $\tilde{\vartheta}$ be an Artin-Schreier generator of $K_2| K_1$
and choose $\nu \geq 1$ such that
\[K_1^{p^{\nu}}\! \subseteq K\;.\]
Then
\[\wp (\tilde{\vartheta}^{p^\nu}) = (\wp (\tilde{\vartheta}))^{p^\nu}
\in K\;,\]
hence
\[K(\tilde{\vartheta}^{p^\nu})| K\]
is an Artin-Schreier extension: it is non-trivial since
$K(\tilde{\vartheta})|K$ is not purely inseparable. Comparing
degrees, we see that $K_2 = K_1 (\tilde{\vartheta}^{p^\nu})=
K_1.K(\tilde{\vartheta}^{p^\nu})$.

Now let $L | K$ be any such Artin-Schreier extension. Let $\vartheta$
be an Artin-Schreier generator of $L| K$ and hence of $K_2| K_1$ too.
Using $\vartheta^p = \vartheta + a$ with $a\in K$, we compute
\begin{equation}                            \label{th^p^n}
\vartheta^{p^\nu} = \vartheta + a' \mbox{\ \ where\ \ }
a' = a + \ldots + a^{p^{\nu - 1}} \in K\;.
\end{equation}
Hence,
\[\dist(\vartheta^{p^\nu},K_1)=\dist(\vartheta,K_1)\>.\]
Further,
\[\delta := \dist(\vartheta,K_1) =p^\nu\delta
= \dist(\vartheta^{p^\nu},K_1^{p^\nu})\]
since $\delta$ is idempotent by hypothesis;
\[\dist(\vartheta^{p^\nu},K_1^{p^\nu}) \leq
\dist(\vartheta^{p^\nu},K)
\leq \dist(\vartheta^{p^\nu},K_1)\]
because $K_1^{p^\nu}\! \subseteq K \subset K_1$.
Putting these three equations together, we find that equality holds
everywhere. In particular,
\[\dist(\vartheta,K_1)=\dist(\vartheta^{p^\nu},K)
=\dist(\vartheta,K)\;,\]
where the second equality again holds because of (\ref{th^p^n}). This
shows that $\Lambda^{\rm L}(\vartheta,K)$ is cofinal in $\Lambda^{\rm L}
(\vartheta, K_1)$. Since $K_1(\vartheta)|K_1$ is immediate, we know from
Theorem~\ref{KT1} that $\Lambda^{\rm L}(\vartheta,K_1)=v(\vartheta-
K_1)$ has no maximal element. Now we have that $\Lambda^{\rm L}
(\vartheta,K) \subseteq v(\vartheta-K)\subseteq v(\vartheta-K_1)$ and
that $\Lambda^{\rm L}(\vartheta,K)$ is cofinal in $v(\vartheta-K_1)$;
this yields that $v(\vartheta-K)$ is cofinal in $v(\vartheta-K_1)$ and
thus has no maximal element. Now Lemma~\ref{ueGp1} shows that
$K(\vartheta)|K$ is immediate. Since $\dist(\vartheta,K)= \dist
(\vartheta,K_1)$ is idempotent, $K(\vartheta)|K$ is independent.
\end{proof}

\begin{lemma}                                 \label{l5}
Every finite extension of an inseparably defectless field
of characteristic $p > 0$ is again an inseparably defectless field.
\end{lemma}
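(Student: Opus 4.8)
The plan is to deduce the lemma from the following criterion, which is the arbitrary-$p$-degree substitute for the degree count behind Lemma~\ref{delon} and Corollary~\ref{p-degmax}: \emph{if a valued field $L$ has a maximal immediate extension $M$ that is separable over $L$, then $L$ is inseparably defectless.} To prove the criterion, by Lemma~\ref{charid1/p} it suffices to show that every finite purely inseparable $F|L$ with $F\subseteq L^{1/p}$ is defectless. Since $M|L$ is separable and $F|L$ purely inseparable they are linearly disjoint over $L$, so $[M.F:M]=[F:L]$; as $M$ is a maximal field it is defectless, whence $M.F|M$ is a defectless purely inseparable extension. The key point is that $M.F|F$ is immediate: reducing to $[F:L]=p$ and inducting along a tower, write $F=L(\alpha)$ with $\alpha^p=a\in L$; the defectless extension $M(\alpha)|M$ satisfies $(vM(\alpha):vM)=p$ or $[M(\alpha)v:Mv]=p$, and since $M|L$ is immediate ($vM=vL$, $Mv=Lv$) the corresponding datum---$v\alpha=\frac{1}{p}va\notin vL$, or (after scaling $\alpha$ to value $0$) the residue $(av)^{1/p}\notin Lv$---already exhausts $(vF:vL)$ or $[Fv:Lv]$. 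Hence $v(M.F)=vF$ and $(M.F)v=Fv$, and together with $[M.F:M]=[F:L]$, $vM=vL$, $Mv=Lv$ this forces $[F:L]=(vF:vL)[Fv:Lv]$, i.e.\ $F|L$ is defectless.

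It then remains to manufacture, for a finite $L|K$ with $K$ inseparably defectless, a maximal immediate extension of $L$ that is separable over $L$. I would reduce to degree-$p$ steps by passing to the ramification field: by Proposition~\ref{dlta}, $K$ is inseparably defectless iff $K^r$ is, and $L$ is iff $L^r=L.K^r$ is, and by Lemma~\ref{tow} the finite extension $L.K^r|K^r$ is a tower of normal degree-$p$ steps, each purely inseparable or Artin--Schreier. Inducting up the tower, consider a step $E'|E$ with $E$ henselian and inseparably defectless, and let $M_E$ be a maximal immediate extension of $E$; by Corollary~\ref{lid} it is separable over $E$. If the step is defectless---this covers every purely inseparable step, which is defectless because $E$ is inseparably defectless, as well as the defectless Artin--Schreier steps---then Lemma~\ref{ivd} shows $M_E.E'|E'$ is immediate, while $M_E.E'$, being finite over the maximal field $M_E$, is itself a maximal field and is separable over $E'$. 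Thus $M_E.E'$ is a separable maximal immediate extension of $E'$, and the criterion gives that $E'$ is inseparably defectless.

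The delicate case is an Artin--Schreier defect step $E'=E(\vartheta)|E$, which is immediate; here $M_E.E'|E'$ need not be immediate, so I would instead embed $E'$ into $M_E$. Since $E(\vartheta)|E$ is immediate, $\vartheta$ is a limit of a pseudo Cauchy sequence of algebraic type in $E$ with no limit in $E$ (as in the proof of Corollary~\ref{expCsalg}); as $M_E$ is a maximal field this sequence has a limit in $M_E$, and Kaplansky's uniqueness of the immediate extension generated by such a sequence (Theorem~3 of [Ka]) embeds $E(\vartheta)$ into $M_E$ over $E$. Then $E'\subseteq M_E$, so $M_E$ is itself a separable maximal immediate extension of $E'$ and the criterion again yields that $E'$ is inseparably defectless; the induction up the tower completes the proof. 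I expect the main obstacle to be precisely these two points that replace the missing degree count: the immediacy of $M.F|F$ in the criterion, and the use of the maximality of $M_E$ to absorb the immediate extension $E(\vartheta)$ in the Artin--Schreier defect step.
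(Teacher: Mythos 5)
Your reduction to towers of degree-$p$ steps over henselian fields via Proposition~\ref{dlta} and Lemma~\ref{tow} is the same as the paper's, but the two pillars you substitute for the paper's classification machinery both have genuine gaps. The first is in your proof of the criterion. Defectlessness of the degree-$p$ purely inseparable extension $M(\alpha)|M$ is equivalent to $v(\alpha-M)$ having a maximum, and that maximum is attained at some $y_0\in M$; the value or residue witnessing defectlessness belongs to $\alpha-y_0$, not to $\alpha$ itself. Thus the implication ``$(vM(\alpha):vM)=p \Rightarrow v\alpha\notin vM$'' (and its residue analogue after scaling by elements of $L$) is simply not valid, and your argument tacitly needs $v(\alpha-L)$ to be cofinal in $v(\alpha-M)$. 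But that cofinality fails \emph{precisely} in the case you must exclude: if $F=L(\alpha)$ is immediate over $L$, then $v(\alpha-L)$ has no maximum (Theorem~\ref{KT1}), $\alpha$ is the limit of a pseudo Cauchy sequence in $L$ without limit in $L$, and since $M$ is maximal this sequence has a limit $y\in M$, which satisfies $v(\alpha-y)>v(\alpha-L)$, i.e.\ $\alpha\sim_L y$. So in the bad case all the defectlessness of $M(\alpha)|M$ is carried by elements of $M\setminus L$, and your computation says nothing about $vF$ and $Fv$; the argument is circular. Note also that the criterion you are invoking is exactly the arbitrary-$p$-degree strengthening of d)$\Rightarrow$a) of Theorem~\ref{charinsdl}, whose proof in the paper is a $p$-degree count that genuinely needs finiteness; the dependent/independent apparatus of Section~\ref{+4} was built precisely because no such transfer from $M$ down to $L$ is available in general.

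The second gap is the Artin--Schreier defect step. Theorem~3 of [Ka] gives no uniqueness of the immediate extension attached to a pseudo Cauchy sequence of algebraic type: the polynomial of minimal degree with eventually increasing values is not unique, and roots of different minimal polynomials generate non-isomorphic immediate extensions. Indeed this non-uniqueness \emph{is} the dependent Artin--Schreier phenomenon: by Theorem~\ref{deform}, the separable polynomials $X^p-X-a/b^p$ and the purely inseparable $X^p-a$ can be minimal for the same sequence. The limit of your sequence in $M_E$ need not be a root of $\wp(X)-a$, nor algebraic over $E$ at all, so $E(\vartheta)$ need not embed into $M_E$ over $E$. What survives is much weaker and much harder: Lemma~\ref{l2} of the paper shows only that an \emph{independent} defect extension of distance $0^-$ leaves a trace in every algebraically maximal immediate extension, namely some equivalent generator of a \emph{different} Artin--Schreier polynomial, produced by an explicit Hensel-type construction rather than by uniqueness, and extending even this to arbitrary distances needs the coarsening argument preceding Lemma~\ref{l3}. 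The paper's actual proof avoids both of your gaps: once $E$ is inseparably defectless it has no immediate purely inseparable extensions, hence (Proposition~\ref{dep}, Corollary~\ref{noinsnodep}) no dependent Artin--Schreier defect extensions, so any immediate step $K_2|K_1$ above a finite purely inseparable $K_1|E$ is independent; Lemma~\ref{l4} then descends this independence to an immediate Artin--Schreier extension of $E$ itself, and Lemma~\ref{ivd} finishes. That descent lemma is the tool your plan is missing.
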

\begin{proof}
From Corollary~\ref{ed} it follows that every finite purely inseparable
extension of an inseparably defectless field is again an inseparably
defectless field. Thus it remains to show the lemma in the case of a
finite separable extension $L$ of an inseparably defectless field $K$.
We fix an extension of $v$ to $K\sep$ and consider the ramification
fields $K^r$ and $L^r$ of $K$ and $L$ with respect to that extension. By
Proposition~\ref{dlta}, we know that $K$ is inseparably defectless if
and only if $K^r$ is inseparably defectless, and the same holds for $L$
and $L^r$. By Lemma~\ref{rf}, we have $L^r = L.K^r$, and therefore $L^r|
K^r$ is a finite separable extension. The same proposition shows that
$K\sep| K^r$ is a $p$-extension, so $L^r| K^r$ is a tower of
Artin-Schreier extensions (cf.\ Lemma~\ref{tow}). Hence, replacing $K$
and $L$ by their ramification fields, we may assume from the start that
they are henselian and that $L|K$ is a tower of Artin-Schreier
extensions. Now it suffices to prove that $L$ is inseparably defectless
under the additional assumption that $L|K$ itself is an Artin-Schreier
extension since then, our assertion will follow by induction. Since
$L^{1/p^{\infty}} = L.K^{1/p^{\infty}}$, it
suffices to show for every finite purely inseparable extension $K_1| K$
(which itself is defectless by hypothesis), that $K_2 = K_1.L$ is a
defectless extension of $L$. This follows immediately if $K_2| K_1$ and
thus $K_2| K$ are defectless. Now assume that $K_2| K_1$ is immediate.
Note that $K_1$ is an inseparably defectless field, being a finite
purely inseparable extension of the inseparably defectless field $K$. In
particular, this yields that $K_1$ admits no immediate purely
inseparable extension and hence by virtue of Proposition~\ref{dep}, no
dependent Artin-Schreier defect extension. The immediate Artin-Schreier
extension $K_2| K_1$ is thus independent. An application of
Lemma~\ref{l4} now shows that $L|K$ is immediate. But then, it follows
already from Corollary~\ref{ivd} that $K_2|L$ is defectless. Hence we
have proved that $L$ is an inseparably defectless field.
\end{proof}
\n
In both of the preceding lemmas, the finiteness conditions cannot be
dropped, as Examples~\ref{examp1} and~\ref{examp4} in the next
section will show.

\parb
We are now able to give the
\sn
{\bf Proof of Theorem~\ref{si=d}: } \n
Assume that the valued field $K$ of characteristic $p>0$ is
separable-algebraically maximal and inseparably defectless.
We note that $K$ is henselian since it is separable-algebraically
maximal. Let $(L|K,v)$ be a finite extension. We want to show that it is
defectless. Since any subextension of a defectless extension is
defectless too, we may assume w.l.o.g. that $L|K$ is normal. Hence there
exists an intermediate field $K_1$ such that $L| K_1$ is separable and
$K_1 | K$ is purely inseparable. By hypothesis, we know that $K_1 | K$
is defectless. It remains to prove that $L| K_1$ is defectless.

Using Lemma~\ref{tow}, choose a finite tame extension $N$ of $K_1$ such
that $L.N|N$ is a tower of Artin-Schreier extensions.
By Proposition~\ref{dlta}, $L|K_1$ is defectless if and only if $L.N|N$
is defectless. Since $K_1|K$ is defectless and $N|K_1$ is tame and
hence defectless, both extensions being finite, $N|K$ is finite and
defectless. Using Lemma~\ref{l5} we conclude that $N$ is inseparably
defectless too and therefore does not admit immediate purely inseparable
extensions. By Corollary~\ref{noinsnodep}, this shows that every
immediate Artin-Schreier extension of the henselian field $N$ must be
independent. Moreover, from Proposition~\ref{l6} we infer that $N$ does
not admit independent Artin-Schreier defect extensions. Consequently,
given an Artin-Schreier extension $L'|N$ contained in $L.N|N$, this
extension must be defectless. In view of Lemma~\ref{l5} and
Proposition~\ref{l6}, $L'$ will again be inseparably defectless and will
not admit any independent Artin-Schreier defect extension. By induction,
we conclude that all Artin-Schreier extensions in the tower $L.N|N$
are defectless, hence $L.N|N$ and thus $L|K_1$ and $L|K$ are defectless,
as asserted.

\pars
Conversely, every defectless field is immediately seen to be
separable-algebraically maximal and inseparably defectless.
                                                                \QED

%
%
\subsection{Examples}                       \label{sectexamp}
\begin{example}                             \label{examp1}
{\bf (for an independent Artin-Schreier defect extension with distance
$0^{-}$):} \ Let $k$ be an algebraically closed field of characteristic
$p>0$, and $K=k(t)^{1/p^\infty}$ the perfect hull of the rational
function field $k(t)$. Further, let $v=v_t$ be the unique extension of
the $t$-adic valuation from $k(t)$ to $K$; we write $vt=1$. Note that
$vK$ is $p$-divisible and $Kv=k$ is algebraically closed.

We consider the Artin Schreier extension $L_0=k(t,\vartheta)$ of
$k(t)$ generated by a root $\vartheta$ of the polynomial
\[X^p\,-\,X\,-\,\frac{1}{t}\;.\]
As $v\vartheta=-1/p\notin \Z=vk(t)$, we see that
$[L_0:k(t)]=p=(vL_0:vk(t))$. Thus, the extension of $v$ from $k(t)$ to
$L_0$ is unique. Further, the extension of $v$ from $L_0$ to its perfect
hull is unique. But the latter is equal to $L_0.K$, so we find that the
extension of $v$ from $K$ to $L:=L_0.K$ is unique. On the other hand,
the extension $L|K$ is immediate since $vK$ is $p$-divisible and $Kv=k$
is algebraically closed. Therefore, $L|K$ is an Artin-Schreier defect
extension. Since $K$ is perfect, it is independent by definition.

For
\[a_n\;:=\;\sum_{i=1}^{n} \frac{1}{t^{p^{-i}}}\]
we have
\[a_n^p\,-\,a_n\;=\; \frac{1}{t}\,-\,\frac{1}{t^{p^{-n}}}\;,\]
whence
\[(\vartheta-a_n)^p \,-\,(\vartheta-a_n)\;=\;\vartheta^p-\vartheta\,-\,
(a_n^p-a_n)\;=\;\frac{1}{t}\,-\,\left(\frac{1}{t}\,-\,\frac{1}{t^{p^{-n}}}
\right) \;=\; \frac{1}{t^{p^{-n}}}\;.\]
By Lemma~\ref{valASgen} this yields
\[v(\vartheta-a_n)\;=\; \frac{1}{p}\>v\,\frac{1}{t^{p^{-n}}}\;=\;
-\frac{1}{p^{n+1}}\;.\]
Since this increases with $n$, we see that $(a_n)_{n\in\N}$ is a pseudo
Cauchy sequence with limit $\vartheta$. By Corollary~\ref{wpC},
$\dist(\vartheta,K)\leq 0^-$. On the other hand, the values
$v(\vartheta-a_n)$ are cofinal in $\widetilde{vK}^{<0}$. Therefore,
\[\dist(\vartheta,K)\;=\;0^{-}\;.\]
\pars
This example shows that the condition in Lemma~\ref{l4} that $K_1|K$ be
finite cannot be dropped. Indeed, it is known that $(k(t),v_t)$ is a
defectless field (for instance, this is a consequence of the Generalized
Stability Theorem, cf.\ [Ku4]). So it does not admit any Artin-Schreier
defect extension. But the infinite extension $K$ of $k(t)$ admits an
independent Artin-Schreier defect extension.

\pars
The example also shows that ramification theoretical properties of a
polynomial are not necessarily preserved in the limit. As above, one
shows that for every $n\in\N$, a root of the polynomial
\[X^p-X-\frac{1}{t^{np+1}}\]
generates a non-trivial immediate extension of $K$. The same is true for
a root of the polynomial
\[Y^p-t^{n(p-1)}Y-\frac{1}{t}\;.\]
Under $n\rightarrow\infty$ (which implies
$vt^{n(p-1)}\rightarrow\infty$), the limit of this polynomial is
\[Y^p-\frac{1}{t}\;.\]
But this polynomial does not induce a non-trivial extension of $K$ since
$K$ is perfect.
\end{example}

This example works even for non-algebraically closed fields $k$. In
[Ku2] we presented it with $k=\F_p$. See also [Ku5].

\begin{example}                              \label{examp2}
{\bf (for an independent Artin-Schreier defect extension with distance
smaller than $0^{-}$):} \ In the previous example, we may choose $k$
such that it admits a non-trivial valuation $\ovl{v}$. Now we consider
the valuation $v':=v\circ \ovl{v}$ on $L$. As $(L|K,v)$ is immediate and
$Lv=k=Kv$, it follows that also $(L|K,v')$ is immediate. The value group
$\ovl{v}k$ is canonically isomorphic to a non-trivial convex subgroup
$H$ of $v'L$ (such that $v'L/H\isom vL$). If there would exist some
$c\in K$ and an element $\beta\in H$ such that $v'(\vartheta-c)\geq
\beta$, then $v(\vartheta-c)\geq 0$ which is impossible. On the other
hand, the values $v'(\vartheta-a_n)$ are cofinal in $\{\alpha\in
\widetilde{v'K}\mid \alpha<H\}$ since the values $v(\vartheta-a_n)$ are
cofinal in $vK^{<0}$. This shows that the distance $\dist(\vartheta,K)$
with respect to $v'$ is the cut
\[H^{-}\;=\; (\{\alpha\in \widetilde{v'K}\mid \alpha<H\}\,,\,\{\alpha\in
\widetilde{v'K}\mid\exists \beta\in H:\,\beta\leq\alpha\})\]
which is smaller than $0^{-}$ since $H$ is non-trivial.
\end{example}

\begin{example}                             \label{examp3}
{\bf (for a dependent Artin-Schreier defect extension):} \
With $k(t)$ as before, we take $K_0$ to be the separable-algebraic
closure of $k(t)$, with any extension $v_t$ of the $t$-adic valuation of
$k(t)$. Being separable-algebraically closed, $K_0$ does in particular
not admit any Artin-Schreier extension. But we can build a field
admitting a dependent Artin-Schreier defect extension by taking $K=K_0(x)$
and endowing it with the (unique) extension $v$ of $v_t$ such that
$vx>vK_0$. (This means that $K$ has the $x$-adic valuation $v_x$ with
residue field $K_0$, and $v=v_x\circ v_t$ is the composition of $v_x$
with $v_t\,$.) We take any $\eta\in \tilde{K_0}\setminus K_0$. Since
$\eta$ lies in the completion of $(K_0,v)$ by Theorem~\ref{AScphic}, we
have $\Lambda^{\rm L}(\eta,K_0)=v_tK_0=vK_0\,$. It follows that
$\Lambda^{\rm L}(\eta,K)$ is the least initial segment of $vK$ containing
$vK_0$. That is, the cut $\dist(\eta,K)$ is the cut $(vK_0)^+$ induced
in $\widetilde{vK}$ by the upper edge of the convex subgroup $vK_0$ of
$vK$. In particular, $\eta$ does not lie in the completion of $(K,v)$.
Now Proposition~\ref{ipie} shows that $K$ admits a dependent
Artin-Schreier defect extension. According to this proposition, it can
for instance be generated by a root $\vartheta$ of the polynomial
$X^p-X-(\eta/x)^p$, as $vx>\dist(\eta,K)= p\,\dist(\eta,K)$. Then
$\dist(\vartheta,K)=
\dist(\eta,K)-vx=(vK_0)^+ -vx=(-vx+vK_0)^+$ is the cut induced by the
upper edge of the coset $-vx+vK_0$ in $\widetilde{vK}$. Note that in
$vK$, which is the lexicographic product $\Z vx\times vK_0\,$, the cut
$(-vx+vK_0)^+$ is equal to the cut $vK_0^-$ induced by the lower edge of
the convex subgroup $vK_0$ of $vK$. Nevertheless, the cut
$\dist(\vartheta,K)$ in $\widetilde{vK}$ is {\it not} equal to $H^-$
or $H^+$ for any convex subgroup $H$ of $vK$ or of $\widetilde{vK}$
(cf.\ Example~\ref{exampcut} in Section~\ref{sectcad}).
\end{example}

Enlarging the rank of the valuation in order to obtain a dependent
Artin-Schreier defect extension may appear to be a dirty trick.
Therefore, we add a further example which shows that such extensions can
also appear for valuations of rank one.

\begin{example}                             \label{examp4}
{\bf (for a dependent Artin-Schreier defect extension in rank 1):} \
With $(k(t),v)$ as before, we take $a_1$ to be a root of the
Artin-Schreier polynomial $X^p-X-1/t$. Then $va_1=-1/p<0$. By
induction on $i$, we take $a_{i+1}$ to be a root of the Artin-Schreier
polynomial $X^p-X+a_i\,$, for all $i\in\N$. Then $va_i=-1/p^i<0$.
Note that $t,a_1,\ldots,a_i\in k(a_{i+1})$ for every $i$, because $a_i=
a_{i+1}-a_{i+1}^p$. We have $1/p\in vk(a_1)\setminus vk(t)$. Since
$p\leq (vk(a_1):vk(t))\leq [k(a_1):k(t)]\leq p$, equality holds
everywhere and we find that $vk(a_1)=\frac{1}{p}vk(t)$. Repeating this
argument by induction on $i>1$, we obtain $1/p^i\in vk(a_i)\setminus
vk(a_{i-1})$ and thus, $vk(a_i)= \frac{1}{p} vk(a_{i-1})=\frac{1}{p^i}
vk(t)$. Therefore, the value group of $K:= k(a_i\mid i\in \N)$ is the
$p$-divisible hull $\frac{1}{p^\infty}\Z$ of $\Z$ (an ordered abelian
group of rank $1$).

Finally, we choose $\eta$ such that $\eta^p=1/t$. Since $vK$ is
$p$-divisible and $Kv=k$ is algebraically closed, the extension
$K(\eta)|K$ with the unique extension of the valuation $v$ is immediate.
We wish to determine $\dist(\eta,K)$. We set $c_i:=a_1+ \ldots+ a_{i-1}
\in k(a_{i-1})$ for $i>1$. Using that $a_1^p=\frac{1}{t}+a_1$ and
$a_{i+1}^p=a_{i+1}-a_i$ for $i\in\N$, we compute:
\begin{eqnarray*}
0 & = & \eta^p-\frac{1}{t}\;=\;(\eta-c_i+a_1+\ldots+a_{i-1})^p
-\frac{1}{t}\\
& = & (\eta-c_i)^p+a_1^p+\ldots+a_{i-1}^p-\frac{1}{t}
\;=\; (\eta-c_i)^p+a_{i-1}\;.
\end{eqnarray*}
It follows that $v(\eta-c_i)^p= va_{i-1}\,$, that is, $v(\eta-c_i)=
\frac{1}{p} va_{i-1}=va_i=-1/p^i$. Hence, $-1/p^i\in\Lambda^{\rm L}(\eta,K)$
for all $i$. Assume that there is some $c\in K$ such that $v(\eta-c)>
-1/p^i$ for all $i$. Then $v(c-c_i)=\min\{v(\eta-c_i),v(\eta-c)\}=
-1/p^i$ for all $i$. On the other hand, there is some $i$ such that
$c\in k(a_{i-1})$ and thus, $c-c_i\in k(a_{i-1})$. But this contradicts
the fact that $v(c-c_i)= -1/p^i\notin vk(a_{i-1})$. This proves that the
values $-1/p^i$ are cofinal in $\Lambda^{\rm L}(\eta,K)$. Hence,
$\Lambda^{\rm L}(\eta,K)=vK^{<0}$ and $\dist(\eta,K)=0^-$.

Now Proposition~\ref{ipie} shows that $K$ admits a dependent
Artin-Schreier defect extension. According to this proposition, it
can for instance be generated by a root $\vartheta$ of the polynomial
$X^p-X-(\eta/t)^p$, as $vt=1> \dist(\eta,K)= p\,\dist(\eta,K)$. Then
$\dist(\vartheta,K)=\dist(\eta,K) -1 = 0^- -1=(-1)^-$.

\pars
This example shows that the condition in Lemma~\ref{l5} that the
extension be finite cannot be dropped. Indeed, as we have noted in
Example~\ref{examp1}, $(k(t),v_t)$ is a defectless and hence inseparably
defectless field. But the infinite extension $K$ of $k(t)$ is not an
inseparably defectless field.
\end{example}

\begin{example}                             \label{examp5}
{\bf (for a field having a dependent but no independent Artin-Schreier
defect extension):}
\ We do not know whether the field $K$ of the last example admits any
independent Artin-Schreier defect extension; this an open problem. But in
any case, we can construct from it a field which has a dependent but no
independent Artin-Schreier defect extension. Indeed, by Zorn's Lemma there
is an extension field of $K$ within its algebraic closure not admitting any
independent Artin-Schreier defect extension; such an extension field can be
found by a (possibly transfinitely) repeated extension by independent
Artin-Schreier defect extensions. We choose such an extension field and
call it $L$. Since it is a separable algebraic extension of $K$, the
extension $L(\eta)|L$ is still non-trivial and purely inseparable, and
by our hypothesis on the value group and residue field of $K$, it is
also immediate.

We wish to show that $\dist(\eta,L)\;=\;\dist(\eta,K)$. Assume that
this is not true. Then there is an element $\zeta\in L$ such that
$v(\eta-\zeta)>\dist(\eta,K)$. We write $L=\bigcup_{\mu<\nu} K_\mu$
where $\nu$ is some ordinal, $K_{\mu+1}|K_{\mu}$ is an independent
Artin-Schreier defect extension whenever $0\leq\mu<\nu$, and $K_\lambda=
\bigcup_{\mu<\lambda} K_\mu$ for every limit ordinal $\lambda<\nu$. Let
$\mu_0$ be the minimal ordinal for which $K_{\mu_0}$ contains such an
element $\zeta$. Then $\mu_0$ must be a successor ordinal, and we have
that $\dist(\eta,K)=\dist(\eta,K_{\mu_0-1})$. Hence, $v(\eta-\zeta)>
\dist(\eta,K_{\mu_0-1})$, that is, $\zeta\sim_{K_{\mu_0-1}}\eta$.
But this is a contradiction since by construction, $K_{\mu_0}
|K_{\mu_0-1}$ is an independent Artin-Schreier defect extension. This
proves that
\[\dist(\eta,L)\;=\;\dist(\eta,K)\;=\;0^-\;.\]
Now Corollary~\ref{nodepincomp} shows that $L$ admits a dependent
Artin-Schreier defect extension $L'|L$. On the other hand, by
construction it does not admit any independent Artin-Schreier defect
extension.

This example shows once more that Lemma~\ref{l4} becomes false if the
finiteness condition is dropped. To see this, note that
$L'.L^{1/p^{\infty}}|L^{1/p^{\infty}}$ is still an
Artin-Schreier defect extension, since $L'|L$ is linearly disjoint from
$L^{1/p^{\infty}}|L$, $vL^{1/p^{\infty}}$ is $p$-divisible and
$L^{1/p^{\infty}}v$ is algebraically closed, and the extension of $v$
from $L$ to $L'.L^{1/p^{\infty}}$ and thus also the extension of $v$
from $L^{1/p^{\infty}}$ to $L'.L^{1/p^{\infty}}$ is unique. On the
other hand, $L^{1/p^{\infty}}$ admits no purely inseparable extensions
at all, so by Corollary~\ref{noinsnodep}, such an Artin-Schreier defect
extension can only be independent. We have thus shown that
$L^{1/p^{\infty}}$ admits an independent Artin-Schreier defect extension
whereas $L$ does not. In view of Lemma~\ref{l4}, this is only possible
since $L^{1/p^{\infty}}|L$ is an infinite extension. In contrast to
Example~\ref{examp1}, here we have the case where the lower field is
{\it not} defectless.
\end{example}

\begin{example}                             \label{examp6}
{\bf (for a field which is not relatively algebraically closed in any
maximal immediate extension, but has no independent Artin-Schreier
defect extension):} \
If we replace $k(t)$ by its absolute ramification field $k(t)^r$ (with
respect to an arbitray extension of $v$ to the separable-algebraic
closure of $k(t)$), then the constructions of Example~\ref{examp4}
and~\ref{examp5} can be taken over literally. Since $vk(t)^r$ is
divisible by every prime different from $p$, the value groups of $K$,
$L$ and $L'$ will then be divisible. Since their residue fields are
algebraically closed and all fields are henselian, it follows that $K$,
$L$ and $L'$ are equal to their ramification fields.

Observe that now $L'$ will be contained in every maximal immediate
extension of $L$. This is true because $vL$ is divisible and $Lv$ is
algebraically closed, which implies that every maximal immediate
extension of $L$ is algebraically closed. We have thus shown that $L$ is
not separable-algebraically closed in any of its maximal immediate
extensions, whereas it doesn't admit independent Artin-Schreier
defect extensions.

Since $L'|L$ is linearly disjoint from $L^c|L$, we may replace $L$ by
its completion $L^c$. By Lemma~\ref{ASdown}, $L^c$ still cannot admit
independent Artin-Schreier defect extensions. As the completion of a
henselian field is again henselian (cf.\ [W], Theorem~32.19) and is an
immediate extension, it follows that the completion of a field which is
equal to its absolute ramification field has the same property. The same
argument as before shows that again, $L'.L^c$ will be contained in every
maximal immediate extension of $L^c$. Hence, $L^c$ is an example of a
complete field, equal to its absolute ramification field, which is not
relatively algebraically closed in any maximal immediate extension, but
has no independent Artin-Schreier defect extension.
\end{example}

%
%
\section{Another characterization of defectless fields}  \label{sectac}
\begin{theorem}                                    \label{:}
Let $(K,v)$ be a separably defectless field of characteristic $p>0$. If
in addition $K^c|K$ is separable, then $(K,v)$ is a defectless field.
\end{theorem}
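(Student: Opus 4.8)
The plan is to split defectlessness into its separable and inseparable halves and reduce the whole statement to proving that $(K,v)$ is inseparably defectless. First I would record the (folklore) fact that for a henselian field defectless is equivalent to the conjunction \emph{separably defectless} and \emph{inseparably defectless}: by Theorem~\ref{dhd} all three properties pass to and from the henselization, so one may work over $K^h$; writing an arbitrary finite $E|K^h$ as $E|E_s$ purely inseparable over its maximal separable subextension $E_s|K^h$, the factor $E_s|K^h$ is defectless because $K^h$ is separably defectless, while $E_s$ is again inseparably defectless by Lemma~\ref{l5}, so $E|E_s$ is defectless; multiplicativity of the defect (Lemma~\ref{md}, unique extension in the henselian case) gives $E|K^h$ defectless. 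Since $(K,v)$ is separably defectless by hypothesis, it therefore remains only to prove that it is inseparably defectless.

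The second ingredient I would isolate as a separate lemma: \emph{separability of the completion is inherited by every finite extension} $L|K$, i.e.\ $K^c|K$ separable implies $L^c|L$ separable. For $L|K$ purely inseparable one checks that $L$ is linearly disjoint from $K^c$ over $K$ (whence $L^c=L.K^c$) and that $L^c\cap L^{1/p^\infty}=L$; for $L|K$ separable one uses that $L^c$ is a local factor of the \'etale algebra $L\otimes_K K^c$, so $L^c|K^c$ is finite separable, and then $L^c|L$ is separable by transitivity of separability applied to $L^c|K^c|K$ together with $L|K$. The upshot is that every field produced in the induction below --- each finite purely inseparable extension of $K$, and each finite separable extension --- again satisfies both hypotheses of the theorem, so the whole hypothesis propagates.

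The core is to prove, by induction on $n$, that \emph{over any separably defectless field with separable completion every purely inseparable extension of degree at most $p^n$ is defectless}. Given such an $F|K$ of degree $p^n$, write $F=F_0(b)$ with $[F_0:K]=p^{n-1}$; by induction $F_0|K$ is defectless, so it suffices to show $F_0$ admits no proper immediate purely inseparable extension. By Corollary~\ref{+4.5} together with separability of $F_0^c|F_0$ (second paragraph), it is enough to show $F_0$ has \emph{no dependent Artin-Schreier defect extension}. Suppose it had one, with generator $\vartheta$, $\wp(\vartheta)=a\in F_0$; choosing $\nu$ with $F_0^{p^\nu}\subseteq K$ and setting $\theta=\vartheta^{p^\nu}$ one has $\wp(\theta)=a^{p^\nu}\in K$ and $F_0(\theta)=F_0(\vartheta)$ (exactly the device in the proof of Lemma~\ref{l4}), so the product formula yields $\mbox{\rm d}(K(\theta)|K)\cdot\mbox{\rm d}(F_0(\theta)|K(\theta))=p$. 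If $\mbox{\rm d}(K(\theta)|K)=p$ then $K(\theta)|K$ is an Artin-Schreier defect extension, contradicting separable defectlessness of $K$; if $\mbox{\rm d}(K(\theta)|K)=1$ then $K(\theta)$ is itself separably defectless (Corollary~\ref{ed}) with separable completion, while $F_0.K(\theta)|K(\theta)$ is purely inseparable of degree $p^{n-1}$ (linear disjointness of $F_0$ from $K(\theta)$ over $K$) with defect $p$, contradicting the induction hypothesis. Hence $F_0$ has no dependent Artin-Schreier defect extension, and via Corollary~\ref{+4.5} and Proposition~\ref{ipie} no proper immediate purely inseparable extension, which is the desired contradiction; this closes the induction and shows $(K,v)$ inseparably defectless.

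I expect the main obstacle to be the completion-separability transfer lemma of the second paragraph, in particular verifying that the completion of a finite extension behaves as $L^c=L.K^c$ (respectively as a factor of $L\otimes_K K^c$) for general Krull valuations, where completions need not be henselian; this is precisely what guarantees that the auxiliary field $K(\theta)$ obtained by the Artin-Schreier rotation is still a legitimate base for the induction. A secondary point needing care is the bookkeeping in the product-formula step --- confirming that $K(\theta)|K$ is genuinely a degree-$p$ Artin-Schreier extension, that the valuation extends uniquely throughout the tower $K\subseteq K(\theta)\subseteq F_0(\vartheta)$, and that $F_0.K(\theta)|K(\theta)$ really has degree $p^{n-1}$ and defect $p$.
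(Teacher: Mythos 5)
Your proposal is correct, but its architecture is genuinely different from the paper's. The paper proves the contrapositive by a minimal-counterexample descent on the degree of inseparability: it picks a finite defect extension $(F|K,v)$ of minimal inseparability degree, extracts from its purely inseparable part an immediate degree-$p$ extension $(L(\eta)|L,v)$ (using the arguments of the proof of Theorem~\ref{extrid}), concludes $\eta\notin L^c$ from $L^c=L.K^c$ and the Lang-type separability transfer, and then applies the deformation of Proposition~\ref{ipie} directly to replace $\eta$ by an Artin-Schreier generator $\vartheta$ with $(L(\vartheta)|L,v)$ immediate; since $(L(\vartheta)|K,v)$ is then a defect extension of strictly smaller inseparability degree, minimality is contradicted. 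Everything happens over the fixed base $K$, and the only machinery from Section~\ref{+4} that enters is Proposition~\ref{ipie}. You instead reduce first to inseparable defectlessness (henselization, Lemma~\ref{l5}, Lemma~\ref{md}) and then run an induction on the degree of purely inseparable extensions in which the base field varies: your key mechanism is the Frobenius-power descent $\theta=\vartheta^{p^\nu}$ of an Artin-Schreier generator (the device from the proof of Lemma~\ref{l4}) combined with the product formula (\ref{pf}), yielding the dichotomy ``separable defect extension of $K$'' (against the hypothesis) or ``purely inseparable defect extension of $K(\theta)$ of smaller degree'' (against the induction hypothesis); the deformation theory enters only through Corollary~\ref{+4.5}. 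Your route costs more machinery (Lemma~\ref{l5}, Corollary~\ref{ed}, Corollary~\ref{+4.5}) and the bookkeeping of propagating both hypotheses to each new base $K(\theta)$, but it buys an elementary, purely defect-theoretic case analysis, it bypasses the dependent/independent distinction entirely (dependence is never used, so your argument in fact excludes all Artin-Schreier defect extensions of $F_0$), and it replaces the paper's informal appeal to the arguments of Theorem~\ref{extrid} by an explicit tower computation. Both proofs rest on the same two external facts, which you rightly flagged as the delicate ones: the completion identities ($L^c=L.K^c$ for finite $L|K$, completeness of finite extensions of complete fields) and the Lang-type statement that separability of $K^c|K$ passes to $L^c|L$. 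For the latter, note that in your purely inseparable case what you verify is only the intersection property $L^c\cap L^{1/p^\infty}=L$; that is exactly what is needed there (and full separability does follow from the same linear-disjointness tower argument), whereas for $K(\theta)$, which serves as a new induction base, you genuinely need the full downward statement ($M|K$ separable and $L|K$ separable algebraic with $L\subseteq M$ imply $M|L$ separable) --- this is precisely the fact the paper cites from [L].
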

\begin{proof}
Assume that $K^c|K$ is separable, but that $(K,v)$ is not a defectless
field. We have to show that $(K,v)$ is not separably defectless. Let
$(F|K,v)$ be a finite defect extension of minimal degree of
inseparability. If this extension is separable, then we are done.
Suppose it is not. We wish to deduce a contradiction by constructing a
defect extension of smaller degree of inseparability. Let $E|K$ be the
maximal separable subextension. By assumption, it is defectless, so
the purely inseparable extension $(F|E,v)$ must be a defect
extension. Using the arguments of the proof of Theorem~\ref{extrid}
(with $K$ replaced by $E$), one shows that there exists a subextension
$L|E$ of $F|E$ and an element $\eta\in L^{1/p}\setminus L$ such that the
extension $(L(\eta)|L,v)$ is immediate.

Since a finite extension of a complete field is again complete and since
$L^c$ must contain both $K^c$ and $L$, we find that $L^c=L.K^c$.
Together with the fact that $K^c|K$ is separable, this yields that also
$L^c|L$ is separable (see [L], Chapter X, \$6, Corollary 4). It follows
that $\eta\notin L^c$. By an application of Proposition~\ref{ipie}, we
now obtain an immediate separable extension $(L(\vartheta)|L,v)$.
Altogether, we have constructed a defect extension $(L(\vartheta)|K,v)$
which has smaller degree of inseparability than $(F|K,v)$. This is the
desired contradiction.
\end{proof}

We use this theorem to show:

\begin{theorem}                                   \label{csdd}
Let $K$ be a henselian field of characteristic $p > 0$. Then $K$ is a
separably defectless field if and only if $K^c$ is a defectless field.
\end{theorem}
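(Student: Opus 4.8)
The plan is to prove the two implications separately, relying on two structural facts about completions of henselian fields that the paper already uses: the completion $(K^c,v)$ is again henselian and is an \emph{immediate} extension of $(K,v)$, so that $vK^c=vK$ and $K^cv=Kv$; and $K$ is separable-algebraically closed in $K^c$ (cf.\ [W], Theorem~32.19). Throughout I would use that a henselian field has a unique extension of $v$ to every finite extension, so that the Lemma of Ostrowski (\ref{LoO}) applies directly.

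For the direction ``$K$ separably defectless $\Rightarrow$ $K^c$ defectless'', I would first show that $K^c$ is itself separably defectless and then invoke Theorem~\ref{:}: since $(K^c)^c=K^c$, the hypothesis ``$(K^c)^c|K^c$ is separable'' holds trivially, so Theorem~\ref{:} yields that $K^c$ is defectless. To see that $K^c$ is separably defectless, I take a finite separable extension $(M|K^c,v)$. Because $K$ lies dense in $K^c$ and $K$ is henselian, a Krasner-type approximation argument produces a finite separable extension $L|K$ with $M=L.K^c$ and $[M:K^c]=[L:K]$ (equivalently, restriction induces an isomorphism $\Gal K^c\isom\Gal K$). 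By hypothesis $L|K$ is defectless, so Lemma~\ref{ivd}, applied with the immediate extension $F=K^c$ and the finite defectless extension $L|K$, shows that $L.K^c|K^c$ is defectless. As $M=L.K^c$, the extension $M|K^c$ is defectless, proving that $K^c$ is separably defectless.

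For the converse ``$K^c$ defectless $\Rightarrow$ $K$ separably defectless'', I would take a finite separable extension $(L|K,v)$; since $K$ is henselian the extension of $v$ is unique. I realize the completion inside $L^c$: as $L^c$ is complete and contains $K$, it contains a copy of $K^c$, hence $L.K^c\subseteq L^c$, and so $L.K^c|L$ is immediate, giving $v(L.K^c)=vL$ and $(L.K^c)v=Lv$. Because $K$ is separable-algebraically closed in $K^c$, the separable extension $L|K$ is linearly disjoint from $K^c|K$, whence $[L.K^c:K^c]=[L:K]$. Finally, $K^c$ is defectless and henselian, so $L.K^c|K^c$ is a defectless extension. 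Combining these facts with $vK^c=vK$ and $K^cv=Kv$ yields
\[
[L:K]=[L.K^c:K^c]=(v(L.K^c):vK^c)\,[(L.K^c)v:K^cv]=(vL:vK)\,[Lv:Kv],
\]
so $L|K$ is defectless, and thus $K$ is separably defectless.

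I expect the main obstacle to be the descent step in the forward direction: producing, for a given finite separable $M|K^c$, a finite separable $L|K$ of equal degree with $L.K^c=M$. This rests on the density of $K$ in $K^c$ together with henselianity (equivalently, on the isomorphism of absolute Galois groups of $K$ and $K^c$), and one must ensure that the approximating polynomial over $K$ can be chosen close enough for Krasner's lemma to apply while preserving the degree. By contrast, the backward direction is essentially routine once the two structural inputs (immediacy of $L.K^c|L$, obtained from $L.K^c\subseteq L^c$, and linear disjointness coming from separable-closedness of $K$ in $K^c$) are in place.
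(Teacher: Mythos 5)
Your proof is correct, and its skeleton coincides with the paper's: both reduce the theorem to ``$K^c$ is separably defectless if and only if $K$ is'' by applying Theorem~\ref{:} to $K^c$ (where $(K^c)^c|K^c$ is trivially separable), and your backward direction --- linear disjointness of $L|K$ from $K^c|K$ coming from separable-algebraic closedness of $K$ in $K^c$, immediacy of $L.K^c|L$, then the degree/value group/residue field computation --- is exactly the paper's argument for that direction. The genuine divergence is in the direction ``$K$ separably defectless $\Rightarrow$ $K^c$ separably defectless''. The paper proves the contrapositive: from a finite Galois defect extension $L'|K^c$, realized as the splitting field of $f\in K^c[X]$, it approximates $f$ by $g\in K[X]$ so closely that (by [W], Theorem~32.20) both have the same splitting field over $K^c$; then the chain $[L:K]\geq [L.K^c:K^c]=[L':K^c]>(vL':vK^c)[L'v:K^cv]=(vL:vK)[Lv:Kv]$ shows that the splitting field $L$ of $g$ over $K$ is a separable defect extension --- note that this needs only an inequality, not equality of degrees. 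You instead argue directly: descend an \emph{arbitrary} finite separable $M|K^c$ to $L|K$ with $L.K^c=M$ via Krasner's lemma, and lift defectlessness of $L|K$ up to $M|K^c$ by Lemma~\ref{ivd} with $F=K^c$ as the immediate extension. Your worry about preserving the degree resolves itself automatically: if $g\in K[X]$ has degree $n=[M:K^c]$ and $b$ is the Krasner root with $K^c(b)=M$, then $n=[M:K^c]=[L.K^c:K^c]\leq [L:K]\leq n$, so equality holds. Each route has its merits: yours establishes the stronger descent fact that restriction gives $\Gal K^c\isom\Gal K$, and it recycles Lemma~\ref{ivd} so that no new valuation-theoretic computation is needed; the paper's route approximates only a single polynomial, cites [W], Theorem~32.20 off the shelf rather than redoing Krasner for arbitrary generators, and tolerates a mere inequality of degrees. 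Both arguments rest on the same pillars from [W], Theorem~32.19: the completion $K^c$ is henselian, immediate over $K$, and $K$ is separable-algebraically closed in it.
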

\begin{proof}
Since $K$ is henselian, the same holds for $K^c$ (cf.\ [W],
Theorem~32.19). By virtue of the preceding Theorem, $K^c$ is a
defectless field if and only if it is a separably defectless field. Thus
it suffices to prove that $K^c$ is a separably defectless field if and
only if $K$ is.

Let $L|K$ be an arbitrary finite separable extension. The henselian
field $K$ is separable-algebraically closed in $K^c$ (cf.\ [W],
Theorem~32.19). Consequently, every finite separable extension
of $K$ is linearly disjoint from $K^c$ over $K$, whence
\begin{equation}                            \label{LLc1}
[L.K^c:K^c] \;=\; [L:K]\;.
\end{equation}
On the other hand, $L.K^c =L^c$ is the completion of $L$ and thus an
immediate extension of $L$. Consequently,
\begin{eqnarray}
(vL.K^c:vK^c)\cdot [L.K^cv:K^cv] & = & (vL^c:vK^c)\cdot [L^cv:K^cv]
\nonumber\\
& = & (vL:vK)\cdot [Lv:Kv]\;. \label{LLc2}
\end{eqnarray}

Assume that $K^c$ is a separably defectless field. Then $L.K^c|K^c$ is
defectless, i.e., $[L.K^c:K^c]=(vL.K^c:vK^c)\cdot [L.K^cv:K^cv]$. Hence,
$[L:K] = (vL:vK)\cdot [Lv:Kv]$, showing that $L|K$ is defectless. Since
$L|K$ was an arbitrary finite separable extension, we have shown that
$K$ is a separably defectless field.

\pars
Now assume that $K^c$ is not a separably defectless field. Then there
exists a finite Galois extension $L'|K^c$ with non-trivial defect. Take
an irreducible polynomial $f=X^n+c_{n-1}X^{n-1}+\ldots+c_0\in K^c[X]$ of
which $L'$ is the splitting field. For every $\alpha\in vK$ there are
$d_{n-1},\ldots, d_0\in K$ such that $v(c_i-d_i)\geq\alpha$. If $\alpha$
is large enough, then by Theorem~32.20 of [W], the splitting fields of
$f$ and $g=X^n+d_{n-1}X^{n-1}+\ldots+d_0$ over the henselian field $K^c$
are the same. Consequently, if $L$ denotes the splitting field of $g$
over $K$, then $L'=L.K^c=L^c$. We obtain
\begin{eqnarray*}
[L:K] & \geq & [L.K^c:K^c]\>=\>[L':K^c]\\
& > & (vL':vK^c)[L'v:K^cv] \>=\>(vL^c:vK^c)[L^cv:K^cv]\\
& = & (vL:vK)[Lv:Kv]\;.
\end{eqnarray*}
That is, the separable extension $L|K$ is not defectless. Hence, $K$ is
not a separably defectless field.
\end{proof}

%
%
\section{Algebraically and separable-algebraically maximal fields}
%
%
%
\subsection{Algebraically maximal fields}   \label{sectamf}
We will now give a characterization of algebraically maximal fields
which has been presented by F.~Delon [D1]. We need the following fact,
which was proved by Yu.~Ershov in [Er1] by a different method. Note that
the proof in [D1] has gaps since it is not immediately
clear that if $\sum_{i=1}^{n}\alpha_{i,\nu}$ is increasing with $\nu$,
then there is an increasing cofinal subsequence of
$(\alpha_{i,\nu})_\nu$ for some $i$. Ershov solves this problem by
invoking Ramsey theory. We will avoid this by further analyzing the
valuation theoretical situation.

\begin{lemma}                               \label{vfincrss}
Let $(K,v)$ be any valued field with valuation ring ${\cal O}$, and
$f\in K[X]$ a polynomial in one variable.
\n
1) \ If $v\im_K (f)$ has no maximum, then there is a pseudo Cauchy
sequence $(c_\nu)_{\nu<\lambda}$ of algebraic type in $(K,v)$ without
limit in $K$ but admitting a root of $f$ as a limit, and such that
$(vf(c_\nu))_{\nu<\lambda}$ is a strictly increasing cofinal sequence in
$v\im_K (f)$.
\sn
2) \ If $v\im_{\cal O}(f)$ has no maximum, then there is a pseudo Cauchy
sequence $(c_\nu)_{\nu<\lambda}$ of algebraic type in ${\cal O}$ without
limit in $K$ but admitting a root of $f$ as a limit, and such that
$(vf(c_\nu))_{\nu<\lambda}$ is a strictly increasing cofinal sequence in
$v\im_{\cal O}(f)$.
\end{lemma}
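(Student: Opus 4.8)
The plan is to reduce everything to a single ultrametric computation that makes the \emph{nearest root} of an approximant control the value $vf$, thereby bypassing the combinatorial difficulty with increasing sums that forced Ershov into Ramsey theory. Write $f(X)=a_0\prod_{i=1}^n(X-a_i)$ with $a_i\in\tilde{K}$, so that $vf(c)=va_0+\sum_{i=1}^n v(c-a_i)$ for every $c\in K$. I would first fix a sequence $(c_\nu)_{\nu<\lambda}$, with $\lambda$ a limit ordinal, for which $(vf(c_\nu))_{\nu<\lambda}$ is strictly increasing and cofinal in $v\im_K(f)$; this is possible precisely because $v\im_K(f)$ has no maximum. The property ``of algebraic type'' is then automatic, since $f$ itself witnesses it: $vf(c_\nu)$ is eventually strictly increasing.

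The key observation is as follows. For $c\in K$ choose a root $a_{i(c)}$ nearest to $c$, i.e.\ with $v(c-a_{i(c)})$ maximal, and set $\alpha:=v(c-a_{i(c)})$. By the ultrametric triangle law one computes, for every $j$, that $v(c-a_j)=\min\{\alpha,v(a_{i(c)}-a_j)\}$ (maximality of $\alpha$ settles the case of equal values). Hence, writing $a_1:=a_{i(c)}$,
\[vf(c)\;=\;va_0+\sum_{j=1}^n\min\{\alpha,v(a_1-a_j)\}\;=:\;\Phi_{a_1}(\alpha)\;,\]
a function of the \emph{single} quantity $\alpha=v(c-a_1)$. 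Because the summand for $j=1$ equals $\alpha$, the function $\Phi_{a_1}$ is strictly increasing in $\alpha$. This is the device that replaces the analysis of increasing sums: $vf(c)$ is governed monotonically by how well $c$ approximates one nearest root, and moreover $\Phi_{a_1}$ is eventually of the form $\alpha\mapsto m\alpha+\mathrm{const}$ with $m\geq 1$ (for $\alpha$ above all the finite values $v(a_1-a_j)$).

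Now I would assemble the sequence. Since there are finitely many roots, the map $\nu\mapsto i(c_\nu)$ is constant, say equal to $1$, on a cofinal subset of $\lambda$; passing to this cofinal subsequence preserves strict monotonicity and cofinality of $(vf(c_\nu))$ in $v\im_K(f)$, and now $a_1$ is a nearest root to every $c_\nu$. Put $\alpha_\nu:=v(c_\nu-a_1)$. Strict monotonicity of $\Phi_{a_1}$ together with $\Phi_{a_1}(\alpha_{\nu'})=vf(c_{\nu'})>vf(c_\nu)=\Phi_{a_1}(\alpha_\nu)$ for $\nu'>\nu$ forces $\alpha_{\nu'}>\alpha_\nu$; thus $(\alpha_\nu)$ is strictly increasing, whence $v(c_{\nu'}-c_\nu)=\alpha_\nu$ is strictly increasing and $(c_\nu)$ is a pseudo Cauchy sequence having $a_1$ as a limit. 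The eventual linear form of $\Phi_{a_1}$ turns cofinality of $(vf(c_\nu))$ in $v\im_K(f)$ into cofinality of $(\alpha_\nu)$ in $\Lambda^{\rm L}(a_1,K)$, which rules out a limit in $K$: such a limit $c\in K$ would satisfy $v(c-a_1)\geq\alpha_\nu$ for all $\nu$, hence $v(c-a_1)\geq\dist(a_1,K)$, impossible for $c\in K$ when $\Lambda^{\rm L}(a_1,K)$ has no maximum. This proves part~1, with $a_1$ as the limiting root.

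For part~2 the argument is identical, with ${\cal O}$ in place of $K$ as the domain of approximants: I would start from a cofinal sequence in $v\im_{\cal O}(f)$, pigeonhole a nearest root $a_1$ among the $c_\nu\in{\cal O}$, and run the same monotonicity argument, the only extra bookkeeping being that the constructed sequence stays in ${\cal O}$ by construction and that the relevant distance is the ${\cal O}$-restricted one. The main obstacle is exactly the point that tripped up Delon and required Ramsey theory in Ershov's treatment---that a strictly increasing sum of finitely many terms need not have any single summand increasing cofinally. The identity $vf(c)=\Phi_{a_1}(v(c-a_1))$ dissolves this obstacle, at the cost of the routine but slightly delicate ordinal pigeonhole for the nearest root and the cofinality comparison between $v\im_K(f)$ and $\Lambda^{\rm L}(a_1,K)$.
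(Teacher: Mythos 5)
Your proof is correct, and although it follows the paper's overall strategy---replace Ershov's Ramsey-theoretic argument by a pigeonhole over the finitely many roots plus an ultrametric analysis---the mechanism is genuinely different and arguably cleaner. The paper works with the balls $B_\alpha^\circ(a_i)$ whose radii range over the finite set ${\cal D}$ of pairwise root distances, selects a ball of maximal radius containing a cofinal subsequence, and then prunes the sequence a second time (below the next critical radius $\beta$) to force $v(c_\nu-a_i)=v(c_\nu-a_1)$ for all roots inside the chosen ball; only after this double pruning does $vf(c_\nu)$ take the form $m\,v(c_\nu-a_1)+\mathrm{const}$. Your nearest-root identity $v(c-a_j)=\min\{v(c-a_1),v(a_1-a_j)\}$ (where the tie case is exactly what maximality of $v(c-a_1)$ settles) packages both pruning steps into the single strictly increasing function $\Phi_{a_1}$, and the pigeonhole is on the nearest-root index rather than on balls; this buys a one-step conversion of the increase of $vf(c_\nu)$ into the increase of $\alpha_\nu=v(c_\nu-a_1)$, which is where the paper has to work hardest. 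Two small points in your write-up deserve repair, both fixable with your own tools. First, the detour through $\dist(a_1,K)$ in the ``no limit in $K$'' step is imprecise: by the paper's definition, $\Lambda^{\rm L}(a_1,K)$ contains only values lying in $vK$, whereas the $\alpha_\nu$ need not lie in $vK$. The clean finish is to observe that the ultrametric inequality gives $vf(c)\geq\Phi_{a_1}(v(c-a_1))$ for \emph{every} $c\in K$, whether or not $a_1$ is nearest to $c$; a limit $c\in K$ satisfies $v(c-a_1)>\alpha_\nu$ for all $\nu$, hence $vf(c)>vf(c_\nu)$ for all $\nu$, contradicting cofinality of $(vf(c_\nu))_{\nu<\lambda}$ in $v\im_K(f)$---this is exactly how the paper concludes. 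Second, in part 2) you should say explicitly that a limit $c\in K$ of a sequence lying in ${\cal O}$ automatically lies in ${\cal O}$ (since $v(c-c_\nu)=v(c_{\nu+1}-c_\nu)\geq 0$ and $vc_\nu\geq 0$), so that $vf(c)\in v\im_{\cal O}(f)$ and the same contradiction applies; the paper dispatches this in one sentence as well.
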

\begin{proof}
1): \ We choose a sequence $(c_\nu)_{\nu<\lambda}$ of elements in $K$
such that the values $vf(c_\nu)$ are strictly increasing and cofinal in
$v\im_K (f)$. We write $f(X)=\prod_{i=1}^{n}(X-a_i)$ with
$a_1,\ldots,a_n\in\tilde{K}$ and choose some extension of $v$ to
$\tilde{K}$.

We introduce a symbol $-\infty$ and define $-\infty<\alpha$ for all
$\alpha\in v\tilde{K}$. Now we consider all balls $B_\alpha^\circ(a_i)=
\{a\in\tilde{K}\mid v(a_i-a)>\alpha\}$ with center a root $a_i$ of $f$,
$1\leq i\leq n$, and radius $\alpha$ in the finite set ${\cal D}:=
\{v(a_i-a_j)\mid 1\leq i<j\leq n\}\cup\{-\infty\}$; note that
$B_{-\infty}^\circ(a_i)= \tilde{K}$. These are finitely many balls, with
$\tilde{K}$ one of them, so there is at least one among them with
$\alpha$ maximal in which there lies some cofinal subsequence of
$(c_\nu)_{\nu<\lambda}$. After renaming our elements if necessary, we
may assume that this ball is $B_\alpha^\circ (a_1)$, that the
subsequence is again called $(c_\nu)_{\nu<\lambda}$, and that exactly
$a_1,\ldots,a_m$ ($m\leq n$) are the roots of $f$ which lie in
$B_\alpha^\circ (a_1)$. Then for every $\nu<\lambda$ and $m<i\leq n$, we
have that
\[v(c_\nu-a_i)\;=\;\min\{v(c_\nu-a_1),v(a_1-a_i)\}\;=\;v(a_1-a_i)\;.\]
On the other hand, by the maximality of $\alpha$ we have the following:
if ${\cal D}$ contains elements $>\alpha$ (which is the case if
$B_\alpha^\circ (a_1)$ contains at least two roots of $f$) and if
$\beta$ is the least of these elements, then there is no cofinal
subsequence of $(c_\nu)_{\nu<\lambda}$ which lies in any of the balls
$B_\beta^\circ(a_i)$. This even remains true if we replace
$B_\beta^\circ(a_i)$ by $B_\beta (a_i) =\{a\in\tilde{K}\mid
v(a_i-a)\geq\beta\}$. Indeed, by our choice of $\beta$ we have for
$1\leq i\leq m$ that $B_\beta (a_i)$ contains $a_1,\ldots,a_m$ and thus,
$c\in B_\beta (a_i)$ implies $v(c-a_j)\geq \beta$ for $1\leq j\leq m$.
If in addition $c$ does not lie in any $B_\beta^\circ(a_j)$, then
$v(c-a_j)=\beta$ for $1\leq j\leq m$. Hence if a cofinal subsequence of
$(c_\nu)_{\nu<\lambda}$ would lie in $B_\beta (a_i)$, then the
value
\[vf(c_\nu)\;=\;v\prod_{i=1}^{n}(c_\nu-a_i)\;=\;\sum_{i=1}^{n}
v(c_\nu-a_i)\;=\;m\beta\,+\,\sum_{i=m+1}^{n} v(a_1-a_i)\]
would be fixed for all $c_\nu$ in this subsequence, a contradiction.

After deleting elements from $(c_\nu)_{\nu<\lambda}$, we may thus assume
that $v(c_\nu-a_i)<\beta\leq v(a_1-a_i)$ for all $\nu$ and $1\leq i\leq
m$. It follows that $v(c_\nu-a_i)=\min\{v(c_\nu-a_1), v(a_1-a_i)\}=
v(c_\nu-a_1)$ for all $\nu$ and $1\leq i\leq m$. Now we compute:
\[vf(c_\nu)\;=\;\sum_{i=1}^{n}
v(c_\nu-a_i)\;=\;mv(c_\nu-a_1)\,+\,\sum_{i=m+1}^{n} v(a_1-a_i)\;.\]
If $\mu<\nu<\lambda$, then $vf(c_\mu)<vf(c_\nu)$ and hence we must have
$v(c_\mu-a_1)<v(c_\nu-a_1)$. This shows that $(c_\nu)_{\nu<\lambda}$ is
a pseudo Cauchy sequence with limit $a_1\,$.

Any limit $a\in\tilde{K}$ of this sequence satisfies $v(a-a_1)>
v(c_\nu-a_1)$ and hence also $v(a-a_i)\geq\min\{v(a-a_1),v(a_1-a_i)\}
>v(c_\nu-a_1)$ for $1\leq i\leq m$ and all $\nu$. Thus,
\[vf(a)\;=\;\sum_{i=1}^{n} v(a-a_i)\;>\;
mv(c_\nu-a_1)\,+\,\sum_{i=m+1}^{n} v(a_1-a_i)\;=\;vf(c_\nu)\;.\]
for all $\nu$. This shows that $a$ cannot lie in $K$. Hence,
$(c_\nu)_{\nu<\lambda}$ is a pseudo Cauchy sequence without limit in
$K$, and by construction, it is of algebraic type.

\mn
2): \ We proceed as in 1), but choose the sequence
$(c_\nu)_{\nu<\lambda}$ in ${\cal O}$ such that the values $vf(c_\nu)$
are strictly increasing and cofinal in $v\im_{\cal O} (f)$. We only have
to note in addition that if $a\in K$ would be a limit of the sequence,
than it would also lie in ${\cal O}$.
\end{proof}

\begin{corollary}                           \label{corKO}
Assume that $(K,v)$ is not $K$-extremal with respect to the polynomial
$f(X)\in K[X]$. Then for all $c\in K$ of large enough value, $(K,v)$ is
not ${\cal O}$-extremal with respect to the polynomial $f(c^{-1}X)$.
Hence, if $(K,v)$ is ${\cal O}$-extremal with respect to every
polynomial in one variable, then $(K,v)$ is $K$-extremal with respect to
every polynomial in one variable. The same holds for ``separable
polynomial'' in the place of ``polynomial''.
\end{corollary}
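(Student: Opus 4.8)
The plan is to reduce everything to Lemma~\ref{vfincrss}.1) together with a simple scaling trick. Suppose $(K,v)$ is not $K$-extremal with respect to $f$, so that $v\im_K(f)$ has no maximum. First I would apply Lemma~\ref{vfincrss}.1) to obtain a pseudo Cauchy sequence $(c_\nu)_{\nu<\lambda}$ in $K$ without limit in $K$, admitting a root $a\in\tilde{K}$ of $f$ as a limit, and such that $(vf(c_\nu))_{\nu<\lambda}$ is strictly increasing and cofinal in $v\im_K(f)$. Since $a$ is a limit of the sequence, the values $v(a-c_\nu)$ are eventually strictly increasing, so there is some $\nu_0$ with $v(a-c_\nu)\geq\gamma_0:=v(a-c_{\nu_0})$ for all $\nu\geq\nu_0$; the ultrametric triangle law then gives $vc_\nu=v(a-(a-c_\nu))\geq\min\{va,\gamma_0\}=:\beta$ for all $\nu\geq\nu_0$. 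Thus the values $vc_\nu$ are bounded below along the tail of the sequence, and this boundedness, together with cofinality, is the only feature I will use.

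Next I would set $g(X):=f(c^{-1}X)$ and take any $c\in K$ with $vc\geq-\beta$; this is what ``large enough value'' means (such $c$ exist because $vK$ is cofinal in $\widetilde{vK}$). For $\nu\geq\nu_0$ we then have $v(c\,c_\nu)=vc+vc_\nu\geq 0$, so $c\,c_\nu\in{\cal O}$ and $g(c\,c_\nu)=f(c^{-1}c\,c_\nu)=f(c_\nu)$. Hence the cofinal tail $(vf(c_\nu))_{\nu_0\leq\nu<\lambda}$ is contained in $v\im_{\cal O}(g)$. On the other hand, $v\im_{\cal O}(g)\subseteq v\im_K(f)$, since $g(b)=f(c^{-1}b)$ with $c^{-1}b\in K$ for every $b\in{\cal O}$. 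Now a set that is contained in $v\im_K(f)$ and contains a subset cofinal in $v\im_K(f)$ has a maximum if and only if $v\im_K(f)$ does, and the two maxima then coincide; as the latter has none, $v\im_{\cal O}(g)$ has none either, i.e., $(K,v)$ is not ${\cal O}$-extremal with respect to $f(c^{-1}X)$. This proves the first assertion. The value $\infty$ causes no trouble: the absence of a maximum of $v\im_K(f)$ already rules out a zero of $f$ in $K$, hence a zero of $g$ in ${\cal O}$.

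The second assertion (the ``Hence'' clause) follows by contraposition: if $(K,v)$ fails to be $K$-extremal with respect to some one-variable polynomial $f$, then by the first part it fails to be ${\cal O}$-extremal with respect to the one-variable polynomial $f(c^{-1}X)$ for all $c$ of large enough value. For the separable version I would only add that, since $c\neq 0$, the substitution $X\mapsto c^{-1}X$ multiplies every root of $f$ by the nonzero constant $c$ and therefore sends separable polynomials to separable polynomials; hence $f(c^{-1}X)$ is separable whenever $f$ is, and the entire argument stays within the class of separable polynomials. I do not anticipate a genuine obstacle here; the only points requiring care are producing the lower bound $\beta$ on the values $vc_\nu$ (so that one fixed $c$ pulls the whole relevant tail into ${\cal O}$) and verifying that this tail remains cofinal in $v\im_K(f)$ after discarding the initial segment $\nu<\nu_0$.
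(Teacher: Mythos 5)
Your proof is correct and follows essentially the same route as the paper: apply Lemma~\ref{vfincrss} to get a pseudo Cauchy sequence with $(vf(c_\nu))$ strictly increasing and cofinal in $v\im_K(f)$, then scale by a $c$ of large enough value so that a tail of the $cc_\nu$ lands in ${\cal O}$. The only (harmless) variation is that the paper invokes the eventual constancy of the values $vc_\nu$ (which holds because the sequence has no limit in $K$), whereas you derive a lower bound $\beta$ on the tail of the $vc_\nu$ via the limit root $a$ and the ultrametric inequality; both suffice equally well.
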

\begin{proof}
Take the pseudo Cauchy sequence $(c_\nu)_{\nu<\lambda}$ as in
Lemma~\ref{vfincrss}. For large enough $\nu_0<\lambda$, the values of
the $c_\nu$ with $\nu_0<\nu<\lambda$ are constant, say, $\alpha$. For
every $c$ of value $\geq -\alpha$, we have that $cc_\nu\in {\cal O}$ for
$\nu_0<\nu<\lambda$. Hence, $(K,v)$ is not ${\cal O}$-extremal with
respect to the polynomial $f(c^{-1}X)$.
\end{proof}

The first part of the following result was proved by Yu.~Ershov in
[Er1]:
\begin{proposition}                         \label{propersh}
A valued field is algebraically maximal if and only if it is
henselian and $K$-extremal with respect to every polynomial in one
variable. The same holds with ``${\cal O}$-extremal'' in the place of
``$K$-extremal''.
\end{proposition}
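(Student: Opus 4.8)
The plan is to prove the two claimed equivalences (the ``$K$-extremal'' version and the ``${\cal O}$-extremal'' version) by establishing the four implications, using the pseudo Cauchy sequence machinery assembled in Lemma~\ref{vfincrss}, Corollary~\ref{expCsalg} and Lemma~\ref{apCs}, together with the comparison between the two notions of extremality in Corollary~\ref{corKO}. Throughout, ``extremal with respect to $f$'' is read as ``$v\im_K(f)$ (resp. $v\im_{\cal O}(f)$) has a maximum'', so its negation is the non-existence of a maximum, which is precisely what the cited lemmas detect.

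First I would dispatch the implication ``algebraically maximal $\Rightarrow$ henselian'': the henselization $(K,v)^h$ is an immediate separable-algebraic extension, so an algebraically maximal field admits no proper such extension and hence coincides with its henselization. Next, for ``algebraically maximal $\Rightarrow$ $K$-extremal and ${\cal O}$-extremal with respect to every one-variable polynomial $f$'', I argue by contradiction. If $v\im_K(f)$ (resp. $v\im_{\cal O}(f)$) had no maximum, then Lemma~\ref{vfincrss}, part~1) (resp. part~2), would produce a pseudo Cauchy sequence of algebraic type in $K$ (resp. in ${\cal O}$) without a limit in $K$. By Theorem~3 of [Ka] such a sequence gives rise to a proper immediate algebraic extension of $K$ — exactly the argument used in the proof of Corollary~\ref{exoptappr} — contradicting algebraic maximality.

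For the converse ``henselian and $K$-extremal with respect to every one-variable polynomial $\Rightarrow$ algebraically maximal'', I again argue by contradiction. Suppose $K$ admits a proper immediate algebraic extension; pick an element $a$ in it with $a\notin K$ and let $f\in K[X]$ be the minimal polynomial of $a$ over $K$. By Theorem~1 of [Ka] there is a pseudo Cauchy sequence $(c_\nu)_{\nu<\lambda}$ in $K$ without a limit in $K$ but having $a$ as a limit. Here henselianity is essential: it guarantees that the extension of $v$ from $K$ to $K(a)$ is unique, which is precisely the additional hypothesis in the final assertion of Lemma~\ref{apCs}. That lemma then shows that $(vf(c_\nu))_\nu$ is cofinal in $v\im_K(f)$ and that $v\im_K(f)$ has no maximal element, contradicting $K$-extremality with respect to $f$.

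Finally, the ${\cal O}$-extremal form of the converse follows formally: if $K$ is henselian and ${\cal O}$-extremal with respect to every one-variable polynomial, then by Corollary~\ref{corKO} it is also $K$-extremal with respect to every one-variable polynomial, and the previous paragraph applies to conclude algebraic maximality. The step I expect to be the main obstacle — and the source of the gap in the argument of [D1] — is the extraction from $(vf(c_\nu))_\nu$ of a single strictly increasing cofinal subsequence whose limit is a root of $f$; this requires the ball-refinement bookkeeping around the roots $a_1,\dots,a_n$ of $f$, which is exactly what Lemma~\ref{vfincrss} carries out. With that lemma and Lemma~\ref{apCs} in hand, all four implications reduce to the bookkeeping above.
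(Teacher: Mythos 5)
Your proposal is correct and follows essentially the same route as the paper's own proof: both directions rest on exactly the same ingredients, namely Theorem~1 and Theorem~3 of [Ka], Lemma~\ref{apCs} (with henselianity supplying the uniqueness of the extension of $v$ to $K(a)$), Lemma~\ref{vfincrss}, and Corollary~\ref{corKO} to pass between $K$- and ${\cal O}$-extremality. The only cosmetic difference is that you spell out the step ``algebraically maximal $\Rightarrow$ henselian,'' which the paper leaves to its introductory remark that the henselization is an immediate separable-algebraic extension.
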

\begin{proof}
Suppose that $(K,v)$ is henselian, but not algebraically maximal. Then
there is a proper immediate algebraic extension $L|K$. Take $a\in
L\setminus K$. By Theorem~1 of [Ka], there is a pseudo Cauchy sequence
in $K$ without limit in $K$, having $a$ as a limit. Let $f\in K[X]$ be
the minimal polynomial of $a$ over $K$. Since $K$ is henselian, the
extension of $v$ from $K$ to $K(a)$ is unique. Now it follows from
Lemma~\ref{apCs} that $v\im_K (f)$ has no maximal element. That is, $K$
is not $K$-extremal with respect to $f$. Hence by Corollary~\ref{corKO},
$K$ is also not ${\cal O}$-extremal with respect to every polynomial in
one variable.

For the converse, suppose that there is a polynomial $f\in K[X]$ such
that $v\im_K (f)$ or $v\im_{\cal O}(f)$ has no maximal element. Then by
Lemma~\ref{vfincrss}, $(K,v)$ admits a pseudo Cauchy sequence of
algebraic type in $(K,v)$ without limit in $K$. Now Theorem~3 of [Ka]
shows that there is a proper immediate algebraic extension of $(K,v)$,
i.e., $(K,v)$ is not algebraically maximal.
\end{proof}

Theorem~\ref{extram} and its ``$K$-extremal'' version follow from this
proposition once we have proved the following proposition:
\begin{proposition}                         \label{extrhens}
If a valued field is $K$- or ${\cal O}$-extremal with respect to every
separable polynomial in one variable, then it is henselian.
\end{proposition}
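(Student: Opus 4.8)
The plan is to prove the contrapositive, and first to collapse the two hypotheses into one. Since $\mathcal O$-extremality with respect to every separable polynomial implies $K$-extremality with respect to every separable polynomial (Corollary~\ref{corKO}), it suffices to show that a field which is $K$-extremal with respect to every separable polynomial in one variable is henselian. So I would assume $(K,v)$ is not henselian. Then the henselization $(K,v)^h$, being an immediate separable-algebraic extension, properly contains $K$; I pick $a\in K^h\setminus K$ and let $g\in K[X]$ be its minimal polynomial. Then $g$ is separable and irreducible of degree $\geq 2$, so it has no root in $K$ and $\infty\notin v\im_K(g)$. Fixing the extension of $v$ to $\tilde K$ for which $K^h$ is the henselization, the subextension $K(a)|K$ of the immediate extension $K^h|K$ is immediate, so Theorem~\ref{KT1} gives that $v(a-K)=\Lambda^{\rm L}(a,K)$ has no maximal element. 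The aim is then to deduce that $v\im_K(g)$ has no maximum, exhibiting $g$ as a separable polynomial with respect to which $K$ is not $K$-extremal, which contradicts the hypothesis. (For the stronger $\mathcal O$-extremal hypothesis alone there is a cleaner route: localize near a simple residue root, apply $\mathcal O$-extremality to $g(c_0+tX)$ with $vt=vg(c_0)$ to obtain a maximizer that stays congruent to $c_0$ modulo the maximal ideal, and run one Newton step to contradict maximality; but this localization is unavailable for the weaker $K$-extremal hypothesis, which is why I work with the minimal polynomial.)

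To analyze $v\im_K(g)$, write $g(X)=\prod_{i=1}^n(X-a_i)$ with $a_1=a$, so that $vg(c)=\sum_{i=1}^n v(c-a_i)$ for $c\in K$. I would choose a pseudo Cauchy sequence $(c_\nu)_{\nu<\lambda}$ in $K$ with limit $a$ whose values $v(a-c_\nu)$ are strictly increasing and cofinal in $\Lambda^{\rm L}(a,K)$; this is possible precisely because the latter has no maximal element, and such a sequence has no limit in $K$. As in the first part of the proof of Lemma~\ref{apCs}, each root $a_i$ is either a limit of $(c_\nu)$, so that $v(c_\nu-a_i)=v(c_\nu-a)$ for large $\nu$, or is bounded away from it, so that $v(c_\nu-a_i)$ is eventually the constant $v(a-a_i)$. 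Hence for large $\nu$ one has $vg(c_\nu)=m\,v(c_\nu-a)+C$, where $m\geq 1$ is the number of roots lying in the ball of radius $\delta=\dist(a,K)$ about $a$ and $C$ is a constant; since $v(c_\nu-a)$ increases without a maximum and $m\geq 1$, this sequence of values strictly increases with no last element.

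It then remains to prove the crucial point, that $(vg(c_\nu))_{\nu}$ is cofinal in $v\im_K(g)$, that is, that no $c\in K$ makes $vg(c)$ exceed all of these values. This is exactly the delicate ``dominant approximation'' issue that the ball analysis in the proof of Lemma~\ref{vfincrss} is designed to handle: I must rule out that $vg$ is driven arbitrarily high by $c$ approaching some other cluster of roots. Here Krasner's Lemma is the key tool, since a root that could be approximated from $K$ strictly more closely than its distance to every other root would, over the henselian field $K^h$, already lie in $K^h$; combined with the immediacy of $K^h|K$ (so that the approach toward $a\in K^h$ is never overtaken), this confines the competing contributions and should yield the required cofinality. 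I expect this cofinality step, namely controlling the behaviour of $vg$ near the roots lying outside $K^h$ and showing that the approach to $a$ is the dominant one, to be the main obstacle; once it is established, $v\im_K(g)$ has no maximum, $K$ fails to be $K$-extremal with respect to the separable polynomial $g$, and the contrapositive is complete.
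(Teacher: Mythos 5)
You correctly collapse the two hypotheses via Corollary~\ref{corKO}, and the increasing-values part of your argument (the analysis \`a la Lemma~\ref{apCs}) is sound. But the step you yourself flag as the ``main obstacle'' is not merely unproven --- it is false as stated. For an arbitrary $a\in K^h\setminus K$ with minimal polynomial $g$, the set $v\im_K (g)$ \emph{can} have a maximum, so $g$ need not witness any failure of extremality. Krasner's Lemma only tells you that the approach from $K$ to the roots lying outside $K^h$ is \emph{bounded} (by the distance to the nearest $K^h$-conjugate); it does not prevent that bounded approach from attaining its bound at a value strictly above everything the approach to $a$ can produce. Here is a counterexample. Let $k$ be any field of characteristic $\ne 2$, $K=k(x,y)$, and $v$ the rank~$2$ valuation with $vK=\Z\times\Z$ ordered lexicographically, $vx=(1,0)$, $vy=(0,1)$ (the composition of the $x$-adic with the $y$-adic valuation, in the spirit of Example~\ref{examp3}). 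By Hensel's Lemma, $K^h$ contains $r=\sqrt{(1+y)^2-4x}$ with $v(r-(1+y))\geq vx$; put $A=\frac{1}{2}((1+y)+r)$ and $B=\frac{1}{2}((1+y)-r)$, so that $A,B\in K^h$, $A+B=1+y$, $AB=x$, $vA=0$, $vB=vx$. Two further applications of Hensel's Lemma give $\sqrt{1+y}\in K^h$ and then $a:=\sqrt{A}\in K^h$, whereas $\sqrt{B}\notin K^h$ because $v\sqrt{B}=\frac{1}{2}vx\notin vK=vK^h$. The minimal polynomial of $a$ over $K$ is $g(X)=X^4-(1+y)X^2+x$ (no factorization is possible: linear factors would need $\pm\sqrt{A}$ or $\pm\sqrt{B}$ in $K$, and every quadratic factor has constant term $-A$, $-B$ or $\pm\sqrt{x}$, none of which lies in $K$). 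Moreover $v(a-\sqrt{1+y})=vx$, so $\dist(a,K)=\dist(\sqrt{1+y},K)=H^+$ with $H=\{0\}\times\Z$: approximating $\sqrt{1+y}$ from $K$ beyond $H$ would force $\sqrt{1+y}$ into $k(y)$, the residue field of the coarse valuation. Now for every $c\in K$ one has $vg(c)=v(c^2-A)+v(c^2-B)$, and: $v(c^2-B)\leq vx$ always, since $v(c^2)\in 2vK$ while $vB=vx\notin 2vK$; if $vc\ne 0$ then $v(c^2-A)\leq 0$; and if $vc=0$ then $v(c^2-A)$ lies in $H$ (its first component cannot be positive, as $1+y$ is not a square in $k(y)$) while $v(c^2-B)=0$. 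Hence $vg(c)\leq vx$ for all $c\in K$, with equality at $c=0$. So $v\im_K(g)$ has the maximum $vg(0)=vx$, although $K$ is not henselian and $g$ is the minimal polynomial of $a\in K^h\setminus K$. Your pseudo Cauchy sequence approaching $a$ only produces values inside $H$, every one of which is beaten by $vg(0)$; the competing cluster $\{\pm\sqrt{B}\}$ overtakes the approach to $a$ and attains its Krasner bound.

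This failure is a rank phenomenon: in rank $1$ one has $K^h\subseteq K^c$, the approach to $a$ is cofinal in $vK$, and your cofinality claim does hold --- which is precisely why the paper's proof looks so different. The paper passes to a finitely generated subfield $K_0$ of $K$ containing the data of a Hensel failure and to its relative algebraic closure $K_1$ in $K$ (so $v|_{K_1}$ has finite rank and is a composition of valuations with archimedean value groups), locates the archimedean level at which henselianity fails, and there produces a separable irreducible $g\in K_1[X]$ without roots in $K$ whose Newton iterates have values cofinal in a convex subgroup $H$ of $vK$; a maximum of $v\im_K(g)$ would then lie above $H$, and modulo the coarsening $v_H$ the element $g(d)$ has residue zero while $g'(d)$ has nonzero residue, so one further Newton step improves $vg(d)$ --- a contradiction. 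In other words, the witness polynomial must be chosen adapted to the coarsest level at which henselianity fails; a single element $a\in K^h\setminus K$, chosen blindly, can live at the wrong level, as the example above shows.
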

\begin{proof}
In view of Corollary~\ref{corKO} we only have to prove the assertion for
``$K$-extremal''. Suppose that the valued field $(K,v)$ with valuation
ring ${\cal O}$ is not henselian. Then there is a
polynomial $f\in {\cal O}[X]$ and an element $b\in {\cal O}$ such that
$vf(b)>2vf'(b)$, but $f$ has no root in $K$. We take $K_0$ to be a
finitely generated subfield of $K$ containing $b$ and all coefficients
of $f$, and $K_1$ to be the relative algebraic closure of $K_0$ in $K$.
Then $f$ has no root in $K_1\,$, which shows that $K_1$ is not
henselian. Since $K_1$ has finite transcendence degree over its prime
field, it has finite rank, which means that $v|_{K_1}$ is a composition
$v|_{K_1}=v_1\circ\ldots\circ v_k$ of valuations $v_i$ with archimedean
value groups. By a repeated application of Theorem~32.15 of [W], it
follows that $(K_1,v_1)$ is not henselian or for some $i\leq k$ and
$v^i:= v_1\circ \ldots \circ v_{i-1}$, $(K_1v^i,v_i)$ is not henselian.
In the first case, there is a monic separable and irreducible polynomial
$g\in K_1[X]$ with $v_1$-integral coefficients and a $v_1$-integral
element $c\in K_1$ such that $v_1g(c)>2v_1g'(c)$, but $g$ does not have
a zero in $K_1\,$. It follows that $vg(c)>2vg'(c)$.

In the second case, there is a monic separable and irreducible
polynomial $\ovl{g}\in K_1 v^i[X]$ with $v_i$-integral coefficients and
a $v_i$-integral element $\ovl{c}\in K_1 v^i$ such that $v_i\ovl{g}
(\ovl{c})>2v_i\ovl{g}' (\ovl{c})$, but $\ovl{g}$ does not have a zero in
$K_1v^i$. We take some monic polynomial $g\in K_1[X]$ with
$v^i$-integral coefficients such that its $v^i$-reduction is equal to
$\ovl{g}$. Also, we pick a $v^i$-integral element $c\in K_1$ whose
$v^i$-reduction is $\ovl{c}$. Then it follows that $v^{i+1}g(c)>2
v^{i+1}g'(c)$, whence $vg(c)>2vg'(c)$.

It is well known that if $w$ is any valuation for which the polynomial
$g$ has $w$-integral coefficients and $wg(c)>2wg'(c)$ holds, then a
repeated application of the Newton algorithm
\[c_{n+1}\;:=\; c_n-\frac{g(c_n)}{g'(c_n)}\;,\]
starting with $c_0=c$, leads to a strictly increasing sequence of values
$wg(c_n)$; this sequence is cofinal in the value group of $w$ in case
this value group is archimedean. Hence in the first case, we obtain a
sequence of elements $c_n\in K_1$ such that the sequence $v_1
g(c_n)$ is cofinal in $v_1K_1\,$. This implies that if $d\in K$ is such
that $vg(d)$ is the maximum of $v\im_K(g)$ and $H$ denotes the convex
subgroup of $vK$ generated by $v_1K_1$, then $vg(d)>H$. Let $v_H$ be the
coarsening of $v$ with respect to $H$. Then $v_H g(d)>0$, i.e.,
$g(d)v_H=0$. On the other hand, the reduction modulo $v_H$ induces an
isomorphism on $K_1\,$, and since $g$ was chosen to be separable and
irreducible, we thus have that $g'(d)v_H \ne 0$, i.e., $v_Hg'(d)=0$. But
then by the Newton algorithm, if $g(d)\ne 0$, then there is some $d'\in
K$ such that $v_Hg(d')>v_H g(d)$ and hence, $vg(d')>vg(d)$. This
contradiction shows that $g(d)=0$. But this contradicts our choice of
$g$. Hence, $v\im_K(g)$ does not have a maximum.

In the second case, the Newton algorithm provides elements $\ovl{c}_n$
such that the sequence $v_i\ovl{g}(\ovl{c}_n)$ is cofinal in $v_i
(K_1 v^i)\,$. We choose $v^i$-integral elements $c_n\in K_1$ whose
$v^i$-reductions are $\ovl{c}_n\,$. Then it follows that the values
$vg(c_n)$ are cofinal in a convex subgroup $H$ of $vK$ which is the
convex hull of the convex subgroup of $vK_1$ which corresponds to the
coarsening $v^i$ of $v|_{K_1}\,$. This implies that if $d\in K$ is such
that $vg(d)$ is the maximum of $v\im_K(g)$, then $vg(d)>H$. Let $v_H$ be
the coarsening of $v$ with respect to $H$. Then again, $v_H g(d)>0$ and
$g(d)v_H=0$. On the other hand, the reduction of $g$ modulo $v_H$ is
$\ovl{g}$, so $0=g(d)v_H=\ovl{g}(dv_H)$. Since $\ovl{g}$ was chosen to
be separable and irreducible, we thus have that $g'(d)v_H=\ovl{g}'(dv_H)
\ne 0$, i.e., $v_Hg'(d)=0$. Arguing as in the first case, we show that
$v\im_K(g)$ does not have a maximum. Hence we find that $K$ is not
$K$-extremal with respect to every separable polynomial in one variable.
\end{proof}

The following are corollaries to Theorem~\ref{extram}:
\begin{corollary}                           \label{extr1el}
The property ``algebraically maximal'' is elementary in the language of
valued fields.
\end{corollary}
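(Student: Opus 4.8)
The plan is to deduce the corollary directly from Theorem~\ref{extram}, which characterizes algebraic maximality as ${\cal O}$-extremality with respect to every one-variable polynomial over $K$. Since the introduction already records that ${\cal O}$-extremality with respect to a \emph{fixed} polynomial $f$ is an elementary property in the language of valued fields with the coefficients of $f$ as parameters, the only work left is to turn those parameters into quantified variables and to assemble an axiom scheme, exactly as was done for the property ``extremal'' in the introduction.

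Concretely, for each $n\in\N$ I would write down one sentence $\phi_n$ that universally quantifies over field elements $a_0,\ldots,a_n$, forms the term $f(X)=\sum_{i=0}^{n} a_iX^i$, and then asserts that $v\im_{\cal O}(f)$ has a maximum, i.e.\ $\exists Y\,\big(vY\ge 0\,\wedge\,\forall X\,(vX\ge 0\rightarrow vf(X)\le vf(Y))\big)$. Because the relations $vX\ge 0$ and $vf(X)\le vf(Y)$ are expressible in the language of valued fields (with $\le$ read in $vK\cup\{\infty\}$, so that a root of $f$ in ${\cal O}$ is correctly captured as the value $\infty$), each $\phi_n$ is a first-order sentence. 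A model of the scheme $\{\phi_n\mid n\in\N\}$ is then precisely a valued field which is ${\cal O}$-extremal with respect to every one-variable polynomial over $K$, since any such polynomial has some degree $n$ and is covered by $\phi_n$ (the case $a_n=0$ absorbing the polynomials of lower degree).

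By Theorem~\ref{extram}, this scheme therefore axiomatizes the class of algebraically maximal valued fields, which is exactly the assertion. I do not expect a genuine obstacle here, as the substantive content lies in Theorem~\ref{extram}; the only point requiring a little care is the bookkeeping of the quantifier over the degree, namely that a single sentence will not suffice (the degree is unbounded), so one must pass to an infinite scheme, and one must make sure the formalization of ${\cal O}$-extremality handles the value $\infty$ correctly so that polynomials with a root in ${\cal O}$ are not spuriously excluded.
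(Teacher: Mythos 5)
Your proposal is correct and coincides with the paper's intended argument: the corollary is stated without explicit proof precisely because, as the introduction notes, ${\cal O}$-extremality with respect to all one-variable polynomials of degree at most $n$ is expressible by a sentence quantifying over the coefficients, so Theorem~\ref{extram} immediately yields a countable axiom scheme. Your added care about the value $\infty$ and the need for an infinite scheme matches the paper's treatment of the analogous claim for ``extremal'' fields.
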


\begin{corollary}                           \label{extr1}
Every extremal field is algebraically maximal.
\end{corollary}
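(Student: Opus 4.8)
The plan is to read Corollary~\ref{extr1} as an immediate specialization of Theorem~\ref{extram}. By definition, a valued field $(K,v)$ is extremal precisely when, for every $n\in\N$, it is ${\cal O}$-extremal with respect to every polynomial in $n$ variables with coefficients in $K$. So the first thing I would do is simply restrict this quantifier to the single instance $n=1$: an extremal field is in particular ${\cal O}$-extremal with respect to every polynomial in one variable with coefficients in $K$.

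The second and final step is to invoke Theorem~\ref{extram} directly. That theorem asserts the equivalence of ``algebraically maximal'' with ``${\cal O}$-extremal with respect to every polynomial in one variable''. The backward direction of that equivalence applied to the extremal field $(K,v)$ then yields at once that $(K,v)$ is algebraically maximal, which is the assertion of the corollary.

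I do not expect any genuine obstacle at the level of the corollary itself, because all the substantive content has already been absorbed into Theorem~\ref{extram} (which in turn rests on Proposition~\ref{propersh} and Proposition~\ref{extrhens}). The only point worth making explicit is that the hypothesis required by Theorem~\ref{extram}, namely ${\cal O}$-extremality restricted to one-variable polynomials, is a strictly weaker condition than full extremality and is therefore automatically implied by it; this is transparent from the definition, since full extremality ranges over all $n$ including $n=1$. Consequently the proof reduces to recording this implication and citing the theorem, with no calculation needed.
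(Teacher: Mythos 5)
Your proposal is correct and is exactly the paper's route: the paper states this result (together with Corollary~\ref{extr1el}) as an immediate corollary of Theorem~\ref{extram}, with no further argument, since extremality by definition includes ${\cal O}$-extremality with respect to every one-variable polynomial. Your explicit remark that one only needs the $n=1$ instance of the definition is precisely the (unwritten) content of the paper's deduction.
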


We will now give the
\sn
{\bf Proof of Theorem~\ref{extramh}} and its ``$K$-extremal'' version:
\n
In view of Theorem~\ref{extram}, it suffices to prove that if $K$ is a
henselian but not algebraically maximal field, then there is a
$p$-polynomial $f$ in one variable with coefficients in $K$ with respect
to which $K$ is not $K$-extremal. By Corollary~\ref{corKO}, for suitable
$c\in K$, $K$ is then also not ${\cal O}$-extremal with respect to the
$p$-polynomial $f(c^{-1}X)$.

Take a proper immediate algebraic
extension of $K$. Since $K$ is assumed henselian, it follows
that this extension is purely wild and hence linearly disjoint over
$K$ from the absolute ramification field $K^r$ of $K$. We may assume
that this extension is minimal, that is, it does not admit any proper
subextension. Then by Theorem~13 of [Ku3], it is generated by a root of
a $p$-polynomial $f$. As in the first part of the proof of
Proposition~\ref{propersh} it follows that $v\im_K (f)$ has no
maximal element, that is, $K$ is not $K$-extremal with respect to the
$p$-polynomial $f$. By Corollary~\ref{corKO}, for suitable $c\in K$, $K$
is not ${\cal O}$-extremal with respect to the $p$-polynomial
$f(c^{-1}X)$.                                            \QED

%
%
\subsection{Separable-algebraically maximal fields}  \label{sectsamf}
The following is a further consequence of Proposition~\ref{ipie}:
\begin{corollary}                           \label{samimincomp}
Take a separable-algebraically maximal field $(K,v)$. Every immediate
algebraic extension of $(K,v)$ is purely inseparable and lies in its
completion. Every pseudo Cauchy sequence of algebraic type in $(K,v)$
without limit in $K$ has breadth $\{0\}$, and its unique limit in
$\tilde{K}$ is purely inseparable over $K$.
\end{corollary}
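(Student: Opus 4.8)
The plan is to reduce everything to Proposition~\ref{ipie}, in the form of Corollary~\ref{+4.5}, after first peeling off the separable part of an immediate algebraic extension. So I would begin with pure inseparability. Let $L|K$ be an immediate algebraic extension and let $L_s$ be the separable closure of $K$ in $L$. Because $vL=vK$ and $Lv=Kv$, every field between $K$ and $L$ is again immediate over $K$; in particular $L_s|K$ is an immediate separable-algebraic extension. Since $(K,v)$ is separable-algebraically maximal it admits no proper such extension, so $L_s=K$ and $L|K$ is purely inseparable. (Recall that separable-algebraic maximality also makes $(K,v)$ henselian, so $v$ extends uniquely to $\tilde{K}$ and the completion $K^c$ is unambiguous.)

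Next I would show these extensions lie in $K^c$. Any dependent Artin-Schreier defect extension of $K$ is generated by a root of a polynomial $X^p-X-a$, hence is a proper immediate separable-algebraic extension of degree $p$; separable-algebraic maximality therefore rules it out. Thus $K$ admits no dependent Artin-Schreier defect extension, the hypothesis of Corollary~\ref{+4.5} holds, and every immediate purely inseparable extension of $K$ lies in $K^c$. Combined with the previous paragraph, every immediate algebraic extension of $K$ lies in $K^c$.

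For the remaining two assertions, take a pseudo Cauchy sequence $(c_\nu)_{\nu<\lambda}$ of algebraic type in $(K,v)$ without limit in $K$. By Theorem~3 of [Ka] the minimal-degree polynomial along which the values keep increasing is irreducible and has a root $z\in\tilde{K}$ which is a limit of $(c_\nu)_{\nu<\lambda}$ and for which $K(z)|K$ is immediate and algebraic. By the two paragraphs above, $z$ is purely inseparable over $K$ and $z\in K^c$. Set $\gamma_\nu=v(z-c_\nu)=v(c_{\nu+1}-c_\nu)\in vK$. Since no $c\in K$ is a limit, Lemma~3 of [Ka] shows that $(\gamma_\nu)_\nu$ is cofinal in $v(z-K)$, while $z\in K^c$ makes $\Lambda^{\rm L}(z,K)=v(z-K)\cap vK$ cofinal in $vK$; hence $(\gamma_\nu)_\nu$ is cofinal in $vK$, which is precisely the statement that the breadth of the sequence is $\{0\}$. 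Finally, any limit $z'\in\tilde{K}$ satisfies $v(z-z')>\gamma_\nu$ for all $\nu$, and cofinality of $(\gamma_\nu)_\nu$ in $vK$, hence in the divisible hull $\widetilde{vK}=v\tilde{K}$, forces $v(z-z')=\infty$, i.e.\ $z=z'$; so the limit in $\tilde{K}$ is unique and purely inseparable over $K$.

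The only genuinely delicate point I anticipate is matching the statement's ``breadth $\{0\}$'' with the cofinality of Kaplansky's increasing values $(\gamma_\nu)_\nu$ in $vK$. Once that translation is in place, both the breadth assertion and the uniqueness of the limit follow at once from the fact, established above, that the pseudo limit lies in the completion $K^c$.
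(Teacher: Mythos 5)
Your proof is correct and follows essentially the same route as the paper's: pure inseparability via separable-algebraic maximality applied to the maximal separable subextension, containment in $K^c$ via the absence of dependent Artin-Schreier defect extensions together with Corollary~\ref{+4.5}, and the pseudo Cauchy sequence assertions via Theorem~3 of [Ka] combined with the first two parts. The only difference is one of detail: you spell out explicitly the translation between breadth $\{0\}$ and cofinality of the values $v(c_{\nu+1}-c_\nu)$ in $vK$, and the resulting uniqueness of the limit, which the paper's proof leaves implicit.
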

\begin{proof}
Every immediate algebraic extension of $K$ must be purely inseparable
since otherwise, it would contain a proper immediate separable-algebraic
subextension. Since $K$ in particular does not admit any dependent
Artin-Schreier defect extensions, we thus obtain from
Corollary~\ref{+4.5} that every immediate algebraic extension of $K$
must lie in $K^c$.

Take a pseudo Cauchy sequence $(c_\nu)_{\nu<\lambda}$ of algebraic type
in $(K,v)$ without limit in $K$. By Theorem~3 of [Ka], this pseudo
Cauchy sequence gives rise to a proper immediate algebraic extension of
$K$, in which it has a limit. By what we have just shown, this extension
is purely inseparable and lies in the completion of $K$. The latter
shows that $(c_\nu)_{\nu<\lambda}$ has breadth $\{0\}$ and therefore has
a unique limit in the algebraic closure of $K$. The former shows that
this limit must be purely inseparable over $K$.
\end{proof}

The following result has been presented by F.~Delon in [D1]:
\begin{corollary}
The completion of a separable-algebraically maximal field is
algebraically maximal.
\end{corollary}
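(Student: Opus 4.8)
The plan is to show that $K^c$ admits no proper immediate algebraic extension. By the contrapositive of Corollary~\ref{expCsalg} applied to $K^c$, it suffices to prove that every pseudo Cauchy sequence of algebraic type in $K^c$ has a limit in $K^c$. Note first that $(K,v)$ is henselian, being separable-algebraically maximal, so $K^c$ is henselian as well (cf.\ [W], Theorem~32.19). The idea is to transfer a pseudo Cauchy sequence from $K^c$ down to $K$, where the separable-algebraic maximality of $K$ can be exploited through Corollary~\ref{samimincomp}, and then to push the resulting limit back up into $K^c$.

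So let $(d_\nu)_{\nu<\lambda}$ be a pseudo Cauchy sequence of algebraic type in $K^c$, and suppose for a contradiction that it has no limit in $K^c$. Writing $\gamma_\nu=v(d_{\nu+1}-d_\nu)$ for its (eventually strictly increasing) gauge, I would use the density of $K$ in $K^c$ to choose $c_\nu\in K$ with $v(c_\nu-d_\nu)>\gamma_\nu$. A routine computation with the ultrametric triangle law then shows that $(c_\nu)_{\nu<\lambda}$ is again a pseudo Cauchy sequence, with the same gauge $\gamma_\nu$ and exactly the same limits as $(d_\nu)_{\nu<\lambda}$ in every valued extension field. In particular $(c_\nu)_{\nu<\lambda}$ has no limit in $K$, since any such limit would lie in $K\subseteq K^c$ and would then be a limit of $(d_\nu)_{\nu<\lambda}$, contrary to our assumption.

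The crucial step will be to show that $(c_\nu)_{\nu<\lambda}$ is of algebraic type over $K$; a priori it could be of transcendental type, which would \emph{not} contradict separable-algebraic maximality, so this possibility really has to be excluded. Let $F\in K^c[X]$ be a polynomial of minimal degree witnessing that $(d_\nu)_{\nu<\lambda}$ is of algebraic type, and let $z$ be a root of $F$ that is a limit of the sequence (such a root exists by Theorem~3 of [Ka]). Since no root of $F$ that is a limit of the sequence can lie in the complete field $K^c$, each such root has finite distance from $K^c$, so its associated initial segment is bounded above in $vK$; hence the strictly increasing values $vF(d_\nu)$ are bounded above by some $\beta\in vK$. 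Choosing the $c_\nu$ with $v(c_\nu-d_\nu)$ large enough (larger than $\beta$ plus a fixed correction term governed by the degree and coefficients of $F$, using that the $vc_\nu$ are eventually constant) forces $vF(c_\nu)=vF(d_\nu)$; then approximating the coefficients of $F$ closely enough by elements of $K$ produces $G\in K[X]$ with $vG(c_\nu)=vF(c_\nu)$ for all large $\nu$. Thus $vG(c_\nu)$ is eventually strictly increasing with $G\in K[X]$, so $(c_\nu)_{\nu<\lambda}$ is of algebraic type over $K$.

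Now Corollary~\ref{samimincomp} applies to the separable-algebraically maximal field $K$: the sequence $(c_\nu)_{\nu<\lambda}$, being of algebraic type without limit in $K$, has a unique limit $z'\in\tilde K$, and $z'$ is purely inseparable over $K$. In particular $K(z')|K$ is an immediate algebraic extension, so by the first assertion of Corollary~\ref{samimincomp} we conclude $z'\in K^c$. But $z'$ is a limit of $(c_\nu)_{\nu<\lambda}$ and hence of $(d_\nu)_{\nu<\lambda}$, so $(d_\nu)_{\nu<\lambda}$ has a limit in $K^c$, contradicting our assumption. Therefore every pseudo Cauchy sequence of algebraic type in $K^c$ has a limit in $K^c$, and $K^c$ is algebraically maximal. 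The one point requiring genuine care — and where I expect the real work to lie — is the approximation argument of the third paragraph showing that the transferred sequence is of algebraic type over $K$ rather than of transcendental type; the remaining steps are direct applications of the machinery already developed.
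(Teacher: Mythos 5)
Your proof is correct and follows essentially the same route as the paper's: both arguments transfer the pseudo Cauchy sequence and the polynomial witnessing its algebraic type from $K^c$ down to $K$ by close approximation of the sequence elements and of the polynomial's coefficients, and then invoke Corollary~\ref{samimincomp} to reach a contradiction. The only differences are cosmetic: the paper obtains the needed upper bound on the values $vF(d_\nu)$ from the non-trivial breadth of the sequence via Lemma~8 of [Ka], where you argue directly from the completeness of $K^c$ and the root factorization of $F$, and the paper's final contradiction is breadth $\{0\}$ versus non-trivial breadth, where yours places the unique limit inside $K^c$.
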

\begin{proof}
Take any valued field $(K,v)$ and suppose that $K^c$ admits a proper
immediate algebraic extension. Then by Corollary~\ref{expCsalg} there is
a pseudo Cauchy sequence $(c_\nu)_{\nu<\lambda}$ of algebraic type in
$K^c$ without limit in $K^c$. This must have non-trivial breadth, that
is, there is some $\gamma\in vK$ such that $v(c_{\nu+1}-c_\nu)<\gamma$
for all $\nu$ (because otherwise, Theorem~3 of [Ka] would render a
proper immediate extension of $K^c$ within $K^c$, which is absurd).
Since $c_\nu\in K^c$, there is $c^*_\nu\in K$ such that
$v(c_\nu-c^*_\nu) \geq\gamma$ and hence $v(c^*_{\nu+1}-c^*_\nu)=
v(c_{\nu+1}-c_\nu)$ for all $\nu$. It follows that
$(c^*_\nu)_{\nu<\lambda}$ is a pseudo Cauchy sequence in $K$ without
limit in $K$ and with the same non-trivial breadth as
$(c_\nu)_{\nu<\lambda}$.

Let $f\in K^c[X]$ be a polynomial such that for some $\mu<\lambda$, the
sequence $(vf(c_\nu))_{\mu<\nu<\lambda}$ is strictly increasing. Such a
polynomial must exist since by assumption, $(c_\nu)_{\nu<\lambda}$ is of
algebraic type. Since $(c_\nu)_{\nu<\lambda}$ has non-trivial breadth,
it follows from Lemma~8 of [Ka] that the sequence
$(vf(c_\nu))_{\mu<\nu<\lambda}$ is bounded from above in $vK$. Hence,
we can choose a polynomial $f^*\in K[X]$ with coefficients so close to
the corresponding coefficients of $f$ that $vf^*(c_\nu)=vf(c_\nu)$
whenever $\mu<\nu<\lambda$. This shows that also
$(c^*_\nu)_{\nu<\lambda}$ is of algebraic type. Hence by the foregoing
corollary, $K$ cannot be separable-algebraically maximal.
\end{proof}

Now we give the
\sn
{\bf Proof of Theorem~\ref{extrsam}} and its ``$K$-extremal'' version:
\n
Assume that $(K,v)$ is $K$-extremal or ${\cal O}$-extremal with respect
to every separable
polynomial in one variable. Then by Proposition~\ref{extrhens}, $K$ is
henselian. Suppose that $(K,v)$ is not separable-algebraically maximal.
Then there is a proper immediate separable-algebraic extension $L|K$.
Take $a\in L\setminus K$, and let $f\in K[X]$ be the minimal polynomial
of $a$ over $K$. By Theorem~1 of [Ka], there is a pseudo Cauchy sequence
in $K$ without limit in $K$, having $a$ as a limit. Since $K$ is
henselian, the extension of $v$ from $K$ to $K(a)$ is unique. Now it
follows from Lemma~\ref{apCs} that $v\im_K (f)$ has no maximal element,
that is, $K$ is not $K$-extremal with respect to $f$. By
Corollary~\ref{corKO}, it follows that $K$ is not ${\cal O}$-extremal
with respect to the separable polynomial $f(c^{-1}X)$ for some $c\in K$.
This contradicts our assumption that $K$ is $K$-extremal or ${\cal
O}$-extremal with respect to every separable polynomial in one variable.
Hence, $K$ is separable-algebraically maximal.

\pars
For the converse, assume that $(K,v)$ is separable-algebraically
maximal. Suppose that there is a separable polynomial $f\in K[X]$ such
that $v\im_K (f)$ or $v\im_{\cal O} (f)$ has no maximal element. Then by
Lemma~\ref{vfincrss}, $(K,v)$ admits a pseudo Cauchy sequence
$(c_\nu)_{\nu<\lambda}$ of algebraic type in $(K,v)$ without limit in
$K$, but with a root $a\notin K$ of $f$ as a limit. By
Corollary~\ref{samimincomp}, $a$ is purely inseparable over $K$. But
this contradicts the fact that $a$ is a root of a separable polynomial
over $K$. Hence, $K$ is $K$-extremal and ${\cal O}$-extremal with
respect to every separable polynomial in one variable.     \QED

\begin{corollary}                           \label{extrsamel}
The property ``separable-algebraically maximal'' is elementary in the
language of valued fields.
\end{corollary}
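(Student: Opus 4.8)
The plan is to derive this corollary directly from Theorem~\ref{extrsam}, in exactly the way Corollary~\ref{idelem} was derived from the characterization of inseparably defectless fields. By Theorem~\ref{extrsam}, a valued field $K$ is separable-algebraically maximal if and only if it is ${\cal O}$-extremal with respect to every separable polynomial in one variable with coefficients in $K$. As observed in the introduction, ${\cal O}$-extremality with respect to a \emph{fixed} polynomial $f$ is expressed by the elementary sentence (with the coefficients of $f$ as parameters)
\[\exists Y\,\forall X\;\bigl(Y\in{\cal O}\,\wedge\,X\in{\cal O}\;\rightarrow\;vf(X)\leq vf(Y)\bigr)\,.\]
Hence it suffices to turn these parameters into quantified variables and to range over all relevant polynomials by means of a countable scheme, one sentence per degree.

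Concretely, I would introduce one axiom for each $n\in\N$. The $n$-th axiom universally quantifies over the coefficients $c_0,\ldots,c_n$ of $f(X)=\sum_{i=0}^{n} c_iX^i$ and asserts: if $f$ is separable, then $v\im_{\cal O}(f)$ has a maximum, where the conclusion is the displayed ${\cal O}$-extremality sentence with $f(X)=\sum_{i=0}^{n} c_iX^i$ substituted. A valued field satisfies every axiom in this scheme if and only if it is ${\cal O}$-extremal with respect to every separable polynomial in one variable, which by Theorem~\ref{extrsam} is equivalent to being separable-algebraically maximal. Thus the scheme axiomatizes the property, and the corollary follows once the axioms are seen to be genuine first-order sentences.

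The one point requiring care --- and the step I expect to be the main obstacle --- is to check that the hypothesis ``$f$ is separable'' is itself an elementary (indeed quantifier-free) formula in the coefficients $c_0,\ldots,c_n$. This is standard: $f$ is separable precisely when it has no repeated root, i.e.\ when $f$ and $f'$ are coprime, and this is detected by the non-vanishing of the resultant $\mathrm{Res}(f,f')$ (equivalently, of the discriminant), which is a fixed polynomial expression in $c_0,\ldots,c_n$. This criterion is valid uniformly in characteristic $p$, the inseparable case being exactly the vanishing of that expression. With this observation the $n$-th axiom is a bona fide sentence in the language of valued fields, completing the argument.
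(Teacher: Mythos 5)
Your proposal is correct and follows essentially the same route as the paper: the corollary is intended there as an immediate consequence of Theorem~\ref{extrsam}, combined with the observation from the introduction that ${\cal O}$-extremality with respect to a fixed polynomial is elementary and that one ranges over all polynomials of each degree by quantifying over their coefficients, exactly as in the proof of Corollary~\ref{idelem}. Your additional care in noting that ``$f$ is separable'' is itself elementary in the coefficients (non-vanishing of $\mathrm{Res}(f,f')$, valid in any characteristic) fills in the one detail the paper leaves implicit when it speaks of ``elementarily definable classes of polynomials''.
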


We turn to the
\sn
{\bf Proof of Theorem~\ref{extrsamh}:} \
The proof is the same as for Theorem~\ref{extramh}, except that the
immediate algebraic extension of $K$ can be taken to be separable, and
hence the $p$-polynomial $f$ is separable.                         \QED

\parm
Finally, we note that {\bf Theorem~\ref{KO}} has also been proved, as
our above proof of Theorems~\ref{extram}, \ref{extramh}, \ref{extrsam}
and~\ref{extrsamh} have all dealt simultaneously with both $K$- and
${\cal O}$-extremality.

\newcommand{\lit}[1]{\bibitem #1{#1}}

\end{document}